\title{On $p$-refined Friedberg--Jacquet integrals and the classical symplectic locus in the $\GL_{2n}$ eigenvariety}
\author{Daniel Barrera Salazar, Andrew Graham and Chris Williams}
\date{}
\newcommand{\s}{\setlength{\itemsep}{0pt}}
\newcommand{\UPS}{\theta} 
\newcommand{\Pmin}{P_{\tilde\pi}}
\newcommand{\cPmin}{\cP_{\tilde\pi}}
\newcommand{\alphaG}{\alpha^{\cG,\cP}}
\newcommand{\alphaGprime}{\alpha^{\cG,\cP'}}
\newcommand{\tbyt}[4]{\left( \begin{array}{cc} #1 & #2 \\ #3 & #4 \end{array} \right)}
\newcommand{\Addresses}{{
  \bigskip
  \footnotesize

(Barrera Salazar) \textsc{Universidad de Santiago de Chile, Avenida Libertador Bernardo O'Higgins no. 3363, Estaci\'{o}n Central, Santiago, Chile}\par\nopagebreak
  \textit{E-mail address}: \texttt{daniel.barrera.s@usach.cl}

  \medskip 

  (Graham) \textsc{University of Oxford, Andrew Wiles Building, Woodstock Rd, Oxford OX2 6GG}\par\nopagebreak
  \textit{E-mail address}: \texttt{andrew.graham@maths.ox.ac.uk}

  \medskip

  (Williams) \textsc{University of Nottingham, University Park, Nottingham, NG7 2RD, United Kingdom}\par\nopagebreak
  \textit{E-mail address}: \texttt{chris.williams1@nottingham.ac.uk}

}}
\def\input@path{{../}} 
\newcommand{\sar}[2]{\ar@{}[#1]|-*[@]{#2}}
	\hfill\llap{($\dagger$)}\hfill\parbox{\textwidth-2cm}%
	{\emph{\BODY}}%
\hfill\llap{($*$)}\hfill%
\newcommand{\vol}{\operatorname{vol}}
\newcommand{\GG}{\mathbf{G}}
\begin{document}

	\maketitle

	\renewcommand{\thefootnote}{\fnsymbol{footnote}} 
	\footnotetext{\today. \emph{2020 MSC:} Primary 11F33,  11F67; Secondary 11R23, 11G22
	}     
	\renewcommand{\thefootnote}{\arabic{footnote}}

	\begin{abstract}

  Friedberg--Jacquet proved that if $\pi$ is a cuspidal automorphic representation of $\mathrm{GL}_{2n}(\A)$, then $\pi$ is a functorial transfer from $\mathrm{GSpin}_{2n+1}$ if and only if a global zeta integral $Z_H$ over $H = \mathrm{GL}_n \times \mathrm{GL}_n$ is non-vanishing on $\pi$. We conjecture a $p$-refined analogue: that any $P$-parahoric $p$-refinement $\tilde\pi^P$ is a functorial transfer from $\mathrm{GSpin}_{2n+1}$ if and only if a $P$-twisted version of $Z_H$ is non-vanishing on the $\tilde\pi^P$-eigenspace in $\pi$. This twisted $Z_H$ appears in all constructions of $p$-adic $L$-functions via Shalika models. We connect our conjecture to the study of classical symplectic families in the $\mathrm{GL}_{2n}$ eigenvariety, and -- by proving upper bounds on the dimensions of such families -- obtain various results towards the conjecture.
\end{abstract}

	\setcounter{tocdepth}{1}
	\footnotesize
	\tableofcontents
	\normalsize

	\section{Introduction}

Let $\GG$ be a reductive group. In this paper, we prove new connections between two areas in the study of automorphic forms for $\GG$:
\begin{itemize}
		\item[(1)] Non-vanishing of global period integrals for $G$ over a subgroup $H$ such that $G/H$ is a spherical variety, described in terms of Langlands functoriality and $L$-values; and
	
	\item[(2)] Congruences between systems of Hecke eigenvalues appearing in automorphic representations for $G$, captured through (classical) families in the eigenvariety for $G$.
\end{itemize}

The period integrals in (1) appear in the Gan--Gross--Prasad conjectures and are closely connected to the relative Langlands program. The families in (2) have been centrally important in number theory and arithmetic geometry for decades, essential to breakthroughs in the Langlands program (through modularity theorems, constructions of Galois representations, recent instances of Langlands functoriality, and proofs of local-global compatibility) and Iwasawa theory (in work on the Birch--Swinnerton-Dyer, Bloch--Kato and Iwasawa main conjectures).

In this paper, we consider these questions when $G = \GL_{2n}$ and $H = \GL_n \times \GL_n$. We first study (2), and our work towards this is explained in Theorem \ref{thm:intro 1} and Conjecture \ref{conj:intro} later in this introduction. We then use this study to consider (1), leading to Conjecture \ref{conj:intro FJ} and Theorem \ref{thm:intro 2} below.

\subsection{Classical families}

A system $\alpha$ of Hecke eigenvalues for $G$ is \emph{classical (cuspidal)} if it appears in a (cuspidal) automorphic representation $\pi$ of $G(\A)$. A \emph{classical (cuspidal) family} is any subspace of the eigenvariety in which the classical (cuspidal) points are Zariski-dense. A fundamental question is:
\begin{question-intro}\label{q:intro}
	In how many dimensions does $\alpha$ vary in a classical cuspidal family?
\end{question-intro}
In other words: let $\lambda$ be the weight of $\alpha$. Can we always find, for any $m \geq 0$, a classical cuspidal eigensystem $\alpha_m$ of some weight $\lambda_m$ such that $\alpha_m \equiv \alpha \newmod{p^m}$? In how many weight directions can we deform $\lambda$ to $\lambda_m$ and find such an $\alpha_m$?

A folklore expectation, described below, says every non-trivial classical family for $\GL_N$ arises  from some form of self-duality. Recall a cuspidal automorphic representation $\pi$ of $\GL_N(\A)$ is \emph{essentially self-dual} if there exists a Hecke character $\eta$ such that $\pi^\vee \cong \pi \otimes \eta^{-1}$; and then 
	\begin{equation}\label{eq:sym factorise}
		L(\pi \otimes \pi^\vee,s) = L(\mathrm{Sym}^2\pi \otimes \eta^{-1},s)\cdot L(\wedge^2\pi \otimes \eta^{-1},s)
		\end{equation}
		 factorises. Then:
	\begin{itemize}
		\item The left-hand side of \eqref{eq:sym factorise} has a simple pole at $s=1$, so either the symmetric square or exterior square $L$-function must have a pole at $s=1$. We say $\pi$ is \emph{orthogonal} in the first case, or \emph{symplectic} in the second. 
		\item A classical cuspidal eigensystem $\alpha$ (corresponding to a point in the eigenvariety, and appearing in an automorphic representation $\pi$) is orthogonal (resp.\ symplectic) if $\pi$ is orthogonal (resp.\ symplectic). 
		\item A classical cuspidal family for $\GL_N$ is orthogonal (resp.\ symplectic) if it contains a Zariski-dense set of orthogonal (resp.\ symplectic) points.
	\end{itemize}

In this paper, we consider Question \ref{q:intro} for symplectic families of $\GL_N(\A)$. Symplectic representations exist only for even $N$ (see \cite{AS14}), so let $G = \GL_{2n}$, and let $\alpha$ be attached to a regular algebraic cuspidal automorphic representation (RACAR) $\pi$ of $\GL_{2n}(\A)$ that admits a Shalika model (which is equivalent to $\pi$ being symplectic). We let $\pi_p$ denote the local component at $p$ (and use a similar notation scheme throughout the paper, for example for local vectors or local components of Hecke characters). We assume that $\pi_p$ is unramified, and the Satake parameter of $\pi_p$ is regular semisimple, in which case there are $(2n)!$ possible $p$-refinements $\tilde\pi = (\pi,\alpha)$ of $\pi$. Here a \emph{$p$-refinement} is a Hecke eigensystem $\alpha$ appearing in the Iwahori-invariants of $\pi_p$. 

In this paper, we define a stratification on the $(2n)!$ $p$-refinements $\alpha$ in terms of parabolic subgroups in $\GL_{2n}$, and we predict (in Conjecture \ref{conj:main conjecture}) the dimension of any symplectic family through a given $\alpha$ depends on its position in the stratification. We prove:
\begin{itemize}\s
	\item the upper bound on the dimension unconditionally;
	\item and the lower bound when $\alpha$ has non-critical slope.
\end{itemize}
We also give theoretical justification for the lower bound in general.

We predict that (modulo trivial variation, coming from twists by the norm) there can exist such symplectic families of exact dimension $d$ for any $d = 0,1,...,n$. This seems striking given that every component of the eigenvariety through any such $\alpha$ conjecturally has dimension $n$; so there should be classical families sitting inside `generically non-classical' components of the eigenvariety.

\begin{example}
	For $\GL_4$, there are 24 $p$-refinements $\tilde\pi$.  By \cite[Thm.\ 1.1.5]{Han17}, every irreducible cuspidal component of the $\GL_4$-eigenvariety is 2-dimensional (modulo trivial variation). Then: 
	\begin{itemize}\s
		\item 8 of the $\tilde\pi$ are essentially self-dual, and should vary in 2-dimensional symplectic families, each of which is then an irreducible component of the eigenvariety. 
		\item 8 are `symplectic rigid' -- we prove they do not vary in \emph{any} symplectic family. In any component through these points in the eigenvariety, the classical points should be discrete.
		\item 8 of them should vary in a 1-dimensional symplectic family, sitting in a 2-dimensional component of the eigenvariety, which should be generically non-classical.
	\end{itemize}
	In \S\ref{sec:examples} we give explicit examples of $(\pi,\alpha)$ in each of these cases, showing that `generically non-symplectic but with a positive-dimensional symplectic locus' cases do indeed occur.
\end{example}

\subsection{Previous work on classical families}
To put our results into context, we return to a general setting. Let $\fG$ be a reductive group. The previous work on Question \ref{q:intro} broadly falls into two cases:
\begin{itemize}\s
	\item[(I)] $\fG(\R)$ admits discrete series (true, for example, if $\fG$ forms part of a Shimura datum),
	\item[(II)] $\fG(\R)$ does \emph{not} admit discrete series.
\end{itemize}
In case (I), Question \ref{q:intro} is fairly well-understood: Urban \cite{Urb11} has shown that a (non-critical) cohomological cuspidal $\alpha$ \emph{always} varies `maximally', in all possible weight directions. This generalises the theory of Hida/Coleman families for modular forms ($\fG = \GL_2$). 

However, many fundamental cases -- e.g.\ $\GL_n$ for $n\geq 3$, and $\GL_2$ over non-totally-real fields -- are case (II), where our understanding of Question \ref{q:intro} is extremely poor. Ash--Pollack--Stevens \cite{APS08} and Calegari--Mazur \cite{CM09} considered the cases of $\GL_3$ and $\mathrm{Res}_{F/\Q}\GL_2$ respectively, for $F$ an imaginary quadratic field, and conjectured that:
\begin{myquote}
	For $\fG = \GL_3$ or $\mathrm{Res}_{F/\Q}\GL_2$, $\alpha$ varies in a positive-dimensional classical family if and only if $\alpha$ is essentially self-dual.
\end{myquote}

In \cite{Xia18}, Xiang has studied one direction of $(\dagger)$ more generally, proving that if $\alpha$ is essentially self-dual on $\GL_n$ (that is, both $\pi$ \emph{and} $\alpha$ are essentially self-dual) then $\alpha$ varies in a classical family in all `self-dual/pure' directions in weight space. Since every RACAR, hence every $\alpha$, has pure weight, this variation is `maximal' in the strongest possible sense.

One goal of this paper is to find analogues of $(\dagger)$ in higher-dimensional settings, where the picture is more subtle. Even when $\pi$ itself is essentially self-dual, it admits non-essentially-self-dual refinements $\alpha$, and we show that some of these can be varied in positive-dimensional classical families of smaller dimension. 

\subsection{Philosophy on classical families} \label{sec:intro philosophy}

Case (I) groups $\fG$ yield many classical families. A folklore expectation predicts this accounts for \emph{all} classical families, in the sense that every classical family is a $p$-adic Langlands transfer of a case (I) family. For example, conjecturally:

\begin{itemize}\s
\item[--] For $\GL_3$, all classical families are twists of symmetric square families for $\GL_2$;
\item[--] For $\mathrm{Res}_{F/\Q}\GL_2$, all classical families are twists of base-change families for $\GL_2$, or CM transfers of families for $\mathrm{Res}_{F'/F}\GL_1$, for $F'/F$ quadratic.
\end{itemize}

Before we describe our results precisely, let us explain why they fit strongly into this philosophy. We hesitantly suggest they provide further evidence towards it.

Any RACAR $\pi$ of $\GL_{2n}(\A)$ that admits a Shalika model is essentially self-dual, and a Langlands transfer of some RACAR $\Pi$ for $\mathrm{GSpin}_{2n+1}(\A)$. Note that $\cG \defeq \mathrm{GSpin}_{2n+1}$ is a case (I) group.  There are $2^n n!$ Iwahori $p$-refinements  of $\Pi$. 
By Urban's case (I) theorem, each of these varies in a maximal family over weight space (of dimension $n$, modulo trivial variation). 
In the style of Chenevier, each of these families should admit a transfer to $\GL_{2n}$ interpolating Langlands functoriality on classical points. These $n$-dimensional classical $\GL_{2n}$-families were constructed and studied in \cite{BDGJW}, and fall in the case studied by Xiang, corresponding exactly to the essentially self-dual eigensystems in $\pi$.

This only accounts, however, for $2^nn!$ of the $(2n)!$ possible $p$-refinements of $\pi$; even for $\GL_6$ this is only 48 out of 720. To look for classical families through the other refinements, we consider \emph{parabolic} families for $\cG$, as constructed and studied, for example, in \cite{HL11, BW20}. For any standard parabolic $\cP \subset \cG$, one can study $\cP$-parahoric refinements of $\Pi$. 
We show that for every refinement $\alpha$ of $\pi$, there exists a unique smallest parabolic $\cP \subset \cG$ such that $\alpha$ `is a functorial transfer of a $\cP$-refinement $\alphaG$ of $\Pi$'. Under a natural correspondence, $\cP$ corresponds to a unique `spin' parabolic $P\subset G$, and we call $\alpha$ an \emph{optimally $P$-spin refinement}. If $B$ is the corresponding Borel, the optimally $B$-spin refinements are exactly the $2^nn!$ essentially self-dual ones studied in \cite{BDGJW,Xia18}. All of this is defined in \S\ref{sec:P-spin}, where we give Weyl group, Hecke algebra, and combinatorial definitions of being $P$-spin, proving they are all equivalent.

Let $\alpha$ be an optimally $P$-spin refinement with associated spin eigensystem $\alphaG$. Under a non-criticality assumption, \cite{BW20} shows $\alphaG$ varies in a family in the $\cP$-parabolic $\cG$-eigenvariety over a smaller-dimensional weight space. Again, conceptually, this family should admit a transfer to the (Iwahoric) $G$-eigenvariety interpolating Langlands functoriality on classical points. This would produce a classical symplectic family in the $G$-eigenvariety through $\alpha$, of some smaller dimension depending on $\cP$ (hence $P$).

It is not clear how one should construct these transfer maps in general. There is a natural map of (abstract) Hecke algebras $\jmath^\vee : \cH^G \to \cH^{\cG}$ at Iwahoric level (see \eqref {eq:jmath hecke}), which should induce a map
\[
 \text{[Iwahoric-$\cG$-eigenvariety]}\longrightarrow \text{[Iwahoric-$G$-eigenvariety]}.
\]
However, one needs detailed automorphic information about classical points in the $\cG$-eigenvariety to control this, and in any case this recovers families already known to exist by \cite{BDGJW,Xia18}. At parahoric level the situation is worse: a transfer map\footnote{On the Galois representation side, this should correspond to the problem of consistently choosing triangulations of the $(\varphi,\Gamma)$-modules attached to every classical point in a paraboline family.}
\[
\text{[$\cP$-parahoric-$\cG$-eigenvariety]} \longrightarrow \text{[Iwahoric-$G$-eigenvariety]}
\]
should be induced from a map $\jmath^\vee_{\cP} : \cH^G \to \cH^{\cG,\cP}$ on abstract Hecke algebras, but now there is no natural map: the map  $\jmath^\vee$ above is surjective, so does not take values in  $\cH^{\cG,\cP} \subsetneq \cH^G$. To construct even a candidate $\jmath^\vee_{\cP}$, it seems necessary to \emph{presuppose} the existence of the family for $G$ one wants to construct. As such, we do not pursue this approach to families in this paper. 

\subsection{Our results on symplectic families}

To a spin parabolic $P$, in Definition \ref{def:X_P} we associate a subset $X_P \subset \{1,...,n\}$. Here $X_B = \{1,...,n\}$ and $X_G = \varnothing$. Let $\pi$ be a symplectic RACAR, and let $\alpha$ be an optimally $P$-spin refinement. In the main text, we denote this data by $\tilde\pi = (\pi,\alpha)$. We prove:

\begin{theorem-intro}\label{thm:intro 1}
	\begin{enumerate}[(i)]\s
		\item Any symplectic family $\sC$ through $\tilde\pi$ has dimension at most $\#X_P+1$.
		\item When $\tilde\pi$ has non-critical slope and regular weight, there exists a unique symplectic family through $\tilde\pi$, of dimension exactly $\#X_P+1$.
	\end{enumerate}
\end{theorem-intro}

 (Here we include, as in the main text, the 1-dimensional trivial variation).

In particular, if $\tilde\pi$ is optimally $G$-spin, then $\tilde\pi$ is `symplectic-rigid', varying in \emph{no} non-trivial symplectic family. There are, for example, 8 such refinements in the $\GL_4$ case.

Part (i) is Theorem \ref{thm:shalika obstruction}, which actually says more: that the weight support of such a family must lie in a $P$-parahoric weight space, which has dimension $\#X_P+1$.  
 To prove this, we show first that every classical point in $\sC$ is also optimally $P$-spin, and then obtain obstructions to the existence of optimally $P$-spin families varying outside the $P$-parabolic weight space. 

Part (ii) is Theorem \ref{thm:lower bound}. We show further that this unique component is \'etale over its image in weight space. To construct these families, we use a `refinement-switching' argument to move between points on the $\GL(2n)$-eigenvariety attached to a single $\pi$. The proof highlights interdependencies between the symplectic families through the different $p$-refinements, with implications for a hypothetical `infinite fern' construction for $\GL_{2n}$ (see Remark \ref{rem:infinite fern}). 

\medskip

We remark how Theorem \ref{thm:intro 1} fits into the philosophy above. Writing $\sE$ for the $\GL_{2n}$ eigenvariety of some fixed level, we expect there are an infinite number of closed embeddings $\{\iota_i : \sC_i \hookrightarrow \sE : i \in I\}$, where the $\sC_i$ are classical families in parabolic $\mathrm{GSpin}_{2n+1}$ eigenvarieties. Each $\sC_i$ is flat over the relevant parabolic weight space, and cannot be varied in higher dimension at the level of $\mathrm{GSpin}_{2n+1}$ eigensystems. However, $\sE$ varies over a higher-dimensional weight space, and in general $\iota_i(\sC_i)$ will sit properly inside some larger irreducible component of $\sE$. Theorem \ref{thm:intro 1} says that this irreducible component cannot have any further symplectic variation; that is, the subspaces $\iota_i(\sC_i)$ of $\sE$ cannot be assembled together into any classical family of higher dimension. In other words, all classical symplectic variation, and systematic congruences, should be accounted for by families in (parabolic) $\mathrm{GSpin}_{2n+1}$ eigenvarieties. This is predicted by our guiding philosophy on classical families in the eigenvariety, suggesting our results provide some further evidence for it. Indeed, motivated by the above theorem and the guiding philosophy, we conjecture:

\begin{conjecture-intro}\label{conj:intro}
	Every symplectic family through $\tilde\pi$ is the transfer of a classical parabolic family for $\mathrm{GSpin}_{2n+1}$ and has dimension $\#X_P+1$.
\end{conjecture-intro}

In \S\ref{sec:examples}, we give explicit examples for $\GL_4$ illustrating Theorem \ref{thm:intro 1} and Conjecture \ref{conj:intro}. 

\subsection{Non-vanishing of twisted period integrals}

We give an application to the study of non-vanishing of period integrals. Let $\pi$ be a RACAR of $G(\A)$, and let $H = \GL_n\times\GL_n \subset G$. If $\chi$ is an algebraic Hecke character and $\varphi \in \pi$, then in \eqref{eq:period integral} we define an attached global period integral for $H\subset G$, denoted $Z_H(\varphi,\chi,s)$. The same kind of period integral appears in the GGP conjectures, and is related to the relative Langlands program.

 A result of Friedberg--Jacquet \cite{FJ93} says that for any $s \in \C$, the following are equivalent:
	\begin{enumerate}[(1)]\s
		\item There exists $\varphi \in \pi$ such that $Z_H(\varphi,\chi,s+1/2) \neq 0$;
		\item $\pi$ is a functorial transfer of some $\Pi$ on $\mathrm{GSpin}_{2n+1}(\A)$, and $L(\pi\times \chi, s+1/2) \neq 0$.
	\end{enumerate}

This is related to the relative Langlands program \cite{SV17}; $G/H$ is a spherical variety, and $Z_H$ is an $H$-period integral (that appears, for example, in the GGP conjectures in related settings). This phenomenon is also explained in great generality in \cite[p.174]{JLR01}.

We propose a $p$-refined analogue of this. Let $P \subsetneq G$ be a proper spin parabolic, let $\beta \geq 1$, and let $ut_P^\beta \in G(\Qp)$ be the element defined in Notation \ref{not:u}. Here $u$ is a representative for the open orbit of the action of $B$ on $G/H$ and $t_P$ defines the Hecke operator at $P$.  Let $\tilde\pi$ be a $P$-parahoric $p$-refinement of $\pi$.

\begin{conjecture-intro}\label{conj:intro FJ}
	Suppose $\chi$ is finite order and has conductor $p^\beta > 1$. For any $s \in \C$, the following are equivalent:
\begin{enumerate}[(1)]\s
	\item There exists an eigenvector $\varphi \in \tilde\pi^P$ such that $Z_H(ut_P^\beta \cdot \varphi,\chi,s+1/2) \neq 0$.
	\item All of the following hold:
	\begin{itemize}\s 
		\item[--] $P$ is contained in the $(n,n)$-parabolic (in the sense of Notation \ref{not:(ni)-spin}).
		\item[--] $\tilde\pi^P$ is a functorial transfer of some $\cP$-refined $\tilde\Pi^{\cP}$ on $\mathrm{GSpin}_{2n+1}(\A)$, 
		\item[--]$L(\pi\times\chi, s+1/2) \neq 0$,
	\end{itemize}
\end{enumerate}
\end{conjecture-intro}

We actually state a stronger, and purely local, version of this conjecture in Conjecture \ref{conj:local FJ}. We give this weaker global form in the introduction as it is closer to the original result of Friedberg--Jacquet. The close connection between our local conjecture and this global one is explained in detail in Proposition \ref{prop:local vs global}.

The quantity $Z_H(ut_P^\beta \cdot \varphi,\chi,s+1/2)$, or closely related expressions, appear in constructions of $p$-adic $L$-functions via Shalika models \cite{AG94, Geh18, DJR18,BDW20,BDGJW, Wil-2n}. Conjecture \ref{conj:intro FJ} highlights a close relationship between the $P$-spin conditions defined in this paper, and settings where we can expect to construct non-zero $p$-adic $L$-functions via Shalika models. In this light, the requirement in (2) that $P$ is contained in the $(n,n)$-parabolic $Q$ is natural; the Panchishkin condition \cite{Pan94} predicts that to be able to attach a $p$-adic $L$-function to $\tilde\pi^P$, one requires $P \subset Q$. 

As evidence towards this conjecture, we use Theorem \ref{thm:intro 1} to prove:

\begin{theorem-intro}\label{thm:intro 2}
	\begin{itemize}\s
		\item[(i)] (2) $\Rightarrow$ (1) holds in Conjecture \ref{conj:intro FJ}.
	\item[(ii)] Suppose $\pi$ has regular weight and there is a non-critical slope further refinement $\tilde\pi$ of $\tilde\pi^P$ to Iwahori level. Then (1) $\Rightarrow$ (2) holds in Conjecture \ref{conj:intro FJ}.
	\end{itemize}
\end{theorem-intro}

In particular, the conjecture holds in full for a large class of $\tilde\pi^P$. We actually show (ii) (and deduce the full conjecture) under weaker assumptions on $\tilde\pi^P$, which we cautiously imagine could hold for \emph{all} $\tilde\pi^P$; see Theorem \ref{thm:p-refined FJ} and Remarks \ref{rem:p-refined FJ}.

Our proof of Theorem \ref{thm:intro 2}(i) is purely local, indeed proving the stronger implication in the local version (Conjecture \ref{conj:local FJ}): given (2), we directly exhibit an eigenvector satisfying (1) using methods developed in \cite{BDGJW}. To prove (ii), we deploy global methods, using ideas from \cite{BDW20,BDGJW} to show that if (1) holds, then we can construct a symplectic family through $\tilde\pi$ over the $P$-parahoric weight space. By (the stronger form of) Theorem \ref{thm:intro 1}(i), this forces $\tilde\pi^P$ to be $P$-spin, hence $\tilde\pi^P$ is a functorial transfer.

We expect that this relationship between non-vanishing of twisted period integrals attached to a $p$-refinement, and the refinement being a functorial transfer, should be true much more generally. In future work with Lee, we hope to treat the case of twisted Flicker--Rallis integrals for $\GL_n$ over a CM field, showing non-vanishing implies transfer from a unitary group.

\subsection*{Acknowledgements} We thank our co-authors on \cite{BDGJW}, Mladen Dimitrov and Andrei Jorza, for many stimulating discussions whilst preparing our earlier joint paper. We also thank Valentin Hernandez for interesting discussions on Zariski-density of crystalline points and the infinite fern, and the anonymous referees for their valuable comments and corrections. AG was (partly) supported by ERC-2018-COG-818856-HiCoShiVa and UK Research and Innovation grant MR/V021931/1. CW was supported by EPSRC Postdoctoral Fellowship EP/T001615/1. DBS was supported by FONDECYT 11201025. For the purpose of Open Access, the authors have applied a CC BY public copyright licence to any Author Accepted Manuscript (AAM) version arising from this submission.

\subsection*{Data Availability Statement}

Data sharing is not applicable to this article.

\subsection*{Conflict of Interest Statement}

The authors declare that there are no conflicts of interest.

\section*{Set-up and notation}

Let $n\geq 1$ and let $G \defeq \mathrm{GL}_{2n}$. We write $B = B_{2n}$ for the Borel subgroup of upper triangular matrices, $\overline{B} = \overline{B}_{2n}$ for the opposite Borel of lower triangular matrices and $T = T_{2n}$ for the maximal split torus of diagonal matrices. 

Let  $\cG = \mathrm{GSpin}_{2n+1}$. Fix a Borel subgroup $\cB \subset \cG$ and a maximal split torus $\cT \subset \cB$.

If $\pi$ is a regular algebraic cuspidal automorphic representation (RACAR) of $G(\A)$, then recall from the introduction that $\pi$ is \emph{symplectic} if there exists a Hecke character $\eta$ such that $L(\wedge^2 \pi \times \eta^{-1}, s)$ has a pole at $s=1$.  We call such a $\pi$ a RASCAR (RA-symplectic-CAR). In this case $\pi$ is essentially self-dual, in that $\pi^\vee \cong \pi \otimes \eta^{-1}$. Moreover, by \cite{AS06,FJ93} the following three conditions are equivalent:
\begin{itemize}\setlength{\itemsep}{0pt}
	 \item $\pi$ is symplectic;
	 \item $\pi$ is a functorial transfer of a RACAR $\Pi$ on $\cG(\A)$;
	 \item $\pi$ admits a Shalika model, in the sense of \cite[\S2.3]{FJ93}.
\end{itemize}
Here the functorial transfer is induced from the natural inclusion $\GSp_{2n} \subset \GL_{2n}$, noting that the $L$-group of $\cG$ is $\GSp_{2n}(\C)\rtimes G_{\Q}$, whilst the $L$-group of $G$ is $\GL_{2n}(\C)\rtimes G_{\Q}$.
			
 The equivalence above can be refined: if $\eta$ is the character such that $\pi^\vee \cong \pi \otimes \eta^{-1}$, then $\Pi$ has central character $\eta$, and $\pi$ has an $(\eta,\psi)$-Shalika model.

\part{$P$-spin refinements}

\section{Structure theory and parahoric $p$-refinements}

\subsection{Root systems and spin parabolics}\label{sec:structure gspin}

Our study of `spin' refinements is rooted in the structure theory of $\GL_{2n}$ and $\mathrm{GSpin}_{2n+1}$. We recall the following from \cite[\S6]{BDGJW}.

The spaces of algebraic characters/cocharacters of the torus $T \subset G = \GL_{2n}$ are
\[
X = \Z e_1 \oplus \Z e_2 \oplus \cdots \Z e_{2n}, \hspace{12pt} X^\vee = \Z e_1^* \oplus \Z e_2^* \oplus \cdots \Z e_{2n}^*.
\]
The root system for $G$ is $A_{2n-1}$, with roots $R = \{\pm(e_i - e_j) : 1 \leq i < j \leq 2n\}$, positive roots $\{e_i - e_j : i < j\}$, 
 and simple roots $\Delta_G = \{a_i \defeq e_i - e_{i+1} : i = 1,...,2n-1\}$. The Weyl group $\cW_G = \mathrm{S}_{2n}$ acts by permuting the $e_i$. We set this up so that $\sigma \in \cW_G$ sends $e_i$ to $e_{\sigma^{-1}(i)}$, hence $\sigma$ acts on a character $\mu = (\mu_1,...,\mu_{2n}) \in X$ as $\mu^\sigma = (\mu_{\sigma(1)},...,\mu_{\sigma(2n)})$.

Let $X_0 \subset X$ be the space of \emph{pure characters} $X_0 = \{\lambda \in X: \exists \sw(\lambda) \in \Z \text{ such that } \lambda_i + \lambda_{2n-i+1} = \sw(\lambda) \ \forall 1 \leq i \leq n\}$, and let 
\begin{equation}\label{eq:W_G^0}
	\cW_G^0 \defeq \{ \sigma \in \cW_G :  \sigma(X_0) \subset X_0\} \subset \cW_G.
\end{equation}
There is a splitting $\cW_G^0 = \{\pm 1\}^n \rtimes \mathrm{S}_n,$ where: 
\begin{itemize}\s
	\item for $1 \leq i \leq n$,  $\sigma \in \mathrm{S}_n$ sends $e_i$ to $e_{\sigma^{-1}(i)}$, and $e_{2n+1-i}$ to $e_{2n+1-\sigma^{-1}(i)}$; 
	\item and the $i$th copy of $\{\pm1\}$ acts by swapping $e_i \leftrightarrow e_{2n+1-i}$. 
\end{itemize}
Identifying $i \leftrightarrow e_i$, we view $\cW_G^0$ as a subgroup of $\mathrm{S}_{2n}$, and have the following easy fact:
\begin{lemma}\label{lem:relative}
	If $\sigma \in \cW_G^0$, then $\sigma(i) + \sigma(2n+1-i) = 2n+1$ for all $1 \leq i \leq n$.
\end{lemma}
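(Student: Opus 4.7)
The plan is to verify the identity by using the defining property of $\cW_G^0$ -- that it preserves the subspace $X_0$ of pure characters -- applied to well-chosen test characters. For each $1 \leq i \leq n$ I would consider the explicit pure character $\lambda^{(i)} \in X$ with $\lambda^{(i)}_i = 1$, $\lambda^{(i)}_{2n+1-i} = -1$, and all other entries zero. This manifestly lies in $X_0$ with purity weight $\sw(\lambda^{(i)}) = 0$.

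Next, using the Weyl action formula $\mu^\sigma = (\mu_{\sigma(1)},\ldots,\mu_{\sigma(2n)})$, I would compute $(\lambda^{(i)})^\sigma$: it has entry $+1$ at position $\sigma^{-1}(i)$, entry $-1$ at position $\sigma^{-1}(2n+1-i)$, and zeros elsewhere. Since $\sigma \in \cW_G^0$, the twisted character $(\lambda^{(i)})^\sigma$ must again lie in $X_0$; as it contains exactly one $+1$ and one $-1$ entry, purity forces the two non-zero positions to be symmetric, i.e.\ $\sigma^{-1}(i) + \sigma^{-1}(2n+1-i) = 2n+1$. Finally, since $\cW_G^0$ is a subgroup it is closed under inverses, so applying the same argument to $\sigma^{-1}$ yields the stated identity for $\sigma$.

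Alternatively (and perhaps more directly, given that the splitting $\cW_G^0 = \{\pm 1\}^n \rtimes S_n$ is displayed just above the lemma), one may verify the identity on the two types of generators: on $\tau \in S_n$ it reads $\tau(i) + (2n+1-\tau(i)) = 2n+1$, and on the transposition $(i,\,2n+1-i)$ generating the $i$-th copy of $\{\pm 1\}$ it is trivial. One then checks that the relation $\sigma(i) + \sigma(2n+1-i) = 2n+1$ is preserved under composition: if both $\sigma_1$ and $\sigma_2$ satisfy it, setting $j = \sigma_2(i)$ gives $\sigma_2(2n+1-i) = 2n+1-j$, whence $\sigma_1\sigma_2(i) + \sigma_1\sigma_2(2n+1-i) = \sigma_1(j) + \sigma_1(2n+1-j) = 2n+1$. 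Either route is an elementary verification, so I do not anticipate any serious obstacle.
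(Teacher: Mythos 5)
Both of your arguments are correct, and since the paper offers no proof (the lemma is stated as an ``easy fact'' immediately after the splitting $\cW_G^0 = \{\pm 1\}^n \rtimes \mathrm{S}_n$ is described), your second route — verifying the identity on the displayed generators and noting it is preserved under composition — is almost certainly what the authors had in mind. Your first route via the test characters $\lambda^{(i)}$ is a valid and slightly more intrinsic alternative; one small point worth making explicit there is that $\sigma(X_0)\subset X_0$ automatically upgrades to $\sigma(X_0)=X_0$ (and hence $\cW_G^0$ is a subgroup) because $\sigma$ has finite order in $\cW_G$, so the passage from $\sigma^{-1}$ back to $\sigma$ at the end is legitimate. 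Both routes are fine; no gaps.
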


Recall we fixed a Borel subgroup $\cB$ and maximal split torus $\cT$ in $\cG = \mathrm{GSpin}_{2n+1}$. This has rank $n+1$ \cite[Thm.\ 2.7]{Asg02}. We use calligraphic letters to denote objects for GSpin, whilst keeping other notational conventions as before. 

\begin{proposition}\label{prop:cG root system}
	The root system for $\cG$ is $(\cX, \cR, \cX^\vee, \cR^\vee)$, where
	\[
	\cX = \Z f_0 \oplus \Z f_1 \oplus \cdots \oplus \Z f_n, \hspace{12pt} \cX^\vee = \Z f_0^* \oplus \Z f_1^* \oplus \cdots \oplus \Z f_n^*,
	\] with roots $\cR = \{\pm f_i \pm f_j : 1 \leq i<j\leq n\} \cup \{f_i : 1 \leq i \leq n\}$, simple roots 
	\[
		\Delta_{\cG} = \{b_i \defeq f_i - f_{i+1}  : i = 1,...,n-1 \} \cup \{b_n \defeq f_n\},
	\]
	 and  positive roots $\{f_i : 1 \leq i \leq n\} \cup \{f_i \pm f_j : 1 \leq i < j \leq n\}$. 	The Weyl group $\cW_{\cG}$ is isomorphic to $\{\pm1\}^n \rtimes \mathrm{S}_n$, generated by permutations $\sigma \in \mathrm{S}_n$ and sign changes $\mathrm{sgn}_i$, which act on roots and coroots respectively as (for $j \neq i$)
	\begin{equation}\label{eq:weyl action cG}
		\sigma f_0 = f_0, \ \ \sigma f_i = f_{\sigma^{-1}(i)},\ \ \ \mathrm{sgn}_i f_0 = f_0 + f_i,\ \ \mathrm{sgn}_i(f_i) = -f_i, \ \ \mathrm{sgn}_j(f_i) = f_i,
	\end{equation}
	\[
	\sigma f_0^* = f_0^*, \ \ \sigma f_i^* = f_{\sigma^{-1}(i)}^*,\ \ \ \mathrm{sgn}_i f_0^* = f_0^*,\ \ \mathrm{sgn}_i(f_i^*) = f_0^* -f_i^*,  \ \ \mathrm{sgn}_j(f_i^*) = f_i^*.
	\]
\end{proposition}
\begin{proof}
	The first part is \cite[Prop.\ 2.4]{Asg02}, and the second \cite[Lem.\ 13.2.2]{HS16}.
\end{proof}

Write $\langle -,-\rangle_G$ (resp.\ $\langle-,-\rangle_{\cG}$) for the natural pairing on $X \times X^\vee$ (resp.\ $\cX \times \cX^\vee$).

There is a natural injective map $\jmath : \cX \hookrightarrow X$ given by
\begin{equation}\label{eq:f and e}
	f_i \longmapsto e_i - e_{2n-i+1} \text{ for } 1 \leq i \leq n,\qquad 
	f_0  \longmapsto e_{n+1} + \cdots + e_{2n},\notag
\end{equation}
with $X_0 = \jmath(\cX)$ by \cite[Prop.\ 6.5]{BDGJW}. If $\rho_G$ and $\rho_{\cG}$ are half the sum of the positive roots for $G$ and $\cG$ respectively, a simple check shows $\jmath(\rho_{\cG}) = \rho_G$. We also have:

\begin{proposition}[\cite{BDGJW}, Proposition 6.6] \label{prop:weyl transfer} There is a map $\cW_{\cG} \to \cW_G$ of Weyl groups, also denoted $\jmath$, such that:
	\begin{itemize}\s
		\item[(i)] $\jmath$ induces an isomorphism $\cW_{\cG} \cong \cW_G^0 \subset \cW_G$;
		
		\item[(ii)] for all  $\sigma \in \cW_{\cG}$ and $\mu \in \cX$, we have $\jmath(\mu^\sigma) = \jmath(\mu)^{\jmath(\sigma)}$.
	\end{itemize}
\end{proposition}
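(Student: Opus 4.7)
The plan is to exhibit $\jmath : \cW_\cG \to \cW_G$ directly on generators using the semidirect-product descriptions of both Weyl groups, and then verify property (ii) by a short case-by-case check on generators.

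For (i): by the preceding part of the proposition, $\cW_\cG = \{\pm 1\}^n \rtimes S_n$, generated by permutations $\sigma \in S_n$ and sign changes $\mathrm{sgn}_1, \ldots, \mathrm{sgn}_n$. From \eqref{eq:W_G^0} and the subsequent discussion, $\cW_G^0 \subset \cW_G = S_{2n}$ has the analogous description, with the $i$th copy of $\{\pm 1\}$ generated by the transposition $(i,\,2n+1-i)$, and $\sigma \in S_n$ acting as $e_i \mapsto e_{\sigma^{-1}(i)}$, $e_{2n+1-i} \mapsto e_{2n+1-\sigma^{-1}(i)}$ for $1 \leq i \leq n$. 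I would define $\jmath$ by sending each generator to its namesake in $\cW_G^0$. The two semidirect structures match---conjugation by $\sigma \in S_n$ carries the $i$th sign change to the $\sigma(i)$th on both sides---so this extends uniquely to a group homomorphism $\jmath : \cW_\cG \to \cW_G^0$, which is an isomorphism since both groups have order $2^n n!$ and the map is surjective on each factor.

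For (ii): the identity $\jmath(\mu^\sigma) = \jmath(\mu)^{\jmath(\sigma)}$ is $\Z$-linear in $\mu$ and multiplicative in $\sigma$, so it suffices to verify it for $\mu$ in the basis $\{f_0, f_1, \ldots, f_n\}$ of $\cX$ and $\sigma$ a generator of $\cW_\cG$. This reduces to a short enumeration of cases using \eqref{eq:weyl action cG} and the defining formula for $\jmath$ on $\cX$. The pure-permutation cases---the $S_n$-action on any $f_k$ (including $f_0$, where $\jmath(\sigma)$ permutes the upper-block indices $\{n+1,\ldots,2n\}$ among themselves), and the $\mathrm{sgn}_i$-action on $f_i$ or on $f_j$ for $j \neq i$---are immediate from unwinding the definitions.

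The only case that requires real thought is $(\sigma, \mu) = (\mathrm{sgn}_i, f_0)$: here $f_0^{\mathrm{sgn}_i} = f_0 + f_i$, so the left side equals $\jmath(f_0) + (e_i - e_{2n+1-i})$, while on the right the transposition $(i,\,2n+1-i)$ acts on the sum $e_{n+1}+\cdots+e_{2n}$ by swapping the in-sum vector $e_{2n+1-i}$ for the out-of-sum vector $e_i$, yielding precisely the same result. This ``extra'' summand $f_i$ appearing in $f_0^{\mathrm{sgn}_i}$ is the only place the $\{\pm 1\}^n$-action is non-trivial on $f_0$, and matching it against the transposition is the main point of the argument---purely bookkeeping, but it is the heart of the compatibility.
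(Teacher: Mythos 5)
Your proposal is correct, and since the present paper does not prove this statement but simply cites it from \cite{BDGJW}, there is no in-paper proof to compare against. Your route---define $\jmath$ on the identified $\{\pm 1\}^n \rtimes S_n$ generators, reduce (ii) to generators by $\Z$-linearity in $\mu$ and the right-action cocycle property in $\sigma$, and isolate the $(\mathrm{sgn}_i, f_0)$ case as the only nontrivial one---is exactly the natural argument, and your computation there is right: $\jmath(f_0 + f_i) = (e_{n+1}+\cdots+e_{2n}) + (e_i - e_{2n+1-i})$ agrees with the transposition $(i,\,2n+1-i)$ applied to $e_{n+1}+\cdots+e_{2n}$. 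One small imprecision worth flagging: with the paper's convention $\sigma f_k = f_{\sigma^{-1}(k)}$ one actually has $\sigma\,\mathrm{sgn}_i\,\sigma^{-1} = \mathrm{sgn}_{\sigma^{-1}(i)}$, not $\mathrm{sgn}_{\sigma(i)}$; but the identical conjugation formula holds on the $\cW_G^0$ side, so your conclusion that $\jmath$ is a well-defined homomorphism is unaffected. You should also say a word on why $\jmath$ is injective (e.g., that it is bijective on each generating set and respects both semidirect decompositions), rather than resting on the equality of orders plus surjectivity ``on each factor,'' which by itself does not immediately give injectivity of the assembled map.
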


Dually, define also a map $\jmath^\vee: X^\vee \to \cX^\vee$ by sending $\nu \in X^\vee$ to
\[
\jmath^\vee(\nu) \defeq \sum_{i = 0}^{n} \big\langle \jmath(f_i), \nu\big\rangle_G \cdot f_i^*.
\]
Then for all $\mu \in \cX$ and $\nu \in X^\vee$, we have 
\begin{equation}\label{eq:pairing jmath}
	\langle \mu, \jmath^\vee(\nu)\rangle_{\cG} = \langle \jmath(\mu), \nu\rangle_G
\end{equation}
by construction.  Also let $\jmath^\vee : \cW_G^0 \to \cW_{\cG}$ denote the inverse to $\jmath : \cW_{\cG} \cong \cW_G^0$.

\begin{proposition}[\cite{BDGJW}, Proposition 6.7]\label{prop:jmath vee eq}
	For all $\nu \in X^\vee$ and $\sigma \in \cW_G^0$, we have  
	\[
		\jmath^\vee(\nu^\sigma) = \jmath^\vee(\nu)^{\jmath^\vee(\sigma)}.
	\]
\end{proposition}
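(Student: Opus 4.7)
My plan is to verify the identity by pairing both sides against a basis of $\cX$. Since $\cX^\vee = \Z f_0^* \oplus \cdots \oplus \Z f_n^*$ pairs perfectly with $\cX$ via $\langle f_i, f_j^*\rangle_{\cG} = \delta_{ij}$, it suffices to show that
\[
\langle f_j, \jmath^\vee(\nu^\sigma)\rangle_{\cG} = \langle f_j, \jmath^\vee(\nu)^{\jmath^\vee(\sigma)}\rangle_{\cG}
\]
for every $0 \leq j \leq n$. The strategy is then to unfold each side using the defining duality formula \eqref{eq:pairing jmath} to reduce everything to pairings in the ambient space $X \times X^\vee$, where Proposition \ref{prop:weyl transfer}(ii) can then be applied.

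First I would set $\tau \defeq \jmath^\vee(\sigma) \in \cW_{\cG}$, so by definition $\jmath(\tau) = \sigma$ and hence $\jmath(\tau^{-1}) = \sigma^{-1}$ since $\jmath$ is a group homomorphism. Starting from the left-hand side, \eqref{eq:pairing jmath} gives
\[
\langle f_j, \jmath^\vee(\nu^\sigma)\rangle_{\cG} = \langle \jmath(f_j), \nu^\sigma\rangle_G.
\]
Now I use the standard compatibility of the Weyl action with the natural pairing, namely $\langle \mu, \nu^\sigma\rangle_G = \langle \mu^{\sigma^{-1}}, \nu\rangle_G$, to rewrite this as $\langle \jmath(f_j)^{\sigma^{-1}}, \nu\rangle_G$. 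Applying Proposition \ref{prop:weyl transfer}(ii) with $\jmath(\tau^{-1}) = \sigma^{-1}$ converts $\jmath(f_j)^{\sigma^{-1}}$ into $\jmath(f_j^{\tau^{-1}})$, and then \eqref{eq:pairing jmath} brings us back into the $\cG$-pairing:
\[
\langle \jmath(f_j^{\tau^{-1}}), \nu\rangle_G = \langle f_j^{\tau^{-1}}, \jmath^\vee(\nu)\rangle_{\cG} = \langle f_j, \jmath^\vee(\nu)^{\tau}\rangle_{\cG},
\]
which is exactly the right-hand side once we substitute $\tau = \jmath^\vee(\sigma)$.

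I do not expect any genuine obstacles; the only place to be careful is bookkeeping of conventions, specifically ensuring that the Weyl actions on $X^\vee$ and $\cX^\vee$ used throughout satisfy $\langle \mu^\sigma,\nu\rangle = \langle \mu,\nu^{\sigma^{-1}}\rangle$ in both root data consistently. Once that is fixed, Proposition \ref{prop:weyl transfer}(ii) does all the real work, and the present statement is essentially its formal dual. In fact, the argument can be viewed as saying that $\jmath^\vee$ is the adjoint of $\jmath$ in a strong sense: equivariance on one side immediately yields equivariance on the other.
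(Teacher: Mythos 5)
Your argument is correct. Since the paper simply cites [BDGJW, Prop.\ 6.7] and does not reprove the statement, I can only compare against what the cited literature would plausibly do, and your argument is exactly the natural formal one: reduce to pairings against the basis $\{f_j\}$ of $\cX$ (legitimate because $\langle-,-\rangle_{\cG}$ is perfect on $\cX\times\cX^\vee$), shuttle to the $G$-side via \eqref{eq:pairing jmath}, apply the Weyl-invariance of the natural pairings in the form $\langle\mu,\nu^\sigma\rangle = \langle\mu^{\sigma^{-1}},\nu\rangle$, invoke Proposition~\ref{prop:weyl transfer}(ii) to rewrite $\jmath(f_j)^{\sigma^{-1}}$ as $\jmath(f_j^{\tau^{-1}})$, and shuttle back. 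Each step is justified by facts stated explicitly in the paper: Proposition~\ref{prop:weyl transfer}(i) gives that $\jmath:\cW_{\cG}\to\cW_G^0$ is a group isomorphism with inverse $\jmath^\vee$, so $\jmath(\tau^{-1})=\sigma^{-1}$; and the Weyl-invariance of the pairings follows from the explicit actions recorded (e.g.\ one checks $\langle \mathrm{sgn}_i f_0, \mathrm{sgn}_i f_i^*\rangle_{\cG} = \langle f_0+f_i, f_0^*-f_i^*\rangle_{\cG} = 0 = \langle f_0,f_i^*\rangle_{\cG}$, and similarly in all cases). Your closing observation is apt: the proposition is the formal adjoint of Proposition~\ref{prop:weyl transfer}(ii) with respect to the compatible pairings, and the proof reflects that precisely.
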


We take a brief general intermission. For any quasi-split reductive group $\mathrm{G}$ with a fixed choice of Borel pair $(\mathrm{B},\mathrm{T})$, there is a well-known inclusion-preserving correspondence between standard parabolic subgroups $\mathrm{P}$ of $\mathrm{G}$ and subsets $\Delta_{\mathrm{P}}$ of the set $\Delta$ of simple roots (see e.g. \cite[\S2.3]{BW20}). Here $\mathrm{B}$ corresponds to the empty set, and any proper maximal standard parabolic corresponds to $\Delta\backslash \{a\}$ for some simple root $a \in \Delta$. Further, for any such $\mathrm{P}$ we have a Levi subgroup $L_{\mathrm{P}}$, with Weyl group $\cW_{L_{\mathrm{P}}}$, which is naturally a subgroup of $\cW_{\mathrm{G}}$ (namely, the subgroup that preserves the $\Z$-span of $\Delta_{\mathrm{P}}$). 

\medskip

Returning to our specific set-up, note that $\jmath$ acts on simple roots by sending
\[
	b_1 \mapsto a_1 + a_{2n-1}, \ \ b_2 \mapsto a_2 + a_{2n-2}, \ \ \dots, \ \ b_{n-1} \mapsto a_{n-1}+a_{n+1}, \ \ b_n \mapsto a_n.
\]

\begin{definition}\label{def:r_P}
	Let $P \subset G = \GL_{2n}$ be a standard parabolic, corresponding to a subset $\Delta_P \subset \Delta_G$. We say $P$ is a \emph{spin parabolic} if, for any $i$, $a_i \in \Delta_P$ implies $a_{2n-i} \in \Delta_P$; that is, $\Delta_P$ is a union of some of the sets
	\[
	A_1 \defeq \{a_1,a_{2n-1}\}, \ \ A_2 \defeq \{a_2,a_{2n-2}\}, \dots, \ \ A_{n-1} \defeq \{a_{n-1},a_{n+1}\},\ \ A_n \defeq \{a_n\}.
\]
\end{definition}

If $P$ is a spin parabolic, then there is a corresponding parabolic $\cP \subset \cG$, defined by
\[
	b_i \in \Delta_{\cP} \iff A_i \subset \Delta_P.
\]
Under this correspondence the Borel subgroups $B\subset G$ and $\cB \subset \cG$ are identified.

\begin{notation}\label{not:(ni)-spin}
	We call the parabolic $P$ with Levi $\GL_{n_1} \times \cdots \times \GL_{n_r}$ the $(n_1,...,n_r)$-parabolic. Note that $P$ is a spin parabolic if and only if $(n_1,...,n_r)$ is symmetric around the middle (so the (1,4,1)-parabolic is spin, but the (1,3,2)-parabolic is not).
\end{notation}


\subsection{Parahoric $p$-refinements for $G$}
Let $\pi$ be a $p$-spherical RASCAR of $\GL_{2n}(\A)$. We can write $\pi_p = \Ind_B^G \UPS$ as an unramified principal series representation. Here $\UPS = (\UPS_1,\dots,\UPS_{2n})$ is an unramified character of $T(\Qp) \cong (\Qp^\times)^{2n}$, and we use the normalised induction
	\[
	\Ind_B^G\UPS \defeq \Big\{f : G(\Qp) \to \C : f\text{ smooth, }f(bg) = \delta_B^{1/2}\UPS(b)f(g) \ \ \forall b\in B(\Qp)\Big\},
	\]
	where $\delta_B$ is the standard modulus character on $B(\Qp)$. The choice of $\UPS$ is not unique: we may replace $\UPS$ with $\UPS^\sigma$ for any $\sigma \in \cW_G = \mathrm{S}_{2n}$ in the Weyl group of $G$. Precisely, $\sigma$ permutes the constituent characters $\UPS_i$ by $\UPS_i^\sigma = \UPS_{\sigma(i)}$.

\begin{definition}\label{def:spin} 
	We say $\UPS$ is \emph{spin} if 
	\begin{equation}\label{eq:UPS}
		\UPS_1\UPS_{2n} = \UPS_2\UPS_{2n-1} = \cdots = \UPS_n\UPS_{n+1} = \eta_p.
	\end{equation}
\end{definition}

Since $\pi_p$ admits an $(\eta_p,\psi_p)$-Shalika model, using \cite{AG94} and \cite[\S6.1]{BDGJW} we may (and will) choose $\UPS$ to be spin. This is the `Asgari--Shahidi' convention on $\UPS$ described in \cite[\S6.1]{BDGJW}. This is still not unique: we could replace $\UPS$ with $\UPS^\sigma$, for any $\sigma\in\cW_G^0 \subset \cW_G$. 

Note this is \emph{different} from how we chose $\UPS$ in \cite{BDGJW}, where we assumed $\UPS_i\UPS_{n+i} = \eta_p$. The two choices are exchanged by $\tau = \smallmatrd{1}{}{}{w_n} \in \cW_G$ (see \S6.1 and Remark 6.12 \emph{op.\ cit}.).

\medskip

Now let $B \subset P \subset \GL_{2n}$ be a standard parabolic, with associated parahoric subgroup $J_P \defeq \{g \in \GL_{2n}(\Zp) : g \newmod{p} \in P(\F_p)\}$. Note that $J_B = \Iw$ is an Iwahori subgroup of $G(\Qp)$. 

\begin{definition}\label{def:P-refinement}
	\begin{itemize}\s
		\item[--] For $1 \leq r \leq 2n$, let  $t_{p,r} = \smallmatrd{p I_r}{}{}{I_{n-r}} = (e_1^*+\cdots + e_r^*)(p) \in T(\Qp)$. Let $U_{p,r}^P = [J_P t_{p,r} J_P]$ be the associated double coset operator for $J_P$.
		\item[--] Let $\cH_p^P \defeq \Qp[U_{p,r}^P, U_{p,2n}^P : 1 \leq r \leq 2n-1,a_r \not\in \Delta_P]$.
	\end{itemize}
\end{definition}

Formally, $\cH_p^P$ is the free commutative $\Qp$-algebra generated by the symbols $U_{p,r}^P$.  Via the description as double coset operators, \cite[Prop.\ 4.3]{HidP-ord} (with $C = I_{P,1}$ in the notation \emph{op.\ cit}.) shows we can identify $\cH_p^P$ with a commutative subalgebra of the $P$-parahoric Hecke algebra $C^\infty_c(J_P\backslash G(\Qp)/J_P)$ at $p$. Thus $\cH_p^P$ acts on $\pi_p^{J_P}$ by convolution product, with $U_{p,r}^P$ acting as averaging over representatives of $J_Pt_{p,r}J_P/J_P$. 

\begin{definition}Fix an isomorphism $i_p : \C \to \overline{\Q}_p$.
	
		\begin{itemize}
		\item[--]	A \emph{$P$-parahoric $p$-refinement}  of $\pi$, or \emph{$P$-refinement} for short, is a system $\alpha^P : \cH_{p}^P \to \overline{\Q}_p$ of Hecke eigenvalues such that $i_p^{-1}\circ \alpha_P$ appears in $\pi_p^{J_P}$. As the $U_{p,r}$-eigenvalues on $\pi^{J_P}$ are algebraic, this depends only lightly on $i_p$. We denote this as $\tilde\pi^P = (\pi,\alpha^P)$. 
		
		\item[--]  If $P = B$, then we write `$p$-refinement' or `Iwahori $p$-refinement' instead of `$B$-parahoric $p$-refinement'. We drop the superscript $B$, writing $\cH_p \defeq \cH_p^B$, $\alpha \defeq \alpha^B$, $\tilde\pi \defeq \tilde\pi^B$, etc.
	\end{itemize}
\end{definition}

\begin{remarks}\label{rem:restrict refinement}
	\begin{itemize}\s
\item[(i)] The algebra $\cH_p$ is the Hecke algebra considered in \cite{BDGJW}. If $Q$ is the $(n,n)$-parabolic, then $\cH_p^Q = \Qp[U_{p,n},U_{p,2n}]$ is the Hecke algebra considered in \cite{BDW20}.

\item[(ii)] If $P'\subset P$ are two parabolics, then we have a natural injective map 
\[
\cH_p^P\hookrightarrow \cH_p^{P'}, \qquad U_{p,r}^P \mapsto U_{p,r}^{P'}.
\]
Taking $P' = B$,  this allows us to identify $\cH_p^P$ as a subalgebra of $\cH_p$ for all $P$. Via \cite[Cor.\ 3.16]{OST19} (see also Proposition \ref{prop:p-refinement} and \eqref{eq:Up char poly} below), any (Iwahori) $p$-refinement $\tilde\pi = (\pi,\alpha)$ restricts to a unique $P$-parahoric $p$-refinement $\tilde\pi^P = (\pi,\alpha^P)$, with $\alpha^P \defeq \alpha|_{\cH_p^P}$.

\item[(iii)] Part (ii) shows that the $U_{p,r}^P$-eigenvalues appearing in $\pi_p^{J_P}$ are independent of $P$ (that is, they depend only on $r$ and $\pi_p$). In light of this, we will henceforth abuse notation and write just $U_{p,r}$, dropping $P$ from notation. 
\end{itemize}
\end{remarks}

The following describes the possible $p$-refinements in terms of the Weyl group $\cW_G = \mathrm{S}_{2n}$. The \emph{Satake parameter} of an irreducible unramified principle series representation $\pi_p = \Ind_B^G \UPS$ is $\UPS(p) = (\UPS_1(p),\dots, \UPS_{2n}(p)) \in (\C^\times)^{2n}$. It is \emph{regular semisimple} if the $\UPS_i(p)$ are pairwise distinct.

\begin{proposition} \label{prop:p-refinement}
	Suppose the Satake parameter of $\pi_p = \Ind_B^G\UPS$ is regular semisimple. 
	\begin{itemize}\s 
		\item[(i)] There is a bijection (that depends on $\UPS$)
		\begin{equation}\label{eq:Psi_UPS}
		\Psi_\UPS : \{\text{Iwahori }p\text{-refinements of }\pi\}  \longrightarrow \cW_G,
	\end{equation}
  such that if $\tilde\pi = (\pi,\alpha)$ is a $p$-refinement with $\Psi_{\UPS}(\tilde\pi) = \sigma$, then for each $r$ we have
	\begin{equation}\label{eq:chenevier}
	\alpha(U_{p,r}) =\delta_B^{-1/2}\UPS^\sigma(t_{p,r}) = \prod_{j=1}^r p^{-\tfrac{2n-2j+1}{2}}\UPS_{\sigma(j)}(p) \neq 0.
	\end{equation}
	
	\item[(ii)] If $P$ is a standard parabolic with Levi subgroup $L_P$, there is a bijection
	\[
			\Psi_\UPS^P : \{P\text{-refinements of }\pi\}  \longrightarrow \cW_G/\cW_{L_P},
	\]
	such that if $\tilde\pi^P = (\pi,\alpha^P)$ is a $P$-refinement with $\Psi_{\UPS}^P(\tilde\pi^P) = [\sigma]$ for $\sigma \in \cW_G$, then $\alpha^P(U_{p,r})$ is given by \eqref{eq:chenevier} whenever $U_{p,r} \in \cH_p^P$.
	
	\item[(iii)] If $\tilde\pi^P$ is a $P$-refinement, then the possible extensions to Iwahori level are exactly the $p$-refinements $\tilde\pi$ with $\Psi_{\UPS}(\tilde\pi) = \Psi_{\UPS}^P(\tilde\pi^P) \newmod{\cW_{L_P}}$.
	\end{itemize}
\end{proposition}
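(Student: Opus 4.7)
The plan is to extract all three parts from a single structural decomposition of the Iwahori-invariants $\pi_p^{\Iw}$ as an $\cH_p$-module. Since $\pi_p = \mathrm{Ind}_B^G \UPS$ has regular Satake parameter, the principal series is irreducible, and Casselman's theorem (equivalently, the Borel--Matsumoto isomorphism combined with Bernstein's presentation of the Iwahori--Hecke algebra) produces a canonical $\cH_p$-equivariant decomposition
\[
\pi_p^{\Iw} \;=\; \bigoplus_{\sigma \in \cW_G} V_\sigma,
\]
where each $V_\sigma$ is a line on which $U_{p,r}$ acts by the scalar $\delta_B^{-1/2}(t_{p,r})\,\UPS^\sigma(t_{p,r})$. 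The modulus twist arises because the relevant Bernstein generators act on the Jacquet-module characters $\UPS^\sigma$ after a normalising shift by $\delta_B^{-1/2}$. Expanding both factors as products over $j = 1,\dots,r$ yields the explicit formula \eqref{eq:chenevier}, and regularity of $\UPS$ ensures the resulting eigensystems are pairwise distinct and their values on $U_{p,r}$ non-zero. This produces the bijection $\Psi_\UPS$ and proves (i).

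For part (ii), the inclusion $\Iw \subset J_P$ gives $\pi_p^{J_P} \subset \pi_p^{\Iw}$, and Casselman's theorem applied at parahoric level shows that $\pi_p^{J_P}$ has dimension $|\cW_G/\cW_{L_P}|$. Since the operators $U_{p,r} \in \cH_p^P$ preserve $\pi_p^{J_P}$ and act diagonally in the $V_\sigma$-basis, each $\cH_p^P$-eigenspace in $\pi_p^{J_P}$ is spanned by a subset of the $V_\sigma$. Two Weyl elements $\sigma, \sigma'$ give the same system of $\cH_p^P$-eigenvalues precisely when $\UPS^\sigma(t_{p,r}) = \UPS^{\sigma'}(t_{p,r})$ for every $r$ with $a_r \notin \Delta_P$ (together with $r = 2n$); by regularity and the definition of the simple roots generating $\cW_{L_P}$, this happens exactly when $\sigma^{-1}\sigma' \in \cW_{L_P}$. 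The $\cH_p^P$-eigensystems on $\pi_p^{J_P}$ are therefore parametrised by $\cW_G/\cW_{L_P}$, with eigenvalues given by the formula from (i) evaluated on any coset representative. Alternatively, this can be obtained directly from \cite[Cor.\ 3.16]{OST19} together with the factorisation of the characteristic polynomial of $U_{p,r}$ on $\pi_p^{J_P}$ appearing in \eqref{eq:Up char poly}.

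Part (iii) is then essentially formal: both bijections arise from the same decomposition $\pi_p^{\Iw} = \bigoplus V_\sigma$, and the inclusion $\cH_p^P \hookrightarrow \cH_p$ restricts the $\sigma$-eigensystem of (i) to the $\sigma\cW_{L_P}$-eigensystem of (ii) by construction. Hence the Iwahori extensions of a given $\tilde\pi^P$ are precisely those $\tilde\pi$ with $\Psi_\UPS(\tilde\pi)$ lying in the coset $\Psi_\UPS^P(\tilde\pi^P)$. The main point to pin down is the parahoric-level analysis in (ii), where one must identify $\pi_p^{J_P}$ as an $\cH_p^P$-module with eigensystems indexed by $\cW_G/\cW_{L_P}$; depending on taste this can be done either by running Casselman--Jacquet theory through the Levi $L_P$, or by appealing to \cite[Cor.\ 3.16]{OST19}. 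The rest is bookkeeping with $\delta_B$ and the Weyl group action on characters.
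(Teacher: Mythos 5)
Your proposal reconstructs the argument that lies behind the paper's two citations: the paper simply invokes Chenevier \cite[Lem.\ 4.8.4]{Che04} for (i) and Osborne--Spallone--Trifkovic \cite[Cor.\ 3.16]{OST19} for (ii), and declares (iii) immediate. Your version is correct in outline and indeed isomorphic to what those references prove (Casselman/Borel--Matsumoto for the Iwahori-level eigenlines, the $\delta_B^{-1/2}$ shift from the normalisation of induction, regularity to guarantee distinct eigensystems, and a parahoric-level dimension count for (ii)), so there is no genuinely different route here — you have simply unpacked the black boxes, which is a reasonable thing to do.

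One step in (ii) is stated loosely enough that it would need repair if you wanted it to stand as a self-contained proof. You write that ``each $\cH_p^P$-eigenspace in $\pi_p^{J_P}$ is spanned by a subset of the $V_\sigma$.'' This is not right as stated: the lines $V_\sigma$ are $\cH_p$-eigenlines inside $\pi_p^{\Iw}$, and individually they are almost never contained in $\pi_p^{J_P}$, so no subset of them can span a subspace of $\pi_p^{J_P}$. What is true is that the $\cH_p^P$-eigenspaces of $\pi_p^{\Iw}$ are the sums $\bigoplus_{\sigma \in [\sigma]\cW_{L_P}} V_\sigma$ (by regularity), that each such $|\cW_{L_P}|$-dimensional eigenspace meets $\pi_p^{J_P}$ in exactly a line (by the dimension count $\dim \pi_p^{J_P} = |\cW_G/\cW_{L_P}|$, or via the trace map $\pi_p^{\Iw}\twoheadrightarrow\pi_p^{J_P}$ as in the paper's Lemma \ref{lem:trace}), and that these lines carry the eigensystems from \eqref{eq:chenevier} restricted to $\cH_p^P$. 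With that fix the argument closes; you also hedge by offering \cite[Cor.\ 3.16]{OST19} as a fallback, which is exactly what the paper does.
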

\begin{proof}
 (i) is \cite[Lem.\ 4.8.4]{Che04}. (ii) is \cite[Cor.\ 3.16]{OST19}. (iii) is immediate.
\end{proof}

\begin{remark}\label{rem:change of UPS}
	For any $\nu \in \cW_G$ and any $p$-refinement $\tilde\pi$,  we have $\Psi_{\UPS^\nu}(\tilde\pi) = \nu \Psi_{\UPS}(\tilde\pi)$.  In \cite{BDGJW} we denoted $\UPS$ for what would be $\UPS^\tau$ here, where $\tau = \mathrm{diag}(1,w_n)$, where $w_n$ is the longest Weyl element for $\GL_n$. Thus our bijection $\Psi_{\UPS}$ is denoted $\Delta_{\UPS^\tau}$ there.
\end{remark}

\begin{remark}
We will assume regular semisimplicity of the Satake parameter of $\pi_p$ throughout this paper. This is a fairly mild assumption: for example, it conjecturally holds for all unramified local components of any RACAR of $\GL_2(\A)$ (equivalent, by Proposition \ref{prop:p-refinement}, to the two roots of the Hecke polynomial at $p$ being distinct). 

In general, let $\pi$ be a fixed RASCAR of $\GL_N(\A)$. Whilst it is no longer necessarily true that regular semisimplicity holds for \emph{all} unramified primes\footnote{For example, let $E$ be an elliptic curve with good supersingular reduction at $p$, with associated RACAR $\Pi$ of $\GL_2(\A)$.  Then $\pi \defeq \mathrm{Sym}^3\Pi$ is a RASCAR of $\GL_4(\A)$ whose Satake parameter is not regular semisimple at $p$.}, it still holds for a density 1 set of primes. Indeed, via local-global compatibility in the Langlands correspondence (known for essentially self-dual representations, hence for $\pi$) there is a compatible system of Galois representations $\rho_{\pi,\ell}$ attached to $\pi$, regular in the sense of having distinct Hodge-Tate weights (as $\pi$ is regular algebraic), and such that the eigenvalues of $\mathrm{Frob}_p$ correspond to the Satake parameters at $p$ for unramified primes $p$. This claim is then justified in the proof of \cite[Lem.\ 5.3.1(2)]{BGGT14a}.
\end{remark}

The $U_{p,r}$-eigenvalues will not, in general, vary $p$-adic analytically. For $p$-adic interpolation, we must instead use normalised analogues $U_{p,r}^\circ$ of $U_{p,r}$. For this, we must introduce the \emph{weight} of $\pi$. Our convention is that the weight is the unique algebraic character $\lambda$ of $T$ such that $\pi$ contributes to (Betti) cohomology with coefficients in $V_\lambda^\vee$, where $V_\lambda$ is the irreducible representation of $G$ of highest weight $\lambda$. This is summarised in detail in \cite[\S2.5]{BDW20}.

\begin{definition}
	 If $\lambda$ is the weight of $\pi$, we define
	\[
	U_{p,r}^\circ = \lambda(t_{p,r}) U_{p,r}  = p^{\lambda_1+\cdots+\lambda_r}U_{p,r} \in  \cH_p.
	\]
\end{definition}

Let $\pi$ be a RASCAR of weight $\lambda$, and $P$ a spin parabolic. By \cite[Lem.\ 4.9]{Clo90}, $\lambda$ is \emph{pure}, in the sense that there exists $\sw \in \Z$ such that $\lambda_i + \lambda_{2n+1-i} = \sw$ for all $1 \leq i \leq n$.

\begin{definition}\label{def:non-critical slope}
	Let $\tilde\pi^P = (\pi, \alpha^P)$ be a $P$-refinement of $\pi$. We say $\tilde\pi^P$ has \emph{non-$P$-critical slope} if
	\[
	v_p(\alpha^P(U_{p,r}^\circ)) < \lambda_{r} - \lambda_{r+1} + 1 \qquad \text{for all } 1 \leq r \leq 2n-1 \text{ with } a_r \not\in \Delta_P.
	\]
	(Note that $\lambda_r - \lambda_{r+1} = \lambda_{2n-r} - \lambda_{2n-r+1}$ by purity, so the bounds for $U_{p,r}^\circ$ and $U_{p,2n-r}^\circ$ agree).
	
	We say a $p$-refinement $\tilde\pi$ has non-$P$-critical slope if its associated $P$-refinement $\tilde\pi^P$ does. We say $\tilde\pi$ has non-critical slope if it has non-$B$-critical slope.
\end{definition}


\section{$P$-spin refinements}\label{sec:P-spin}

Let $P \subset G = \GL_{2n}$ be a spin parabolic. We now generalise \cite[\S6]{BDGJW} to an arbitrary such $P$. Let $\pi = \Ind_B^G \UPS$ be a RASCAR of $\GL_{2n}(\A)$ that is spherical and regular at $p$, recalling we have fixed a spin $\UPS$ satisfying $\UPS_1\UPS_{2n} = \cdots = \UPS_n\UPS_{n+1} = \eta_p$ \eqref{eq:UPS}. Recall $\Psi_{\UPS}$ from \eqref{eq:Psi_UPS}.

\begin{definition}\label{def:P-spin}
	\begin{itemize}\s
\item	We say an Iwahori $p$-refinement $\tilde\pi = (\pi,\alpha)$ is a \emph{$P$-spin refinement} if 
	\[
		\Psi_{\UPS}(\tilde\pi) \in \cW_G^0 \cdot \cW_{L_P} \subset \cW_G.
	\]
	\item We say a $P$-refinement $\tilde\pi^P$ is $P$-spin if 
	\[
		\Psi_{\UPS}^P(\tilde\pi^P) \in \mathrm{Im}\Big(\cW_{G}^0 \to \cW_G \to \cW_G/\cW_{L_P}\Big) \subset \cW_G/\cW_{L_P}.
	\] 
\end{itemize}
\end{definition}

\begin{lemma}
A $P$-refinement $\tilde\pi^P$ is $P$-spin if and only if all of its extensions to Iwahori $p$-refinements are $P$-spin.	
\end{lemma}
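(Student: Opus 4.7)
The plan is to directly unwind the two definitions and reduce the statement to a coset computation, using Proposition \ref{prop:p-refinement}(iii) to translate between $P$-refinements and $B$-refinements.

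First, I would set $[\sigma] \defeq \Psi_{\UPS}^P(\tilde\pi^P) \in \cW_G/\cW_{L_P}$ and pick any lift $\sigma \in \cW_G$. By Proposition \ref{prop:p-refinement}(iii), the Iwahori extensions $\tilde\pi$ of $\tilde\pi^P$ are exactly those satisfying $\Psi_{\UPS}(\tilde\pi) \in \sigma\cW_{L_P}$; as $\Psi_{\UPS}$ is a bijection onto $\cW_G$, every element of $\sigma\cW_{L_P}$ arises as the image of such an extension.

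The key observation is that the set $\cW_G^0\cdot\cW_{L_P}\subset\cW_G$ is by construction a union of right $\cW_{L_P}$-cosets: for any $\sigma'\in\cW_G$, membership $\sigma'\in\cW_G^0\cdot\cW_{L_P}$ depends only on the class $[\sigma']\in\cW_G/\cW_{L_P}$, and holds precisely when $[\sigma']\in\mathrm{Im}\bigl(\cW_G^0\to\cW_G/\cW_{L_P}\bigr)$. Applying this with $[\sigma'] = [\sigma]$: the condition ``every element of $\sigma\cW_{L_P}$ lies in $\cW_G^0\cdot\cW_{L_P}$'' is equivalent to ``some element of $\sigma\cW_{L_P}$ lies in $\cW_G^0\cdot\cW_{L_P}$'' is equivalent to ``$[\sigma]\in\mathrm{Im}(\cW_G^0\to\cW_G/\cW_{L_P})$''.

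Combining these, both directions of the lemma follow at once: the left-hand side (``$\tilde\pi^P$ is $P$-spin'') is Definition \ref{def:P-spin} applied to $[\sigma]$, while the right-hand side (``all extensions are $P$-spin'') is Definition \ref{def:P-spin} applied to every element of $\sigma\cW_{L_P}$, and we have just shown these are equivalent. There is no genuine obstacle here — once the right cosets are identified via Proposition \ref{prop:p-refinement}(iii), the statement is a tautology coming from the fact that the spin condition at Iwahori level is $\cW_{L_P}$-right-invariant.
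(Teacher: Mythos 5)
Your proof is correct and matches the paper's approach exactly — the paper simply states that the lemma is ``immediate from the definitions and Proposition \ref{prop:p-refinement}(iii),'' and you have spelled out precisely why, by observing that $\cW_G^0\cdot\cW_{L_P}$ is a union of right $\cW_{L_P}$-cosets so that the Iwahori-level spin condition descends to (and is detected on) the quotient $\cW_G/\cW_{L_P}$.
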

\begin{proof}
Immediate from the definitions and Proposition \ref{prop:p-refinement}(iii).
\end{proof}

\begin{remarks}\label{rem:P-spin ind of UPS} 
\begin{itemize}\s
	\item[(i)] The cases of $B$-spin and $Q$-spin refinements, for $Q$ the $(n,n)$-parabolic, were defined in \cite[Lem.\ 6.12, Rem.\ 6.14]{BDGJW}. 
	
	\item[(ii)] Since any two choices of spin $\UPS$ differ by an element of $\cW_G^0$, this definition is independent of such a choice of $\UPS$ by Remark \ref{rem:change of UPS}. 
\end{itemize}
\end{remarks}

\subsection{$P$-spin refinements via Hecke algebras}   

Recall objects for $\cG = \mathrm{GSpin}_{2n+1}$ (e.g.\ Borel $\cB$, parabolics $\cP$) are written as calligraphic versions of objects for $G = \mathrm{GL}_{2n}$ (e.g. $B,P$).

As $\pi$ is symplectic, it is the functorial transfer of a RACAR $\Pi$ of $\cG(\A)$. Moreover $\Pi_p = \Ind_{\cB}^{\cG}\UPS_{\cG}$ is an unramified principal series for $\cG(\Qp)$, for $\UPS_{\cG}$ an unramified character of $\cT$ satisfying $\jmath(\UPS_{\cG}) = \UPS$ (by \cite[p.177(i)]{AS06} and \cite[Prop.\ 5.1]{AS14}). 

Our primary motivation for $P$-spin refinements is that they interact well with this functoriality, as we will show in Proposition \ref{prop:spin factor}.

\subsubsection{Parahoric refinements for $\mathrm{GSpin}_{2n+1}$}

\begin{definition}
	Let $\cB \subset \cP \subset \cG$ be a parabolic, with parahoric subgroup $\cJ_{\cP} \subset \cG(\Zp)$.
	\begin{itemize}\s
		\item For $1 \leq r \leq n$, let $\cU_{p,r} \defeq [\cJ_{p} \cdot \jmath^\vee(t_{p,r}) \cdot \cJ_{p}]$, where $\jmath^\vee(t_{p,r}) = (f_1^*+\cdots + f_r^*)(p)$. Let $\cV_p \defeq [\cJ_p \cdot f_0^*(p) \cdot \cJ_p]$, which acts on $\Pi^{\cJ_p}_p$ via the central action of $p\in\Qp$. 
		\item  Define a Hecke algebra $\cH_p^{\cG,\cP} \defeq \Qp[\cU_{p,r}, \cV_p : b_r \notin \Delta_{\cP}]$. 
	\item A \emph{$\cP$-parahoric $p$-refinement $\tilde\Pi^{\cP} = (\Pi,\alphaG)$ of $\Pi$} is an eigensystem $\alphaG : \cH_p^{\cG,\cP} \to \overline{\Q}_p$ appearing in $\Pi_p^{\cJ_{\cP}}$. We sometimes write $\cP$-refinement for short.
	\end{itemize}
\end{definition}

\subsubsection{Functoriality for parahoric refinements}
Let $P \subset G$ be a spin parabolic, with associated $\cP \subset \cG$. Note that $a_r \not\in \Delta_P \iff b_r \not\in \Delta_{\cP}$, so 
\[
	\cH_p^{\cG,\cP} = \Qp[\cU_{p,r}, \cV_p : a_r \not\in \Delta_P].
\]
We now relate $P$- and $\cP$-refinements. The map $\jmath^\vee : X^\vee \to \cX^\vee$ induces a map 
\begin{equation}\label{eq:jmath hecke}
	\jmath^\vee : \cH_p^P \longrightarrow \cH_p^{\cG,\cP}
\end{equation}
(cf.\ \cite[\S6.4]{BDGJW}). If $1 \leq r \leq n$ with $a_r \not\in \Delta_P$, then $\jmath^\vee$ sends  
\[
	U_{p,r} \longmapsto \cU_{p,r}, \qquad U_{p,2n-r} \longmapsto \cU_{p,r}\cV_p^{n-r}, \qquad U_{p,2n} \longmapsto \cV_p^n.
\] 
For $1 \leq r \leq 2n$, consider the characteristic polynomials
\[
 \cF_{\cG,r}(T) \defeq \det\big(T - \jmath^\vee(U_{p,r})|\Pi_{p}^{\cJ_{\cP}}\big), \qquad	F_{G,r}(T) \defeq \det\big(T - U_{p,r}|\pi_{p}^{J_{P}}\big).
\]

\begin{lemma}
Let $1\leq r\leq 2n$. If $U_{p,r} \in \cH_p^P$, then $\cF_{\cG,r}(T)$ divides $F_{G,r}(T)$.
\end{lemma}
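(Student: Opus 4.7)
The plan is to identify the multisets of roots of both characteristic polynomials explicitly using Proposition \ref{prop:p-refinement} (and its analogue for $\cG$), and to realise the roots of $\cF_{\cG,r}$ as a sub-multiset of those of $F_{G,r}$ via the natural map $\jmath$. First I would use Proposition \ref{prop:p-refinement}(ii) to factorise
\[
F_{G,r}(T) = \prod_{[\sigma] \in \cW_G/\cW_{L_P}} \big(T - \delta_B^{-1/2}\UPS^\sigma(t_{p,r})\big),
\]
which is well-defined on cosets because the assumption $U_{p,r}\in\cH_p^P$ forces $t_{p,r}$ to be fixed by $\cW_{L_P}$ (this is where we use $r=2n$ or $a_r\notin\Delta_P$). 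Applying the same result to $\cG$, $\cP$, $\Pi$ and the cocharacter $\jmath^\vee(t_{p,r})$, one obtains
\[
\cF_{\cG,r}(T) = \prod_{[\tau] \in \cW_{\cG}/\cW_{L_{\cP}}} \big(T - \delta_{\cB}^{-1/2}\UPS_{\cG}^\tau(\jmath^\vee(t_{p,r}))\big).
\]

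Next, I would match the individual eigenvalues by proving
\[
\delta_{\cB}^{-1/2}\UPS_{\cG}^\tau(\jmath^\vee(t_{p,r})) \;=\; \delta_B^{-1/2}\UPS^{\jmath(\tau)}(t_{p,r}) \qquad \text{for all } \tau\in\cW_{\cG}.
\]
This combines the identification $\jmath(\UPS_{\cG})=\UPS$, the Weyl-equivariance $\jmath(\UPS_{\cG}^\tau) = \UPS^{\jmath(\tau)}$ from Proposition \ref{prop:weyl transfer}(ii), the pairing identity \eqref{eq:pairing jmath}, and $\jmath(\rho_{\cG})=\rho_G$ (so that the modulus characters are compatible with $\jmath^\vee$). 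Thus it remains to show that the map $\jmath:\cW_{\cG}\to\cW_G$ descends to an \emph{injection} $\bar{\jmath}:\cW_{\cG}/\cW_{L_{\cP}}\hookrightarrow \cW_G/\cW_{L_P}$; equivalently, that $\jmath(\cW_{L_{\cP}})=\cW_G^0\cap \cW_{L_P}$ (using that $\jmath$ is itself injective with image $\cW_G^0$).

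The inclusion $\jmath(\cW_{L_{\cP}})\subseteq \cW_G^0\cap\cW_{L_P}$ is immediate from Definition \ref{def:r_P} and the action of $\jmath$ on simple reflections: $\jmath(s_{b_i})$ for $i<n$ is the product of the two simple reflections $s_{a_i}s_{a_{2n-i}}$, both of which lie in $\cW_{L_P}$ when $b_i\in\Delta_{\cP}$, and $\jmath(s_{b_n})=s_{a_n}$. The reverse inclusion is the combinatorial heart of the proof: writing $L_P\cong \GL_{n_1}\times\cdots\times\GL_{n_k}$ with the symmetric partition $n_j=n_{k+1-j}$, Lemma \ref{lem:relative} forces any $\sigma\in \cW_G^0\cap\cW_{L_P}$ to act on the symmetric blocks $B_j, B_{k+1-j}$ in a mirrored fashion, and on the (even-sized) middle block through the hyperoctahedral group commuting with $i\mapsto 2n+1-i$. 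Matching these generators against those of $\cW_{L_{\cP}}$ under $\jmath$ gives the equality, and then $\bar{\jmath}$ is injective, so the multiset of roots of $\cF_{\cG,r}$ embeds into that of $F_{G,r}$, yielding the claimed divisibility. The main obstacle is step three, the combinatorial identification of $\cW_G^0\cap\cW_{L_P}$, but this is a concrete and finite check once the block structure is in hand.
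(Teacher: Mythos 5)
Your proposal is correct and follows essentially the same route as the paper: factorise both characteristic polynomials via \cite[Cor.~3.16]{OST19} (what the paper gives as \eqref{eq:Up char poly}), match the individual eigenvalues using $\jmath(\UPS_{\cG})=\UPS$, the Weyl compatibilities from Propositions \ref{prop:weyl transfer}/\ref{prop:jmath vee eq}, $\jmath(\rho_{\cG})=\rho_G$, and \eqref{eq:pairing jmath}, and then reduce divisibility to the injectivity of $\cW_{\cG}/\cW_{\cL_{\cP}} \to \cW_G/\cW_{L_P}$.

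The one place you diverge in emphasis from the paper is in how you handle that injectivity. The paper defines $\cW_{L_P}^0 := \jmath(\cW_{\cL_{\cP}})$ and then simply \emph{asserts} the identity $\cW_{L_P}^0 = \cW_{L_P}\cap \cW_G^0$ (``Now note that\ldots''), from which the embedding of quotients is immediate; you instead flag this as the nontrivial step and sketch a proof of both inclusions by examining $\jmath$ on simple reflections ($\jmath(s_{b_i}) = s_{a_i}s_{a_{2n-i}}$ for $i<n$, $\jmath(s_{b_n})=s_{a_n}$) and by a block analysis of $\cW_G^0 \cap \cW_{L_P}$ using Lemma \ref{lem:relative}. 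This is a genuine improvement in completeness over what is written in the paper. One small imprecision in your sketch: the ``middle block'' analysis only literally applies when the symmetric partition $(n_1,\dots,n_k)$ has odd length (so that there is an even-sized middle block); when $k$ is even, the involution $i\mapsto 2n+1-i$ swaps the two central blocks, and an element of $\cW_G^0\cap\cW_{L_P}$ must then lie entirely in the $\mathrm{S}_n$-factor of $\cW_G^0$ (no sign changes), which is a slightly different case from the hyperoctahedral-on-the-middle-block picture. With that case distinction made explicit, your argument is complete.
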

\begin{proof}
Let $\nu_{p,r} \defeq e_1^* + \cdots + e_r^* \in X^\vee$. By \cite[Cor.\ 3.16]{OST19}, we may write
	\begin{equation}\label{eq:Up char poly}
		F_{G,r}(T)  = \prod_{[\sigma] \in \cW_G/\cW_{L_P}}\Big(T-p^{\langle \rho_G, \nu_{p,r}\rangle_G} p^{\langle \UPS^\sigma,\nu_{p,r}\rangle_G}\Big)
	\end{equation}
 where we identify $\UPS^\sigma(t_{p,r}) = \UPS^\sigma(\nu_{p,r}(p)) = p^{\langle\UPS^\sigma,\nu_{p,r}\rangle_G}$ under the natural extension of $\langle-,-\rangle_G$. 
 
 For $\cG$, \cite[Cor.\ 3.16]{OST19} again gives
	\begin{align*}
		\cF_{\cG,r}(T) &= \prod_{\omega \in \cW_{\cG}/\cW_{\cL_{\cP}}}\Big(T-p^{\langle \rho_{\cG}, \jmath^\vee(\nu_{p,r})\rangle_{\cG}} p^{\langle\UPS^\omega_{\cG},\jmath^\vee(\nu_{p,r})\rangle_{\cG}}\Big)\\
					&= \prod_{[\sigma] \in \cW_{G}^0/\cW_{L_P}^0}\Big(T-p^{\langle \rho_{G}, \nu_{p,r}\rangle_{G}} p^{\langle\UPS^\sigma,\nu_{p,r}\rangle_{G}}\Big),
			\end{align*}
	where we identify $\sigma = \jmath(\omega)$, we write $\cW_{L_P}^0 = \jmath(\cW_{\cL_{\cP}})$, and we have used $\jmath(\rho_{\cG}) = \rho_G$, Proposition \ref{prop:jmath vee eq}, and \eqref{eq:pairing jmath}. 
	
	Now note that $\cW_{L_P}^0 = \cW_{L_P}\cap \cW_G^0$, so that $\cW_{G}^0/\cW_{L_P}^0$ is naturally a subset of $\cW_G/\cW_{L_P}$. 
	It follows immediately that $\cF_{\cG,r}$ divides $F_{G,r}$.
\end{proof}

\begin{definition}\label{def:refinement transfer 1}
		Let $P$ be a spin parabolic and $\tilde\pi^P = (\pi,\alpha^P)$ a $P$-refinement. We say $\tilde\pi^P$ is \emph{the functorial transfer of a $\cP$-refinement $\tilde\Pi^{\cP} = (\Pi,\alphaG)$ of $\Pi$} if $\alpha^P$ factors as
		\[
			\cH_p^P \xrightarrow{\ \ \jmath^\vee \ \ } \cH_p^{\cG,\cP} \xrightarrow{ \ \ \alphaG \ \ } \overline{\Q}_p.
		\]
\end{definition}

\begin{proposition}\label{prop:spin factor}
Let $\tilde\pi^P$ be a $P$-refinement. Then
\[
\tilde\pi^P\text{ is $P$-spin} \iff \tilde\pi^P\text{ is the functorial transfer of some $\tilde\Pi^{\cP}$.}
\]
\end{proposition}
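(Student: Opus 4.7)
The plan is to translate both sides of the equivalence into statements about the parametrization $\Psi$ of Proposition \ref{prop:p-refinement}(ii), and then to check they coincide using exactly the calculation carried out in the preceding lemma.

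\textbf{Preliminary.} First I would establish the analogue of Proposition \ref{prop:p-refinement}(ii) for $\cG$: that $\cP$-parahoric refinements of $\Pi$ are in bijection with $\cW_{\cG}/\cW_{\cL_\cP}$ via a map $\Psi^{\cP}_{\UPS_{\cG}}$ satisfying Chenevier's formula with respect to $\UPS_{\cG}$. This requires $\UPS_{\cG}$ to be regular, which follows from regularity of $\UPS$ combined with injectivity of $\jmath : \cX \hookrightarrow X$ and $\jmath(\UPS_{\cG}) = \UPS$: if $\UPS_{\cG}^\omega = \UPS_{\cG}$ then $\UPS^{\jmath(\omega)} = \UPS$, forcing $\omega = 1$.

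\textbf{Forward direction.} Suppose $\tilde\pi^P$ is $P$-spin, so $\Psi_{\UPS}^P(\tilde\pi^P) = [\sigma]$ for some $\sigma \in \cW_G^0$. Set $\omega \defeq \jmath^\vee(\sigma) \in \cW_{\cG}$ and let $\tilde\Pi^{\cP}$ be the $\cP$-refinement with $\Psi_{\UPS_{\cG}}^{\cP}(\tilde\Pi^{\cP}) = [\omega]$. The task is to verify $\alpha^P(U) = \alphaG(\jmath^\vee(U))$ on each generator of $\cH_p^P$. For $U_{p,r}$ with $r \le n$ and $a_r \notin \Delta_P$, $\jmath^\vee(U_{p,r}) = \cU_{p,r}$, and by Chenevier's formula the two eigenvalues are
\[
p^{\langle \rho_G,\nu_{p,r}\rangle_G}\,p^{\langle\UPS^\sigma,\nu_{p,r}\rangle_G} \quad\text{and}\quad p^{\langle\rho_{\cG},\jmath^\vee(\nu_{p,r})\rangle_{\cG}}\,p^{\langle\UPS_{\cG}^\omega,\jmath^\vee(\nu_{p,r})\rangle_{\cG}},
\]
which coincide by \eqref{eq:pairing jmath}, $\jmath(\rho_{\cG}) = \rho_G$, and $\jmath(\UPS_{\cG}^\omega) = \jmath(\UPS_{\cG})^{\jmath(\omega)} = \UPS^\sigma$ (Proposition \ref{prop:weyl transfer}(ii)). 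For the operators $U_{p,2n-r}$ (with $r\le n$, $a_r \notin \Delta_P$) and $U_{p,2n}$, a short computation shows $\jmath^\vee(\nu_{p,2n-r}) = \jmath^\vee(\nu_{p,r}) + (n-r)f_0^*$ and $\jmath^\vee(\nu_{p,2n}) = nf_0^*$; the same pairing identities then reduce the required equalities to the case already handled.

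\textbf{Reverse direction.} Conversely, suppose $\alpha^P = \alphaG \circ \jmath^\vee$ for some $\cP$-refinement $\tilde\Pi^{\cP}$, and let $[\omega] = \Psi_{\UPS_{\cG}}^{\cP}(\tilde\Pi^{\cP}) \in \cW_{\cG}/\cW_{\cL_\cP}$. Applying the same computation in reverse, $\alpha^P(U_{p,r})$ equals the value predicted by Chenevier's formula for $\sigma \defeq \jmath(\omega) \in \cW_G^0$. Since $\Psi_{\UPS}^P$ is a bijection (Proposition \ref{prop:p-refinement}(ii)), this forces $\Psi_{\UPS}^P(\tilde\pi^P) = [\sigma]$, which lies in the image of $\cW_G^0 \to \cW_G/\cW_{L_P}$; hence $\tilde\pi^P$ is $P$-spin.

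\textbf{Main obstacle.} The argument is essentially bookkeeping once the parametrization and the pairing identity \eqref{eq:pairing jmath} are in hand; the potentially fiddly step is handling the operators $U_{p,r}$ with $r>n$, where $\jmath^\vee(U_{p,r})$ involves a power of $\cV_p$, and making sure the central character contribution matches on both sides. But this is exactly the content of the decomposition of $\jmath^\vee(\nu_{p,2n-r})$ carried out above (and implicit in the factorization of characteristic polynomials in the preceding lemma), so no new idea is needed.
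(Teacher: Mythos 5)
Your proposal is correct and takes essentially the same approach as the paper: both proofs reduce the statement to the Chenevier-type eigenvalue formula and the compatibility of $\jmath, \jmath^\vee$ with the pairings $\langle -,- \rangle_G$, $\langle-,-\rangle_\cG$ and the Weyl actions (Propositions \ref{prop:weyl transfer}, \ref{prop:jmath vee eq} and \eqref{eq:pairing jmath}). The paper's proof is more terse, simply invoking ``the proof of the above lemma'' (the factorization $\cF_{\cG,r} \mid F_{G,r}$, whose eigenvalues are indexed by $\cW_G^0/\cW_{L_P}^0$ inside $\cW_G/\cW_{L_P}$) to conclude that $\alpha^P$ factors through $\jmath^\vee$ iff $[\sigma] \in \cW_G^0/\cW_{L_P}^0$; your version fills in a few things the paper leaves implicit, notably the $\cG$-side analogue of Proposition \ref{prop:p-refinement}(ii) with regularity of $\UPS_\cG$, the explicit choice $\omega = \jmath^\vee(\sigma)$ in the forward direction, and the computations $\jmath^\vee(\nu_{p,2n-r}) = \jmath^\vee(\nu_{p,r}) + (n-r)f_0^*$, $\jmath^\vee(\nu_{p,2n}) = nf_0^*$ needed to handle the operators involving $\cV_p$. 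One microscopic slip in the reverse direction: $\omega$ is a coset, so ``$\sigma \defeq \jmath(\omega)$'' should be $\sigma = \jmath(\tilde\omega)$ for any lift $\tilde\omega$, with the resulting class $[\sigma] \in \cW_G/\cW_{L_P}$ well-defined because $\jmath(\cW_{\cL_\cP}) = \cW_{L_P}^0 \subset \cW_{L_P}$; this does not affect the argument.
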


\begin{proof}
Let $\tilde\pi^P = (\pi,\alpha^P)$ with $\Psi_{\UPS}^P(\tilde\pi) = [\sigma] \in \cW_G/\cW_{L_P}$. By the proof of the above lemma, and the fact that 
\[
	\alpha^P(U_{p,r}) = \delta_B^{-1/2}\UPS^\sigma(t_{p,r}) = p^{\langle \rho_{G}, \nu_{p,r}\rangle_{G}} p^{\langle\UPS^\sigma,\nu_{p,r}\rangle_{G}},
\]
 we see that $\alpha^P$ factors through $\jmath^\vee$ if and only if $[\sigma]$ is in $\cW_G^0/\cW_{L_P}^0 \subset \cW_G/\cW_{L_P}$; that is, if and only if $\tilde\pi^P$ is a $P$-spin refinement.
\end{proof}

\subsection{Optimally $P$-spin refinements}
Above, we studied when a $P$-refinement was $P$-spin (for the same $P$). An Iwahori $p$-refinement $\tilde\pi$, however, can be $P$-spin for many different $P$'s. 

\begin{definition}\label{def:optimal}
	We say an Iwahori $p$-refinement $\tilde\pi$ is \emph{optimally $P$-spin} if it is $P$-spin and there is no spin $P'\subsetneq P$ such that it is $P'$-spin.
\end{definition}

\begin{corollary}\label{cor:min spin}
	Let $\tilde\pi = (\pi,\alpha)$ be an Iwahori $p$-refinement. 
	\begin{itemize}\s
		\item[(i)] If $P$ and $P'$ are spin parabolics and $\tilde\pi$ is $P$-spin and $P'$-spin, then $\tilde\pi$ is $P\cap P'$-spin. 
		\item[(ii)] $\tilde\pi$ is optimally $\Pmin$-spin for precisely one spin parabolic $B \subseteq \Pmin \subseteq G$.
	\end{itemize}
\end{corollary}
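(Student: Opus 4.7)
The plan is to prove part (i) via a direct Weyl-group argument and to deduce (ii) as a formal consequence.

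For part (i), I would first reformulate the $P$-spin condition combinatorially. Identify $\cW_G = \mathrm{S}_{2n}$ acting on $\{1,\dots,2n\}$, and let $w_0(j) \defeq 2n+1-j$; from Lemma \ref{lem:relative} one checks $\cW_G^0$ is exactly the centralizer $Z_{\cW_G}(w_0)$. For a spin parabolic $P$ with standard Levi blocks $B_1,\dots,B_r$, one has $w_0(B_i) = B_{r+1-i}$, and $\cW_{L_P}$ preserves each $B_i$ setwise. The key claim is that $\sigma \in \cW_G^0\cdot \cW_{L_P}$ if and only if the ordered partition $(\sigma(B_i))_i$ is ``$w_0$-symmetric'', i.e.\ $\sigma(B_{r+1-i}) = w_0(\sigma(B_i))$ for all $i$. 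The forward direction is immediate from writing $\sigma=u\tau$ with $u\in \cW_G^0,\tau\in\cW_{L_P}$ and using $u w_0 = w_0 u$. For the converse, given a symmetric partition, one builds $u\in\cW_G^0$ with $u(B_i)=\sigma(B_i)$ by selecting arbitrary bijections $B_i\to\sigma(B_i)$ for $i<(r+1)/2$ and extending to $B_{r+1-i}$ by $u(w_0 k)=w_0(u k)$; when $r$ is odd, a $w_0$-equivariant bijection on the middle block $B_m=w_0(B_m)$ (which has even size) is built pair-by-pair.

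Now suppose $\tilde\pi$ is both $P$-spin and $P'$-spin, with $P'$ having blocks $(B'_j)_{j=1}^s$. First, $P\cap P'$ is itself a spin parabolic: $\Delta_{P\cap P'}=\Delta_P\cap\Delta_{P'}$ is a union of the sets $A_k$ of Definition \ref{def:r_P}, since each $\Delta_P, \Delta_{P'}$ is. Its Levi blocks are the nonempty intersections $B_i\cap B'_j$, paired under $w_0$ by $B_{r+1-i}\cap B'_{s+1-j}$. Bijectivity of $\sigma$ then gives
\[
  \sigma(B_{r+1-i}\cap B'_{s+1-j}) \ =\ \sigma(B_{r+1-i})\cap \sigma(B'_{s+1-j})\ =\ w_0(\sigma(B_i))\cap w_0(\sigma(B'_j))\ =\ w_0\big(\sigma(B_i\cap B'_j)\big),
\]
so the $(P\cap P')$-partition of $\sigma$ is $w_0$-symmetric and $\sigma\in\cW_G^0\cdot\cW_{L_{P\cap P'}}$ by the criterion. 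For part (ii), the set of spin parabolics $P$ such that $\tilde\pi$ is $P$-spin is nonempty (it trivially contains $G$, as $\cW_{L_G}=\cW_G$) and is closed under intersection by (i); hence it contains a unique minimum $\Pmin$.

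The main obstacle will be rigorously establishing the combinatorial criterion, especially the converse direction at the middle block when $r$ is odd, which rests on the observation that $B_m$ and $\sigma(B_m)$ carry matching $w_0$-pair decompositions of the same even cardinality. An alternative route would be to invoke Proposition \ref{prop:spin factor} and glue the factorizations $\alpha^P=\beta\circ\jmath^\vee$ and $\alpha^{P'}=\beta'\circ\jmath^\vee$ into a factorization of $\alpha^{P\cap P'}$ through $\jmath^\vee\colon \cH_p^{P\cap P'}\to \cH_p^{\cG,\cP\cap\cP'}$, but one would then need to verify that the glued character genuinely arises from a $(\cP\cap\cP')$-refinement of $\Pi$, which carries comparable combinatorial content.
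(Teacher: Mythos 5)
Your proposal is correct, and your primary argument is a genuinely different route from the paper's. The paper proves (i) by invoking Proposition~\ref{prop:spin factor}: it takes the factorizations $\alpha^P = \alphaG\circ\jmath^\vee$ and $\alpha^{P'} = \alphaGprime\circ\jmath^\vee$, notes that $\alphaG$ and $\alphaGprime$ agree on the common generators $\cU_{p,r}$, $\cV_p$ of $\cH_p^{\cG,\cP}\cap\cH_p^{\cG,\cP'}$ (since both restrict the single Iwahoric $\alpha$), glues them into a map $\alpha^{\cG,\cP\cap\cP'}$ on $\cH_p^{\cG,\cP\cap\cP'}$, and concludes via $\Delta_{P\cap P'}=\Delta_P\cap\Delta_{P'}$. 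You instead develop an intrinsic Weyl-group criterion -- $\sigma\in\cW_G^0\cdot\cW_{L_P}$ iff the ordered block partition $(\sigma(B_i))_i$ is $w_0$-symmetric, where $\cW_G^0 = Z_{\cW_G}(w_0)$ -- and then win by the elementary set-theoretic computation for blocks of $P\cap P'$. Both are sound. Your route has the advantage of staying entirely inside $\cW_G$ with no reference to the $\mathrm{GSpin}$ side, and it essentially packages a cleaner version of the paper's later Proposition~\ref{prop:P-spin criterion}; note, however, that the paper proves that later proposition \emph{using} Corollary~\ref{cor:min spin}(i), so your approach inverts that logical order, which is a legitimate and self-contained alternative. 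Your identification $\cW_G^0 = Z_{\cW_G}(w_0)$ is correct (inclusion $\supseteq$ is Lemma~\ref{lem:relative}, and equality follows by a cardinality count $2^n n!$, or directly by testing on $\lambda = e_a - e_{w_0(a)}$), and your middle-block construction goes through because $w_0$ is fixed-point free so both $B_m$ and $\sigma(B_m)$ decompose into $w_0$-pairs of the same cardinality. Your caution about the Hecke-algebra alternative -- that one must know the glued $\alpha^{\cG,\cP\cap\cP'}$ genuinely appears in $\Pi_p^{\cJ_{\cP\cap\cP'}}$ -- is resolved in the paper because the \emph{proof} of Proposition~\ref{prop:spin factor} characterizes the $P$-spin condition purely as ``$\alpha^P$ factors through $\jmath^\vee$'' (with no a priori requirement that the factor be a refinement), and that mere factorization is all the gluing produces and all that is needed.
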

\begin{proof}
	(i) By Proposition \ref{prop:spin factor}, the associated $P$-refinement $\alpha^P$ and $P'$-refinement $\alpha^{P'}$ both factor through spin Hecke algebras; that is, there are maps
	\[
	\alphaG : \cH_{p}^{\cG,\cP} = \Qp[\cU_{p,r}, \cV_p : a_r \not\in \Delta_P] \to \overline{\Q}_p, \qquad 	\alphaGprime : \cH_{p}^{\cG,\cP'} = \Qp[\cU_{p,r},\cV_p : a_r \not\in \Delta_{P'}] \to \overline{\Q}_p
	\]	
	such that
	\[
	\cH_p^P \xrightarrow{\ \ \jmath^\vee \ \ } \cH_{p}^{\cG,\cP} \xrightarrow{ \ \ \alphaG \ \ } \overline{\Q}_p, \qquad 	\cH_p^{P'} \xrightarrow{\ \ \jmath^\vee \ \ } \cH_p^{\cG,\cP'} \xrightarrow{ \ \ \alphaGprime \ \ } \overline{\Q}_p.
	\]
	These extend to a map
	\[
	\alpha^{\cG, \cP\cap\cP'} : \cH_p^{\cG,\cP\cap\cP'} = \Qp[\cU_{p,r}, \cV_p : a_r \not\in \Delta_P\cap\Delta_{P'}] \to \overline{\Q}_p.
	\]	
	Since $\Delta_P \cap \Delta_{P'} = \Delta_{P\cap P}$, we find $\alpha^{P\cap P'}$ factors through $\alpha^{\cG, \cP\cap\cP'}$, whence $\alpha$ is a $P\cap P'$-spin refinement, as required.
	
	(ii) The unique minimum $\Pmin$ is the intersection of all $P$ such that $\tilde\pi$ is $P$-spin. 
\end{proof}

\subsection{$P$-spin refinements combinatorially}
We now introduce a convenient combinatorial description of $p$-refinements. Let $\tilde\pi$ be a $p$-refinement, with $\Psi_{\UPS}(\tilde\pi) = \sigma$. We represent this by the tuple $\tilde\pi \sim \{\sigma(1)\sigma(2)\cdots\sigma(2n)\}$ (for example, if $\sigma$ is the transposition in $\mathrm{S}_{4}$ exchanging 1 and 2, then we represent $\tilde\pi$ as $\{2134\}$). From this, we can easily read off whether $\tilde\pi$ is $P$-spin.

\begin{definition}\label{def:i-spin}
Let $\sigma \in \cW_G$. For $1 \leq r \leq n$, we say $\sigma$ is \emph{$r$-spin} if
\begin{equation}\label{eq:P-spin criterion}
	\forall i \leq r, \ \exists j \geq 2n+1-r \ \text{s.t.} \ \sigma(i) + \sigma(j) = 2n+1.
\end{equation}
We say a $p$-refinement $\tilde\pi$ is $r$-spin if $\sigma = \Psi_{\UPS}(\tilde\pi)$ is $r$-spin. 
\end{definition}

In particular, to be $r$-spin, in the tuple $\{\sigma(1)\cdots\sigma(2n)\}$, it must be possible to pair off the first $r$ numbers and last $r$ numbers into pairs that sum to $2n+1$. For example:
\begin{itemize}
	\item The $p$-refinement $\tilde\pi \sim \{216345\}$ (for $\GL_6$) is 1-spin (since $\sigma(1) + \sigma(6) = 2+5 = 7$). It is not 2-spin, as $\{2,1\}$ and $\{4,5\}$ cannot be paired off into pairs summing to 7. Similarly it is not 3-spin. 
	\item The $p$-refinement $\tilde\pi' \sim \{132456\}$ is 1-spin and 3-spin, but not 2-spin.
\end{itemize}

\begin{definition}\label{def:X_P}
For a spin parabolic $P$, define $X_P \subset \{1,...,n\}$ by
\[
	i \in X_P  \iff a_i \not\in \Delta_P \iff A_i \not\subset \Delta_P.
	\]
\end{definition}

This defines an inclusion-reversing bijection between spin parabolics $P$ and subsets $X_P \subset \{1,...,n\}$. If $X \subset \{1,...,n\}$, we say $\tilde\pi$ is \emph{$X$-spin} if it is $r$-spin for all $r \in X$.

\begin{proposition}\label{prop:P-spin criterion}
Let $P$ be a spin parabolic and $\tilde\pi$ a $p$-refinement. Then
\begin{equation}\label{eq:P-spin criterion 2}
\tilde\pi \text{ is }P\text{-spin} \iff \tilde\pi \text{ is }X_P\text{-spin}.
\end{equation}
It is optimally $P$-spin if and only if $X_P = \bigcup_{\substack{X \subset \{1,...,n\}\\\tilde\pi\text{ is $X$-spin}}}X$ is maximal with this property.
\end{proposition}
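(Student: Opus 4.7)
\medskip

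The plan is to translate both sides of the equivalence into identical combinatorial conditions on the permutation $\sigma = \Psi_{\UPS}(\tilde\pi)$, using the block structure of a spin parabolic.

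First, I would unwind the definition of $P$-spin. Write the spin parabolic $P$ as corresponding to a symmetric block partition $(n_1,\ldots,n_r)$ of $2n$, with block boundaries $m_k = n_1 + \cdots + n_k$ (so $X_P = \{m_k : m_k \le n\}$ and $\Delta_{L_P}$ consists of those simple reflections fixing each block). The Levi Weyl group $\cW_{L_P}$ acts on the right by permuting positions within each block, so a coset $\sigma \cW_{L_P}$ is determined by the tuple $(S_1,\ldots,S_r)$ where $S_k = \sigma(\{m_{k-1}+1,\ldots,m_k\})$. By Lemma \ref{lem:relative}, $\tau \in \cW_G^0$ is characterised by $\tau(i) + \tau(2n+1-i) = 2n+1$; since the block partition is symmetric (i.e.\ $m_{r-k} = 2n - m_k$), the coset $\sigma \cW_{L_P}$ meets $\cW_G^0$ if and only if
\[
S_{r+1-k} \;=\; \{2n+1 - v : v \in S_k\} \qquad \text{for all } 1 \le k \le r. \tag{$\star$}
\]

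Next I would reformulate the $r$-spin condition of Definition \ref{def:i-spin} as a set-theoretic mirror identity. Since $|\{i : i \le r\}| = |\{j : j \ge 2n+1-r\}| = r$ and $\sigma$ is a permutation, the existential clause in \eqref{eq:P-spin criterion} forces a bijection, giving the equivalent formulation
\[
\{\sigma(2n+1-r),\ldots,\sigma(2n)\} \;=\; \{2n+1-v : v \in \{\sigma(1),\ldots,\sigma(r)\}\}.
\]
Taking $r = m_k$ for a boundary $m_k \le n$, and using $m_{r-k} = 2n - m_k$, this becomes
\[
S_{r-k+1} \cup \cdots \cup S_r \;=\; \{2n+1-v : v \in S_1 \cup \cdots \cup S_k\}. \tag{$\star\star$}
\]

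The main content of the proof is then showing $(\star) \iff \eqref{eq:P-spin criterion 2}$, i.e.\ that the system $(\star)$ is equivalent to $(\star\star)$ holding for all $m_k \in X_P$. The forward direction is clear: $(\star)$ gives the block-by-block mirror identity, and taking unions over $k' \le k$ yields $(\star\star)$. For the converse, one inducts on $k$: $(\star\star)$ at $m_1$ gives $S_1 = \{2n+1-v : v \in S_r\}$, and then set-differencing consecutive instances of $(\star\star)$ recovers $S_k = \{2n+1-v : v \in S_{r+1-k}\}$ for each $k$ up to the middle. If $n \in X_P$ (equivalently $r$ is even), this covers all blocks; if $n \notin X_P$, there is a central block $S_{(r+1)/2}$, but the global identity $\bigcup S_k = \{1,\ldots,2n\}$ together with the already-established pairings forces the remaining block to be self-mirror automatically. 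This is the step I expect to be the main (purely combinatorial) bookkeeping obstacle, but it is elementary once one fixes notation.

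Finally, for the maximality statement, I would invoke Corollary \ref{cor:min spin}(ii): there is a unique smallest spin parabolic $\Pmin$ for which $\tilde\pi$ is $\Pmin$-spin, and by the inclusion-reversing bijection $P \mapsto X_P$ between spin parabolics and subsets of $\{1,\ldots,n\}$, $\tilde\pi$ being optimally $P$-spin is equivalent to $X_P$ being maximal among all $X \subseteq \{1,\ldots,n\}$ with $\tilde\pi$ being $X$-spin. Combined with the equivalence just proved, this set is $\bigcup_{\tilde\pi \text{ is } X\text{-spin}} X$, completing the proof.
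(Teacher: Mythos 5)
Your proof is correct, but it takes a genuinely different route from the paper. The paper first reduces to the maximal spin parabolics $P_r$ of type $(r,2n-2r,r)$, appealing to Corollary \ref{cor:min spin}(i) to write $P = \bigcap_{r \in X_P} P_r$; for the fixed $P_r$, the $\Leftarrow$ direction is handled by a ``WLOG'' renormalisation of $\UPS$ by elements of $\cW_G^0$ (permitted by Remark \ref{rem:change of UPS}, Remark \ref{rem:P-spin ind of UPS} and \eqref{eq:sigma vs zetasigma}) to arrange $\sigma(\{1,\dots,r\}) = \{1,\dots,r\}$, from which $\sigma \in \cW_{L_{P_r}}$ follows quickly. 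You instead treat the general spin $P$ directly: you encode the coset $\sigma\cW_{L_P}$ by its block images $(S_1,\dots,S_r)$, rewrite ``$P$-spin'' as the block-by-block mirror system $(\star)$, rewrite ``$X_P$-spin'' as the cumulative mirror system $(\star\star)$, and then pass between the two by a set-differencing induction, with the central block (in the odd-block-count case) handled by a global complementation argument. Both proofs are valid; yours is more self-contained at the combinatorial level and avoids the dependence on Corollary \ref{cor:min spin}(i) (hence on the Hecke-algebra factorisation behind Proposition \ref{prop:spin factor}), while the paper's buys a shorter argument per parabolic by reducing to the maximal case. One small point worth tightening in your write-up: the assertion that $(\star)$ is \emph{sufficient} for $\sigma\cW_{L_P} \cap \cW_G^0 \neq \varnothing$ requires actually constructing $\tau \in \cW_G^0$ with the prescribed block images, and when $r$ is odd this uses that the self-mirror central set $S_{(r+1)/2}$ has no fixed points under $v \mapsto 2n+1-v$ (true since $(2n+1)/2 \notin \Z$), so it decomposes into pairs and a suitable $\tau$ on the central block exists; you state the criterion without spelling out this construction. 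The final maximality statement you handle exactly as the paper does, via the inclusion-reversing bijection $P \leftrightarrow X_P$.
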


\begin{example}Recall $\Pmin$ is the unique spin parabolic such that $\tilde\pi$ is optimally $\Pmin$-spin. The example $\tilde\pi \sim \{216345\}$ above is 1-spin but not 2- or 3-spin, so $X_{\Pmin} = \{1\}$, hence $\Delta_{\Pmin} = \{a_2,a_3,a_4\}$, i.e $\Pmin$ is the (1,4,1)-parabolic. Similarly $\Delta_{P_{\tilde\pi'}} = \{a_2,a_4\}$, so $P_{\tilde\pi'}$ is the (1,2,2,1)-parabolic.
\end{example}

\begin{proof}
For $1 \leq r \leq n$, let $P_r$ be the $(r,2n-2r,r)$-parabolic. Note that 
\[\textstyle
	P = \bigcap_{\substack{r \in \{1,...,n\}\\a_r \not\in \Delta_P}} P_r, \qquad \text{thus} \qquad X_P = \bigcup_{\substack{r \in \{1,...,n\}\\a_r \not\in \Delta_P}} X_{P_r},
\]
 so by Corollary \ref{cor:min spin}(i),  it suffices to show that 
\begin{equation}\label{eq:P-spin criterion 3}
\tilde\pi \text{ is }P_r\text{-spin} \iff \tilde\pi \text{ is }r\text{-spin}.
\end{equation}

First suppose $\tilde\pi$ is $P_r$-spin, so we can write $\Psi_{\UPS}(\tilde\pi) = \zeta\sigma$, with $\zeta \in \cW_G^0$ and $\sigma \in \cW_{L_{P_r}}$. Note $\sigma \in \cW_{L_{P_r}} = \mathrm{S}_r \times \mathrm{S}_{2n-2r} \times \mathrm{S}_r$ preserves $\{1,...,r\}$ and $\{2n+1-r,...,2n\}$, hence $\sigma$ is $r$-spin.

By Lemma \ref{lem:relative}, as $\zeta \in \cW_G^0$, $\sigma(i) + \sigma(j) = 2n+1$ if and only if $\zeta\sigma(i) + \zeta\sigma(j) = 2n+1$, i.e.
	\begin{equation}\label{eq:sigma vs zetasigma}
		\text{($\sigma$ is $r$-spin) $\iff$ ($\zeta\sigma$ is $r$-spin)}.
	\end{equation}
It follows that $\zeta\sigma$, hence $\tilde\pi$, is $r$-spin, giving $\Rightarrow$ in \eqref{eq:P-spin criterion 3}.

Conversely, suppose $\tilde\pi$ is $r$-spin, and let $\sigma = \Psi_{\UPS}(\tilde\pi) \in \cW_G$.

\begin{claim}
Without loss of generality we may assume $\sigma$ preserves $\{1,...,r\}$. 
\end{claim}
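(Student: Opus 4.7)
The plan is to reduce to the case by left multiplication by a suitable element of $\cW_G^0$. Specifically, by \eqref{eq:sigma vs zetasigma}, replacing $\sigma$ by $\zeta\sigma$ for $\zeta \in \cW_G^0$ does not affect the $r$-spin property, and it also does not change whether $\sigma \in \cW_G^0 \cdot \cW_{L_{P_r}}$ (which is what we ultimately want to establish). So it suffices to exhibit some $\zeta \in \cW_G^0$ such that $(\zeta\sigma)(F_r) = F_r$, where $F_r \defeq \{1,\dots,r\}$.

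First I would unpack the $r$-spin hypothesis: it yields a (necessarily injective, hence bijective) map $f : F_r \to L_r \defeq \{2n+1-r,\dots,2n\}$ with $\sigma(i)+\sigma(f(i)) = 2n+1$ for all $i \in F_r$. Since $F_r$ and $L_r$ are disjoint (we are in the regime $r \leq n$), so are $\sigma(F_r)$ and $\sigma(L_r)$. The pairing above then forces $\sigma(L_r) = \{2n+1-s : s \in \sigma(F_r)\}$, i.e.\ $\sigma(L_r)$ is the ``mirror'' of $\sigma(F_r)$. Consequently $\sigma(F_r)$ contains no ``mirror pair'' $\{i,2n+1-i\}$.

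Second I would verify that $\cW_G^0$ acts transitively on the collection of $r$-subsets of $\{1,\dots,2n\}$ that contain no mirror pair, with $F_r$ lying in this collection. This is an immediate combinatorial check from the splitting $\cW_G^0 = \{\pm 1\}^n \rtimes \mathrm{S}_n$ recorded after \eqref{eq:W_G^0}: given such a subset $S$, the sign changes allow one to replace any element $s > n$ of $S$ by its partner $2n+1-s \leq n$ (this is consistent since $S$ has no mirror pair), and then the $\mathrm{S}_n$ factor permutes the resulting subset of $\{1,\dots,n\}$ onto $\{1,\dots,r\}$. Applying this to $S = \sigma(F_r)$ produces the required $\zeta \in \cW_G^0$ with $(\zeta\sigma)(F_r) = F_r$, completing the claim.

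The only step that requires any care is the transitivity assertion, and even there the combinatorics is transparent once one remembers that $\cW_G^0$ preserves the partition of $\{1,\dots,2n\}$ into mirror pairs $\{i,2n+1-i\}$ and acts on it as $\{\pm 1\}^n \rtimes \mathrm{S}_n$. I do not anticipate any genuine obstacle; the main point is conceptual, namely noticing that the $r$-spin condition is exactly what prevents $\sigma(F_r)$ from containing a mirror pair, which is in turn exactly the obstruction to moving it back to $F_r$ by $\cW_G^0$.
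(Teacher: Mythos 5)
Your proposal is correct and takes essentially the same approach as the paper: both reduce by left-multiplying $\sigma$ by mirror-swap transpositions $(s,2n+1-s) \in \cW_G^0$ to push $\sigma(\{1,\dots,r\})$ into $\{1,\dots,n\}$, and then apply an element of the $\mathrm{S}_n$ factor to sort it to $\{1,\dots,r\}$, citing \eqref{eq:sigma vs zetasigma} and Remark \ref{rem:P-spin ind of UPS} to justify the renormalisation of $\UPS$. Your repackaging -- observing that $r$-spin forces $\sigma(\{1,\dots,r\})$ to contain no mirror pair and then invoking transitivity of $\cW_G^0$ on such $r$-subsets -- is a mild conceptual tidying of the paper's element-by-element iteration, not a different argument.
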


\emph{Proof of claim:} We may renormalise $\UPS$ by elements of $\cW_G^0$, as this preserves both being $P_r$-spin (Remark \ref{rem:P-spin ind of UPS}) and $r$-spin (by Remark \ref{rem:change of UPS} and \eqref{eq:sigma vs zetasigma}). We do so repeatedly.

First, without loss of generality we may take 
\begin{equation}\label{eq:subset 1n}
\{\sigma(1),...,\sigma(r)\} \subset \{1,...,n\}.
\end{equation}
 Indeed, if $\sigma(i) > n$ for $1 \leq i \leq r$, then  there exists $2n+1-r\leq j \leq 2n$ such that $\sigma(i) + \sigma(j) = 2n+1$, so that $\sigma(j) \leq n$; and we may exchange $\sigma(i)$ and $\sigma(j)$ by the transposition $(\sigma(i),\sigma(j)) \in \cW_G^0$.

Given \eqref{eq:subset 1n}, after acting by an element of $S_n \subset \cW_G^0$, we may assume $\{\sigma(1),...,\sigma(r)\} = \{1,...,r\}$, proving the claim. 

\medskip

As $\tilde\pi$ is $r$-spin, if $\sigma$ preserves $\{1,...,r\}$, it must also preserve $\{2n+1-r,...,2n\}$. This means $\sigma \in \mathrm{S}_r \times \mathrm{S}_{2n-2r} \times \mathrm{S}_r = \cW_{L_{P_r}}$, so $\sigma$ (hence $\tilde\pi$) is $P_r$-spin, giving $\Leftarrow$ in \eqref{eq:P-spin criterion 3}, and hence \eqref{eq:P-spin criterion 2}.
	 
	The last statement is immediate as $P \leftrightarrow X_P$ is inclusion-reversing.
\end{proof}

\subsection{The function $\gamma_{\tilde\pi}$}

Finally, we introduce one more combinatorial description of being $P$-spin, which will be useful when we study symplectic families.

\begin{definition}\label{def:gamma}
	Let $\tilde\pi$ be a $p$-refinement and $\sigma = \Psi_{\UPS}(\tilde\pi)$. Define an injective map
	\[
	\gamma_{\tilde\pi} : \{1,...,n\} \longhookrightarrow \{1,...,2n\}
	\]
	by setting $\gamma_{\tilde\pi}(i)$ to be the unique integer such that
	\[
	\sigma(i) + \sigma(2n+1-\gamma_{\tilde\pi}(i)) = 2n+1.
	\]
\end{definition}

\begin{lemma}
	The map $\gamma_{\tilde\pi}$ is independent of the choice of $\UPS$ satisfying $\UPS_i \UPS_{2n+1-i} = \eta_p$.
\end{lemma}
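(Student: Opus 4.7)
The plan is to reduce to checking invariance under the action of $\cW_G^0$ on $\UPS$, then use Lemma \ref{lem:relative} to carry out that check. By Remark \ref{rem:P-spin ind of UPS}(ii), any two choices of $\UPS$ satisfying the hypotheses differ by an element $\nu \in \cW_G^0$, and by Remark \ref{rem:change of UPS} this replaces $\sigma = \Psi_{\UPS}(\tilde\pi)$ by $\nu\sigma$. So the content of the lemma is that $\gamma_{\tilde\pi}$ computed from $\sigma$ agrees with $\gamma_{\tilde\pi}$ computed from $\nu\sigma$ for every $\nu \in \cW_G^0$.

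The key structural observation is that every $\nu \in \cW_G^0$, viewed as a permutation of $\{1,\dots,2n\}$, commutes with the involution $k \mapsto 2n+1-k$. Indeed, Lemma \ref{lem:relative} states $\nu(i) + \nu(2n+1-i) = 2n+1$ for $1 \leq i \leq n$, and the substitution $i \leftrightarrow 2n+1-i$ extends this to all $1 \leq i \leq 2n$. Consequently, for any $a,b \in \{1,\dots,2n\}$, since $a+b = 2n+1$ means $b = 2n+1-a$, the identity $\nu(b) = 2n+1-\nu(a)$ yields $\nu(a) + \nu(b) = 2n+1$; applying the same fact to $\nu^{-1} \in \cW_G^0$ gives the converse. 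Thus
\[
a + b = 2n+1 \iff \nu(a) + \nu(b) = 2n+1.
\]

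Applying this equivalence with $a = \sigma(i)$ and $b = \sigma(2n+1-j)$ shows that the defining condition $\sigma(i) + \sigma(2n+1-j) = 2n+1$ for $j = \gamma_{\tilde\pi}(i)$ is equivalent to $\nu\sigma(i) + \nu\sigma(2n+1-j) = 2n+1$. Since $\gamma_{\tilde\pi}(i)$ is uniquely characterised by this condition (the relevant $j$ exists and is unique as $\sigma$ is a bijection), the same $j$ is produced by both $\sigma$ and $\nu\sigma$, proving the independence claim.

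There is no substantive obstacle: once Remarks \ref{rem:P-spin ind of UPS}(ii) and \ref{rem:change of UPS} are invoked to reduce to a statement about $\cW_G^0$, the proof is a direct consequence of Lemma \ref{lem:relative}. The only minor point worth stating explicitly is the trivial extension of Lemma \ref{lem:relative} from $1 \leq i \leq n$ to all $i \in \{1,\dots,2n\}$.
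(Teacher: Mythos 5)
Your proof is correct and follows essentially the same route as the paper: reduce to the action of $\nu \in \cW_G^0$ via Remarks \ref{rem:P-spin ind of UPS}(ii) and \ref{rem:change of UPS}, then invoke Lemma \ref{lem:relative}. You have simply spelled out explicitly the step the paper states as "By Lemma \ref{lem:relative}, $\gamma_{\tilde\pi}$ is unchanged if we replace $\sigma$ with $\nu\sigma$", which is a reasonable level of detail to add.
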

\begin{proof}
	If $\UPS'$ is another such choice, there exists $\nu \in \cW_G^0$ such that $\UPS' = \UPS^\nu$. Remark \ref{rem:change of UPS} says $\Psi_{\UPS'}(\tilde\pi) = \nu\Psi_{\UPS}(\tilde\pi) = \nu\sigma$. By Lemma \ref{lem:relative}, $\gamma_{\tilde\pi}$ is unchanged if we replace $\sigma$ with $\nu\sigma$.
\end{proof}

\begin{lemma}\label{lem:spin relations}
	Let $\tilde\pi$ be a $p$-refinement. For $1 \leq r \leq n$,  we have
	\[
		\tilde\pi\text{ is $r$-spin } \iff \text{ $\gamma_{\tilde\pi}$ sends $\{1,...,r\}$ to itself.}
	\]
\end{lemma}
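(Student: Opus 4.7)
The proof is essentially an unpacking of the two definitions, using the substitution $j \leftrightarrow 2n+1-j$ to translate between the indexing in the $r$-spin condition and the indexing used to define $\gamma_{\tilde\pi}$. There should be no significant obstacle; the only thing to verify carefully is that the injectivity of $\gamma_{\tilde\pi}$ upgrades the containment $\gamma_{\tilde\pi}(\{1,\ldots,r\}) \subseteq \{1,\ldots,r\}$ to equality, so that the map really sends this set \emph{to itself}.

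Concretely, the plan is as follows. Set $\sigma = \Psi_{\UPS}(\tilde\pi)$. For a fixed $1 \leq i \leq n$, there is a unique $k \in \{1,\ldots,2n\}$ with $\sigma(i) + \sigma(k) = 2n+1$, since $\sigma$ is a permutation. By definition of $\gamma_{\tilde\pi}$, this unique $k$ equals $2n+1-\gamma_{\tilde\pi}(i)$. Thus the existence of $j \geq 2n+1-r$ with $\sigma(i)+\sigma(j) = 2n+1$ forces $j = k = 2n+1-\gamma_{\tilde\pi}(i)$, and the inequality $j \geq 2n+1-r$ becomes exactly $\gamma_{\tilde\pi}(i) \leq r$.

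Running this equivalence for every $i \in \{1,\ldots,r\}$, the $r$-spin condition \eqref{eq:P-spin criterion} translates to the statement $\gamma_{\tilde\pi}(\{1,\ldots,r\}) \subseteq \{1,\ldots,r\}$. Since $\gamma_{\tilde\pi}$ is injective (as the map $i \mapsto k$ above is injective) and $\{1,\ldots,r\}$ is a finite set, this inclusion is equivalent to the equality $\gamma_{\tilde\pi}(\{1,\ldots,r\}) = \{1,\ldots,r\}$, i.e.\ $\gamma_{\tilde\pi}$ sends $\{1,\ldots,r\}$ to itself. This gives both directions of the claimed equivalence.
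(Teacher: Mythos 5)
Your proof is correct and takes essentially the same route as the paper's: both simply translate the inequality $j \geq 2n+1-r$ in the $r$-spin definition into $\gamma_{\tilde\pi}(i) \leq r$ via the identification $j = 2n+1-\gamma_{\tilde\pi}(i)$, the paper phrasing the result in terms of "pairing off" $\{\sigma(1),\ldots,\sigma(r)\}$ and $\{\sigma(2n+1-r),\ldots,\sigma(2n)\}$ while you spell out the uniqueness of $k$ directly. The injectivity-plus-finiteness observation you include at the end is the same implicit step the paper uses to pass from the containment to "preserves."
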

\begin{proof}
	We know $\gamma_{\tilde\pi}$ preserves $\{1,...,r\}$ if and only if $2n+1-r\leq 2n+1-\gamma_{\tilde\pi}(i) \leq 2n$ for all $i$. By definition of $\gamma_{\tilde\pi}$, this is if and only if the sets $\{\sigma(1),...,\sigma(r)\}$ and $\{\sigma(2n+1-r),...,\sigma(2n)\}$ can be paired off into pairs summing to $2n+1$. But this is the definition of $r$-spin.
\end{proof}

\begin{proposition}\label{prop:gamma spin}
	Let $P$ be a spin parabolic, let $\tilde\pi$ be a $p$-refinement, and $\gamma_{\tilde\pi} : \{1,...,n\} \hookrightarrow \{1,...,2n\}$ the function from Definition \ref{def:gamma}. Then
\[
	\tilde\pi\text{ is $P$-spin } \iff \text{ $\gamma_{\tilde\pi}$ preserves $\{1,...,r\}$ whenever $r \in X_P$.}
\]
Additionally, $\tilde\pi$ is optimally $P$-spin if $\gamma_{\tilde\pi}$ does \emph{not} preserve $\{1,...,r\}$ for all $r \not\in X_P$.
\end{proposition}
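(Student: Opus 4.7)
The plan is to deduce both parts of the proposition directly from Proposition \ref{prop:P-spin criterion} and Lemma \ref{lem:spin relations}; no new combinatorics is needed beyond carefully combining these two statements with the inclusion-reversing bijection $P \leftrightarrow X_P$.

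For the equivalence, I would first recall that by Proposition \ref{prop:P-spin criterion}, $\tilde\pi$ is $P$-spin if and only if it is $X_P$-spin, i.e.\ $r$-spin for every $r \in X_P$. Then Lemma \ref{lem:spin relations} translates each individual $r$-spin condition into the statement that $\gamma_{\tilde\pi}$ sends $\{1,\dots,r\}$ into itself (equivalently, preserves it, since $\gamma_{\tilde\pi}$ is injective and the codomain of its restriction is a finite set of the same cardinality). Concatenating these two equivalences over all $r \in X_P$ gives the first claim immediately.

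For the additional statement about optimality, suppose $\gamma_{\tilde\pi}$ fails to preserve $\{1,\dots,r\}$ for every $r \notin X_P$. By Lemma \ref{lem:spin relations}, this is the same as saying $\tilde\pi$ is \emph{not} $r$-spin for any $r \notin X_P$. Combined with the first part of the proposition, the set of $r \in \{1,\dots,n\}$ for which $\tilde\pi$ is $r$-spin is then \emph{exactly} $X_P$. Consequently, if $X \subset \{1,\dots,n\}$ is any subset such that $\tilde\pi$ is $X$-spin, then necessarily $X \subset X_P$. Via the inclusion-reversing bijection between spin parabolics and subsets of $\{1,\dots,n\}$, this means any spin parabolic $P'$ with $\tilde\pi$ being $P'$-spin must satisfy $P' \supseteq P$. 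Applying the characterisation of optimality in the last sentence of Proposition \ref{prop:P-spin criterion} (equivalently, Definition \ref{def:optimal}) yields that $\tilde\pi$ is optimally $P$-spin.

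The only mild subtlety, and essentially the only thing worth checking, is that Lemma \ref{lem:spin relations} gives an \emph{if and only if} characterisation of the $r$-spin condition in terms of $\gamma_{\tilde\pi}$, so that one really does get contrapositive statements of the form ``not $r$-spin iff $\gamma_{\tilde\pi}$ does not preserve $\{1,\dots,r\}$'' for free. There is no hard step here: once the dictionary $r$-spin $\leftrightarrow$ $\gamma_{\tilde\pi}$ preserves $\{1,\dots,r\}$ is in place, the proposition is a bookkeeping consequence of the preceding results.
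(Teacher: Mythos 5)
Your argument is correct and is exactly the paper's approach: the published proof of Proposition~\ref{prop:gamma spin} is the one-line statement that both parts follow by combining Proposition~\ref{prop:P-spin criterion} with Lemma~\ref{lem:spin relations}, which is precisely what you spell out.

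One small point worth making explicit. In the optimality direction, the phrase ``Combined with the first part of the proposition'' is where you are quietly using that $\tilde\pi$ is $P$-spin (so that, via Lemma~\ref{lem:spin relations}, $\gamma_{\tilde\pi}$ preserves $\{1,\dots,r\}$ for every $r \in X_P$, and hence the set $Y$ of $r$ for which $\tilde\pi$ is $r$-spin is \emph{all} of $X_P$, not merely contained in it). That hypothesis is genuinely needed: if $\gamma_{\tilde\pi}$ fails to preserve $\{1,\dots,r\}$ for every $r \notin X_P$ but $\tilde\pi$ is not $P$-spin, then $Y \subsetneq X_P$ and $\tilde\pi$ is optimally $P'$-spin for some strictly larger $P' \supsetneq P$, not optimally $P$-spin. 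The proposition's ``Additionally\dots if\dots'' is to be read as a further condition on top of the $P$-spin characterisation already established, and your proof implicitly does read it that way; it would just be cleaner to state this assumption outright before invoking Definition~\ref{def:optimal}.
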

\begin{proof}
	Both statements follow by combining Proposition \ref{prop:P-spin criterion} with Lemma \ref{lem:spin relations}. 
\end{proof}

\subsection{Non-critical slope bounds}

We conclude this section by showing that non-critical slope conditions (as in Definition \ref{def:non-critical slope}) interact well with the functoriality described above. We will not use this result in this paper, but it is simple to prove and has wider applications. Let $\pi$ be a RASCAR of weight $\lambda$, and $P$ a spin parabolic.

Suppose $\tilde\pi^P = (\pi, \alpha^P)$ is a $P$-spin $P$-refinement, and let $\alpha^{\cP}$ be the corresponding $\cP$-refinement of $\Pi$ furnished by Proposition \ref{prop:spin factor}. The integrally normalised Hecke operators for $\cG$ are defined as $\cU_{p,r}^\circ = \lambda^{\cG}(\jmath^{\vee}(t_{p,r})) \cU_{p,r}$, where $\lambda^{\cG} = \lambda_1f_1 + \cdots + \lambda_nf_n + (\lambda_n + \lambda_{n+1})f_0$ is the weight of $\Pi$, the unique weight with $\jmath(\lambda^{\cG}) = \lambda$. Note for any $1 \leq r \leq n$, we have $\lambda(t_{p,r}) = \lambda^{\cG}(\jmath^\vee(t_{p,r}))$ by  \eqref{eq:pairing jmath}, so $\alpha^{\cP}(\cU_{p,r}^\circ) = \alpha^P(U_{p,r}^\circ)$ for $1 \leq r \leq n$ with $a_r \not\in \Delta_P$.

The small slope bound for $\cG$ is defined (in terms of the root system) in \cite[Def.\ 4.3]{BW20}. Specifically, we need $v_p(\alpha^{\cP}(\cU_{p,r}^\circ)) < \langle \lambda^{\cG}, \beta_r^*\rangle + 1,$ where $\beta_r^*$ is the corresponding simple coroot. Using the $\cG$-root system from Proposition \ref{prop:cG root system}:
\begin{itemize}\s
	\item If $1 \leq r \leq n-1$, then $\beta_r = f_r - f_{r+1}$, and $\beta_r^* = f_r^* - f_{r+1}^*$. So $\langle \lambda^{\cG}, \beta_r^*\rangle + 1 = \lambda_r-\lambda_{r+1} + 1$.
	\item If $r = n$, then $\beta_n = f_n$, and $\beta_n^* = 2f_n^* - f_0^*$. So $\langle \lambda^{\cG}, \beta_n^*\rangle = \lambda_n - \lambda_{n+1} + 1$.
\end{itemize}

Accordingly, we see $\alpha^{\cP}$ is non-$\cP$-critical slope (in the sense of \cite{BW20}) if and only if $v_p(\alpha^{\cP}(\cU_{p,r}^\circ)) < \lambda_r - \lambda_{r+1}+ 1$ whenever $a_r \not\in \Delta_{\cP}$. In particular:

\begin{proposition}\label{lem:non-critical slope transfer}
	Let $\tilde\pi^P$ be a $P$-spin $P$-refinement, corresponding to a $\cP$-refinement $\tilde\Pi^{\cP}$ of $\Pi$. Then $\tilde\pi^P$ is non-$P$-critical slope if and only if $\tilde\Pi^{\cP}$ is non-$\cP$-critical slope.
\end{proposition}

\begin{proof}
	In \eqref{eq:U_p equality} we will show $\alpha^P(U_{p,r}^\circ) = \eta_0(p)^{n-r}\alpha^P(U_{p,2n-r}^\circ)$, for $\eta_0$ a finite order character. As $\eta_0$ has finite order, $v_p(\eta_0(p)) = 0$. Thus for all $r$ with $a_r \not\in \Delta_P$, we have 
	\[
	v_p(\alpha^{\cP}(\cU_{p,r}^\circ)) = v_p(\alpha(U_{p,r}^\circ)) = v_p(\alpha(U_{p,2n-r}^\circ)).
	\]
	But the non-critical slope bounds for these operators are the same for each $r$.
\end{proof}


\part{Dimensions of Symplectic Components}

In part II, we focus on full Iwahori refinements $\tilde\pi$, and study the families through such refinements in the Iwahori eigenvariety. In particular, we conjecture a classification on the dimension of such symplectic families based on the unique spin parabolic $\Pmin$ such that $\tilde\pi$ is optimally $\Pmin$-spin, prove the upper bound, and prove the lower bound in special cases.

\section{The symplectic locus in the eigenvariety}

\subsection{The eigenvariety}
Recall that $K = K^p\Iw$ is Iwahori at $p$, and let $\sW = \sW_K$ be the \emph{weight space} for $G$ of level $K$ (defined e.g.\ in \cite[\S10.1]{BDGJW}). It is a $2n$-dimensional $\Qp$-rigid space.  Let $\cH = \cH^p \cdot \cH_p$, for $\cH^p = \otimes_{v\nmid p\infty} \cH_v$ the tame Hecke algebra of e.g.\ \cite[Def.\ 2.2]{BW20}.

The central object of study in this paper is the \emph{eigenvariety for $G$}.

\begin{theorem}(\cite[Thm.\ 1.1.2]{Han17}). \label{thm:eigenvariety}
	There exists a canonical separated rigid analytic space $\sE^G_K$, and a locally finite map $w:\sE_K^G \to \sW$,  such that the $L$-points $x \in \sE^G$ with $w(x) = \lambda$ biject with finite-slope systems of $\cH$-eigenvalues in the overconvergent cohomology $\hc{\bullet}(S_K,\sD_\lambda)$.
\end{theorem}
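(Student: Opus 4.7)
The plan is to follow the Buzzard--Coleman eigenvariety machine applied to overconvergent cohomology, in the style of Ash--Stevens, Urban, and Hansen. First I would construct a coherent family of locally analytic distribution modules over weight space: for each affinoid subdomain $\Omega = \mathrm{Sp}(R) \subset \sW$ there is a universal character $\lambda_\Omega : T(\Zp) \to R^\times$ specialising at each $\lambda \in \Omega(L)$ to the corresponding weight, and one forms the $R$-module $\sD_\Omega$ of locally analytic distributions on a $p$-adic monoid containing $\Iw$, twisted by $\lambda_\Omega$ along the Borel. This $\sD_\Omega$ is a projective Banach $R$-module carrying a continuous action of the monoid generated by $\Iw$ and the elements $t_{p,r}$ that control the Hecke operators in $\cH_p$.

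Next I would form the Borel--Serre cochain complex $C^\bullet(S_K, \sD_\Omega)$, whose cohomology recovers $\hc{\bullet}(S_K,\sD_\lambda)$ upon specialisation. The critical technical input is that $U_p \defeq U_{p,1}\cdots U_{p,2n-1}$ acts as a compact operator on this complex, which follows from the standard orbit/volume estimate: $t_p$ strictly contracts the radius of analyticity on the orthonormalisable pieces of $\sD_\Omega$. One then has slope decompositions: for each rational $h \geq 0$, there is a direct summand $C^\bullet(S_K,\sD_\Omega)^{\leq h}$ which is a perfect complex of $R$-modules, stable under $\cH$, and whose formation is compatible with base change along $\Omega$.

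With this data in hand I would invoke the eigenvariety machine. The characteristic power series $\det(1 - T U_p \mid C^\bullet(S_K,\sD_\Omega))$ patch over an admissible cover of $\sW$ to cut out a spectral variety $Z \to \sW$ that is flat and locally finite; the finite-slope cohomology, viewed as a coherent sheaf on $Z$, carries a commuting action of $\cH$. Taking the analytic spectrum of the image of $\cH$ in the endomorphism algebra of this sheaf produces $\sE_K^G$ together with a finite map $\sE_K^G \to Z$; composing with $Z \to \sW$ yields the desired locally finite weight map $w$. Canonicity follows because the local data is functorial in the pair $(\Omega,h)$ and thus glues across affinoid slope data; separatedness of $\sE_K^G \to \sW$ is inherited from that of $Z \to \sW$ together with the fact that Hecke eigenvalues determine the closed points.

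Finally, the interpolation property is essentially built into the construction: an $L$-point of $\sE_K^G$ over $\lambda$ is, by definition, a system of simultaneous generalised $\cH$-eigenvalues on $\hc{\bullet}(S_K,\sD_\lambda)^{\mathrm{fs}}$, which is precisely a finite-slope eigensystem in $\hc{\bullet}(S_K,\sD_\lambda)$. I expect the main obstacle to be the compactness and base-change analysis for $U_p$ on a chain complex whose cohomology is not concentrated in a single degree — this is unavoidable for $G = \GL_{2n}$ with $n \geq 2$, since $G(\R)$ admits no discrete series. Hansen's innovation is to perform the slope decomposition on perfect complexes \emph{before} passing to cohomology, so that Buzzard's axioms apply directly at the chain level and all cohomological degrees are captured simultaneously.
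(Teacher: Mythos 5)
This theorem is not proved in the paper: it is quoted directly from Hansen \cite[Thm.\ 1.1.2]{Han17}, and the paper offers no independent argument. Your outline faithfully reproduces the construction in that reference, including the key observation that slope decompositions must be performed on perfect Borel--Serre complexes rather than on individual cohomology groups, which is forced because for $\GL_{2n}$ with $n \geq 2$ the (cuspidal) cohomology is spread over a range of degrees.
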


Here $S_K$ is the locally symmetric space for $G$ of level $K$ defined in \cite[\S2.1]{Han17}, $\cD_\lambda$ is the local system of locally analytic distributions of weight $\lambda$ defined in \cite[\S2.2]{Han17}, and $\cH_p$ acts on the cohomology via normalised Hecke operators $U_{p,r}^{\circ}$ \cite[Rem.\ 3.13]{BDW20}. 

A point $x \in \sE_K^G$ is \emph{classical (cuspidal)} if the corresponding system of eigenvalues appears in $\pi_x^K$ for a (cuspidal) automorphic representation $\pi_x$ of $G(\A)$ of weight $w(x)$.  Following \cite{Urb11}, \cite[Conj.\ 1.1.5]{Han17} predicts:

\begin{conjecture}
	Every irreducible component of $\sE_K^G$ containing a non-critical cuspidal classical point of regular weight has dimension $n+1$.
\end{conjecture}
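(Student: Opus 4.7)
The target dimension $n+1 = \dim(\sW) - \ell_0$, where $\ell_0 = n-1$ is the defect of $\GL_{2n}(\R)$, matches Urban's general heuristic \cite{Urb11} that cuspidal components of eigenvarieties should be cut out in the middle degree by $\ell_0$ equations beyond the weight map. The plan is to adapt Urban's Fredholm-determinant construction, using Hansen's overconvergent cohomology \cite{Han17} to work uniformly across the cuspidal range of cohomological degrees $[q_0, q_0 + \ell_0]$, rather than one degree at a time.

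I would proceed in three broad steps. First, for $\lambda$ in a neighbourhood of $w(x)$ in $\sW$, analyse the slope-$\le h$ part of the overconvergent complex computing $\hc{\bullet}(S_K, \sD_\lambda)$ as a perfect complex of Banach $\sW$-modules; by non-criticality and regularity of $\lambda$, the classical eigensystem at $x$ appears in $\hc{q}(S_K, \sD_\lambda)$ exactly for $q \in [q_0, q_0 + \ell_0]$, with multiplicities controlled by $(\mathfrak{g}, K_\infty)$-cohomology of the underlying archimedean representation. Second, form a signed Fredholm determinant
\[
F(\lambda, T) \;=\; \prod_{q = q_0}^{q_0 + \ell_0} \det\!\bigl(1 - T \cdot U_p^\circ \,\big|\, \hc{q}(S_K, \sD_\lambda)^{\le h}\bigr)^{(-1)^{q - q_0}}
\]
whose zero locus in $\sW \times \mathbb{G}_m$ contains the cuspidal part of $\sE_K^G$ near $x$. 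Third, using a spectral sequence argument together with Poincar\'e or Borel--Serre duality pairing degrees symmetrically about $q_0 + \ell_0/2$, show that the cuspidal locus drops in dimension by exactly $\ell_0$ relative to $\sW \times \mathbb{G}_m$, yielding irreducible components of dimension $n+1$ through $x$. The upper bound $\dim \le n+1$ should follow purely from the support-in-$[q_0, q_0 + \ell_0]$ statement and an Euler-characteristic cancellation argument; the lower bound requires exhibiting enough transverse equations.

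The principal obstacle, and the reason the conjecture remains open in general, is separating \emph{cuspidal} from \emph{Eisenstein} contributions $p$-adically in a neighbourhood of $x$. The signed Euler characteristic above isolates a union of both cuspidal and boundary-type loci, and one needs a way to project onto the cuspidal piece at the level of the overconvergent complex, not just on classical cohomology. At regular weight and non-critical slope one might hope to use strict purity of Hodge--Tate weights to rule out Eisenstein deformations infinitesimally; but making this rigorous would require either an unconditional construction of Galois families through $x$ with precise control on Hodge--Tate--Sen weights, or a direct archimedean separation of cuspidal and Eisenstein parts of the overconvergent complex. Both strategies lie beyond currently available techniques, which is why the present paper attacks the problem only under the symplectic hypothesis, where functorial transfer from $\mathrm{GSpin}_{2n+1}$ provides a much more tractable handle on the classical locus and allows the authors to make \emph{unconditional} progress on the upper bound (with matching lower bound in the non-critical slope case).
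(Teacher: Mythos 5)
This statement is a conjecture that the paper does not prove; it is restated from Urban and Hansen's ``non-abelian Leopoldt'' prediction, and the paper only cites the partial result of Newton (\cite[Prop.~B.1]{Han17}) proving the lower bound $\dim \geq n+1$. You do correctly recognise that the statement is open in general, which is the right conclusion.

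However, you have the directions of the easy and hard bounds reversed. You claim the upper bound $\dim \leq n+1$ ``should follow purely from the support-in-$[q_0,q_0+\ell_0]$ statement and an Euler-characteristic cancellation argument'' while the lower bound ``requires exhibiting enough transverse equations.'' In fact it is precisely the lower bound that follows from the perfect-complex/support analysis: Newton's argument in Hansen's appendix applies Auslander--Buchsbaum and the concentration of overconvergent cohomology in the cuspidal range of degrees to conclude that every such component has codimension at most $\ell_0 = n-1$ in the ambient $2n$-dimensional space. The upper bound is the part that is wide open, and it is exactly because Euler-characteristic arguments give no information here: for case (II) groups the signed Fredholm determinant you write down is identically $1$ (the Euler characteristic of the overconvergent complex vanishes by Poincar\'e duality and Kostant's theorem), so it cuts out nothing. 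This is the classical obstruction to Urban's original approach, and the reason Hansen's construction abandons the Euler-characteristic framing.

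Separately, your diagnosis of the key obstruction as cuspidal/Eisenstein separation is not really the accepted formulation. The obstacle is more fundamental: one has no control over the dimensions of individual overconvergent cohomology groups $\hc{q}(S_K, \sD_\lambda)^{\leq h}$ as $\lambda$ varies, and in particular one cannot preclude a component whose weight image escapes the $(n+1)$-dimensional pure locus $\sW_0$ by having its classical points concentrate in a proper subvariety. That is, one cannot currently rule out a ``generically non-classical'' component of larger dimension passing through $x$. This distinction between classical families (where the paper can prove strong upper bounds via the $P$-spin stratification) and arbitrary irreducible components (the subject of the conjecture) is at the heart of why the paper's methods do not touch this statement.
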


\begin{remarks}
	\begin{itemize}
		\item[(i)]	The notion of non-criticality we take here is \cite[Def.\ 3.2.3]{Han17}.
			
			\item[(ii)] By an irreducible component of a rigid space, we mean in the sense of \cite[Def.\ 2.2.2]{Con99}. Whilst the global definition of irreducible components is complicated, the dimension of any such component can be computed locally, where the definition is much more straightforward: if $\mathrm{Sp}(T) \subset \sE_K^G$ is any affinoid piece, the irreducible components of $\mathrm{Sp}(T)$ are of the form $\mathrm{Sp}(T/\mathfrak{p})$, where $\mathfrak{p}$ is a minimal prime ideal of $T$. If $x \in \mathrm{Sp}(T)$ is a given point, corresponding to a maximal ideal $\m_x \subset T$, then the irreducible components containing $x$ are the components $\mathrm{Sp}(T/\mathfrak{p})$ with $\mathfrak{p}\subset \m_x$.

\item [(iii)] In \cite[Prop.\ B.1]{Han17}, Newton has proved that every component as in the conjecture has dimension at least $n+1$. For $\GL_N$, the natural generalisation of this conjecture -- precisely stated in \cite[Conj.\ 1.1.5]{Han17}, and which Hida and Urban style as a `non-abelian Leopoldt conjecture' -- has been proved for $N\leq 4$ in \cite[Thm.\ 4.5.1]{Han17}, noting that $l(\GL_1), l(\GL_2) = 0$ and $l(\GL_3),l(\GL_4) = 1$. For $N \geq 5$, however, it remains wide open.

			\item[(iv)] This conjecture generalises \cite[Conj.\ 1.1]{HidP-ord}, which considers the $p$-ordinary special case from a similar automorphic perspective. In this setting, one has an analogous conjecture on the Galois side due to Tilouine \cite{Til96}, predicting the dimension of certain deformation rings; and under appropriate $R=T$ theorems, the two conjectures become equivalent. To our knowledge, however,  this analogous Galois conjecture is equally wide open.
		
		\end{itemize}

\end{remarks}

\subsection{The classical and symplectic loci}

\begin{definition}
The \emph{classical cuspidal locus} $\sL_K^G \subset \sE_K^G$ is the Zariski closure of the classical cuspidal points in $\sE_K^G$.
\end{definition}

Let $\sW_0 \subset \sW$ be the $(n+1)$-dimensional \emph{pure weight space}, the Zariski-closure of all pure algebraic weights (that is, dominant weights $\lambda = (\lambda_1,...,\lambda_{2n})$ such that $\lambda_1+\lambda_{2n} = \lambda_2+\lambda_{2n-1} = \cdots = \lambda_n + \lambda_{n+1} = \sw(\lambda)$ for some $\sw(\lambda) \in \Z$). 
By \cite[Lem.\ 4.9]{Clo90} any classical cuspidal point $x$ has weight $w(x) \in \sW_0$, so:

\begin{proposition}
	We have $w(\sL_K^G) \subset \sW_0$. 
\end{proposition}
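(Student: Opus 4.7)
The plan is to reduce the statement to a Zariski-closure argument, using Clozel's purity result as the essential input. First I would observe that $\sW_0$ is by its very definition a Zariski-closed subspace of $\sW$: it is the Zariski closure in $\sW$ of the set of pure algebraic weights, so as a closed analytic subspace it sits inside $\sW$ as a $(n+1)$-dimensional closed subvariety. Since the weight map $w : \sE_K^G \to \sW$ is a morphism of rigid analytic spaces (locally finite by Theorem \ref{thm:eigenvariety}), the preimage $w^{-1}(\sW_0)$ is a Zariski-closed subspace of $\sE_K^G$.

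Next I would invoke Clozel's purity lemma, cited in the excerpt as \cite[Lem.\ 4.9]{Clo90}: any regular algebraic cuspidal automorphic representation $\pi$ of $\GL_{2n}(\A)$ has pure infinitesimal character, which translates into the statement that the weight $w(x)$ of any classical cuspidal point $x \in \sE_K^G$ is a pure algebraic weight, hence lies in $\sW_0$. Consequently every classical cuspidal point of $\sE_K^G$ lies in $w^{-1}(\sW_0)$.

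Finally, since $w^{-1}(\sW_0)$ is Zariski-closed in $\sE_K^G$ and contains all classical cuspidal points, it contains their Zariski closure, which by definition is $\sL_K^G$. Applying $w$ gives $w(\sL_K^G) \subset \sW_0$, as desired.

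There is no real obstacle here: the only non-formal input is Clozel's purity theorem, and everything else is topological. The slight subtlety worth noting is just that one must verify $\sW_0$ really is Zariski-closed in $\sW$ (rather than merely a constructible subset), but this is immediate from its definition as a Zariski closure, or alternatively from the fact that the purity conditions $\lambda_i + \lambda_{2n+1-i} = \sw(\lambda)$ cut out a closed rigid subspace of $\sW$.
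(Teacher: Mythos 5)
Your proof is correct and matches the paper's (implicit) argument exactly: the paper's one-line proof simply cites Clozel's purity to conclude every classical cuspidal point has weight in $\sW_0$, leaving the Zariski-closure step you spell out as a routine unstated detail. Nothing to add.
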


 Through any point $x \in \sL^G_K$, there is a `trivial' 1-dimensional family, corresponding to twists by the norm. (In the introduction, for more conceptual statements, we removed this trivial variation; but here, for cleaner comparisons to other works, we leave it in). 

\begin{definition}
	Let $x \in \sL_K^G$ be a classical cuspidal point.
	\begin{itemize}\s
		\item An irreducible neighbourhood of $\sL_K^G$ through $x$ is \emph{trivial} if it is exactly 1-dimensional, given by twists by the norm and varying over the weight family $\{w(x) + (\kappa,....,\kappa)\}$.
		\item A \emph{classical family} through $x$ is a non-trivial irreducible neighbourhood $\sC \subset \sL_K^G$  of $x$ that itself contains a Zariski-dense set of classical points\footnote{Note that if the classical points are very Zariski-dense in $\sL_K^G$, then $\sC$ will always contain a Zariski-dense set of classical points.}.
	\item We say a point/eigensystem $x \in \sL_K^G$ is \emph{arithmetically rigid} if it cannot be varied in a classical family (i.e.\ it varies only in a trivial family).
	\end{itemize}
\end{definition}

Little is known, or even precisely conjectured, about the classical cuspidal locus. However, there is a folklore expectation that \emph{all} classical families should come from discrete series, in the sense described in \S\ref{sec:intro philosophy}. In particular, all such families should `come from self-duality'.

Given the above expectation, it is natural to study RACARs $\pi$ of $G(\A)$ that are essentially self-dual. Such RACARs are either orthogonal or symplectic. We focus on the latter. 

\begin{definition}
	Define the \emph{symplectic locus} $\sS_K^G \subset \sL_K^G \subset \sE^G_K$ to be the Zariski closure of all classical cuspidal points $x$ such that $\pi_x$ is symplectic. A \emph{symplectic family through $x$} is a non-trivial irreducible neighbourhood of $x$ in $\sS_K^G$ containing a Zariski-dense set of symplectic points.
\end{definition}

Our main result (Theorem \ref{thm:intro 1} of the introduction) gives upper/lower bounds for the dimensions of symplectic families. We state this in the stronger form we prove in \S\ref{sec:main results}.

\subsection{Parabolic weight spaces}\label{sec:parabolic weight spaces}

 To state the more precise version of Theorem \ref{thm:intro 1} that we actually prove, we must introduce parabolic weight spaces.  

Recall that if $P \subset G$ is a parabolic, then the \emph{$P$-parabolic weight space} is the subspace $\sW^P \subset \sW$ of characters that extend to characters of $L_P$. If $\lambda_\pi \in \sW$ is any fixed weight, we denote its coset
\[
\sW_{\lambda_\pi}^P \defeq \lambda_\pi  + \sW^P \subset \sW,
\]
and call it the \emph{$P$-parabolic weight space through $\lambda_\pi$}. These notions are defined in general, and in detail, in \cite[\S3.1]{BW20}. We also define the pure subspaces $\sW_0^P$ and $\sW_{0,\lambda_\pi}^P$ to be the intersections of $\sW^P$ and $\sW_{\lambda_\pi}^P$ with $\sW_0$. We now compute their dimensions.

\begin{lemma}\label{lem:lambda_i-lambda_i+1constant in families}
	If $\lambda_\pi = (\lambda_{\pi,1},...,\lambda_{\pi,2n})$ and $\lambda = (\lambda_1,...,\lambda_{2n})$ are two weights, then $\lambda \in \cW_{\lambda_\pi}^P$ if and only if
	\begin{equation}\label{eq:lambda_i}
		\lambda_i - \lambda_{i+1} = \lambda_{\pi,i} - \lambda_{\pi,i+1} \qquad \forall i \text{ such that }a_i \in \Delta_P.
	\end{equation}
\end{lemma}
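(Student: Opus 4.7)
The plan is to directly unpack the definition of $\sW^P$ in terms of the character lattice and the coroots. The statement is an affine translation of the characterization of $\sW^P$, so it suffices to show that $\mu \in \sW^P$ if and only if $\mu_i - \mu_{i+1} = 0$ for every $i$ with $a_i \in \Delta_P$; the lemma then follows by applying this to $\mu = \lambda - \lambda_\pi$ and noting that $\lambda \in \sW_{\lambda_\pi}^P = \lambda_\pi + \sW^P$ if and only if $\mu \in \sW^P$.

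First I would recall the definition of $\sW^P$ from \cite[\S3.1]{BW20}: it is the rigid analytic space of $p$-adic characters of $T(\Zp)$ that extend to characters of $L_P(\Zp)$. Such an extension exists if and only if $\mu$ is trivial on $[L_P, L_P] \cap T$; equivalently, $\mu$ must pair trivially with every coroot contained in the Levi, and it suffices to check this on the simple coroots $a_i^\vee$ with $a_i \in \Delta_P$ (since these generate the coroot lattice of $L_P$).

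Finally, I would compute the relevant pairing. For the standard simple root $a_i = e_i - e_{i+1}$ of $G = \GL_{2n}$, the corresponding coroot is $a_i^\vee = e_i^* - e_{i+1}^*$, so for any $\mu = (\mu_1, \dots, \mu_{2n})$, the natural pairing gives $\langle \mu, a_i^\vee\rangle = \mu_i - \mu_{i+1}$. Hence $\mu \in \sW^P$ if and only if $\mu_i = \mu_{i+1}$ for every $i$ with $a_i \in \Delta_P$, which is exactly \eqref{eq:lambda_i} after substituting $\mu = \lambda - \lambda_\pi$.

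There is no real obstacle: the lemma is a direct unwinding of the definition of $\sW^P$ together with the standard description of the Levi of a parabolic in $\GL_{2n}$ in terms of simple roots. The only point requiring a little care is the passage between the algebraic characterization of characters of $L_P$ (on the character lattice $X$) and the $p$-adic analytic notion of $\sW^P$; but since the relations cutting out $\sW^P$ inside $\sW$ are the same equations $\langle\mu, a_i^\vee\rangle = 0$ (now interpreted on $p$-adic points of the torus), this extends verbatim to the rigid analytic setting.
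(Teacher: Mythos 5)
Your proof is correct and follows essentially the same route as the paper's: both reduce to showing $\mu = \lambda - \lambda_\pi \in \sW^P$ and then characterise this by $\mu_i = \mu_{i+1}$ whenever $a_i \in \Delta_P$. The only cosmetic difference is in the middle step: the paper argues directly via the block structure of the Levi ($\mu$ factors through $\det_1 \times \cdots \times \det_r$, so is constant on each $\GL_{m_j}$ block), while you phrase the same condition root-theoretically as $\langle \mu, a_i^\vee\rangle = 0$ for $a_i \in \Delta_P$; these are equivalent, and the final computation $\langle \mu, a_i^\vee\rangle = \mu_i - \mu_{i+1}$ matches the paper's conclusion.
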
 

\begin{proof}
	We have $\lambda \in \sW_{\lambda_\pi}^G$ if and only if $\lambda - \lambda_\pi =: \mu= (\mu_1,...,\mu_{2n}) = (\lambda_1 - \lambda_{\pi,1}, ..., \lambda_{2n}-\lambda_{\pi,2n})$ factors through $L_P$. If $L_P = \GL_{m_1} \times \cdots \times \GL_{m_r}$,  then this happens if and only if $\mu$ factors through $\det_1 \times \cdots \times \det_r$.  This is equivalent to $\mu_1 = \cdots = \mu_{m_1}$, ..., $\mu_{2n-m_r+1} = \cdots = \mu_{2n}$ (i.e.\ the $\mu_i$'s are constant in each Levi factor); or in other words, that $\lambda_i - \lambda_{\pi,i} = \mu_i = \mu_{i+1} = \lambda_{i+1}-\lambda_{\pi,i+1}$ for all $i$ with $a_i \in \Delta_P$. Rearranging gives \eqref{eq:lambda_i}.
\end{proof}

In particular, $\lambda_i - \lambda_{i+1}$ can vary in a $P$-parabolic weight family if and only if $a_i \not\in \Delta_P$. For example, in a $B$-parabolic weight family weights can vary in all directions (since $\Delta_B = \varnothing$). If $Q$ is the $(n,n)$-parabolic, then $\Delta_Q = \Delta_B \backslash \{a_n\}$, so in a $Q$-parabolic family $\lambda_1 -\lambda_2$, ..., $\lambda_{n-1}-\lambda_n$ are fixed, $\lambda_n - \lambda_{n+1}$ can vary, and $\lambda_{n+1}-\lambda_{n+2}$, ..., $\lambda_{2n-1} - \lambda_{2n}$ are fixed, so we get the 2-dimensional variation of \cite{BDW20}.

\begin{lemma}\label{lem:parabolic dimension}
	For any spin parabolic $P$ and $\lambda_\pi \in X_0 \subset \sW_0$, we have $\mathrm{dim}(\sW_{0,\lambda_\pi}^P) = \#X_{P} + 1 $.
\end{lemma}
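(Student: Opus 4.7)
The plan is a direct dimension count in convenient linear coordinates. Introduce coordinates $(x_1, \ldots, x_{2n-1}, y)$ on $\sW \simeq \mathbb{A}^{2n}$ by setting $x_i := \lambda_i - \lambda_{i+1}$ and $y := \lambda_{2n}$. In these coordinates, the pure condition ``$\lambda_i + \lambda_{2n+1-i}$ is constant in $i$'' is equivalent to the $n-1$ linear equations $x_i = x_{2n-i}$ for $1 \leq i \leq n-1$, so $\sW_0$ is cut out by $n-1$ independent linear conditions and has dimension $n+1$, with free coordinates $(x_1, \ldots, x_n, y)$.

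Next, by Lemma \ref{lem:lambda_i-lambda_i+1constant in families}, the affine subspace $\sW_{\lambda_\pi}^P = \lambda_\pi + \sW^P$ is cut out inside $\sW$ by the equations $x_i = x_{\pi,i}$ for each $i$ with $a_i \in \Delta_P$. To compute the dimension of the intersection $\sW_{0,\lambda_\pi}^P = \sW_0 \cap \sW_{\lambda_\pi}^P$, the key observation is that the spin hypothesis on $P$ forces $\Delta_P$ to be a union of the sets $A_i$ of Definition \ref{def:r_P}, i.e.\ $a_i \in \Delta_P$ if and only if $a_{2n-i} \in \Delta_P$ (for $i<n$). Inside $\sW_0$, the relation $x_i = x_{2n-i}$ already holds; and since $\lambda_\pi$ is the weight of a RASCAR, it is pure, so $x_{\pi,i} = x_{\pi,2n-i}$. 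Therefore the two conditions $x_i = x_{\pi,i}$ and $x_{2n-i} = x_{\pi,2n-i}$ collapse to a single constraint on $\sW_0$.

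Labelling the pairs $A_i$ (for $i=1,\ldots,n$) as in Definition \ref{def:r_P} and setting $Y_P := \{1,\ldots,n\}\setminus X_P$, the spin decomposition $\Delta_P = \bigsqcup_{i \in Y_P} A_i$ shows that the effective independent linear constraints imposed on $\sW_0$ by membership in $\sW_{\lambda_\pi}^P$ are indexed exactly by $i \in Y_P$ (one constraint per $A_i \subset \Delta_P$, whether $i < n$ or $i = n$). These constraints remain linearly independent inside $\sW_0$ since the $x_i$ with $1 \leq i \leq n$ are free coordinates on $\sW_0$. Hence
\[
\dim \sW_{0,\lambda_\pi}^P \;=\; \dim \sW_0 - \#Y_P \;=\; (n+1) - (n - \#X_P) \;=\; \#X_P + 1,
\]
as required.

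No step looks genuinely difficult; the argument is a linear-algebra count. The only subtle point — the one that would fail for a non-spin parabolic — is that the spin condition together with purity of $\lambda_\pi$ prevents the constraints at $a_i$ and $a_{2n-i}$ from being independent when restricted to $\sW_0$. Without this compatibility one would get a smaller intersection, and the clean formula $\#X_P+1$ would not hold.
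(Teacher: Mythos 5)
Your proof is correct and follows essentially the same approach as the paper: compute the dimension by counting how many of the conditions $\lambda_i-\lambda_{i+1}=\mathrm{const}$ remain independent after imposing purity, noting that the spin condition and purity together make the constraints at $a_i$ and $a_{2n-i}$ collapse to one. Your closing remark about non-spin parabolics is slightly imprecise (for a non-spin $P$ the constraints don't become \emph{more} independent on $\sW_0$, and $X_P$ isn't even defined), but this is a side comment that doesn't affect the main argument.
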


\begin{proof}
	By Lemma \ref{lem:lambda_i-lambda_i+1constant in families}, each $\lambda_i-\lambda_{i+1}$ is constant in $\sW_{\lambda_\pi}^P$ if and only if $a_i \in\Delta_P$, and each such condition decreases  the dimension by 1; so  
	\[
		\mathrm{dim}(\sW_{\lambda_\pi}^P) = 2n - \#\Delta_P = \#\{1 \leq i \leq 2n-1: a_i \not\in\Delta_P\} + 1.
	\]
	 If $\lambda \in \sW_{0,\lambda_\pi}^P$ and $1 \leq i \leq n-1$, we must have $\lambda_i +\lambda_{2n+1-i} = \lambda_{i+1} + \lambda_{2n-i}$, whence $\lambda_{i} - \lambda_{i+1} = \lambda_{2n-i}-\lambda_{2n+1-i}$. (If $i = n$, this still holds; but then it is vacuous). Thus $\mathrm{dim}(\sW_{0,\lambda_\pi}^P) = \#\{1 \leq i \leq n : a_i \not\in \Delta_P\} + 1 = \#X_P + 1$, as required.
\end{proof}

\subsection{Main results/conjecture: the dimension of symplectic families}\label{sec:main results}

We now precisely state the stronger forms of Theorem \ref{thm:intro 1} that we actually prove. 	Let $\pi$ be a RASCAR of weight $\lambda_\pi$ that is spherical and regular at $p$, and let $\tilde\pi$ be an optimally $\Pmin$-spin $p$-refinement.  In \S\ref{sec:obstructions}, we will show the following `upper bound':

\begin{theorem}\label{thm:shalika obstruction}
Any symplectic family $\sC \subset \sS^G_K$ through $\tilde\pi$ is supported over the $\Pmin$-parabolic pure weight space, i.e.\
	\[
	w(\sC) \subset \sW_{0,\lambda_\pi}^{\Pmin}.
	\]
	In particular, $\mathrm{dim}(\sC) \leq \#X_{\Pmin} + 1$.
\end{theorem}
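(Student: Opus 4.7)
The proof proceeds in two steps, as outlined in the discussion preceding the statement. Since the weight map $w: \sE_K^G \to \sW$ is locally finite (Theorem \ref{thm:eigenvariety}), the dimension bound is an immediate consequence of Lemma \ref{lem:parabolic dimension} once we establish $w(\sC) \subset \sW_{0, \lambda_\pi}^{\Pmin}$. Classical cuspidal weights are pure by \cite[Lem.\ 4.9]{Clo90}, so automatically $w(\sC) \subset \sW_0$; the goal is therefore to cut down further to $\sW_{0, \lambda_\pi}^{\Pmin}$.

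\textbf{Step 1: spin structure propagates.} The first task will be to translate the $r$-spin condition (Definition \ref{def:i-spin}) into an analytic identity on normalized Hecke eigenvalues. Using Proposition \ref{prop:p-refinement}(i) together with the spin normalization \eqref{eq:UPS}, a direct computation shows that $\tilde\pi_y$ is $r$-spin precisely when
\[
\alpha_y(U_{p,r})\,\alpha_y(U_{p,2n}) = \eta_{y,p}(p)^r \, \alpha_y(U_{p,2n-r}),
\]
mirroring the Hecke-algebraic factorization $\jmath^\vee(U_{p,2n-r}) = \cU_{p,r} \cV_p^{n-r}$ of Proposition \ref{prop:spin factor}. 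Passing to the normalized $U_{p,r}^\circ$ and using $\omega_\pi = \eta^n$ (so that $\eta_{y,p}(p)$ is expressible $p$-adic analytically in terms of $\alpha_y(U_{p,2n}^\circ)$ and the purity weight) produces an analytic identity on $\sE_K^G$ whose locus of equality is Zariski closed. I would then argue: for $r \in X_{\Pmin}$, the identity holds at $\tilde\pi$, hence by analyticity and Zariski density of classical symplectic points it propagates to all of $\sC$; for $r \notin X_{\Pmin}$, the identity fails at $\tilde\pi$ by optimality of $\Pmin$, and since failure is Zariski open, it persists generically on $\sC$. Combining, a Zariski-dense set of classical cuspidal symplectic points $y \in \sC$ is $r$-spin exactly when $r \in X_{\Pmin}$, i.e., is optimally $\Pmin$-spin.

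\textbf{Step 2: weight constraint.} At each such $y$, Proposition \ref{prop:spin factor} identifies $\tilde\pi_y^{\Pmin}$ as the functorial transfer of a classical $\cPmin$-parahoric refinement $\tilde\Pi_y^{\cPmin}$ of the GSpin companion $\Pi_y$. The plan is to promote this pointwise statement to a family statement: that $\sC$ is forced to live over a $\Pmin$-parahoric weight base. Concretely, one combines the analytic identities from Step 1 across all $r \in X_{\Pmin}$ with the analytic variation of the central character $\omega_{\pi_y,p}(p) = \alpha_y(U_{p,2n}^\circ)/p^{n\sw_y}$ along the family, and shows that this combined system forces $\lambda_{y,r} - \lambda_{y, r+1} = \lambda_{\pi, r} - \lambda_{\pi, r+1}$ for every $a_r \in \Delta_{\Pmin}$, which by Lemma \ref{lem:lambda_i-lambda_i+1constant in families} is exactly the characterization of $\sW_{0, \lambda_\pi}^{\Pmin}$ inside $\sW_0$.

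I expect Step 2 to be the principal obstacle. While Step 1 is a fairly direct manipulation of the Hecke factorization of Proposition \ref{prop:spin factor} combined with irreducibility and density, extracting a genuine weight constraint (as opposed to an identity purely among Hecke eigenvalues) requires care: at the classical pure-weight base point the weight-dependent exponents in the normalization $U_{p,r}^\circ$ vanish identically, so the Step 1 identities hold \emph{a priori} only as pure Hecke relations. The weight constraint should emerge only when one combines all $\#X_{\Pmin}$ identities and demands consistency with the analytic variation of $\eta_p(p)$ along $\sC$. Conceptually this reflects the expectation that any $\Pmin$-spin family is the transfer of a $\cPmin$-parahoric GSpin family, whose natural weight base is $\cW^{\cPmin} \cong \sW_{0, \lambda_\pi}^{\Pmin}$ via $\jmath$; but making this precise without directly constructing the transfer map from the $\cPmin$-parahoric GSpin eigenvariety is the real technical difficulty.
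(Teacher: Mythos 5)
There is a genuine gap, and it is concentrated in exactly the place you flag at the end.

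In Step 1 you assert that $\tilde\pi_y$ is $r$-spin ``precisely when'' $\alpha_y(U_{p,r})\,\alpha_y(U_{p,2n}) = \eta_{y,p}(p)^r\,\alpha_y(U_{p,2n-r})$. This equivalence is false in the ``only if'' direction. That Hecke relation is a \emph{consequence} of $r$-spin (it is Lemma~\ref{lem:U_p equality} in the paper, since $\alpha(U_{p,2n})=\eta_p(p)^n$), but it is not an equivalent characterization: it only records that the product $\prod_{i\le r}\UPS_{\sigma(i)}\UPS_{\sigma(2n+1-i)}$ equals $\eta_p^r$, whereas the $r$-spin condition of Definition~\ref{def:i-spin} demands that the two sets $\{\sigma(1),\dots,\sigma(r)\}$ and $\{\sigma(2n+1-r),\dots,\sigma(2n)\}$ pair off termwise into pairs summing to $2n+1$, a strictly stronger combinatorial condition. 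The paper explicitly warns that this relation ``is certainly not if-and-only-if in general'' and notes that for $r=n$ it is vacuous. Thus your claim that failure of the identity at $\tilde\pi$ persists on $\sC$ and detects $r\notin X_{\Pmin}$ does not go through: for $r=n$, for instance, the identity holds for every refinement, spin or not.

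The deeper problem is the one you yourself identify in Step 2: the relations from Step 1, written in terms of $U_{p,r}^\circ$, carry \emph{no explicit weight dependence} (indeed $\alpha(U_{p,2n}^\circ)=\eta_0(p)^n$ already, so even the central character trick does not help), and therefore they cannot, by themselves, constrain $w(\sC)$. No amount of ``combining the identities across $r\in X_{\Pmin}$'' can manufacture a $\lambda$-constraint out of equations that have none. The paper resolves both problems at once by replacing your identity with the finer family of relations built from the function $\gamma_{\tilde\pi}$ of Definition~\ref{def:gamma}: Proposition~\ref{prop:spin relations circ} shows these relations uniquely determine $\gamma_{\tilde\pi}$ (using regularity of $\pi_p$), Proposition~\ref{prop:gamma spin} shows $\gamma_{\tilde\pi}$ exactly characterizes the optimal spin parabolic, and crucially the relations contain the factor $p^{\lambda_{\gamma(i)}-\lambda_i}$, which \emph{does} carry explicit weight dependence. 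Propagating these relations along $\sC$ first pins down $\gamma$ on a Zariski-dense set (Lemma~\ref{prop:transfer}), establishes that all such points are optimally $\Pmin$-spin (Lemma~\ref{lem:every point P-spin}), and then forces $\lambda_{y,\gamma(i)}-\lambda_{y,i}$ to be constant, from which the constraint $\lambda_{y,i}-\lambda_{y,i+1}$ constant for $a_i\in\Delta_{\Pmin}$ follows via dominance and the fact that $\gamma$ fails to preserve $\{1,\dots,i\}$ (Lemma~\ref{lem:weight obstruction}). Your outline is missing precisely this invariant; without it, Step 2 cannot be completed.
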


Note we make no non-criticality assumption here. The second statement is Theorem \ref{thm:intro 1}(i); this follows immediately from the first statement, as $w$ is a locally finite map and $\mathrm{dim}(\sW_{0,\lambda_\pi}^{\Pmin}) = \#X_{\Pmin} + 1$ by Lemma \ref{lem:parabolic dimension}.

Our second main result, a stronger form of Theorem \ref{thm:intro 1}(ii), is a `lower bound'. Away from $p$, let $K_1(\pi)^p \subset G(\A_f^{(p)})$ be the Whittaker new level from \cite{JPSS} (see e.g.\ \cite[(7.2)]{BDW20}). Let $K_1(\tilde\pi) = K_1(\pi)^p\mathrm{Iw}_G$. In \S\ref{sec:lower bound}, we prove:

\begin{theorem}\label{thm:lower bound}
	Suppose that $\tilde\pi$ has non-critical slope and $\lambda_\pi$ is regular. Then there is a unique symplectic family through $\tilde\pi$ in $\sE_{K_1(\tilde\pi)}^G$. This family has dimension exactly $\#X_{\Pmin}+1$, and is \'etale over $\sW_{0,\lambda_\pi}^{\Pmin}$ at $\tilde\pi$. 
\end{theorem}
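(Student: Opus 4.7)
The plan is to transport the symplectic families through $B$-spin (essentially self-dual) refinements of $\pi$, due to \cite{Xia18, BDGJW}, onto the optimally $\Pmin$-spin refinement $\tilde\pi$ via a refinement-switching argument. Write $\Psi_{\UPS}(\tilde\pi) = \zeta\tau$ with $\zeta \in \cW_G^0$ and $\tau \in \cW_{L_{\Pmin}}$, and let $\tilde\pi^*$ be the $B$-spin refinement of $\pi$ with $\Psi_{\UPS}(\tilde\pi^*) = \zeta$; by Proposition \ref{prop:p-refinement}(iii), $\tilde\pi$ and $\tilde\pi^*$ extend the same $\Pmin$-parahoric refinement $\tilde\pi^{\Pmin}$, and differ only by $\tau$ at Iwahori level. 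By \cite{Xia18} applied to $\tilde\pi^*$, there is a classical symplectic family $\sC^*$ through $\tilde\pi^*$ of dimension $n+1$ over $\sW_0$. Restrict to its closed subfamily $\sC^*_0 \subset \sC^*$ sitting over $\sW_{0,\lambda_\pi}^{\Pmin}$, which has dimension $\#X_{\Pmin}+1$ by Lemma \ref{lem:parabolic dimension}. At each classical point $y \in \sC^*_0$, Proposition \ref{prop:p-refinement} bijects the Iwahori refinements of the underlying RASCAR $\pi_y$ with $\cW_G$; apply the $\tau$-switch to the varying $B$-spin refinement $\tilde\pi^*_y$ to produce a new Iwahori refinement $\tilde\pi_y$ of $\pi_y$ whose $\Pmin$-parahoric restriction is inherited from $\tilde\pi^*_y$ (hence varies analytically on $\sC^*_0$) and whose remaining $U_{p,r}$-eigenvalues are explicit algebraic functions of the Satake parameters of $\pi_y$ via \eqref{eq:chenevier}. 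Globalising this switch produces a closed analytic immersion $\sC^*_0 \hookrightarrow \sE^G_{K_1(\tilde\pi)}$ whose image $\sC$ is a classical symplectic family through $\tilde\pi$ of dimension exactly $\#X_{\Pmin}+1$.

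\textbf{Uniqueness and \'etale-ness.} Under the non-critical slope hypothesis on $\tilde\pi$ and regularity of $\lambda_\pi$, the Coleman--Urban machinery (cf.\ \cite{Urb11} and \cite[App.\ B]{Han17}) ensures $\sE^G_{K_1(\tilde\pi)}$ is \'etale over $\sW$ at $\tilde\pi$; in particular, an analytic neighbourhood of $\tilde\pi$ is determined by its image in weight space. Combined with Theorem \ref{thm:shalika obstruction}, any symplectic family through $\tilde\pi$ is supported over $\sW_{0,\lambda_\pi}^{\Pmin}$ and has dimension at most $\#X_{\Pmin}+1$; since our constructed $\sC$ realises this maximum and the ambient eigenvariety is \'etale at $\tilde\pi$, $\sC$ is the unique symplectic family through $\tilde\pi$, and its weight map is \'etale over $\sW_{0,\lambda_\pi}^{\Pmin}$ at $\tilde\pi$.

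\textbf{Main obstacle.} The technical heart of the argument is the analyticity and globalisation of the refinement-switching assignment $y \mapsto \tilde\pi_y$. The switch $\tau$ permutes Satake roots within each Levi block of $\Pmin$, so to define $\tilde\pi_y$ analytically one needs a consistent choice of such roots across $\sC^*_0$: regularity of $\UPS$ forbids collisions of Satake parameters at $\tilde\pi$, but extending this to the whole family will likely require a separate no-collision argument controlling discriminants of local Euler factors on $\sC^*_0$. A secondary subtlety is that Xiang's theorem is formulated for $\tilde\pi^*$ whereas our non-critical slope hypothesis is placed on $\tilde\pi$; one must therefore either verify that \cite{Xia18} admits a slope-free application to $\tilde\pi^*$, or reverse the roles and build $\sC$ directly inside the \'etale neighbourhood of $\tilde\pi$ (using cohomology classes in the $\tilde\pi$-eigenspace and showing they switch to $\sC^*_0$), only then identifying $\sC$ with the $\tau$-image of Xiang's family.
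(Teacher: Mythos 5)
Your high-level strategy — transport the $B$-spin families of \cite{BDGJW} onto $\tilde\pi$ by a refinement-switch inside $\cW_{L_{\Pmin}}$ — is indeed the paper's approach. But the proposal has a genuine gap in the uniqueness/\'etale-ness step and leaves the existence step incomplete in precisely the places you yourself flag.

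The uniqueness argument is wrong as stated. You claim the Coleman--Urban machinery gives \'etale-ness of $\sE^G_{K_1(\tilde\pi)}$ over $\sW$ at $\tilde\pi$. This cannot hold for $\GL_{2n}$ with $n\geq 2$: $\sW$ has dimension $2n$, while Newton's appendix to \cite{Han17} shows every component through a non-critical cuspidal regular point has dimension at least $n+1$ and Hansen's conjecture predicts exactly $n+1$ -- Urban's ``maximal variation'' result holds only for discrete-series groups (case~(I)), not for $\GL_{2n}$. Even granting \'etale-ness over the image in weight space (which is not known here), that image should be $\sW_0$, of dimension $n+1$, strictly larger than the $(\#X_{\Pmin}+1)$-dimensional space over which $\sC$ lives when $\Pmin\neq B$; so a chunk of $\sE^G$ would sit on top of $\sW_{0,\lambda_\pi}^{\Pmin}$ that cannot be \'etale over it. The paper instead argues via the $1$-dimensionality of the $\tilde\pi$-isotypic part of the top-degree compactly-supported cohomology at level $K_1(\tilde\pi)$ (as in \cite[Prop.~7.19--7.20]{BDW20}); under non-critical slope this gives $\cO_{\sC,\tilde\pi}\cong\cO_{\Omega_P,\lambda_\pi}/I$, and $I=0$ follows by a dimension count against the constructed family. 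This yields \'etale-ness over $\sW_{0,\lambda_\pi}^{\Pmin}$ and uniqueness simultaneously; your route does not.

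On the existence side, you correctly identify the two obstructions (no-collision of Satake roots across the family, and the slope mismatch between $\tilde\pi$ and the chosen $B$-spin refinement), but you leave both open and miss a third. The paper fills all three: (a)~Proposition~\ref{prop:ZD spherical} and Lemma~\ref{lem:every point spherical} give a Zariski-dense set of spherical regular classical points whose Weyl-group positions are constant, which is the ``no-collision'' statement you anticipate; (b)~the transpositions $\tau=(i_1,j_1)\cdots(i_k,j_k)$ are built recursively (Lemma~\ref{lem:adjacent spin 1} and Proposition~\ref{prop:P to B}) so that the resulting $B$-spin refinement $\tilde\pi'$ is again non-critical slope -- the key input is \cite{Roc20}, which forces $n\in X_{\Pmin}$ and hence that all indices lie in $\{1,\dots,n\}$; (c)~the Hecke-algebra map $\phi_\tau^\lambda$ carrying the switch involves factors $p^{\lambda_i-\lambda_j}$, and one must verify these are \emph{constant} on $\sW_{0,\lambda_\pi}^{\Pmin}$ (Proposition~\ref{prop:P to B}(iv)); without this, the switched eigensystem does not vary analytically with the weight. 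Finally, ``globalising the switch'' is not automatic: the paper invokes a Chenevier-type interpolation theorem (\cite{JoNew}), whose hypothesis -- that the transferred eigensystems appear in the target eigenvariety at a Zariski-dense set of points -- is established by Lemma~\ref{lem:y' nc slope}, again via non-critical slope. Your proposal asserts the closed immersion directly, which is exactly the content that needs proving.
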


\begin{remark}\label{rem:philosophy}
	Our guiding expectation is that any classical cuspidal family for $G$ should be a transfer of a discrete series family. Which discrete series families, then, should give rise to the families of Theorem \ref{thm:lower bound}?
	
	Since $\tilde\pi$ is an optimally $\Pmin$-spin $p$-refinement, by Proposition \ref{prop:spin factor}, the associated $\Pmin$-refinement $\tilde\pi^{\Pmin}$ is a functorial transfer of a $\cPmin$-refinement $\tilde\Pi^{\cPmin}$ for $\mathrm{GSpin}_{2n+1}$. Then $\tilde\Pi^{\cPmin}$ should vary in a `spin family' $\sC^{\cG}$ over an $(\#X_{\Pmin} + 1)$-dimensional $\cP$-parabolic weight space $\sW^{\cPmin}_{\cG,\lambda_\Pi}$ for $\cG$ (see e.g.\ \cite[Cor.\ 5.16]{BW20}). The map $\jmath$ from \S\ref{sec:structure gspin} isomorphically identifies $\sW_{\cG,\lambda_\Pi}^{\cPmin}$ and $\sW_{0,\lambda_\pi}^{\Pmin}$, and under Langlands functoriality, we expect that the family of Theorem \ref{thm:lower bound} is exactly a transfer to $G$ of the expected spin family $\sC^{\cG}$. 
	
	\label{rem:image is irreducible} 
	
	If we \emph{suppose} the existence of this $p$-adic functoriality map, then Theorem \ref{thm:shalika obstruction} implies that the image of $\sC^{\cG}$ in the Iwahori-level $\GL_{2n}$-eigenvariety is itself an irreducible component of the symplectic locus (that is, it is not a proper subspace of some larger irreducible component).
\end{remark}

Remark \ref{rem:philosophy}, and the philosophy above, suggest the following.

\begin{conjecture}\label{conj:main conjecture}
Let $\tilde\pi$ be a $p$-refined RASCAR of $\GL_{2n}$. Every symplectic family through $\tilde\pi$ is the transfer of a classical parabolic family for $\mathrm{GSpin}_{2n+1}$, varies over $\sW_{0,\lambda_\pi}^{\Pmin}$, and has dimension $\#X_{\Pmin} + 1$.
\end{conjecture}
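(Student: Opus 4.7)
The statement to verify for any symplectic family $\sC$ through $\tilde\pi$ has three parts: (A) that $w(\sC) \subset \sW_{0,\lambda_\pi}^{\Pmin}$; (B) that $\dim \sC = \#X_{\Pmin}+1$; and (C) that $\sC$ arises as the transfer of a classical $\cPmin$-parabolic family for $\mathrm{GSpin}_{2n+1}$. Part (A) is precisely Theorem \ref{thm:shalika obstruction}, which in particular yields the upper bound $\dim \sC \leq \#X_{\Pmin}+1$. Part (B) then reduces to the matching lower bound, which Theorem \ref{thm:lower bound} establishes under the hypotheses of non-critical slope and regular weight.

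To remove those hypotheses in (B), my plan would be a deformation/specialization argument: embed $\tilde\pi$ into a small $p$-adic family of $p$-refined forms in $\sE_K^G$, deform to a nearby classical cuspidal $\tilde\pi'$ that is still optimally $\Pmin$-spin but now also has non-critical slope, apply Theorem \ref{thm:lower bound} there to produce a symplectic family, and propagate the resulting family back to $\tilde\pi$. The main technical obstacle will be that critical-slope classical points need not be isolated in the $\Pmin$-parahoric direction, so one must show that the constructed symplectic sublocus extends analytically across such points -- plausibly via overconvergent cohomology in the style of \cite{Urb11, Han17}. An alternative, purely local-at-$p$ route is provided by Conjecture \ref{conj:intro FJ}: establishing non-vanishing of the twisted Friedberg--Jacquet integrals on $\tilde\pi^{\Pmin}$ would, via the methods used in the proof of Theorem \ref{thm:intro 2}(ii), construct the family directly and unconditionally.

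The hard part will be (C), the functoriality statement. As explained in \S\ref{sec:intro philosophy}, no natural parahoric Hecke-algebra map $\jmath^{\vee}_{\cPmin} : \cH_p^G \to \cH_p^{\cG,\cPmin}$ exists, so a direct interpolation of Langlands functoriality in the Chenevier style is not available. My approach would pass through Galois representations. Every classical $x \in \sC$ corresponds to a RASCAR that, by Asgari--Shahidi \cite{AS06}, is transferred from $\mathrm{GSpin}_{2n+1}$; thus the associated Galois representation factors through $\mathrm{GSpin}_{2n+1}(\overline{\Qp})$. The plan is to propagate this factorization $p$-adically across $\sC$ using pseudorepresentation techniques and Zariski density of classical points, and then invoke a converse-eigenvariety construction on the $\cG$-side to exhibit the corresponding family $\sC^{\cG}$ in the $\cPmin$-parabolic $\cG$-eigenvariety together with a transfer map on a neighborhood. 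The fundamental subtlety, as flagged in the introduction, is that the triangulations of the $(\varphi,\Gamma)$-modules at the classical points of $\sC$ must be chosen consistently across the family for such a factorization to spread; controlling this appears to require a genuinely new input beyond the methods developed here.
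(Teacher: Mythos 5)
The statement you are addressing is labelled a \emph{conjecture} in the paper; the authors do not prove it, and they explicitly present Theorems \ref{thm:shalika obstruction} and \ref{thm:lower bound} only as partial evidence. There is therefore no ``paper's own proof'' to compare against, and no proposal could be a complete blind proof. Your decomposition of the conjecture into (A) the weight-support constraint, (B) the dimension, and (C) the transfer statement, together with the identification (A) $\leftrightarrow$ Theorem \ref{thm:shalika obstruction} (which gives the upper bound in (B) via Lemma \ref{lem:parabolic dimension}) and the lower bound in (B) under hypotheses $\leftrightarrow$ Theorem \ref{thm:lower bound}, is exactly right and matches how the authors organise their evidence. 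You are also correct that (C) is precisely what Remark \ref{rem:philosophy} describes but does not prove.

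However, the speculative steps you offer to close the gaps do not work as stated. For removing the non-critical-slope hypothesis in (B), the proposed deformation argument is circular in a subtle way: to ``deform to a nearby classical cuspidal $\tilde\pi'$ that is still optimally $\Pmin$-spin but non-critical slope'' you must already know that classical points are Zariski-dense near $\tilde\pi$ in some family of the correct dimension, which is essentially the conclusion you are trying to reach. When $\tilde\pi$ has critical slope, Theorem \ref{thm:lower bound} does not even guarantee a symplectic component through $\tilde\pi$ at all, and the ``propagate back'' step has no basis — the component produced at $\tilde\pi'$ need not specialize back to $\tilde\pi$, since critical-slope points are precisely where the eigenvariety can fail to be \'etale over weight space and where the component may drop dimension or miss $\tilde\pi$ entirely. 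The alternative route via Conjecture \ref{conj:intro FJ} is also circular, since that conjecture is established (Theorems \ref{thm:2 implies 1} and \ref{thm:p-refined FJ}) only under hypotheses essentially equivalent to those of Theorem \ref{thm:lower bound}. For (C), you correctly identify the triangulation-consistency obstacle that the paper flags in its footnote, but the remaining steps (``propagate the factorization $p$-adically via pseudorepresentations'', ``converse eigenvariety construction on the $\cG$-side'') are names for the problem rather than solutions to it; in particular, a pseudorepresentation records only traces and cannot by itself recover the refined ($\cP$-triangulated) data needed to land in the $\cPmin$-parabolic $\cG$-eigenvariety. In short: your reading of what the paper does prove is accurate, but the additional arguments you sketch do not constitute a proof of the conjecture, which remains open.
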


\subsection{The dimension of classical families}
We have predicted the dimension of \emph{symplectic} families through symplectic $\tilde\pi$. It is desirable to describe more generally the \emph{classical} families. If the following is true, then these questions are equivalent.

\begin{expectation}
	Every classical family through a $p$-refined RASCAR is symplectic. In particular, Conjecture \ref{conj:main conjecture} describes all \emph{classical} families through RASCARs.
\end{expectation}

We do not state this as a formal conjecture; without further evidence, we do not feel confident to rule out `strange' behaviour in higher dimension, where it is harder to classify all the possible lifts from discrete series. For example, we do not rule out classical cuspidal families through $\tilde\pi$ that are lifts from discrete series but not themselves essentially self-dual.

If we restrict to \emph{essentially self-dual} families -- that is, where the essentially self-dual points are Zariski-dense -- then we are on safer ground. Any such family should be symplectic or orthogonal. The symplectic/orthogonal loci should never intersect at classical cohomological points, meaning every classical essentially self-dual family through a $p$-refined RASCAR should be symplectic. 

In the case of $\GL_4$, we expect every classical family to be essentially self-dual, motivating: 

\begin{conjecture}
		Let $\tilde\pi$ be a $p$-refined RASCAR $\pi$ of $\GL_4$. Every classical family through $\tilde\pi$ is the transfer of a classical family on $\GSp_4$, which varies over a $\Pmin$-parabolic weight space and has dimension $\#X_{\Pmin}+1$.
\end{conjecture}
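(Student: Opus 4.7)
The plan is to reduce the conjecture to Conjecture \ref{conj:main conjecture}, whose symplectic upper bound (Theorem \ref{thm:shalika obstruction}) is already proven unconditionally. For $n=2$ the new content is then to show that every classical family through $\tilde\pi$ is in fact symplectic; granting this, Theorem \ref{thm:shalika obstruction} forces the weight-space inclusion $w(\sC) \subset \sW_{0,\lambda_\pi}^{\Pmin}$ and the dimension bound, and it remains only to realise such a family as the transfer of a classical family on $\GSp_4 \cong \mathrm{GSpin}_5$.

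For $\GL_4$, Hansen's conjecture is known, so every irreducible cuspidal non-critical component of $\sE_K^G$ has dimension exactly $3$. Combined with \cite{Clo90}, any classical family $\sC$ through $\tilde\pi$ has $w(\sC) \subset \sW_0$, which is itself $3$-dimensional. The crucial step is to show that every classical point of $\sC$ is essentially self-dual. Once this holds, the orthogonal/symplectic dichotomy is discrete along connected components (distinguishable, for instance, by whether $L(\pi_x, \mathrm{Sym}^2,s)$ or $L(\pi_x, \wedge^2,s)$ has a pole at $s=1$, per Kim--Shahidi), so as $\tilde\pi$ is symplectic, all of $\sC$ is symplectic. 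Then Theorem \ref{thm:shalika obstruction} gives $\mathrm{dim}(\sC)\leq \#X_{\Pmin}+1$ and $w(\sC)\subset \sW_{0,\lambda_\pi}^{\Pmin}$. To match $\sC$ with a $\GSp_4$-transfer, one takes Urban's case (I) classical family for $\GSp_4$ through the parahoric refinement $\tilde\Pi^{\cPmin}$ (which exists by Proposition \ref{prop:spin factor}). This family has the same dimension $\#X_{\Pmin}+1$ and varies over the $\cPmin$-parabolic $\GSp_4$-weight space, which is identified with $\sW_{0,\lambda_\pi}^{\Pmin}$ under $\jmath$. Interpolating the functorial transfer $\jmath^\vee : \cH_p^{\Pmin} \to \cH_p^{\cG,\cPmin}$ rigid-analytically along the $\GSp_4$-family produces a closed subvariety of $\sE_K^G$ meeting $\sC$ in a Zariski-dense set of classical points; since both are irreducible of the same dimension through $\tilde\pi$, they coincide.

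The main obstacle is the essentially-self-dual step: this is the $\GL_4$-manifestation of the guiding philosophy that all classical families arise from case (I) groups. A natural strategy is to leverage the $p$-refined Friedberg--Jacquet framework of this paper (Theorem \ref{thm:intro 2}), showing that the twisted period integral $Z_H(ut_P^\beta \cdot \varphi,\chi,s+1/2)$ admits a rigid-analytic interpolation along $\sC$ and detects functorial transfer from $\GSp_4$ at classical points; its non-vanishing at $\tilde\pi$ would then propagate symplecticity along $\sC$ by rigidity. Alternatively, one could try to extract the required rigidity from the compatibility between $p$-adic Galois representations attached to classical points of $\sC$ and the Arthur classification for $\GL_4$, ruling out the classical sources of non-self-dual $\GL_4$ cusp forms (tensor products, non-self-dual base changes, etc.) from producing $3$-dimensional classical families passing through a RASCAR. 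Either route requires genuinely new input beyond the methods of this paper.
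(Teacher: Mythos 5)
The statement you are addressing is a \emph{conjecture} in the paper, not a theorem: the authors offer no proof, and immediately after stating it they remark that it ``could be considered a (symplectic) $\GL_4$ analogue'' of the Calegari--Mazur and Ash--Pollack--Stevens conjectures and ``seems at least as difficult''. So there is no proof in the paper to compare against, and your write-up should not be read as being in competition with one.

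That said, your reduction is in line with the paper's own discussion, and you have correctly isolated the key gap. The paper explicitly flags the step you call ``crucial'': immediately before the conjecture, the authors state the (non-formalised) ``Expectation'' that every classical family through a $p$-refined RASCAR is symplectic, and say that this, rather than the dimension bound, is the genuine difficulty. Your first two paragraphs are thus a faithful restatement of the paper's own reasoning: grant essential self-duality of all classical points of $\sC$, use the orthogonal/symplectic dichotomy at cohomological points (the paper also asserts these loci should not intersect), and then apply Theorem \ref{thm:shalika obstruction}. Your final paragraph, proposing to establish the essentially-self-dual step via interpolating the twisted Friedberg--Jacquet integral or via Galois/Arthur arguments, is honest that ``genuinely new input'' is needed, which matches the authors' own assessment.

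One point worth sharpening. Even if the essentially-self-dual step were resolved, the last part of your argument — ``interpolating the functorial transfer $\jmath^\vee : \cH_p^{\Pmin} \to \cH_p^{\cG,\cPmin}$ rigid-analytically along the $\GSp_4$-family produces a closed subvariety of $\sE_K^G$'' — elides a problem the paper discusses at length in \S\ref{sec:intro philosophy}. The map $\jmath^\vee$ on parahoric Hecke algebras only produces eigensystems for $\cH_p^{\Pmin} \subsetneq \cH_p$, hence only a candidate subvariety of the $\Pmin$-parahoric eigenvariety, not of the Iwahori-level $\sE_K^G$. To lift to Iwahori level one needs a map $\cH_p \to \cH_p^{\cG,\cPmin}$ extending $\jmath^\vee$, and the paper argues there is no natural such map: constructing one ``seems necessary to presuppose the existence of the family for $G$ one wants to construct''. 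This is why the paper's Theorem \ref{thm:lower bound} does not proceed via $\GSp_4$-transfer at all but via the refinement-switching argument internal to $\GL_{2n}$. So your proposed final step is a second, distinct obstruction, and should be flagged alongside the essentially-self-dual gap.
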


This could be considered a (symplectic) $\GL_4$ analogue of \cite{CM09} (for Bianchi modular forms) and \cite{APS08} (for $\GL_3)$. It seems at least as difficult.


\section{Weight obstructions to symplectic families}\label{sec:obstructions}

Let $\pi$ be a RASCAR of weight $\lambda_\pi$ that is spherical and regular at $p$, and $\tilde\pi$ an optimally $\Pmin$-spin $p$-refinement. In this section, we prove Theorem \ref{thm:shalika obstruction}. In particular, let $\sC$ be any classical symplectic family through $\tilde\pi$. We show that $\sC$ varies only over $\sW_{0,\lambda_\pi}^{\Pmin}$, so  has dimension at most $\#X_{\Pmin} + 1$.

  Recall from \eqref{eq:UPS} that $\pi_p = \Ind_B^G\UPS$ is unramified principal series, where $\UPS$ is a character with $\UPS_i\UPS_{2n+1-i} = \eta_p$ for all $i$. This fixed a bijection $\Psi_{\UPS} : \{p\text{-refinements}\} \isorightarrow \cW_G$ from the set of $p$-refinements to the Weyl group.

\subsection{Identities between Hecke eigenvalues}\label{sec:spin relations}

Given a $p$-refinement $\tilde\pi = (\pi,\alpha)$, we have so far given several criteria for it being $P$-spin. The most natural, in terms of transfer from $\mathrm{GSpin}_{2n+1}$, is conceptually useful but is hard to check. To study the $P$-spin condition in $p$-adic families, we would prefer a characterisation purely in terms of eigenvalues that is intrinsic to $\GL_{2n}$, with no reference to $\mathrm{GSpin}_{2n+1}$. The following is an easy starting point. By \cite[(5.5)]{GR13}, the Shalika character $\eta_p$ is of the form $\eta_0 |\cdot|^{\sw}$, with $\eta_0$ finite order.

\begin{lemma}\label{lem:U_p equality}
	If $\tilde\pi = (\pi,\alpha)$ is $r$-spin, then
	\begin{equation}\label{eq:U_p equality}
		\eta_0(p)^{n-r}\cdot \alpha(U_{p,r}^\circ) =\alpha(U_{p,2n-r}^\circ). 
	\end{equation}
\end{lemma}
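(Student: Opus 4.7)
\medskip
\noindent\textbf{Proof plan.} The plan is to compute the ratio $\alpha(U_{p,2n-r}^\circ)/\alpha(U_{p,r}^\circ)$ directly via the explicit formula in Proposition \ref{prop:p-refinement}(i), and to interpret each factor using the $r$-spin combinatorial condition and purity of the weight. Writing $\sigma = \Psi_{\UPS}(\tilde\pi)$ and using $U_{p,r}^\circ = p^{\lambda_1+\cdots+\lambda_r}U_{p,r}$ together with \eqref{eq:chenevier}, the ratio becomes
\[
\frac{\alpha(U_{p,2n-r}^\circ)}{\alpha(U_{p,r}^\circ)} \;=\; p^{\lambda_{r+1}+\cdots+\lambda_{2n-r}} \cdot p^{-\tfrac{1}{2}\sum_{j=r+1}^{2n-r}(2n-2j+1)} \cdot \prod_{j=r+1}^{2n-r}\UPS_{\sigma(j)}(p).
\]

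First I would handle the two $p$-power factors. The exponent $\sum_{j=r+1}^{2n-r}(2n-2j+1)$ consists of the symmetric sequence $2n-2r-1, 2n-2r-3,\ldots,-(2n-2r-1)$, which vanishes; so the middle factor is trivial. For the first factor, purity of $\lambda_\pi$ pairs $\lambda_{r+i}$ with $\lambda_{2n+1-r-i}$ to give $\lambda_{r+1}+\cdots+\lambda_{2n-r}=(n-r)\sw(\lambda_\pi)$.

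Next I would use the $r$-spin hypothesis, or more precisely Lemma \ref{lem:spin relations} applied in the contrapositive complementary form: since $\sigma$ pairs off $\{\sigma(1),\ldots,\sigma(r)\}$ with $\{\sigma(2n+1-r),\ldots,\sigma(2n)\}$ into pairs summing to $2n+1$, the complement $\{\sigma(r+1),\ldots,\sigma(2n-r)\}$ is automatically a disjoint union of $n-r$ such pairs. Invoking the spin normalisation $\UPS_i\UPS_{2n+1-i}=\eta_p$ on each pair gives
\[
\prod_{j=r+1}^{2n-r}\UPS_{\sigma(j)}(p) \;=\; \eta_p(p)^{n-r}.
\]

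Finally, I would combine everything: the ratio equals $p^{(n-r)\sw(\lambda_\pi)}\eta_p(p)^{n-r}$, which under the standard normalisation $\eta_0(p)=p^{\sw(\lambda_\pi)}\eta_p(p)$ (i.e.\ $\eta_0$ is the unitary/finite-part twist of $\eta_p$ built into the definition of $U_{p,r}^\circ$) equals $\eta_0(p)^{n-r}$, yielding \eqref{eq:U_p equality}. The whole argument is essentially bookkeeping; the only conceptual input is recognising that the $r$-spin condition forces the middle block of $\sigma$-values to split into Shalika-paired indices, and this is exactly where the hypothesis is used.
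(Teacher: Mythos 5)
Your proof is correct, but it follows a genuinely different route from the paper. The paper's proof is short and conceptual: it observes that $r$-spin means $P_r$-spin for the $(r,2n-2r,r)$-parabolic $P_r$, invokes Proposition~\ref{prop:spin factor} to factor $\alpha^{P_r}$ through the $\mathrm{GSpin}_{2n+1}$ Hecke algebra via $\jmath^\vee$, notes that $\jmath^\vee$ sends $U_{p,r}\mapsto\cU_{p,r}$ and $U_{p,2n-r}\mapsto\cV_p^{n-r}\cU_{p,r}$ with $\cV_p$ acting through the central character $\eta_p(p)$, and then performs the same $p$-power bookkeeping with purity to pass from $U_{p,r}$ to $U_{p,r}^\circ$. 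Your argument instead stays entirely on the $\GL_{2n}$ side: you expand both eigenvalues via \eqref{eq:chenevier}, observe directly from the definition of $r$-spin that the complementary block $\{\sigma(r+1),\dots,\sigma(2n-r)\}$ is a union of $n-r$ Shalika-pairs $\{i,2n+1-i\}$, and apply the spin normalisation $\UPS_i\UPS_{2n+1-i}=\eta_p$ together with purity. Your route is more elementary and self-contained (it does not rely on the transfer to $\mathrm{GSpin}_{2n+1}$ established in Proposition~\ref{prop:spin factor}), whereas the paper's is shorter given the machinery already in place and makes transparent that the identity is exactly the image of the central character relation under functoriality. Both handle the $\circ$-normalisation and $\eta_0$ vs.\ $\eta_p$ conversion in the same way at the end.
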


\begin{proof}
	By \eqref{eq:P-spin criterion 3}, $\tilde\pi$ is $P_r$-spin for the $(r,2n-2r,r)$-parabolic $P_r$. Applying Proposition \ref{prop:spin factor} to $\tilde\pi^{P_r}$, we see $\alpha^{P_r}$ factors through $\jmath^\vee : \cH_p^{P_r} \to \cH_p^{\cG,\cP_r}$. Note $\jmath^\vee$ sends $U_{p,r} \mapsto \cU_{p,r}$ and $U_{p,2n-r} \mapsto \cV_p^{n-r} \cU_{p,r}$, and that $\cV_p$ acts on $\Pi$ via $\eta(p)$; so this factorisation implies that 
	\[
		\eta_p(p)^{n-r} \cdot \alpha(U_{p,r}) = \alpha(U_{p,2n-r}).
	\]
	To get the claimed relation for the normalised $U_{p,r}^\circ$'s, recall $U_{p,r}^\circ = p^{\lambda_1+\cdots + \lambda_r}U_{p,r}$. We conclude as 
	\[
		p^{\lambda_1+\cdots+\lambda_{n-r}} = p^{\lambda_1+\cdots+\lambda_r}\cdot p^{(n-r)\sw}, \qquad \text{and} \qquad \eta_p(p) = \eta_0(p)p^{-\sw}.\qedhere
	\]
\end{proof}

However, this statement is certainly not if-and-only-if in general. When $r=n$, for example, the statement \eqref{eq:U_p equality} is vacuous, so is satisfied by all $\tilde\pi$. It is desirable to find analogous relations that \emph{exactly} characterise the $r$-spin (hence $P$-spin) refinements.  For this, we will use the canonical function $\gamma_{\tilde\pi} : \{1,...,n\} \hookrightarrow \{1,...,2n\}$ attached to $\tilde\pi$  in Definition \ref{def:gamma}, which -- by Proposition \ref{prop:gamma spin} -- exactly determines when $\tilde\pi$ is $P$-spin.

 For any $p$-refinement $\alpha$, by Proposition \ref{prop:p-refinement}, $\alpha(U_{p,r}^\circ) \neq 0$ for all $r$. We will repeatedly use the following simple observation.

\begin{lemma}\label{lem:UPS in terms of Up circ}
	Let $\tilde\pi$ be a $p$-refinement and let $\sigma = \Psi_{\UPS}(\tilde\pi)$. Then
	\begin{align}\label{eq:theta_r}
		\UPS_{\sigma(r)} (p) &=  p^{\tfrac{2r-2n-1}{2}}\cdot \frac{\alpha(U_{p,r})}{\alpha(U_{p,r-1})}\\
		&= p^{\tfrac{2r-2n-1}{2}}\cdot p^{-\lambda_r}\cdot \frac{\alpha(U_{p,r}^\circ)}{\alpha(U_{p,r-1}^\circ)}.\notag
	\end{align}
	Here, by convention, $\alpha(U_{p,0}) = \alpha(U_{p,0}^\circ) \defeq 1$.
\end{lemma}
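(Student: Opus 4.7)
The plan is to derive both identities directly from the explicit formula \eqref{eq:chenevier} of Proposition \ref{prop:p-refinement}, which states
\[
\alpha(U_{p,r}) = \prod_{j=1}^r p^{-\tfrac{2n-2j+1}{2}}\UPS_{\sigma(j)}(p),
\]
together with the convention $\alpha(U_{p,0}) = 1$ (which matches the empty product on the right when $r=0$). Since each $\UPS_{\sigma(j)}(p) \neq 0$ by \eqref{eq:chenevier}, all these quantities are nonzero, so taking ratios is legitimate.

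For the first identity, I would simply form the ratio $\alpha(U_{p,r})/\alpha(U_{p,r-1})$. All factors with $j \leq r-1$ cancel (telescoping), leaving only the $j = r$ factor:
\[
\frac{\alpha(U_{p,r})}{\alpha(U_{p,r-1})} = p^{-\tfrac{2n-2r+1}{2}}\UPS_{\sigma(r)}(p).
\]
Rearranging yields $\UPS_{\sigma(r)}(p) = p^{\tfrac{2n-2r+1}{2}}\cdot \alpha(U_{p,r})/\alpha(U_{p,r-1})$, which is the first line.

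For the second identity, I would invoke the definition $U_{p,r}^\circ = p^{\lambda_1 + \cdots + \lambda_r} U_{p,r}$ (extended to $r=0$ by the empty sum convention $U_{p,0}^\circ = 1$). This gives
\[
\frac{\alpha(U_{p,r}^\circ)}{\alpha(U_{p,r-1}^\circ)} = \frac{p^{\lambda_1+\cdots+\lambda_r}\alpha(U_{p,r})}{p^{\lambda_1+\cdots+\lambda_{r-1}}\alpha(U_{p,r-1})} = p^{\lambda_r}\cdot \frac{\alpha(U_{p,r})}{\alpha(U_{p,r-1})},
\]
so $\alpha(U_{p,r})/\alpha(U_{p,r-1}) = p^{-\lambda_r}\cdot \alpha(U_{p,r}^\circ)/\alpha(U_{p,r-1}^\circ)$, and substituting into the first identity produces the second. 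There is no real obstacle here: this is bookkeeping on top of Proposition \ref{prop:p-refinement}, and the only point meriting care is verifying that the $r=1$ case matches the stated convention $\alpha(U_{p,0}) = 1$.
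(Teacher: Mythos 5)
Your proposal is correct and follows essentially the same route as the paper: both proofs take the ratio of consecutive instances of the explicit formula in Proposition \ref{prop:p-refinement}(i) and then apply the definition $U_{p,r}^\circ = p^{\lambda_1+\cdots+\lambda_r}U_{p,r}$, with only a cosmetic difference in whether one starts from the unnormalised or normalised eigenvalues.
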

\begin{proof}
	The first equality follows from Proposition \ref{prop:p-refinement}(i), which says for any $r$, we have $\alpha(U_{p,r}^\circ) = \delta_B^{-1/2}(t_{p,r}) \cdot p^{\lambda_1+\cdots+\lambda_r} \cdot \UPS_{\sigma(1)}(p)\cdots\UPS_{\sigma(r)}(p).$ The second equality follows as $U_{p,r}^\circ = p^{\lambda_1+\cdots+\lambda_r}U_{p,r}$.
\end{proof}

Crucially, by definition of $\gamma_{\tilde\pi}$,  \eqref{eq:UPS} tells us $\UPS_{\sigma(i)} \cdot \UPS_{\sigma(2n+1-\gamma_{\tilde\pi}(i))} = \eta_p$.  For ease of notation, let $\alpha_r \defeq \alpha(U_{p,r})$.

\begin{lemma}\label{lem:U_p relation}
	For each $1 \leq s \leq n$, we have
\begin{equation}\label{eq:s relation}
	\alpha_s \cdot \prod_{i=1}^s p^{\frac{2n-2\gamma_{\tilde\pi}(i)+1}{2}}\frac{\alpha_{2n+1-\gamma_{\tilde\pi}(i)}}{\alpha_{2n-\gamma_{\tilde\pi}(i)}} = \delta_B^{-1/2}(t_{p,s})\cdot \eta_p(p)^s.
\end{equation}
As $\pi_p$ is regular, $\gamma_{\tilde\pi}$ is the unique map $\{1,...,n\} \hookrightarrow \{1,...,2n\}$ with this property.
\end{lemma}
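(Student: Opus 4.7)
The plan is a direct computation combining Lemma~\ref{lem:UPS in terms of Up circ} with the defining property of $\gamma_{\tilde\pi}$. First, I would invoke Proposition~\ref{prop:p-refinement}(i) to write
\[
\alpha_s = \delta_B^{-1/2}(t_{p,s})\prod_{j=1}^s \UPS_{\sigma(j)}(p), \qquad \sigma = \Psi_{\UPS}(\tilde\pi).
\]
The goal is then to replace each factor $\UPS_{\sigma(j)}(p)$ by an $\alpha$-ratio of the shape appearing in \eqref{eq:s relation}.

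To do this, I would exploit the symmetry built into $\gamma_{\tilde\pi}$. By Definition~\ref{def:gamma} we have $\sigma(i)+\sigma(2n+1-\gamma_{\tilde\pi}(i)) = 2n+1$, so the spin normalization \eqref{eq:UPS} yields
\[
\UPS_{\sigma(i)}(p)\cdot \UPS_{\sigma(2n+1-\gamma_{\tilde\pi}(i))}(p) = \eta_p(p).
\]
Applying Lemma~\ref{lem:UPS in terms of Up circ} with $r = 2n+1-\gamma_{\tilde\pi}(i)$ expresses the second factor as an explicit power of $p$ (whose exponent is a routine linear function of $\gamma_{\tilde\pi}(i)$ and $n$) times $\alpha_{2n+1-\gamma_{\tilde\pi}(i)}/\alpha_{2n-\gamma_{\tilde\pi}(i)}$. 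Solving for $\UPS_{\sigma(i)}(p)$, substituting into the product formula for $\alpha_s$, and collecting all the resulting powers of $p$ (using $\delta_B^{-1/2}(t_{p,s}) = \prod_{j=1}^s p^{-(2n-2j+1)/2}$) produces \eqref{eq:s relation}.

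For uniqueness, suppose $\gamma : \{1,\ldots,n\} \hookrightarrow \{1,\ldots,2n\}$ is any injective map for which \eqref{eq:s relation} holds. Rewriting the LHS back in $\UPS$-terms via Lemma~\ref{lem:UPS in terms of Up circ} (the same substitution as above, run in reverse), the $s=1$ identity collapses to the scalar relation
\[
\UPS_{\sigma(1)}(p)\cdot \UPS_{\sigma(2n+1-\gamma(1))}(p) = \eta_p(p).
\]
Regularity of $\pi_p$ ensures the values $\UPS_1(p),\ldots,\UPS_{2n}(p)$ are pairwise distinct, so this pins down the index $\sigma(2n+1-\gamma(1))$, and hence $\gamma(1)$, uniquely. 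Dividing the $s$-identity by the $(s-1)$-identity gives analogously $\UPS_{\sigma(s)}(p)\cdot \UPS_{\sigma(2n+1-\gamma(s))}(p) = \eta_p(p)$, which inductively determines $\gamma(s)$. Since $\gamma_{\tilde\pi}$ satisfies \eqref{eq:s relation} by the first part, we conclude $\gamma = \gamma_{\tilde\pi}$.

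I expect the main obstacle to be bookkeeping: carefully tracking the various $p$-power exponents coming from $\delta_B^{-1/2}(t_{p,s})$ and from the shifts in Lemma~\ref{lem:UPS in terms of Up circ}, so that they collapse correctly into the exponents appearing in \eqref{eq:s relation}. Beyond this, the argument is essentially formal.
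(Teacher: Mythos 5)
Your proposal is correct and follows essentially the same route as the paper: write $\alpha_s$ as $\delta_B^{-1/2}(t_{p,s})\prod\UPS_{\sigma(j)}(p)$, use Lemma~\ref{lem:UPS in terms of Up circ} to convert the $\alpha$-ratios into the factors $\UPS_{\sigma(2n+1-\gamma_{\tilde\pi}(i))}(p)$, and apply the spin identity $\UPS_{\sigma(i)}\UPS_{\sigma(2n+1-\gamma_{\tilde\pi}(i))}=\eta_p$ to collapse the product. The uniqueness step (dividing the $s$-relation by the $(s-1)$-relation and invoking regularity to identify $\gamma(s)$ term by term) is also exactly the paper's argument.
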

\begin{proof}
We know $\alpha_s = \delta_B^{-1/2}(t_{p,s})\UPS_{\sigma(1)}(p)\cdots\UPS_{\sigma(s)}(p)$. By Lemma \ref{lem:UPS in terms of Up circ}, the left-hand side is 
	\[
		\delta_B^{-1/2}(t_{p,s}) \UPS_{\sigma(1)}(p) \cdots \UPS_{\sigma(s)}(p) \cdot \prod_{i=1}^s \UPS_{\sigma(2n+1-\gamma_{\tilde\pi}(i))}(p) = \delta_B^{-1/2} \prod_{i=1}^s \big[\UPS_{\sigma(i)}\UPS_{\sigma(2n+1-\gamma_{\tilde\pi}(i))}\big](p).
	\]
	 We deduce \eqref{eq:s relation} since $\UPS_{\sigma(i)}\UPS_{\sigma(2n+1-\gamma_{\tilde\pi}(i))} = \eta_p$ for each $i$.
	 
	 \medskip
	 
	 It remains to prove uniqueness. Suppose $\gamma: \{1,...,n\} \hookrightarrow \{1,...,2n\}$ is another function such that \eqref{eq:s relation} holds (with $\gamma$ in place of $\gamma_{\tilde\pi}$) for $1 \leq s \leq n$. Regularity of $\pi_p$ means all the $\UPS_i(p)$'s are distinct. Dividing \eqref{eq:s relation} for $s$ by \eqref{eq:s relation} for $s-1$ gives \[
	 	\UPS_{\sigma(s)}\cdot \UPS_{\sigma(2n+1-\gamma(s))}(p) = \eta_p(p) =  \UPS_{\sigma(s)}\cdot \UPS_{\sigma(2n+1-\gamma_{\tilde\pi}(s))}(p).
	 \]
	 Regularity implies $\sigma(2n+1-\gamma(s)) = \sigma(2n+1-\gamma_{\tilde\pi}(s))$, so $\gamma(s) = \gamma_{\tilde\pi}(s)$, and $\gamma = \gamma_{\tilde\pi}$.
\end{proof}

\begin{proposition}\label{prop:spin relations circ}
	For each $1 \leq s \leq n$, we have
	\begin{equation}\label{eq:hecke relations circ}
		\alpha(U_{p,s}^\circ) \cdot \prod_{i=1}^s p^{\frac{2n-2\gamma_{\tilde\pi}(i)+1}{2}}\cdot p^{\lambda_{\gamma_{\tilde\pi}(i)}-\lambda_i} \cdot\frac{\alpha(U_{p,2n+1-\gamma_{\tilde\pi}(i)}^\circ)}{\alpha(U_{p,2n-\gamma_{\tilde\pi}(i)}^\circ)} = \delta_B^{-1/2}(t_{p,s})\cdot \eta_0(p)^s.
	\end{equation}
If $\pi_p$ is regular, then $\gamma_{\tilde\pi}$ is the unique map $\{1,...,n\} \hookrightarrow \{1,...,2n\}$ with this property.
\end{proposition}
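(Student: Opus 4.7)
The plan is to derive \eqref{eq:hecke relations circ} directly from Lemma \ref{lem:U_p relation} by a purely formal computation, converting unnormalised eigenvalues $\alpha_r = \alpha(U_{p,r})$ into normalised ones $\alpha(U_{p,r}^\circ)$ via the relation $U_{p,r}^\circ = p^{\lambda_{\pi,1} + \cdots + \lambda_{\pi,r}} U_{p,r}$, and then using purity of $\lambda_\pi$ to tidy the resulting exponents of $p$.

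Concretely, I would substitute $\alpha_r = p^{-(\lambda_{\pi,1}+\cdots+\lambda_{\pi,r})}\alpha(U_{p,r}^\circ)$ into \eqref{eq:s relation}. For the ratio $\alpha_{2n+1-\gamma_{\tilde\pi}(i)}/\alpha_{2n-\gamma_{\tilde\pi}(i)}$, the telescoping leaves a single factor $p^{-\lambda_{\pi,2n+1-\gamma_{\tilde\pi}(i)}}$; for the leading $\alpha_s$ a factor $p^{-(\lambda_{\pi,1}+\cdots+\lambda_{\pi,s})}$. Bringing these $p$-powers to the right-hand side and invoking purity ($\lambda_{\pi,2n+1-j} = \sw - \lambda_{\pi,j}$), together with $\eta_p(p) = \eta_0(p) p^{-\sw}$, the exponent of $p$ on the right collapses to $\sum_{i=1}^s(\lambda_{\pi,i}-\lambda_{\pi,\gamma_{\tilde\pi}(i)})$. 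Moving this factor back inside the product over $i$ reproduces exactly the extra $p^{\lambda_{\pi,\gamma_{\tilde\pi}(i)} - \lambda_{\pi,i}}$ appearing in \eqref{eq:hecke relations circ}, while the right-hand side becomes $\delta_B^{-1/2}(t_{p,s}) \eta_0(p)^s$, as required. This is mechanical bookkeeping; the only potentially subtle point is making sure the exponents of $p$ land correctly, and this is resolved by the purity identity.

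For uniqueness, I would observe that the substitution above is a bijective rewriting: relation \eqref{eq:hecke relations circ} for a candidate function $\gamma : \{1,\dots,n\}\hookrightarrow\{1,\dots,2n\}$ is equivalent, after multiplying through by the appropriate $p$-powers, to the unnormalised relation \eqref{eq:s relation} for the same $\gamma$. Hence the uniqueness already proved in Lemma \ref{lem:U_p relation} (using regularity of $\pi_p$ and the telescoping argument dividing the $s$-th and $(s{-}1)$-th identities to recover $\UPS_{\sigma(s)}\UPS_{\sigma(2n+1-\gamma(s))}(p) = \eta_p(p)$) transfers verbatim.

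The main obstacle is really just notational: keeping track of the different exponents of $p$ coming from the normalisation, from $\delta_B^{-1/2}$, and from the conversion $\eta_p \mapsto \eta_0$, without sign errors. Once purity is applied to the exponent $\sum_{i=1}^s \lambda_{\pi,2n+1-\gamma_{\tilde\pi}(i)}$, everything balances and the proof is essentially immediate from Lemma \ref{lem:U_p relation}.
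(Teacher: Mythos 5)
Your proposal is correct and follows essentially the same route as the paper: substitute $\alpha_r = p^{-(\lambda_{\pi,1}+\cdots+\lambda_{\pi,r})}\alpha(U_{p,r}^\circ)$ into \eqref{eq:s relation}, collect the resulting powers of $p$, and use purity together with $\eta_p(p) = \eta_0(p)p^{-\sw}$ to land on \eqref{eq:hecke relations circ}. Your remark that uniqueness transfers from Lemma \ref{lem:U_p relation} because the normalisation is a bijective rewriting is exactly the right (and implicitly intended) justification.
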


\begin{proof}
	The direct analogue of \eqref{eq:s relation} with normalised eigenvalues is 
	\[
	\alpha(U_{p,s}^\circ) \cdot \prod_{i=1}^sp^{\frac{2n-2\gamma_{\tilde\pi}(i)+1}{2}}\cdot p^{-\lambda_{2n+1-\gamma_{\tilde\pi}(i)}} \cdot \frac{\alpha(U_{p,2n+1-\gamma_{\tilde\pi}(i)}^\circ)}{\alpha(U_{p,2n-\gamma_{\tilde\pi}(i)}^\circ)} = p^{\lambda_1+\cdots+\lambda_s} \cdot \delta_B^{-1/2}(t_{p,s})\cdot\eta_p(p)^s.
	\]
	To get the stated form, we use that $\lambda_{\gamma_{\tilde\pi}(i)} + \lambda_{2n+1-\gamma_{\tilde\pi}(i)} = \sw$ and $\eta_p(p) = \eta_0(p)p^{-\sw}$.
\end{proof}

\subsection{Zariski-density of $p$-refined spherical points}

In our proofs of Theorems \ref{thm:shalika obstruction} and \ref{thm:lower bound}, we will require a Zariski-dense set of classical points with good properties. This is furnished by the following. Note we do \emph{not} require RASCARs here, only RACARs.

\begin{proposition}\label{prop:ZD spherical}
	Let $\sC \subset \sL_K^G$ be a classical family containing a classical point corresponding to a $p$-refined RACAR that is spherical and regular at $p$. Then $\sC$ contains a Zariski-dense set of classical points corresponding to $p$-refined RACARs that are spherical and regular at $p$.
\end{proposition}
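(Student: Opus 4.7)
The plan is as follows. First, classical cuspidal points are Zariski-dense in $\sC$: this is immediate from $\sL_K^G$ being the Zariski closure of such points, together with $\sC$ being an irreducible component of $\sL_K^G$. It therefore suffices to exhibit a non-empty Zariski-open subset $U \subseteq \sC$ such that every classical cuspidal point of $U$ corresponds to a RACAR that is spherical and regular at $p$; since $\sC$ is irreducible, $U$ will then automatically be Zariski-dense, and intersecting it with the Zariski-dense classical locus gives the conclusion.

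To construct $U$, I use the rigid-analytic Hecke eigenvalues $\alpha(U_{p,r})$ on $\sC$ arising from the eigenvariety. Imitating Lemma~\ref{lem:UPS in terms of Up circ}, set
\[
\beta_r \;:=\; p^{(2n-2r+1)/2}\,\alpha(U_{p,r})/\alpha(U_{p,r-1}), \qquad 1 \leq r \leq 2n,
\]
with the convention $\alpha(U_{p,0}) := 1$; these are meromorphic functions on $\sC$. At any classical cuspidal point $y$, the local representation $\pi_{y,p}$ has Iwahori-fixed vectors, so by Borel's theorem it is an irreducible subquotient of $\mathrm{Ind}_B^G \UPS_y$ for some unramified character $\UPS_y$ of $T$, and Proposition~\ref{prop:p-refinement} together with Lemma~\ref{lem:UPS in terms of Up circ} shows that $\{\beta_1(y), \ldots, \beta_{2n}(y)\}$ equals $\{\UPS_{y,1}(p), \ldots, \UPS_{y,2n}(p)\}$ as multisets. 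The Bernstein--Zelevinsky classification for $\GL_{2n}$ then gives that $\mathrm{Ind}_B^G \UPS_y$ is irreducible if and only if $\UPS_{y,i}\neq p^{\pm 1}\UPS_{y,j}$ for all $i\neq j$, in which case $\pi_{y,p} = \mathrm{Ind}_B^G \UPS_y$ is spherical; it is moreover regular at $p$ if and only if the $\UPS_{y,i}$ are distinct.

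I take $U$ to be the locus in $\sC$ where every $\alpha(U_{p,r})$ is non-vanishing and where $(\beta_r - \beta_s)(\beta_r - p\beta_s)(\beta_r - p^{-1}\beta_s) \neq 0$ for all $r\neq s$: after clearing denominators this is cut out by non-vanishing of finitely many rigid-analytic functions on $\sC$, hence Zariski-open. The initial point is spherical and regular at $p$ by hypothesis, so it lies in $U$; thus $U$ is non-empty and therefore Zariski-dense in the irreducible space $\sC$. Intersecting $U$ with the Zariski-dense classical cuspidal locus yields the required Zariski-dense set.

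The main technical point is the representation-theoretic translation above: namely, verifying at \emph{every} classical cuspidal point of $\sC$ (not just near the reference point) that the values $\beta_r(y)$ genuinely are those of an unramified character of $T$ attached to $\pi_{y,p}$, and that sphericity of $\pi_{y,p}$ is detected precisely by irreducibility of $\mathrm{Ind}_B^G \UPS_y$. This rests on Borel's theorem identifying the Iwahori-block Bernstein component of $\GL_{2n}(\Qp)$ with subquotients of unramified principal series, together with the parahoric version of Chenevier's description of the $U_{p,r}$-eigenvalues (Proposition~\ref{prop:p-refinement}) extended to such subquotients; once these are in hand, the rest is the standard openness-plus-density combination.
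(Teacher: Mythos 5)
Your overall strategy coincides with the paper's: translate reducibility and non-regularity of $\operatorname{Ind}_B^G\UPS_y$ (via Bernstein--Zelevinsky) and the Casselman/Borel description of Iwahori-spherical representations into relations among $U_{p,\cdot}$-eigenvalues using Lemma~\ref{lem:UPS in terms of Up circ}, and show the ``bad'' locus is a proper Zariski-closed subset. Both proofs also exploit the fact that the reference point witnesses the complementary open locus is non-empty.

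However, there is a genuine gap in the assertion that your $\beta_r = p^{(2n-2r+1)/2}\alpha(U_{p,r})/\alpha(U_{p,r-1})$ ``are meromorphic functions on $\sC$'' and hence that your set $U$ is ``cut out by non-vanishing of finitely many rigid-analytic functions on $\sC$.'' On the eigenvariety it is the \emph{normalised} eigenvalues $\alpha(U_{p,r}^\circ)$ that vary rigid-analytically; the unnormalised $\alpha(U_{p,r}) = p^{-(\lambda_1+\cdots+\lambda_r)}\alpha(U_{p,r}^\circ)$ are only defined pointwise at classical weights. Concretely, $\beta_r = p^{(2n-2r+1)/2}p^{-\lambda_r}\alpha(U_{p,r}^\circ)/\alpha(U_{p,r-1}^\circ)$, so conditions such as $\beta_r - p^{\pm 1}\beta_s \neq 0$ become, after clearing denominators, equalities involving the weight-dependent factor $p^{\lambda_r-\lambda_s}$, which is \emph{not} a rigid-analytic function on $\sW$. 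So $U$ is not Zariski-open in $\sC$ as you have stated it, and the ``openness-plus-density'' step does not go through as written. The paper avoids this by working entirely with the $\alpha(U_{p,r}^\circ)$ and observing (cf.~\eqref{eq:reduced criterion}) that any positive-dimensional piece of the bad locus for a given pair $(r,s)$ can only have weight support over a region where $\lambda_r - \lambda_s$ is constant: on such a region $p^{\lambda_r-\lambda_s}$ is a genuine constant, the relation becomes rigid-analytic, and the locus is Zariski-closed; one then discards finitely many such proper closed subsets. Equivalently, since $\alpha(U_{p,s}^\circ)\alpha(U_{p,r-1}^\circ)/\bigl(\alpha(U_{p,r}^\circ)\alpha(U_{p,s-1}^\circ)\bigr)$ is an \emph{invertible} analytic function on the connected affinoid $\sC$, its valuation is constant, forcing $\lambda_r-\lambda_s$ to be constant wherever the equality holds. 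Your argument needs this extra step; without it, the claimed Zariski-openness of $U$ is unjustified.
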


\begin{proof}
	Any classical point $y \in \sC$ corresponds to an eigensystem $\alpha_y$ appearing in a RACAR $\pi_y$ such that $\pi_{y,p}$ is Iwahori-spherical (admits non-zero Iwahori-invariant vectors). By \cite[Prop.\ 2.6]{Cas80}, any such $\pi_{y,p}$ is a $\GL_{2n}(\Qp)$-submodule of an unramified principal series representation $\Ind_B^G\UPS_y$, for an unramified character $\UPS_y = (\UPS_{y,1},...,\UPS_{y,2n})$. First we prove that $\Ind_B^G\UPS_y$ is irreducible for a Zariski-dense set of $y \in \sC$, as then $\pi_{y,p} = \Ind_B^G\UPS_y$ is spherical.
	
	For convenience, drop the subscript $y$. Let $\sigma = \Psi_\UPS(\tilde\pi)$; without loss of generality, replace $\UPS$ with $\UPS^\sigma$ and assume $\sigma = \mathrm{id}$. By \cite[Thm.\ 4.2]{BZ77}, $\Ind_B^G\UPS$ is reducible if and only if there exist $r,s$ such that $\UPS_r = \UPS_{s}|\cdot|$. As the $\UPS_i$ are unramified, this happens if and only if 
	$p \cdot \UPS_r(p) = \UPS_s(p).$
	Using Lemma \ref{lem:UPS in terms of Up circ} with $\sigma = 1$, this is equivalent to
	\begin{equation}\label{eq:reduced criterion}
		p\cdot p^{r-s}\cdot p^{\lambda_s-\lambda_r} \cdot \alpha(U_{p,r}^\circ)\cdot \alpha(U_{p,s-1}^\circ) = \alpha(U_{p,s}^\circ) \cdot \alpha(U_{p,r-1}^\circ).
	\end{equation}
	Since the $\alpha(U_{p,i}^\circ)$ are all analytic and non-zero on $\sC$, the locus $\sC_{r,s}$ in $\sC$ where \eqref{eq:reduced criterion} is satisfied is a Zariski-closed subspace (with weight support only over subsets where $\lambda_r-\lambda_s$ is constant). However, by assumption $\sC$ contains a $p$-refined spherical point, so $\sC_{r,s} \neq \sC$, whence $\sC_{r,s} \subset \sC$ is a proper subspace of smaller dimension. 
	
	Any classical point $y$ where $\Ind_B^G \UPS_y$ is reducible must live in $\bigcup_{r\neq s}\sC_{r,s}$. Since there are only finitely many possible pairs $(r,s)$, this union is a proper subspace of $\sC$ of smaller dimension. It follows that $\Ind_B^G\UPS_y$ is irreducible for a Zariski-dense set of $y$, and each of these $y$ corresponds to a $p$-refined $p$-spherical RACAR.
	
	\medskip
	
	It remains to check a Zariski-dense subset of these $y$ are regular. Note such a $y$ is not regular, then there exist $r \neq s$ such that $\UPS_r(p) = \UPS_s(p)$. Arguing as above, this happens if and only if
	\[
	p^{r-s}\cdot p^{\lambda_s-\lambda_r} \cdot \alpha(U_{p,r}^\circ)\cdot \alpha(U_{p,s-1}^\circ) = \alpha(U_{p,s}^\circ) \cdot \alpha(U_{p,r-1}^\circ),
	\]
	again cutting out a closed subspace in $\sC$. We conclude that there are a Zariski-dense set of $p$-regular points as before.
\end{proof}

\begin{remark}
	In any positive-dimensional component of $\sC_{r,s}$ we must have $\lambda_r-\lambda_s$ constant. It follows that any everywhere-ramified family must vary over some parabolic weight space $\sW_{0,\lambda}^{P}$ for some non-minimal $B \subsetneq P \subset G$. In particular, we recover that any classical family over the full pure weight space $\sW_0$ contains a Zariski-dense set of spherical points.
\end{remark}

\subsection{Proof of Theorem \ref{thm:shalika obstruction}}

Let $\tilde\pi$ be an optimally $P$-spin $p$-refined RASCAR such that $\pi_p$ is spherical and regular, and let $\sC$ be a symplectic family though $\tilde\pi$. To prove Theorem \ref{thm:shalika obstruction}, we must show that $w(\sC) \subset \sW_{0,\lambda_\pi}^P$.

Let $\fX$ be the set of classical points in $\sC$ that correspond to $p$-refined RASCARs $\tilde\pi_y$ such that $\pi_{y,p}$ is spherical and regular. By Proposition \ref{prop:ZD spherical}, the set $\fX$ is Zariski-dense in $\sC$.

For each $y\in \fX$, let $\gamma_y : \{1,...,n\} \hookrightarrow \{1,...,2n\}$ be the function for $\tilde\pi_y$ from Definition \ref{def:gamma}. 

\begin{lemma}\label{prop:transfer}
	The function $\gamma_y$ is constant as $y$ varies in $\fX$.  
\end{lemma}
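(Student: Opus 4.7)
The plan is to show that the discrete invariant $\gamma_y$, which takes values in the finite set of injections $\{1,\dots,n\}\hookrightarrow\{1,\dots,2n\}$, is cut out by $p$-adic analytic identities on $\sC$, so that a Zariski-density argument forces it to be constant.

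For each injection $\gamma:\{1,\dots,n\}\hookrightarrow\{1,\dots,2n\}$, let $Z_\gamma\subset\sC$ denote the locus on which the identities \eqref{eq:hecke relations circ} hold with $\gamma$ in place of $\gamma_{\tilde\pi}$, for all $1\leq s\leq n$. I would verify that $Z_\gamma$ is Zariski-closed as follows: the eigenvalues $\alpha(U_{p,r}^\circ)$ are analytic and nowhere-vanishing on $\sC$ (by Theorem \ref{thm:eigenvariety} and Proposition \ref{prop:p-refinement}(i)); the weight-dependent factors $p^{\lambda_{\gamma(i)}-\lambda_i}$ are analytic on $\sC$ as pullbacks under $w$ of evaluations of the universal weight character; and although the factor $\eta_0(p)$ is only defined up to an $n$-th root of the analytic quantity $\eta_0(p)^n$, this ambiguity can be removed either by passing to a suitable \'etale cover on which $\eta_0(p)$ is single-valued, or by raising both sides of \eqref{eq:hecke relations circ} to the $n$-th power before cutting out $Z_\gamma$.

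By Proposition \ref{prop:spin relations circ}, each $y\in\fX$ lies in exactly one $Z_\gamma$, namely $Z_{\gamma_y}$, so $\fX\subset\bigcup_\gamma Z_\gamma$. Since the set of injections is finite and $\fX$ is Zariski-dense in $\sC$, at least one $Z_{\gamma_0}$ must equal $\sC$; then the identities \eqref{eq:hecke relations circ} hold on all of $\sC$ with $\gamma=\gamma_0$. In particular, they hold at every $y\in\fX$, and the uniqueness in Proposition \ref{prop:spin relations circ} forces $\gamma_y=\gamma_0$. Since $\tilde\pi\in\fX$, the common value is $\gamma_{\tilde\pi}$.

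The hard part will be handling the $\eta_0(p)$ factor cleanly: one must verify that the $n$-th root ambiguity does not introduce spurious solutions when passing between the $n$-th-power form of \eqref{eq:hecke relations circ} and the original form on which the uniqueness in Proposition \ref{prop:spin relations circ} relies. This should follow from the fact that on a connected component of $\sC$, any analytic function valued in $n$-th roots of unity is constant, combined with the known value $\gamma_{\tilde\pi}$ at the distinguished point $\tilde\pi$.
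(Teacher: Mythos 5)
Your proof is correct and is essentially the same argument as the paper's. The paper pigeonholes to get a Zariski-dense subset $\fY\subset\fX$ on which $\gamma_y$ is a fixed $\gamma$, extends the relations \eqref{eq:hecke relations circ} from $\fY$ to all of $\sC$ by analyticity, and then applies the uniqueness in Proposition \ref{prop:spin relations circ} at each $y\in\fX$; your reformulation via the closed loci $Z_\gamma$ is a cosmetic repackaging of the same idea (both use irreducibility of $\sC$, which holds by definition of a symplectic family).

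The one substantive point where you go beyond the paper is the observation that $\eta_0(p)$ appears on the right-hand side of \eqref{eq:hecke relations circ}, and a priori could be only one of $n$ branches of the analytic function $\eta_0(p)^n$. The paper silently treats $\eta_0(p)$ as a constant. Your concern is legitimate, and your proposed fixes (passing to an \'etale cover, or raising to the $n$-th power plus a connectedness argument) are viable. A somewhat cleaner way to dispose of this: at each $y\in\fX$ the Shalika character $\eta_{y,0}$ is a finite-order Hecke character whose conductor is bounded by the fixed tame level, so there are only finitely many possibilities; one can pigeonhole to a Zariski-dense subset where $\eta_{y,0}(p)$ is also constant before extending the identity, or note that the locus of fixed $\eta_{y,0}$ is cut out by tame Hecke eigenvalues and is therefore closed, so irreducibility of $\sC$ forces $\eta_{y,0}$ constant. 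Either way, the uniqueness step then applies with the correct $\eta_0(p)$, and no $n$-th root ambiguity remains.
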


\begin{proof}
	There are only finitely many functions $\gamma : \{1,...,n\} \hookrightarrow \{1,...,2n\}$, so there must exist such a function $\gamma$ and a Zariski dense subset $\fY \subset \fX \subset \sC$ such that $\gamma_z = \gamma$ for all $z \in \fY$.
	
	By Proposition \ref{prop:spin relations circ}, at every $y$ in $\fY$, the Hecke relations 
	\begin{equation}\label{eq:obstruction}
		\alpha_y(U_{p,s}^\circ) \cdot \prod_{i=1}^s p^{\frac{2n-2\gamma_{\tilde\pi}(i)+1}{2}}\cdot p^{\lambda_{y,\gamma(i)}-\lambda_{y,i}} \cdot\frac{\alpha_y(U_{p,2n+1-\gamma(i)}^\circ)}{\alpha_y(U_{p,2n-\gamma(i)}^\circ)} = \delta_B^{-1/2}(t_{p,s})\cdot \eta_0(p)^s
	\end{equation}
	are satisfied for all $1 \leq s \leq n$, where $w(y) = \lambda_y$. Since $U_{p,r}^\circ$ defines an analytic function on $\sC$, and these relations hold for the Zariski-dense $\fY$, they hold over all of $\sC$. In particular, they hold at every point $y \in \fX$. Since the points in $\fX$ are regular, the unicity statement in Proposition \ref{prop:spin relations circ} says $\gamma_y = \gamma$ for all $y \in \fX$. 
\end{proof}

\begin{lemma}\label{lem:every point P-spin}
	Every point $y \in \fX$ is optimally $P$-spin.
\end{lemma}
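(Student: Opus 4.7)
The plan is to deduce this immediately from Lemma~\ref{prop:transfer} combined with the combinatorial characterisation in Proposition~\ref{prop:gamma spin}. Since $\sC$ is a symplectic family, every classical point of $\sC$ corresponds to a RASCAR; in particular each $y \in \fX$ corresponds to a $p$-refined RASCAR which is spherical and regular at $p$, so the notions of $P$-spin and optimally $P$-spin apply verbatim to $\tilde\pi_y$, and the function $\gamma_y$ of Definition~\ref{def:gamma} is well-defined.

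By Lemma~\ref{prop:transfer}, there is a single function $\gamma : \{1,\ldots,n\} \hookrightarrow \{1,\ldots,2n\}$ with $\gamma_y = \gamma$ for every $y \in \fX$; and since $\tilde\pi \in \fX$, this common function must equal $\gamma_{\tilde\pi}$. By Proposition~\ref{prop:gamma spin}, whether $\tilde\pi_y$ is $P'$-spin (for any spin parabolic $P'$) depends only on the subset of $r \in \{1,\ldots,n\}$ for which $\gamma_y$ preserves $\{1,\ldots,r\}$, and the same holds for the optimal version. Since $\gamma_y = \gamma_{\tilde\pi}$, this subset is identical for $y$ and for $\tilde\pi$, and $\tilde\pi$ being optimally $P$-spin means this set equals $X_P$ and is maximal. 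Therefore $\tilde\pi_y$ is $X_P$-spin with $X_P$ maximal, so by Proposition~\ref{prop:P-spin criterion} (equivalently, Proposition~\ref{prop:gamma spin}), $y$ is optimally $P$-spin.

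There is essentially no obstacle here: the lemma is a direct consequence of the transfer of $\gamma$ across $\fX$ together with the fact that being optimally $P$-spin is detected purely by $\gamma$. The only point to verify is that $\fX$ consists of refinements to which the combinatorial apparatus of \S\ref{sec:P-spin} applies, which is guaranteed by the sphericity and regularity built into the definition of $\fX$ via Proposition~\ref{prop:ZD spherical}.
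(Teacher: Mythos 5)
Your proof is correct and takes essentially the same route as the paper: use Lemma~\ref{prop:transfer} to see that $\gamma_y$ is constant across $\fX$ and equal to $\gamma_{\tilde\pi}$, then invoke Proposition~\ref{prop:gamma spin} to conclude that the optimal spin parabolic of $\tilde\pi_y$, being determined by $\gamma_y$, coincides with that of $\tilde\pi$. The only difference is presentational: the paper phrases the last step by noting $P_y$ is constant over $\fX$ and equal to $P$ since $\tilde\pi \in \fX$, whereas you unwind this through the set $X_P$; the content is identical.
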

\begin{proof}
	Let $y \in \fX$, and let $P_y$ be the unique spin parabolic such that $\tilde\pi_y$ is optimally $P_y$-spin. By Proposition \ref{prop:gamma spin}, $P_y$ is determined by the function $\gamma_y$. By Lemma \ref{prop:transfer}, the function $\gamma_y$ is constant over $\fX$; thus $P_y$ is also constant over $\fX$. But $\fX$ contains $\tilde\pi$, which by assumption is optimally $P$-spin. Thus $P_y = P$ for all $y \in \fX$.
\end{proof}

\begin{lemma}\label{lem:weight obstruction}
	For $1 \leq i \leq n$, if $a_i \in \Delta_{P}$, then $\lambda_{y,i} - \lambda_{y,i+1}$ is constant as $y$ varies in $\fX$. 
\end{lemma}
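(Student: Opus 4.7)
The plan is to extract weight constraints from the spin identities of Proposition \ref{prop:spin relations circ}. By Lemma \ref{prop:transfer}, the function $\gamma_y = \gamma$ is constant on $\fX$, so the identity \eqref{eq:hecke relations circ} holds uniformly with the same $\gamma$ at every point of $\fX$. Zariski density of $\fX$ in $\sC$ (Proposition \ref{prop:ZD spherical}), together with analyticity of the normalised Hecke eigenvalues $\alpha(U_{p,r}^\circ)$, propagates it to an analytic identity on all of $\sC$.

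First I would take the ratio of \eqref{eq:hecke relations circ} for $s$ and $s-1$. For each $s \in \{1, \ldots, n\}$, this yields an analytic identity on $\sC$ of the form
\[
    p^{\lambda_{y, \gamma(s)} - \lambda_{y, s}} \cdot \Phi_s(y) = C_s,
\]
where $\Phi_s$ is an explicit bounded analytic function in the $\alpha(U_{p, r}^\circ)$'s and $C_s$ is a non-zero constant. Combined with local constancy of the slopes $v_p(\alpha(U_{p, r}^\circ))$ on irreducible components of the eigenvariety over weight space, matching $p$-adic valuations forces $\lambda_{y, \gamma(s)} - \lambda_{y, s}$ to be constant on $\fX$ for each $s$. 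By Proposition \ref{prop:gamma spin}, this is vacuous precisely when $s \in X_P$; for $s \notin X_P$ (i.e.\ $a_s \in \Delta_P$ with $s \leq n$), it gives a genuine constraint.

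The hard part will be the final combinatorial step: converting the constraints $\{\lambda_{\gamma(s)} - \lambda_s = \text{const} : s \notin X_P,\ s \leq n\}$, together with the purity relations $\lambda_j + \lambda_{2n+1-j} = \sw$ on $\sW_0$ (Lemma \ref{lem:relative}), into the desired $\lambda_{y, i} - \lambda_{y, i+1} = \text{const}$ for each $a_i \in \Delta_P$ with $i \leq n$. This demands a case analysis on where $\gamma(s)$ lies relative to $\{1, \ldots, n\} \sqcup \{n+1, \ldots, 2n\}$: when $\gamma(s) \leq n$, the constraint is a direct statement about $\lambda$'s in the first half; when $\gamma(s) > n$, purity must be used to rewrite $\lambda_{\gamma(s)}$ in terms of $\lambda_{2n+1-\gamma(s)}$ and $\sw$, and several constraints may combine non-trivially via the pairing structure encoded by $\gamma$. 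One ultimately verifies that the resulting subspace cut out of $\sW_0$ coincides exactly with $\sW_{0, \lambda_\pi}^P$, matching its codimension $n - \#X_P$ in $\sW_0$ as given by Lemma \ref{lem:parabolic dimension}.
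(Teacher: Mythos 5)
Your first step reproduces the paper's: from \eqref{eq:hecke relations circ}, Zariski density of $\fX$, analyticity of the $\alpha(U_{p,r}^\circ)$'s, and a valuation argument, you correctly deduce that $\lambda_{y,\gamma(s)} - \lambda_{y,s}$ is constant on $\fX$ for every $s$. (Your side-remark that this is ``vacuous precisely when $s \in X_P$'' is a misreading of Proposition \ref{prop:gamma spin} --- $\gamma$ preserving $\{1,\dots,r\}$ for $r \in X_P$ does not force $\gamma(s)=s$ --- but this does not affect the part you actually use.)

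The genuine gap is in your final step. You assert that the constraints $\{\lambda_{\gamma(s)} - \lambda_s = \mathrm{const}\}_{s}$, together with purity, cut out exactly $\sW_{0,\lambda_\pi}^P$, so that the conclusion follows from linear algebra on weight coordinates. That is false. Take $G=\GL_4$ with $\tilde\pi$ optimally $G$-spin (e.g.\ $\Psi_\UPS(\tilde\pi) = \{2314\}$, giving $\gamma(1)=3$, $\gamma(2)=4$ and $X_P = \varnothing$). The constraints are $\lambda_3-\lambda_1=c_1$ and $\lambda_4-\lambda_2=c_2$, which by purity both reduce to the single relation $\sw-\lambda_1-\lambda_2 = \mathrm{const}$; this is codimension $1$ in the $3$-dimensional $\sW_0$, whereas $\sW_{0,\lambda_\pi}^{G}$ has codimension $2$. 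So the Hecke identities alone do not pin down the parabolic weight space, and no linear manipulation will change that. What the paper uses instead --- and what your linear cut-out cannot supply --- is a dominance sandwich. For $a_i \in \Delta_P$, optimality and Proposition \ref{prop:gamma spin} produce $m\le i$ with $\gamma(m)>i$; dominance then gives $\lambda_m\ge\lambda_i\ge\lambda_{i+1}\ge\lambda_{\gamma(m)}$, so $\lambda_{y,i}-\lambda_{y,i+1}$ is a non-negative integer bounded above by the constant $\lambda_{y,m}-\lambda_{y,\gamma(m)}$. It therefore takes only finitely many values on the Zariski-dense set $\fX$; since $\sC$ is irreducible and $(1+p)^{\lambda_i-\lambda_{i+1}}$ interpolates to a rigid-analytic function on $w(\sC)$, that function is forced to be constant. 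This inequality-plus-discreteness step, not a combinatorial cut-out, is the key idea your proposal omits.
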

\begin{proof}
	Let $\gamma$ be the function from the proof of Lemma \ref{prop:transfer}. We showed that the relation \eqref{eq:obstruction} holds over all of $\sC$, and for all $1 \leq s \leq n$. As the $\alpha_y(U_{p,r}^\circ)$ vary analytically with $y$, for this to be true for all $s$, the term $p^{\lambda_{y,\gamma(i)} - \lambda_i}$ must be constant for all $1 \leq i \leq n$. This forces $\lambda_{y,\gamma(i)} - \lambda_{y,i}$ to be constant.
	
	Now, suppose $a_i \in \Delta_P$. Then $i \not\in X_P$. Now, since the points of $\fX$ are optimally $P$-spin, by Proposition \ref{prop:gamma spin} we know that $\gamma$ does not preserve $\{1,...,i\}$. In particular, there exists some $m \in \{1,...,i\}$ such that $\gamma(m) > i$. Also, by dominance, we have $\lambda_{m} \geq \lambda_{i} \geq \lambda_{i+1} \geq \lambda_{\gamma(m)}$. Thus if $\lambda_{y,\gamma(m)} - \lambda_{y,m}$ is constant, as $y$ varies over $\fX$, then so is $\lambda_{y,i} - \lambda_{y,i+1}$.
\end{proof}

Finally we prove Theorem \ref{thm:shalika obstruction}. If $a_i \in \Delta_P$, then either:
\begin{itemize}\s
\item[(1)] $1 \leq i \leq n$. Lemma \ref{lem:weight obstruction}, and Zariski-density of $\fX$, imply $\lambda_i-\lambda_{i+1}$ is constant over $w(\sC)$. 
\item[(2)] or $n+1 \leq i \leq 2n-1$; then $1 \leq 2n-i \leq n$. As $P$ is a spin parabolic $a_i \in \Delta_P$ if and only if $a_{2n-i} \in \Delta_P$, so by (1) $\lambda_{2n-i} - \lambda_{2n-i+1}$ is constant.  As $w(\sC)$ is in the pure weight space, this implies $\lambda_{i}-\lambda_{i+1}$ is constant. 
\end{itemize}
By Lemma \ref{lem:lambda_i-lambda_i+1constant in families}, this means that $w(\sC) \subset \sW_{0,\lambda_\pi}^P$, as claimed. \qed



\section{Existence of $P$-spin families}\label{sec:lower bound}

We have obtained an upper bound on the dimension of symplectic families. We now prove Theorem \ref{thm:lower bound}, constructing families realising this bound through non-critical slope refinements.

\subsection{$B$-spin families}
Let $\pi$ be a RASCAR of regular weight that is spherical and regular at $p$. Let $K_1(\tilde\pi)$ be as before Theorem \ref{thm:lower bound}. In \cite{BDW20} and \cite{BDGJW} we proved:

\begin{theorem}\label{thm:BDGJW}
Let $\tilde\pi$ be a non-critical $B$-spin refinement. There is a unique family $\sC$ through $\tilde\pi$ in $\sE^G_{K_1(\tilde\pi)}$ that varies over the pure weight space $\sW_0$. Moreover $\sC$ is an $(n+1)$-dimensional classical symplectic family \'etale over $\sW_0$ at $\tilde\pi$ in which the classical symplectic points are very Zariski-dense. 
\end{theorem}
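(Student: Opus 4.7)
The plan is to combine Urban's theorem for case (I) groups with a Shalika-model-based construction in overconvergent cohomology, essentially following the strategy of \cite{BDW20, BDGJW}.

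First, I would exploit the fact that $\tilde\pi$ is a $B$-spin refinement: by Proposition \ref{prop:spin factor}, $\tilde\pi$ is the functorial transfer of a Borel refinement $\tilde\Pi$ of some RACAR $\Pi$ of $\cG(\A) = \mathrm{GSpin}_{2n+1}(\A)$, and the non-critical slope at $\tilde\pi$ transfers to a non-criticality condition on $\tilde\Pi$ via the map $\jmath^\vee$ on Hecke algebras. Since $\cG$ has discrete series, Urban's theorem produces a unique family $\sC^{\cG}$ through $\tilde\Pi$ in the Iwahori $\cG$-eigenvariety, of dimension $\mathrm{rk}(\cG) = n+1$ and \'etale over the $\cG$-weight space at $\tilde\Pi$. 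Under $\jmath$, this $\cG$-weight space is canonically identified with $\sW_0$.

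Next, I would realise the desired image family $\sC \subset \sE^G_{K_1(\tilde\pi)}$ directly via overconvergent cohomology of $G$ restricted to $\sW_0$, rather than constructing an abstract $p$-adic transfer map. Non-criticality plus regularity places $\tilde\pi$ in the small-slope part of overconvergent cohomology where classicality holds, and multiplicity one at the Whittaker new level $K_1(\tilde\pi)$ forces $\tilde\pi$ to appear with multiplicity one in the fibre over $\lambda_\pi$. Standard eigenvariety machinery over the $(n+1)$-dimensional pure weight space then produces a family $\sC$ through $\tilde\pi$, \'etale over $\sW_0$ at $\tilde\pi$; uniqueness of $\sC$ is a formal consequence of \'etaleness.

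The hard part will be to prove that the classical specialisations of $\sC$ are symplectic RASCARs, so that $\sC \subset \sS^G_K$. For this I would construct, using Shalika models and the $(\eta_p,\psi_p)$-structure carried by $\tilde\pi$, a rigid-analytic Shalika-type zeta integral on $\sC$ that does not vanish at $\tilde\pi$ and hence is non-zero on a Zariski-dense subset of classical points of $\sC$. By Friedberg--Jacquet, each such classical point corresponds to a symplectic RASCAR that is a functorial transfer from $\cG$, and Zariski density then forces $\sC \subset \sS^G_K$.

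The main obstacle is producing such Shalika-twisted classes at Iwahori level over the full pure weight space $\sW_0$ and showing they vary analytically in families. This requires integrality and refinement-switching arguments beyond the parahoric case treated in \cite{BDW20} and provides the principal technical content of \cite{BDGJW}; once this input is in place, the classical symplectic family $\sC$ is assembled as above.
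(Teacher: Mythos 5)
The paper's own proof of this theorem is a one-line citation: when the tame level is spherical it is exactly \cite[Thm.~13.6]{BDGJW}, and the general tame level $K_1(\pi)^p$ is handled as in \cite[\S 7.5--7.6]{BDW20}. Your proposal is an attempt to reconstruct the cited argument, so I will judge it against what that argument must be (which this paper reproduces in outline in \S\ref{sec:branching laws} and \S 8).

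There is a genuine gap in the logical ordering. You first claim that ``standard eigenvariety machinery over the $(n+1)$-dimensional pure weight space then produces a family $\sC$ through $\tilde\pi$, \'etale over $\sW_0$ at $\tilde\pi$,'' and only afterwards introduce the rigid-analytic Shalika-type zeta integral as a way to \emph{verify} that the family is symplectic. But the eigenvariety is fibred over the full $2n$-dimensional weight space $\sW$, and there is no standard mechanism that hands you an $(n+1)$-dimensional component supported precisely over the pure subspace $\sW_0$. Multiplicity one at $K_1(\tilde\pi)$ gives the \emph{upper} bound $\cO_{\sC,\tilde\pi} \cong \cO_{\sW_0,\lambda_\pi}/I$; the hard content is proving $I=0$. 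In the cited argument this is done by constructing the big evaluation map $\cE_{\Omega,B,\chi}^{j,\eta_0} : \hc{t}(S_{K_B},\sD_\Omega)\to\cO_\Omega$ interpolating the Friedberg--Jacquet integral, showing it does not vanish modulo $\m_{\lambda_\pi}$ (which uses the explicit eigenvector and local zeta computation, essentially Theorem~\ref{thm:2 implies 1} in the $P=B$, $\tilde\pi$ $B$-spin case), and then invoking torsion-freeness of $\cO_\Omega$ to conclude that the annihilator of the relevant overconvergent class is trivial. The Shalika integral is thus the \emph{engine} that produces the $(n+1)$-dimensional family, not a post-hoc check on one already known to exist. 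Classicality and symplecticity along the family follow from the same evaluation map together with Friedberg--Jacquet, as you indicate; but you cannot separate existence from non-vanishing the way your sketch does.

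Two smaller points. Your opening paragraph about Urban's theorem for $\mathrm{GSpin}_{2n+1}$ is, as you yourself note, motivational only: no $p$-adic transfer map is constructed, and the proof works entirely on the $\GL_{2n}$ side. You could drop it entirely. And ``refinement-switching'' is not the technique needed here -- that is the method used in \S 5 of this paper to pass from $B$-spin families to $P$-spin families; the $B$-spin case in \cite{BDGJW} requires no refinement-switching, only the Iwahori-level Shalika-twisted evaluation machinery (branching-law interpolation, explicit eigenvectors $F_{w_n}^\nu$, and the local zeta non-vanishing).
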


Recall we say a subset $X \subset \sC$ is \emph{very Zariski-dense} if for every $x \in X$, there is a basis of affinoid neighbourhoods $V \subset \sC$ of $x$ such that $X \cap V$ is Zariski-dense in $V$.

\begin{proof}
When $K_1(\pi) = G(\widehat{\Z})$, this is \cite[Thm.\ 13.6]{BDGJW}. One can treat general $K_1(\pi)$ following exactly the strategy of \cite[\S7.5,7.6]{BDW20}. 
\end{proof}

	\begin{lemma}\label{lem:every point spherical}
		We may shrink $\sC$ so that every classical point $y \in V$ corresponds to a $B$-spin $p$-refined RASCAR $\tilde\pi_y$ such that $\pi_{y,p} = \Ind_B^G \UPS_y$ is a regular and spherical, with $\Psi_{\UPS_y}(\tilde\pi_y) = \Psi_{\UPS}(\tilde\pi)$.
	\end{lemma}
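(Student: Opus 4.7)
The plan is to shrink $\sC$ in three stages, corresponding to the three properties required: (i) $\pi_{y,p}$ is spherical and regular at every classical $y$; (ii) $\pi_y$ is a RASCAR; (iii) the Weyl element $\Psi_{\UPS_y}(\tilde\pi_y)$ can be taken equal to $\sigma := \Psi_{\UPS}(\tilde\pi)$.

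For (i), I would apply the argument inside the proof of Proposition \ref{prop:ZD spherical}: the locus where reducibility of the principal series or coincidence of two Satake parameters can occur is a finite union $\bigcup_{r \neq s} \sC_{r,s}$ of proper Zariski-closed subspaces, none containing $\tilde\pi$. After shrinking $\sC$ to avoid this union, the Bernstein--Zelevinsky classification forces $\pi_{y,p}$ at every classical $y$ to be an irreducible unramified principal series $\Ind_B^G \UPS_y$ with pairwise distinct Satake parameters. For (ii), I would use that in the proof of Theorem \ref{thm:BDGJW}, $\sC$ is constructed as the image, under a $p$-adic functoriality transfer, of a classical cuspidal family on the $\mathrm{GSpin}_{2n+1}$-eigenvariety through the $\cB$-refinement corresponding to $\tilde\pi$. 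Every classical point of that source family corresponds to a classical $\Pi_y$ whose Langlands transfer to $\GL_{2n}(\A)$ is a symplectic RACAR $\pi_y$; so after further shrinking to lie in the image of a small enough neighbourhood of $\Pi$, every classical $y \in \sC$ corresponds to such a RASCAR, and by Proposition \ref{prop:spin factor} the $p$-refinement $\tilde\pi_y$ is automatically $B$-spin.

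For (iii), I would define the analytic functions
\[
\theta_r(y) := p^{(2n-2r+1)/2}\, p^{-\lambda_{y,r}}\, \frac{\alpha_y(U_{p,r}^\circ)}{\alpha_y(U_{p,r-1}^\circ)}, \qquad r = 1, \ldots, 2n,
\]
on $\sC$ (the factor $p^{-\lambda_{y,r}}$ being analytic via the weight map). By Lemma \ref{lem:UPS in terms of Up circ}, at each classical $y$ the multi-set $\{\theta_r(y)\}_r$ is the multi-set of Satake parameters of $\pi_{y,p}$, and for any choice $\sigma_y = \Psi_{\UPS_y}(\tilde\pi_y)$ one has $\theta_r(y) = \UPS_{y, \sigma_y(r)}(p)$. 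One then \emph{chooses} $\UPS_y$ by declaring $\UPS_{y, \sigma(r)}(p) := \theta_r(y)$; this is a valid relabeling since the parameters are distinct, and by construction produces $\Psi_{\UPS_y}(\tilde\pi_y) = \sigma$. To conclude that this $\UPS_y$ is spin, i.e.\ $\UPS_{y, \sigma(r)}\UPS_{y, \sigma(2n+1-r)} = \eta_{y,p}$, I would use Lemma \ref{lem:relative} together with $\sigma \in \cW_G^0$ ($B$-spin hypothesis) to rewrite this as $\theta_r(y)\theta_{2n+1-r}(y) = \eta_{y,p}(p)$, which is an analytic identity on $\sC$; since it holds at $\tilde\pi$ by the spin choice of $\UPS$, it holds throughout.

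The principal subtlety lies in step (ii): ensuring that \emph{every} classical point of the shrunken family, and not merely a Zariski-dense subset, is symplectic. This does not follow from the definition of symplectic family alone; rather it relies on the explicit construction of $\sC$ as a Langlands transfer in the proof of Theorem \ref{thm:BDGJW}. Once this is granted, the remaining assertions reduce to analytic continuation of Hecke identities combined with the uniqueness statements for Satake parameters at spherical regular points.
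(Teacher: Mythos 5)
Your step (i) matches the paper's first step exactly. But steps (ii) and (iii) both have genuine gaps, and the paper's actual argument is different in a way that avoids them.

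For step (ii), you assert that $\sC$ in Theorem \ref{thm:BDGJW} is ``constructed as the image, under a $p$-adic functoriality transfer, of a classical cuspidal family on the $\mathrm{GSpin}_{2n+1}$-eigenvariety.'' This is not how the family is built, and the paper's introduction (\S\ref{sec:intro philosophy}) explicitly states that such transfer maps have \emph{not} been constructed and are not pursued in this work. The families in \cite{BDGJW} (and the generalisation via \cite{BDW20}) are constructed intrinsically on the $\GL_{2n}$ side via evaluation maps on overconvergent cohomology, with symplecticity of classical points deduced from non-vanishing of a Shalika period. You cannot appeal to a $p$-adic Langlands transfer that does not exist. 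The paper instead establishes that every classical point of (the shrunk) $\sC$ is optimally $B$-spin by invoking Lemmas \ref{prop:transfer} and \ref{lem:every point P-spin}, whose proofs propagate the Hecke identities of Proposition \ref{prop:spin relations circ} from a Zariski-dense subset $\fY$ of $\fX$ to all of $\sC$, and then use the uniqueness of $\gamma_{\tilde\pi_y}$ under regularity.

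For step (iii), you claim that the spin condition $\theta_r(y)\theta_{2n+1-r}(y) = \eta_{y,p}(p)$ is ``an analytic identity on $\sC$; since it holds at $\tilde\pi$ by the spin choice of $\UPS$, it holds throughout.'' This is a non sequitur: in a positive-dimensional family, an identity between analytic functions does not propagate from a single point. You would need it to hold on a Zariski-dense subset, and to establish that you first need to know that $\tilde\pi_y$ is $B$-spin at a Zariski-dense set of classical $y$ --- which is precisely the content of Lemmas \ref{prop:transfer} and \ref{lem:every point P-spin}, and which you have not established independently. As written, your argument silently assumes its conclusion. Once those lemmas are invoked (as the paper does), the final conjugation of $\UPS_y$ into the position $\Psi_{\UPS_y}(\tilde\pi_y)=\Psi_{\UPS}(\tilde\pi)$ is immediate from Remarks \ref{rem:change of UPS} and \ref{rem:P-spin ind of UPS}, and your explicit relabelling via the $\theta_r$'s is then a harmless reformulation rather than an independent argument.
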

In other words: `each classical point is a $p$-refined $p$-spherical RASCAR, and for each such point, and all the refinements are in the same position in the Weyl group.' 

\begin{proof}
By Proposition \ref{prop:ZD spherical} and its proof, all the classical points corresponding to RACARs that are ramified at $p$ live inside a proper closed subspace of the eigenvariety, and since $x$ is not in this closed subspace, we can shrink the neighbourhood $\sC$ to avoid it completely. Then every classical $y$ is unramified principal series at $p$.

In this $\sC$, every $y$ is (optimally) $B$-spin by Lemmas \ref{prop:transfer} and \ref{lem:every point P-spin}; so $\Psi_{\UPS_y}(\tilde\pi_y) \in \cW_G^0$. By Remarks \ref{rem:change of UPS} and \ref{rem:P-spin ind of UPS}, we can thus conjugate $\UPS_y$ so that $\Psi_{\UPS_y}(\tilde\pi_y) = \Psi_{\UPS}(\tilde\pi)$.
\end{proof}

\begin{wrapfigure}[13]{r}{0.4\textwidth}
	
	\vspace{-14pt}
	
	\hspace{-10pt}
	\includegraphics[width=6cm]{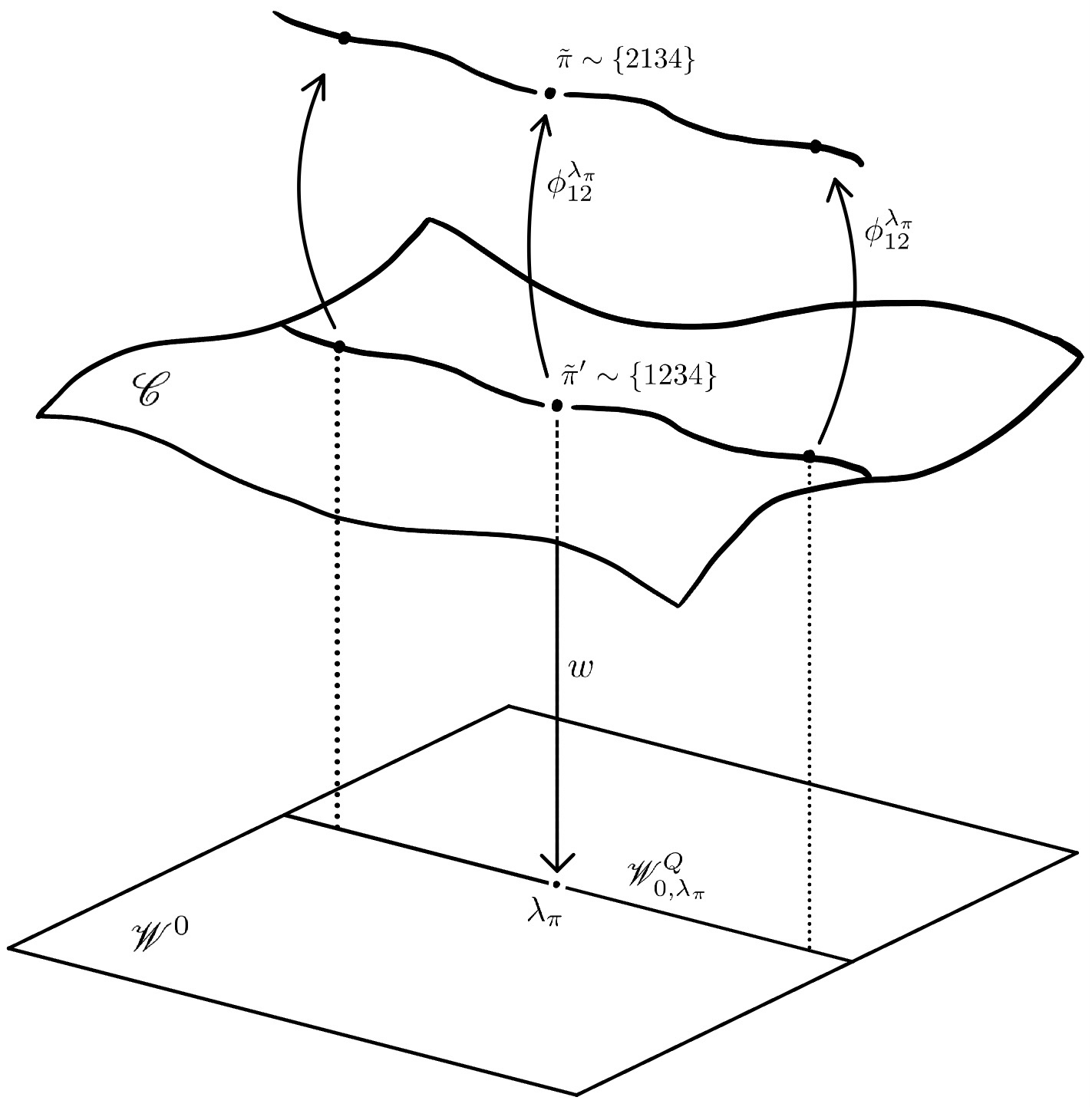}
\end{wrapfigure}

\subsection{Refinement-switching} \label{sec:relating eigenvalues}

To produce $P$-spin families, we take the part of the $B$-spin family supported over the $P$-parahoric weight space, and systematically switch between refinements for each classical point in the family. For $\GL_4$, this is pictorially represented in the figure right, and we shall now explain the notation.

To enact this strategy, we need to able to pass between optimally $P$-spin and optimally $B$-spin refinements, and to relate eigenvalues as we do so.  Recall the notion of being $r$-spin from Definition \ref{def:i-spin}, and $X$-spin from Definition \ref{def:X_P}. The following lemma shows you can always `improve' the spin-ness with a controlled transposition.

\begin{lemma}\label{lem:adjacent spin 1}
	Suppose $\tilde\pi$ is optimally $X$-spin, for $X \subset \{1,...,n\}$.
	\begin{enumerate}[(i)]\s
		\item Let $1 \leq i \leq n-1$, and suppose: (a) $(i-1) \in X$ or $i=1$, and (b) $i \notin X$. 
		Let 
		\[
			k \defeq \left\{\begin{array}{cl} 2n-i &: i-1\text{ is maximal in }X,\\
				\mathrm{min}\{i' \in X: i' > i-1\} &: \text{else}.
				\end{array}\right.
		\]
		
		Then there exists $i+1 \leq j \leq k$ such that the $p$-refinement $\tilde\pi'$ with
		\[
		\Psi_{\UPS}(\tilde\pi') = \Psi_{\UPS}(\tilde\pi) \cdot (i,j)
		\]
		is $X\cup\{i\}$-spin. 
		
		\item If $\tilde\pi$ is $(n-1)$-spin, then it is $n$-spin (i.e.\ if $n-1 \in X$, then $n \in X$). 
	\end{enumerate}
\end{lemma}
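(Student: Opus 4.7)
The plan is to translate both statements into the language of a perfect matching on positions: given $\sigma = \Psi_{\UPS}(\tilde\pi) \in \cW_G$, call positions $a,b \in \{1,\ldots,2n\}$ \emph{partners} (under $\sigma$) if $\sigma(a)+\sigma(b) = 2n+1$. Since the values $\sigma(1),\ldots,\sigma(2n)$ form a permutation of $\{1,\ldots,2n\}$, this defines a perfect matching on the position set. Unpacking Definition \ref{def:gamma} and Lemma \ref{lem:spin relations}, $\tilde\pi$ is $r$-spin exactly when positions $\{1,\ldots,r\}$ are matched with positions $\{2n+1-r,\ldots,2n\}$. For part (ii), being $(n-1)$-spin says that $\{1,\ldots,n-1\}$ is matched with $\{n+2,\ldots,2n\}$; the only remaining positions are $\{n,n+1\}$, which must then be partners, giving the matching $\{1,\ldots,n\} \leftrightarrow \{n+1,\ldots,2n\}$ and hence $n$-spin.

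For part (i), the strategy is to identify a single canonical transposition that repairs the $i$-spin defect while leaving the $X$-spin conditions intact. Condition (a) combined with $X$-spin forces positions $\{1,\ldots,i-1\}$ to be matched with $\{2n+2-i,\ldots,2n\}$, so positions $\{i,\ldots,2n+1-i\}$ are matched among themselves. Let $j$ denote the partner of position $2n+1-i$ under $\sigma$. If $j=i$, then $\sigma$ would be $i$-spin and hence $X \cup \{i\}$-spin, contradicting optimality of $X$; so $j \in \{i+1,\ldots,2n-i\}$. The claim is that $\sigma' \defeq \sigma\cdot(i,j)$ has the required properties. Writing $i'$ for the partner of $i$ under $\sigma$, a direct check shows that the effect of the transposition on the matching is simply to replace the two pairs $\{i,i'\}$ and $\{j,2n+1-i\}$ by $\{i,2n+1-i\}$ and $\{j,i'\}$, leaving every other pair untouched.

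It then remains to verify that $\sigma'$ is $X\cup\{i\}$-spin and that $j \leq k$. The new pair $\{i,2n+1-i\}$ immediately gives the $i$-spin condition for $\sigma'$. For $r \in X$ with $r < i$, the matching of $\{1,\ldots,r\}$ with $\{2n+1-r,\ldots,2n\}$ is preserved because neither $i$ nor $j$ lies in $\{2n+2-i,\ldots,2n\}$ (indeed both are at most $2n-i < 2n+2-i$). For $r \in X$ with $r > i$, provided $j \leq r$ both $i$ and $j$ lie in $\{1,\ldots,r\}$ and the swap merely permutes the existing matching $\{1,\ldots,r\} \leftrightarrow \{2n+1-r,\ldots,2n\}$ with itself. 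The bound $j \leq k$ is the delicate step: if $i-1$ is maximal in $X$ then $k=2n-i$ and $j \leq 2n-i$ is automatic; otherwise $k = r_+ \defeq \min\{r \in X : r > i-1\}$, and $r_+ \geq i$ combined with the $r_+$-spin of $\sigma$ forces position $2n+1-i \in \{2n+1-r_+,\ldots,2n\}$ to have its partner $j$ in $\{1,\ldots,r_+\} = \{1,\ldots,k\}$. The main point to check carefully is the interplay between these two cases -- $k$ must be tight enough to protect the $X$-spin conditions above $i$ while still large enough to contain $j$ -- which is precisely what the exact form of $k$ in the statement ensures.
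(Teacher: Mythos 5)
Your proof is correct and follows essentially the same route as the paper: you take $j$ to be the position paired with $2n+1-i$ (exactly the paper's choice), derive $i+1\le j\le k$ from the $(i-1)$- and $k$-spin conditions, and verify that the transposition $(i,j)$ preserves the spin conditions for $r\in X$ and repairs the $i$-spin defect. The "perfect matching on positions" framing is a clean way to package the paper's $(\dagger)$ observation and makes the two-pair swap transparent, but it is the same argument.
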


\begin{proof}
	(i)	Let $\sigma = \Psi_{\UPS}(\tilde\pi)$, and let $j$ be the unique integer such that $\sigma(j) + \sigma(2n+1-i) = 2n+1$. 
	
	\medskip
	
	\textbf{Step 1: Inequalities on $j$.} 	For any $r \in X$, since $\tilde\pi$ is $r$-spin, the sets  $\{\sigma(1),...,\sigma(r)\}$ and $\{\sigma(2n+1-r),...,\sigma(2n)\}$ pair off so that the sum of each pair is $2n+1$. In particular,
	\begin{myquote}
		$\sigma(j)$ is in one of these two sets $\iff$ $\sigma(2n+1-i)$ is in the other. 
	\end{myquote}
	Then:
	\begin{itemize}\s
		\item Apply $(\dagger)$ with $r = i-1$. As $\sigma(2n+1-i) \not\in \{\sigma(2n+2-i),...,\sigma(2n)\}$, we know $\sigma(j) \notin \{\sigma(1),...,\sigma(i-1)\}$. So  $j \not\in \{1,...,i-1\}$, i.e.\ $i \leq j$. 
		
		\item As $\tilde\pi$ is $(i-1)$-spin but \emph{not} $i$-spin, $\sigma(i) + \sigma(2n+1-i) \neq 2n+1$, so $j \neq i$; hence $i+1 \leq j$.

		\item As $i \leq n-1$, we have $\sigma(2n+1-i) \not\in \{\sigma(1),...,\sigma(i-1)\}$, so $j \leq 2n+1-i$. But $j \neq 2n+1-i$ clearly, so $j \leq 2n-i$ (always).
		\item If $i-1$ is maximal in $X$, then $k=2n-i$ and we are done. Otherwise $k$ is the next smallest element of $X$; as  $i < k$ and $\sigma$ is $k$-spin, we have $\sigma(2n+1-i) \in \{\sigma(2n+1-k),...,\sigma(2n)\}$, so $(\dagger)$ implies $j \leq k$.
	\end{itemize}

	\medskip
	
	\textbf{Step 2: $\tilde\pi'$ is $X$-spin.}	Now, let $\zeta = (i,j)$. If $r \in X$, then either we have 
	\[
	r < i \text{ and } j \leq r < 2n+1-r, \qquad\text{ or } \qquad \text{both }i,j \leq r.
	\]
	Either way, $\zeta$ preserves  $\{1,...,r\}$ and $\{2n+1-r,...,2n\}$. In particular, we have
	\begin{align*}
		\{\sigma(1),...,\sigma(r)\} &= \{\sigma\zeta(1),...,\sigma\zeta(r)\},\\
		\{\sigma(2n+1-r),...,\sigma(2n)\} &= \{\sigma\zeta(2n+1-r),...,\sigma\zeta(2n)\},
	\end{align*}
	so $\sigma\zeta$ is $r$-spin since $\sigma$ is. Since this is true of all $r \in X$, we conclude $\sigma\zeta = \Psi_{\UPS}(\tilde\pi')$ is $X$-spin.
	
	\medskip
	
	\textbf{Step 3: $\tilde\pi'$ is $X\cup \{i\}$-spin.} 	 By above, $\sigma\zeta$ is $(i-1)$-spin. Moreover, by construction $\sigma\zeta(i) + \sigma\zeta(2n+1-i) = 2n+1$, so additionally $\sigma\zeta$ is $i$-spin. As it is $X$-spin and $i$-spin, $\sigma\zeta = \Psi_{\UPS}(\tilde\pi')$ is $X\cup\{i\}$-spin, as claimed.
	
	\bigskip

	(ii) If $\tilde\pi$ is $(n-1)$-spin, then by definition, for each $r \leq n-1$, there is $s \geq n+2$ such that $\sigma(r) + \sigma(s) = 2n+1$. This accounts for $n-1$ of the $n$ pairs with this property, and forces $\sigma(n) + \sigma(n+1) = 2n+1$ to be the $n$th and last. Thus $\tilde\pi$ is also $n$-spin.
\end{proof}

We now relate the Hecke eigenvalues of $\tilde\pi$ and $\tilde\pi'$ from the previous lemma. Recall that by Proposition \ref{prop:p-refinement}, since $\UPS_i(p) \neq 0$ for all $i$, $\alpha$ is finite slope, i.e.\ $\alpha(U_{p,i}^\circ) \neq 0$ for all $i$. 

\begin{lemma}\label{lem:adjacent spin 2}
	Let $\tilde\pi = (\pi,\alpha)$ and $\tilde\pi' = (\pi,\alpha')$ be two $p$-refinements, with 
	\[
	\Psi_{\UPS}(\tilde\pi') = \Psi_{\UPS}(\tilde\pi) \cdot (i,j),
	\]
	where $(i,j) \in \mathrm{S}_{2n}$ is a transposition with $i < j$. Then for all $r$, 
	\[
	\alpha'(U_{p,r}^\circ) = \left\{\begin{array}{cc} p^{j-i}p^{\lambda_i-\lambda_j} \frac{\alpha(U_{p,j}^\circ)}{\alpha(U_{p,j-1}^\circ)} \cdot \frac{\alpha(U_{p,i-1}^\circ)}{\alpha(U_{p,i}^\circ)}\cdot\alpha(U_{p,r}^\circ)&: i \leq r < j\\
		\alpha(U_{p,r}^\circ) &: \text{otherwise},
	\end{array}\right.
	\]
	where $\pi$ has weight $\lambda = (\lambda_1,...,\lambda_{2n})$ and we use the shorthand that ``$\alpha(U_{p,0}^\circ)$'' $:= 1$.
\end{lemma}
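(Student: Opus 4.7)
The plan is to reduce everything to the Satake-style formula of Proposition \ref{prop:p-refinement}(i), which expresses $\alpha(U_{p,r})$ as an explicit product over the Weyl element $\sigma = \Psi_\UPS(\tilde\pi)$. Writing $\sigma' = \sigma \cdot (i,j)$, the two products defining $\alpha(U_{p,r})$ and $\alpha'(U_{p,r})$ differ only in the factors indexed by positions in $\{1,\dots,r\}$ where $\sigma$ and $\sigma'$ disagree. Since $\sigma'$ differs from $\sigma$ only at positions $i$ and $j$, the analysis splits into three cases according to where $r$ sits relative to the pair $(i,j)$.

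First I would handle the two easy cases. If $r < i$, neither $i$ nor $j$ lies in $\{1,\dots,r\}$, so the products agree and $\alpha'(U_{p,r}) = \alpha(U_{p,r})$. If $r \geq j$, both $i$ and $j$ lie in $\{1,\dots,r\}$, so the factors at these two positions are merely exchanged and the products again agree. Since the normalisation $U_{p,r}^\circ = p^{\lambda_1+\cdots+\lambda_r} U_{p,r}$ depends only on $r$ and $\lambda$ (not on the refinement), the same equality passes to the normalised eigenvalues.

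The main case is $i \leq r < j$, where only the position $i$ contributes a discrepancy: position $i$ carries $\UPS_{\sigma(j)}(p)$ in the product for $\alpha'$, whereas it carried $\UPS_{\sigma(i)}(p)$ in the product for $\alpha$. Hence
\[
\frac{\alpha'(U_{p,r})}{\alpha(U_{p,r})} = \frac{\UPS_{\sigma(j)}(p)}{\UPS_{\sigma(i)}(p)},
\]
and the same ratio holds for the normalised operators. To re-express this ratio in the stated form, I would substitute Lemma \ref{lem:UPS in terms of Up circ} twice, once for index $j$ and once for index $i$: the powers of $p$ from $\delta_B^{1/2}$ combine to give $p^{i-j}$, the powers from the weight give $p^{\lambda_i - \lambda_j}$, and the Hecke eigenvalues rearrange into the quotient displayed in the lemma. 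Writing out this substitution is the whole proof.

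There is no substantive obstacle: the only points requiring minor care are the convention $\alpha(U_{p,0}^\circ) = 1$ (needed to cover $r = i$, where one of the denominators collapses; this is exactly consistent with Lemma \ref{lem:UPS in terms of Up circ}) and checking that the $p^{\lambda_1+\cdots+\lambda_r}$ normalisation cancels in the ratio so that the formula for $U_{p,r}$ transports verbatim to $U_{p,r}^\circ$.
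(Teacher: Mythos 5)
Your proposal matches the paper's own proof essentially line for line: both start from the explicit product formula in Proposition \ref{prop:p-refinement}(i), observe the products agree when $r<i$ or $r\geq j$ and differ only by the single factor $\UPS_{\sigma(j)}(p)/\UPS_{\sigma(i)}(p)$ when $i\leq r<j$, and then apply Lemma \ref{lem:UPS in terms of Up circ} twice to rewrite that ratio in terms of normalised eigenvalues. The proof is correct and there is nothing substantive to add.
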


\begin{proof}
	Let $\sigma = \Psi_{\UPS}(\tilde\pi)$. By Proposition \ref{prop:p-refinement} the definition of $U_{p,r}^\circ$ we have
	\[
	\alpha(U_{p,r}^\circ) = \delta_B^{-1/2}(t_{p,r}) \cdot p^{\lambda_1 + \cdots + \lambda_r} \cdot \UPS_{\sigma(1)}(p)\cdots\UPS_{\sigma(r)}(p).
	\]
	Now $\alpha'(U_{p,r}^{\circ})$ can be described in the same way, except with $\sigma$ replaced with $\sigma(i,j)$. When $r < i$ or $r \geq j$, this is identical to $\alpha(U_{p,r}^\circ)$; when $i \leq r < j$, this means $\UPS_{\sigma(i)}(p)$ is replaced by $\UPS_{[\sigma(i,j)](i)}(p) = \UPS_{\sigma(j)}(p)$ in the product. Via Lemma \ref{lem:UPS in terms of Up circ}, in this case
	\begin{align*}
		\alpha'(U_{p,r}^\circ) &= \alpha(U_{p,r}^\circ) \cdot \UPS_{\sigma(j)}(p) \cdot \UPS_{\sigma(i)}(p)^{-1}\\
		&= \alpha(U_{p,r}^\circ) \cdot \left[p^{-\lambda_j}p^{(2j-2n-1)/2}\frac{\alpha(U_{p,j}^\circ)}{\alpha(U_{p,j-1}^\circ)}\right]\cdot \left[p^{-\lambda_i}p^{(2i-2n-1)/2}\frac{\alpha(U_{p,i}^\circ)}{\alpha(U_{p,i-1}^\circ)}\right]^{-1},
	\end{align*}
	which simplifies to the claimed expression.
\end{proof}

We will use Lemma \ref{lem:adjacent spin 2} to define maps between families on the eigenvariety. This requires adding inverses to the Hecke algebra.

\begin{definition}
Let $\cH^{\mathrm{frac}} = \cH_p^{\mathrm{frac}} \cdot \cH^p$, where
\[
	\cH_p^{\mathrm{frac}} \defeq \Qp[U_{p,r}^\circ, (U_{p,r}^\circ)^{-1} : 1 \leq r \leq 2n].
\]
\end{definition}

Now fix $K = K_1(\tilde\pi)$ from before Theorem \ref{thm:lower bound}. Let $\sE = \sE_K^G$ from Theorem \ref{thm:eigenvariety}, defined by the action of $\cH$ on overconvergent cohomology. Let also $\sE' = \sE'_K$ be the eigenvariety defined by the same eigenvariety datum, but using instead the action of $\cH^{\mathrm{frac}}$ on the \emph{finite-slope} overconvergent cohomology.

\begin{lemma}\label{lem:E = E'}
We have $\sE = \sE'$.
\end{lemma}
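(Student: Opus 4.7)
The plan is to show that passing from $\cH$ to $\cH^{\mathrm{frac}}$ does not enlarge the set of systems of eigenvalues nor change the rigid analytic structure, because all the newly-adjoined elements $(U_{p,r}^\circ)^{-1}$ already act invertibly on the relevant module.

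First, I would recall that by Theorem \ref{thm:eigenvariety}, $L$-points of $\sE$ correspond bijectively to \emph{finite-slope} systems of $\cH$-eigenvalues $\alpha: \cH \to \overline{\Q}_p$ appearing in $\hc{\bullet}(S_K,\sD_\lambda)$; here ``finite slope'' is the condition that $\alpha(U_{p,r}^\circ) \neq 0$ for every $1 \leq r \leq 2n$. Any such $\alpha$ then extends uniquely to a ring homomorphism $\alpha^{\mathrm{frac}}: \cH^{\mathrm{frac}} \to \overline{\Q}_p$ by $(U_{p,r}^\circ)^{-1} \mapsto \alpha(U_{p,r}^\circ)^{-1}$, and conversely every $\cH^{\mathrm{frac}}$-eigensystem restricts to a finite-slope $\cH$-eigensystem. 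Hence the underlying set of $L$-points of $\sE$ and $\sE'$ coincides.

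Next I would check that the two eigenvariety data produce the same rigid structure. In both cases the data are built from the same spectral variety via the finite-slope decomposition of overconvergent cohomology (in the sense of \cite{Han17}). On each slope-bounded direct summand $M$, each normalised operator $U_{p,r}^\circ$ acts by an invertible endomorphism (its characteristic polynomial has nonzero constant term, as all its roots are nonzero), so the action of $\cH$ on $M$ extends canonically and uniquely to an action of $\cH^{\mathrm{frac}}$. The resulting coherent sheaves on the spectral variety have the same support and the same ring of operators up to taking inverses, and the structure sheaf of the eigenvariety is generated (locally on $\sW$) by the images of the Hecke operators and the coordinates of the spectral variety; adjoining inverses of elements that are already invertible in the image changes nothing.

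The main (very minor) subtlety is checking that this identification is compatible with the gluing in Hansen's construction, i.e.\ that the local identifications over affinoid subdomains of weight space patch together. This is immediate since the inversion $U_{p,r}^\circ \mapsto (U_{p,r}^\circ)^{-1}$ is canonical and does not depend on any choice. One concludes that the eigenvariety datum defining $\sE'$ is a trivial enlargement of that defining $\sE$, and hence $\sE = \sE'$ as rigid spaces over $\sW$.
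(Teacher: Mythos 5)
Your proof takes essentially the same approach as the paper's: the core observation in both is that each $U_{p,r}^\circ$ acts invertibly on the slope-bounded overconvergent cohomology, so the images of $\cH\otimes\cO_\Omega$ and $\cH^{\mathrm{frac}}\otimes\cO_\Omega$ in $\mathrm{End}_{\cO_\Omega}(\hc{\bullet}(S_K,\sD_\Omega)^{\leq h})$ coincide, whence the local pieces $\mathrm{Sp}(\bT_{\Omega,h})$ and the gluing are identical. Your opening paragraph on $L$-points is a correct but redundant consequence of the identification of local rings; otherwise the argument matches the paper's.
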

\begin{proof}
Both eigenvarieties are defined by writing down local pieces $\sE_{\Omega,h} = \mathrm{Sp}(\bT_{\Omega,h})$ and $\sE'_{\Omega,h} = \mathrm{Sp}(\bT_{\Omega,h}')$, where $\bT_{\Omega,h}$ (resp.\ $\bT_{\Omega,h}'$) is the image of $\cH \otimes \cO_\Omega$ (resp $\cH^{\mathrm{frac}}\otimes \cO_\Omega$) in  $\mathrm{End}_{\cO_\Omega}(\hc{\bullet}(S_K,\sD_\Omega)^{\leq h})$. As each $U_{p,r}^\circ$ acts invertibly on the slope $\leq h$ cohomology (see e.g.\ \cite[\S2.3.1]{Urb11}), the image of $U_{p,r}^\circ$ in $\bT_{\Omega,h}$ is invertible; and hence $\bT_{\Omega,h} = \bT_{\Omega,h}'$, so $\sE_{\Omega,h} = \sE_{\Omega,h'}$.

Both $\sE$ and $\sE'$ are defined by the same gluing of the same local pieces, so they are equal.
\end{proof}

\begin{definition}
For $\lambda = (\lambda_1,...,\lambda_{2n}) \in X^*(T)$, and $i < j$, define a map
\[
\phi_{ij}^\lambda : \cH \longrightarrow \cH^{\mathrm{frac}}
\]
to be the identity map on all operators away from $p$, and at $p$ by
\[
	\phi_{ij}^\lambda(U_{p,r}^\circ) = \left\{\begin{array}{cc} p^{j-i}p^{\lambda_i-\lambda_j} \frac{U_{p,j}^\circ}{U_{p,j-1}^\circ} \cdot \frac{U_{p,i-1}^\circ}{U_{p,i}^\circ}\cdot U_{p,r}^\circ&: i \leq r < j\\
		U_{p,r}^\circ &: \text{otherwise},
	\end{array}\right.
	\]
\end{definition}

\begin{lemma}\label{lem:Hecke map}
Let $\pi$ have weight $\lambda_\pi$, and let $\tilde\pi = (\pi,\alpha)$ and $\tilde\pi' = (\pi,\alpha')$ be $p$-refinements with 
\[
\Psi_\UPS(\tilde\pi') = \Psi_{\UPS}(\tilde\pi)\cdot (i,j)
\]
as elements of $\cW_G$. Then $\alpha' = \alpha \circ \phi_{ij}^{\lambda_\pi}$ and $\alpha' \circ \phi_{ij}^{\lambda_\pi} = \alpha$.
\end{lemma}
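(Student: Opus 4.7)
The plan is to reduce both statements to the explicit calculation already carried out in Lemma \ref{lem:adjacent spin 2}. The map $\phi_{ij}^{\lambda_\pi}$ has been engineered precisely so that composition with $\alpha$ reproduces the formula of that lemma; once this is checked on the generators $U_{p,r}^\circ$ of $\cH_p^{\mathrm{frac}}$, the rest is formal.

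For (1), I would first note that away from $p$ the map $\phi_{ij}^{\lambda_\pi}$ is the identity and both $\alpha$ and $\alpha'$ restrict to the same Hecke character (namely the one attached to $\pi^p$), so equality on $\cH^p$ is automatic. At $p$, it remains to verify $\alpha'(U_{p,r}^\circ) = \alpha(\phi_{ij}^{\lambda_\pi}(U_{p,r}^\circ))$ for each $1 \le r \le 2n$. For $r < i$ or $r \ge j$, both sides equal $\alpha(U_{p,r}^\circ)$ by definition of $\phi_{ij}^{\lambda_\pi}$ and by the ``otherwise'' case of Lemma \ref{lem:adjacent spin 2}. For $i \le r < j$, evaluating $\alpha$ on the explicit ratio defining $\phi_{ij}^{\lambda_\pi}(U_{p,r}^\circ)$ yields exactly the expression
\[
p^{i-j}p^{\lambda_{\pi,i}-\lambda_{\pi,j}} \frac{\alpha(U_{p,j}^\circ)}{\alpha(U_{p,j-1}^\circ)} \cdot \frac{\alpha(U_{p,i-1}^\circ)}{\alpha(U_{p,i}^\circ)} \cdot \alpha(U_{p,r}^\circ),
\]
which is the formula for $\alpha'(U_{p,r}^\circ)$ given in Lemma \ref{lem:adjacent spin 2}. (Note the ratios are well-defined in $\cH_p^{\mathrm{frac}}$, and the values $\alpha(U_{p,k}^\circ)$ are nonzero by Proposition \ref{prop:p-refinement}, so the division on the Hecke-eigenvalue side makes sense.) Thus $\alpha' = \alpha \circ \phi_{ij}^{\lambda_\pi}$ on all of $\cH^{\mathrm{frac}}$.

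For (2), the cleanest route is to apply (1) with the roles of $\tilde\pi$ and $\tilde\pi'$ exchanged. Since $(i,j)$ is an involution in $\cW_G$, the hypothesis $\Psi_{\UPS}(\tilde\pi') = \Psi_{\UPS}(\tilde\pi)\cdot(i,j)$ is equivalent to $\Psi_{\UPS}(\tilde\pi) = \Psi_{\UPS}(\tilde\pi')\cdot(i,j)$. Moreover the weight $\lambda_\pi$ appearing in the definition of $\phi_{ij}^{\lambda_\pi}$ depends only on $\pi$, which is common to both refinements. Applying (1) to the pair $(\tilde\pi', \tilde\pi)$ therefore gives $\alpha = \alpha' \circ \phi_{ij}^{\lambda_\pi}$, which is exactly (2). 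I do not expect any serious obstacle here: the statement is essentially a repackaging of Lemma \ref{lem:adjacent spin 2} in the language of Hecke-algebra homomorphisms, and the only genuine input is the finite-slope hypothesis that justifies inverting the $U_{p,r}^\circ$ in $\cH^{\mathrm{frac}}$.
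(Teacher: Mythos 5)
Your proof is correct and follows exactly the same route as the paper: both statements reduce to the explicit formula of Lemma \ref{lem:adjacent spin 2}, with the second obtained from the first by swapping the roles of $\tilde\pi$ and $\tilde\pi'$ via the involutivity of $(i,j)$. The paper compresses this to two sentences, but the underlying argument is identical.
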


\begin{proof}
 Note also $\Psi_{\UPS}(\tilde\pi') \cdot (i,j) = \Psi_{\UPS}(\tilde\pi)$. Both statements are then direct from Lemma \ref{lem:adjacent spin 2}.
\end{proof}

\subsection{From $P$-spin to $B$-spin}
Let $\tilde\pi = (\pi,\alpha)$ be an optimally $P$-spin non-critical slope refinement. 

\begin{proposition}\label{prop:P to B}
	\begin{itemize}\s
	\item[(i)] There exists an element $\tau = (i_1,j_1) \cdots (i_k,j_k) \in \cW_G$, where $k \leq n - \#X_P$, and a $B$-spin $p$-refinement $\tilde\pi' = (\pi, \alpha')$ with 
	\[
		\Psi_{\UPS}(\tilde\pi') = \Psi_{\UPS}(\tilde\pi) \cdot \tau.
	\]

	\item[(ii)] The refinement $\tilde\pi'$ from (i) has non-critical slope.
	
	\item[(iii)] We have  $\alpha' \circ \phi_\tau^{\lambda_\pi} = \alpha$, where for any classical $\lambda$ we let
	\[
		\phi_\tau^{\lambda} \defeq \phi_{i_k,j_k}^{\lambda} \circ \cdots \circ \phi_{i_1,j_1}^{\lambda} : \cH \longrightarrow \cH^{\mathrm{frac}}.
	\]

	\item[(iv)] We have $\phi_\tau^\lambda = \phi_\tau^{\lambda_\pi}$ for any classical $\lambda \in \cW_{\lambda_\pi}^P$. 
	\end{itemize}
\end{proposition}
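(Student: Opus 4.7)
The plan is to build $\tau$ by iterating Lemma \ref{lem:adjacent spin 1}, adding one element to the current spin set at each step until we reach $B$-spin, and then to verify (ii)--(iv) inductively along the resulting chain of refinements. Set $X^{(0)} := X_P$, $\tilde\pi^{(0)} := \tilde\pi$. At each step $m \geq 1$, pick any $i_m \notin X^{(m-1)}$ with $i_m - 1 \in X^{(m-1)}$ or $i_m = 1$ (the smallest missing element always satisfies this), and use Lemma \ref{lem:adjacent spin 1}(i) to produce $j_m$ and $\tilde\pi^{(m)} = (\pi, \alpha^{(m)})$ with $\Psi_\UPS(\tilde\pi^{(m)}) = \Psi_\UPS(\tilde\pi^{(m-1)}) \cdot (i_m, j_m)$ that is $X^{(m)}$-spin for $X^{(m)} := X^{(m-1)} \cup \{i_m\}$. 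The procedure terminates at a $B$-spin $\tilde\pi' := \tilde\pi^{(k)}$, with $\tau := (i_1, j_1) \cdots (i_k, j_k)$. For the bound $k \leq n - \#X_P$: if $n \notin X_P$ then $n - 1 \notin X_P$ either (else Lemma \ref{lem:adjacent spin 1}(ii) would force $\tilde\pi$ to be $n$-spin, contradicting optimality of $X_P$), so adding $n - 1$ makes the $n$-spin condition automatic and saves one transposition.

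Part (iii) is immediate from iterating Lemma \ref{lem:Hecke map}: each step gives $\alpha^{(m)} = \alpha^{(m-1)} \circ \phi_{i_m j_m}^{\lambda_\pi}$, and the second identity in that lemma (equivalently, the involution $\phi_{ij}^\lambda \circ \phi_{ij}^\lambda = \mathrm{id}$, easy to verify from the definition) telescopes this into $\alpha = \alpha' \circ \phi_\tau^{\lambda_\pi}$. For (iv), the map $\phi_{ij}^\lambda$ depends on $\lambda$ only through $p^{\lambda_i - \lambda_j}$, so we need $\lambda_{i_m} - \lambda_{j_m} = \lambda_{\pi,i_m} - \lambda_{\pi,j_m}$ for each $m$ and each $\lambda \in \cW_{\lambda_\pi}^P$. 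By Lemma \ref{lem:lambda_i-lambda_i+1constant in families}, this reduces to showing $a_s \in \Delta_P$ for every $s \in [i_m, j_m - 1]$, which is verified by splitting on the two cases of Lemma \ref{lem:adjacent spin 1}: if $i_m - 1$ is maximal in $X^{(m-1)}$ then $X_P \subseteq X^{(m-1)} \subseteq \{1, \ldots, i_m - 1\}$ and $j_m \leq 2n - i_m$, so $[i_m, j_m - 1] \cap \{1, \ldots, n\}$ avoids $X_P$, and spin-ness of $P$ translates this disjointness to upper indices via $s \mapsto 2n - s$; otherwise $X^{(m-1)} \cap (i_m - 1, k_m) = \varnothing$ with $j_m \leq k_m \leq n$, giving the disjointness directly.

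The main obstacle is (ii), showing non-critical slope survives each step. Combining Lemmas \ref{lem:adjacent spin 2} and \ref{lem:UPS in terms of Up circ}, a short calculation gives
\begin{equation*}
v_p\big(\alpha^{(m)}(U_{p,r}^\circ)\big) - v_p\big(\alpha^{(m-1)}(U_{p,r}^\circ)\big) = c_{\sigma_{m-1}(j_m)} - c_{\sigma_{m-1}(i_m)}
\end{equation*}
for $r \in [i_m, j_m - 1]$ and zero otherwise, where $c_t := v_p(\UPS_t(p))$ and $\sigma_{m-1} := \Psi_\UPS(\tilde\pi^{(m-1)})$. Thus non-criticality of $\alpha^{(m)}$ at such $r$ would follow from that of $\alpha^{(m-1)}$ at $r$ provided this slope difference is $\leq 0$. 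I would establish this by combining three ingredients: the Asgari--Shahidi normalisation $\UPS_t \UPS_{2n+1-t} = \eta_p$ together with purity $\lambda_t + \lambda_{2n+1-t} = \mathrm{w}$, which pair the $c_t$'s and pair the non-critical bounds; the equality $v_p(\alpha^{(m-1)}(U_{p,r}^\circ)) = v_p(\alpha^{(m-1)}(U_{p,2n-r}^\circ))$ for $r \in X^{(m-1)}$ from Lemma \ref{lem:U_p equality}, which converts the non-critical slope condition at the mirrored index $2n - r$ into a usable inequality on Satake slopes; and the constraints on $j_m$ from Lemma \ref{lem:adjacent spin 1}, which control precisely which Satake slopes are being swapped. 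A careful case split on the two possibilities for $k_m$ should then force the sign in each instance; this bookkeeping is the technical heart of the argument.
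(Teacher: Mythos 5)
Parts (i) and (iii) of your proposal track the paper closely: iterate Lemma \ref{lem:adjacent spin 1}(i) from the smallest missing index, then iterate Lemma \ref{lem:Hecke map} for the Hecke algebra identity. Your side argument for the bound $k \le n - \#X_P$ in (i) works, though the paper instead defers this to a fact you never invoke and which is crucial later.

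The genuine gap is in (ii), and it is a strategic one. Your plan is to show the slope difference $c_{\sigma_{m-1}(j_m)} - c_{\sigma_{m-1}(i_m)}$ is $\le 0$ at every intermediate step, which would force each intermediate refinement $\tilde\pi^{(m)}$ to be non-critical slope. This is a strictly stronger claim than the proposition, and there is no reason to expect it: the conclusion one actually reaches is that $v_p(\alpha'(U_{p,i}^\circ)) = v_p(\alpha(U_{p,2n-i}^\circ))$ for $i < n$, which need not be $\le v_p(\alpha(U_{p,i}^\circ))$ for $i \notin X_P$ (Lemma \ref{lem:U_p equality} only equates the two slopes under the $i$-spin condition, which $\tilde\pi$ does not satisfy for $i \notin X_P$). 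So the cumulative slope change can be strictly positive, and an induction on ``slope decreases at each step'' will not close. The bookkeeping you describe cannot ``force the sign.''

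The paper's argument is short once one observation is in place that your proposal never uses: by the proof of \cite[Thm.\ 4]{Roc20}, a non-critical slope $\tilde\pi$ is automatically $n$-spin, so $n \in X_P$. This forces every $i_m, j_m \le n$, hence $\alpha'(U_{p,i}^\circ) = \alpha(U_{p,i}^\circ)$ for all $i \ge n$ (Lemma \ref{lem:adjacent spin 2}). For $i < n$, the $B$-spin property of $\tilde\pi'$ and Lemma \ref{lem:U_p equality} give $v_p(\alpha'(U_{p,i}^\circ)) = v_p(\alpha'(U_{p,2n-i}^\circ)) = v_p(\alpha(U_{p,2n-i}^\circ))$, which is non-critical by hypothesis (using that the bounds for $U_{p,i}^\circ$ and $U_{p,2n-i}^\circ$ agree by purity). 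No step-by-step monotonicity is needed. Incidentally, the fact $n \in X_P$ also trivialises your (iv): the case ``$i_m - 1$ is maximal in $X^{(m-1)}$'' is vacuous (since $n \in X^{(m-1)}$ always), so all of $i_m, \dots, j_m - 1$ lie strictly between two adjacent elements of $X_P$ inside $\{1,\dots,n-1\}$, and your translation-to-upper-indices digression is unnecessary.
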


\begin{proof}
	(i)   We iterate Lemma \ref{lem:adjacent spin 1}.  Let $X_P = \{I_1,...,I_{\#X_P}\}$. Let $1 \leq i_1 \leq n$ be minimal with $i_1 \not\in X_P$. Then there exists some $r$ such that $I_r < i_1 < I_{r+1}$ (where $I_0 \defeq 0$ and $I_{\#X_P+1} \defeq 2n-I_{\#X_P}$). By Lemma \ref{lem:adjacent spin 1}, there exists $I_r < i_1 < j_1 \leq I_{r+1}$ and an $(X_P \cup \{i_1\})$-spin $\tilde\pi^{(1)}$ satisfying
	\[
		\Psi_\UPS(\tilde\pi^{(1)}) = \Psi_\UPS(\tilde\pi) \cdot (i_1,j_1).
	\] 
Iterating this process $k \leq n-\#X_P$ times, we obtain a $p$-refinement $\tilde\pi' = \tilde\pi^{(k)}$ which is $\{1,...,n\}$-spin with $\Psi_{\UPS}(\tilde\pi') = \Psi_\UPS(\tilde\pi) \cdot (i_1,j_i)\cdots (i_k,j_k)$. By Proposition \ref{prop:P-spin criterion} $\tilde\pi'$ is $B$-spin.
	
	\medskip
	
	(ii) From Definition \ref{def:non-critical slope}, $\tilde\pi'$ has non-critical slope if
	\begin{equation}\label{eq:non-critical slope}
		v_p(\alpha'(U_{p,i}^\circ)) < \lambda_i - \lambda_{i+1} + 1, \qquad 1 \leq i \leq 2n-1
	\end{equation}
	By assumption this is true for $\alpha$.  To see it for $\alpha'$:
	\begin{itemize}\s
		\item[(ii-1)] If $i \geq n$: by the proof of \cite[Thm.\ 4]{Roc20} (more precisely, the sentence following the second displayed equation), as $\tilde\pi$ has non-critical slope, it is $n$-spin. In particular, $n \in X_P$. By construction this forces $1 \leq i_r,j_r \leq n$ for all $n$. By Proposition \ref{prop:p-refinement}, we see $\alpha'(U_{p,i}^\circ) = \alpha(U_{p,i}^\circ)$. So $\alpha'(U_{p,i}^\circ)$ is non-critical slope as $\alpha(U_{p,i}^\circ)$ is.
		
		\item[(ii-2)] If $i < n$: as $\tilde\pi'$ is $i$-spin, we have $v_p(\alpha'(U_{p,i}^\circ)) = v_p(\alpha'(U_{p,2n-i}^\circ))$ by Lemma \ref{lem:U_p equality}. This is non-critical slope by (ii-1).
	\end{itemize}

(iii) This follows from iterating Lemma \ref{lem:Hecke map}.

\medskip

(iv) By Lemma \ref{lem:lambda_i-lambda_i+1constant in families}, we know $\lambda_i - \lambda_{i+1}$ is constant in $\cW_{\lambda_\pi}^P$ whenever $i \not\in X_P$. In the map $\phi_{i_r,j_r}^{\lambda}$, the only dependence on $\lambda$ is in the term
\begin{equation}\label{eq:product of ps}
	p^{\lambda_{i_r} - \lambda_{j_r}} = p^{\lambda_{i_r} - \lambda_{i_r+1}} \cdots p^{\lambda_{j_r-1} - \lambda_{j_r}}.
\end{equation}
By construction, we know that $I_s < i_r \leq j_r \leq I_{s+1}$ fall between two adjacent elements of $X_P$, so that $i_r, i_r+1, ..., j_r-1 \not\in X_P$. Thus all of the terms in the product \eqref{eq:product of ps} are constant as $\lambda$ varies in $\cW_{\lambda_\pi}^P$. The result follows.
\end{proof}

\subsection{From $B$-spin to $P$-spin}

Let $\tilde\pi$ and $\tilde\pi'$ be as in Proposition \ref{prop:P to B}. By Theorem \ref{thm:BDGJW} and Lemma \ref{lem:E = E'}, there exists a unique $(n+1)$-dimensional symplectic family $\sC' \subset \sE'$ through $\tilde\pi'$. Assume $\sC'$ is as in Lemma \ref{lem:every point spherical}, and let 
\[
	\sC'_P \defeq \sC' \times_{\sW} \sW_{0,\lambda_\pi}^P
\]
 be the $(\#X_P+1)$-dimensional subspace varying only over $\sW_{0,\lambda_\pi}^P$. By Lemma \ref{lem:every point spherical}, every classical point $y' \in \sC'_P$ corresponds to a $p$-refined $\tilde\pi_y' = (\pi_y, \alpha_y')$ with $\pi_{y,p} = \Ind_B^G\UPS_y$ spherical and regular. Let $\tilde\pi_y = (\pi_y,\alpha_y)$ be the unique $p$-refinement with
 \[
 \Psi_{\UPS_y}(\tilde\pi_y') = \Psi_{\UPS_y}(\tilde\pi_y) \cdot \tau,
 \]
 for $\tau$ as in Proposition \ref{prop:P to B}.
 
 \begin{lemma}
The refinement $\tilde\pi_y$ is optimally $P$-spin and we have
\begin{equation}\label{eq:relate alpha' and alpha}
	\alpha_y' \circ \phi_\tau^{\lambda_\pi} = \alpha_y.
\end{equation}
 \end{lemma}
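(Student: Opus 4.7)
The two claims should follow mechanically from the way $\tilde\pi_y$ is defined, together with the identities already collected in Lemma \ref{lem:every point spherical}, Lemma \ref{lem:Hecke map}, and Proposition \ref{prop:P to B}. No new input appears to be needed; the key structural point is that $\tilde\pi_y$ is obtained from $\tilde\pi_y'$ via the \emph{same} Weyl element $\tau$ at every classical $y$, which is what allows the pointwise identities of Lemma \ref{lem:Hecke map} to globalise via Proposition \ref{prop:P to B}(iv).

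For the first claim, I would chain three identities of Weyl group elements. By definition of $\tilde\pi_y$, $\Psi_{\UPS_y}(\tilde\pi_y)\cdot\tau = \Psi_{\UPS_y}(\tilde\pi_y')$. Lemma \ref{lem:every point spherical}, applied to $\sC'$ (appropriately shrunk), identifies $\Psi_{\UPS_y}(\tilde\pi_y') = \Psi_{\UPS}(\tilde\pi')$; Proposition \ref{prop:P to B}(i) gives in turn $\Psi_{\UPS}(\tilde\pi') = \Psi_{\UPS}(\tilde\pi)\cdot\tau$. Cancelling $\tau$ yields $\Psi_{\UPS_y}(\tilde\pi_y) = \Psi_{\UPS}(\tilde\pi)$. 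Since being optimally $P$-spin is a property only of the underlying Weyl element (Definitions \ref{def:P-spin} and \ref{def:optimal}, together with Remark \ref{rem:P-spin ind of UPS}), and $\tilde\pi$ is optimally $P$-spin by hypothesis, the same holds for $\tilde\pi_y$.

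For the second claim, I would iterate Lemma \ref{lem:Hecke map} along the factorisation $\tau = (i_1,j_1)\cdots(i_k,j_k)$ from Proposition \ref{prop:P to B}. Introducing intermediate refinements $\tilde\pi_y^{(r)}$ with Weyl element $\Psi_{\UPS_y}(\tilde\pi_y)\cdot(i_1,j_1)\cdots(i_r,j_r)$, and applying the second identity of Lemma \ref{lem:Hecke map} at each step, substituting from the top down, gives
\[
\alpha_y = \alpha_y^{(k)}\circ \phi_{i_k,j_k}^{\lambda_y}\circ\cdots\circ \phi_{i_1,j_1}^{\lambda_y} = \alpha_y'\circ \phi_\tau^{\lambda_y},
\]
where $\lambda_y$ is the weight of $\pi_y$. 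Since $y'\in\sC'_P$ we have $\lambda_y\in\sW_{0,\lambda_\pi}^P$, so Proposition \ref{prop:P to B}(iv) replaces $\phi_\tau^{\lambda_y}$ by $\phi_\tau^{\lambda_\pi}$, yielding the claimed identity.

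There is no real obstacle, but it is worth flagging that Proposition \ref{prop:P to B}(iv) is doing genuine work here: it is exactly what promotes the pointwise Hecke identity (with weight $\lambda_y$ varying with $y$) to an identity involving the single fixed map $\phi_\tau^{\lambda_\pi}$. This uniformity across $\sC'_P$ is precisely the feature that will allow $\phi_\tau^{\lambda_\pi}$, in the next step, to pull back the $B$-spin family $\sC'_P$ to a $P$-spin family through $\tilde\pi$, so the lemma should be read as the compatibility statement between the combinatorial construction at a single point and its behaviour in the family.
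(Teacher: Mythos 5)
Your proof is correct and takes essentially the same route as the paper: the chain of Weyl-element identities via Lemma \ref{lem:every point spherical}, cancelling $\tau$, and for the Hecke identity iterating Lemma \ref{lem:Hecke map} along the factorisation of $\tau$ and then invoking Proposition \ref{prop:P to B}(iv) to replace $\phi_\tau^{\lambda_y}$ by $\phi_\tau^{\lambda_\pi}$. The only difference is that you spell out the iteration more explicitly than the paper, which simply cites "as in Proposition \ref{prop:P to B}(iii)".
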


\begin{proof}
	By Lemma \ref{lem:every point spherical}, we know $\Psi_{\UPS_y}(\tilde\pi_y') = \Psi_{\UPS}(\tilde\pi')$. In particular, we have
	\[
		\Psi_{\UPS_y}(\tilde\pi_y) = \Psi_{\UPS_y}(\tilde\pi_y') \cdot \tau^{-1} = \Psi_{\UPS}(\tilde\pi') \cdot \tau^{-1} = \Psi_{\UPS}(\tilde\pi),
	\]
	so that $\tilde\pi_y$ is optimally $P$-spin. The identity \eqref{eq:relate alpha' and alpha} follows by iterating Lemma \ref{lem:Hecke map} as in Proposition \ref{prop:P to B}(iii). Here we use (iv) of that result to see $\phi_\tau^\lambda = \phi_\tau^{\lambda_\pi}$.
\end{proof}

\begin{lemma}\label{lem:y' nc slope}
	For a Zariski-dense set of classical $y' \in \sC'_P$, the $p$-refinement $\tilde\pi_y$ is non-critical slope, and thus corresponds to a classical $P$-spin point $y \in \sE$.
\end{lemma}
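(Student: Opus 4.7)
\emph{Proof proposal.} The plan is to work in a small affinoid neighborhood of $\tilde\pi'$ in $\sC'_P$ on which the $p$-adic valuations of all Hecke eigenvalues are constant, then exploit the explicit identity $\alpha_y = \alpha_y' \circ \phi_\tau^{\lambda_\pi}$ to transfer the non-critical slope bound from $\tilde\pi$ to $\tilde\pi_y$ along a Zariski-dense subset of classical points.

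First, by Theorem \ref{thm:BDGJW} and Lemma \ref{lem:E = E'}, $\sC'$ is \'etale over $\sW_0$ at $\tilde\pi'$; hence the irreducible component $\sC'^{\circ}_P$ of $\sC'_P$ through $\tilde\pi'$ is \'etale over $\sW^P_{0,\lambda_\pi}$ at $\tilde\pi'$ and has dimension $\#X_P + 1$. Each Hecke eigenvalue gives a rigid analytic function $y' \mapsto \alpha_y'(U_{p,r}^\circ)$, non-vanishing at $\tilde\pi'$ since $\tilde\pi'$ is finite slope. By local constancy of $p$-adic valuations of rigid analytic functions at non-zero points, we may shrink to an affinoid neighborhood $\Omega$ of $\tilde\pi'$ in $\sC'^{\circ}_P$ on which
\[
v_p(\alpha_y'(U_{p,r}^\circ)) = v_p(\alpha'(U_{p,r}^\circ)) \qquad \text{for all } y' \in \Omega,\ 1 \le r \le 2n.
\]

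Via \'etale-ness, $\Omega$ identifies with a neighborhood of $\lambda_\pi$ in $\sW^P_{0,\lambda_\pi}$. Define $\fY \subset \Omega$ to be the set of points $y'$ over classical dominant integral weights $\lambda_y$ with $\lambda_{y,i} - \lambda_{y,i+1} \geq \lambda_{\pi,i} - \lambda_{\pi,i+1}$ for every $1 \leq i \leq 2n-1$. By Lemma \ref{lem:lambda_i-lambda_i+1constant in families}, when $a_i \in \Delta_P$ this inequality is forced to be equality; when $a_i \notin \Delta_P$ we are free to enlarge the gap, and classical weights with such bounds are Zariski-dense in $\sW^P_{0,\lambda_\pi}$. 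So $\fY$ is Zariski-dense in $\Omega$. For $y' \in \fY$, the element $\phi_\tau^{\lambda_\pi}(U_{p,i}^\circ)$ is a Laurent monomial in the $U_{p,r}^\circ$ times a fixed power of $p$, so its valuation under $\alpha_y'$ is unchanged by the step above and equals its valuation under $\alpha'$. Combined with Proposition \ref{prop:P to B}(iii), this gives
\[
v_p(\alpha_y(U_{p,i}^\circ)) \;=\; v_p\bigl(\alpha_y'(\phi_\tau^{\lambda_\pi}(U_{p,i}^\circ))\bigr) \;=\; v_p\bigl(\alpha'(\phi_\tau^{\lambda_\pi}(U_{p,i}^\circ))\bigr) \;=\; v_p(\alpha(U_{p,i}^\circ)).
\]
Non-criticality of $\tilde\pi$ then yields $v_p(\alpha_y(U_{p,i}^\circ)) < \lambda_{\pi,i} - \lambda_{\pi,i+1} + 1 \leq \lambda_{y,i} - \lambda_{y,i+1} + 1$ for all $i$, so $\tilde\pi_y$ is non-critical slope. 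The classicality theorem then upgrades non-critical slope plus classical weight to a classical eigensystem, producing the desired classical $P$-spin point $y \in \sE$. Finally, Zariski-density of $\fY$ in the open $\Omega \subset \sC_P'^\circ$ (irreducible) gives Zariski-density in $\sC_P'^\circ$ as required.

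The main obstacle is step 2, the local constancy of $p$-adic valuations on an affinoid neighborhood. Once this is secured, everything else is a matching of numerical inequalities built into the definition of $\phi_\tau^{\lambda_\pi}$, together with the routine observation that classical weights with a prescribed lower bound on each gap are Zariski-dense in the pure parabolic weight space.
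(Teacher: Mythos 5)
Your proof is correct and follows essentially the same argument as the paper: both shrink $\sC'_P$ to an affinoid on which the slopes of $\alpha_y'(U_{p,r}^\circ)$ are constant, use that $\phi_\tau^{\lambda_\pi}(U_{p,i}^\circ)$ is a Laurent monomial in the $U_{p,r}^\circ$ times a factor constant over $\sW_{0,\lambda_\pi}^P$ to deduce $v_p(\alpha_y(U_{p,i}^\circ)) = v_p(\alpha(U_{p,i}^\circ))$, and then observe that for a Zariski-dense set of classical weights the non-critical slope bound is weaker than at $\lambda_\pi$. Your proposal simply spells out the Zariski-dense set of weights and the \'etale identification with weight space more explicitly; the local constancy of valuations at a non-vanishing point, which you flag as the main worry, is a standard fact about rigid analytic functions on affinoids (consider the rational subdomain $\{y : |f(y)| = |f(x)|\}$) and is exactly what the paper implicitly invokes.
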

\begin{proof}
	Up to shrinking $\sC'_P$, we may assume that the slope of each $U_{p,i}^\circ$ is constant along $\sC'_P$. As $\phi_{\tau}^{\lambda_\pi}(U_{p,i}^\circ)$ is a product of $U_{p,i}^\circ$'s and terms constant over $\sW_{0,\lambda_\pi}^P$, the slope of $\alpha_y(U_{p,i}^\circ) = \alpha_y' \circ \phi_{\tau}^{\lambda_\pi}(U_{p,i}^\circ)$ is constant, equal to $v_p(\alpha(U_{p,i}^\circ))$, for all $i$ and for any classical $y' \in \sC'_P$. 
	
	By assumption $\tilde\pi$ is non-critical slope (for $\lambda_\pi$). For a Zariski-dense set of classical weights $\lambda_y \in w(\sC'_P)$, the non-critical slope condition \eqref{eq:non-critical slope} for $\lambda_y$ is strictly weaker than for $\lambda_\pi$; so above all such weights, the points $\tilde\pi_y$ are non-critical slope.
\end{proof}
 
 Here we are using the very-Zariski-density of classical weights in the pure weight space, which allows us to shrink $\sC_P'$ whilst maintaining a Zariski-density of classical (symplectic) points.

\subsection{Proof of Theorem \ref{thm:lower bound}}
Let us take stock. We started with a non-critical slope $P$-spin refinement $\tilde\pi$, and via an element $\tau$ in the Weyl group, associated to it a non-critical slope $B$-spin refinement $\tilde\pi'$. This varies in a unique $(n+1)$-dimensional family $\sC'\subset \sE' = \sE$. Applying $\tau^{-1}$ to each $p$-refined classical point $y'$ in $\sC_P'$ gives another $P$-spin point $y \in \sE$. We now show this association can be interpolated over $\sW_{0,\lambda_\pi}^P$.

\begin{proposition}\label{prop:symplectic existence}
	There exists a finite map $t : \sC'_P \to \sE$ over $\sW_{0,\lambda_\pi}^P$ which interpolates the association $y' \mapsto y$. Thus there exists an $(\#X_P+1)$-dimensional symplectic family through $\tilde\pi$.
\end{proposition}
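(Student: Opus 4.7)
The plan is to construct $t$ by pulling back the universal Hecke eigensystem on $\sC'_P$ through the Hecke-algebra morphism $\phi_\tau^{\lambda_\pi}$, then verify the result gives a finite analytic morphism to $\sE$ that at classical points realises $y' \mapsto y$. Since $\sE = \sE'$ by Lemma \ref{lem:E = E'}, the inclusion $\sC'_P \hookrightarrow \sE$ endows $\sC'_P$ with a universal character $\chi' : \cH^{\mathrm{frac}} \to \cO(\sC'_P)$ (restricting the tautological $\cH^{\mathrm{frac}}$-character on $\sE'$). By Proposition \ref{prop:P to B}(iv), the map $\phi_\tau^\lambda$ is constant in $\lambda \in \sW_{0,\lambda_\pi}^P$, so $\chi \defeq \chi' \circ \phi_\tau^{\lambda_\pi} : \cH \to \cO(\sC'_P)$ is a well-defined continuous $\Qp$-algebra homomorphism. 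Iterating Lemma \ref{lem:Hecke map}, the specialisation of $\chi$ at any $y'$ in the Zariski-dense locus from Lemma \ref{lem:y' nc slope} equals the eigensystem $\alpha_y$ of the non-critical slope optimally $P$-spin classical refinement $\tilde\pi_y$.

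The next step is to promote the pair $(w|_{\sC'_P}, \chi)$ to a morphism $t : \sC'_P \to \sE$. I would appeal to the local construction of the eigenvariety: on an affinoid open $\Omega \subset \sW$, $\sE$ is presented as $\sE_{\Omega,h} = \mathrm{Sp}(\bT_{\Omega,h})$ where $\bT_{\Omega,h}$ is the image of $\cH \otimes \cO_\Omega$ in $\mathrm{End}_{\cO_\Omega}(\hc{\bullet}(S_K,\sD_\Omega)^{\leq h})$. Because the $U_{p,r}^\circ$ act invertibly on slope-$\leq h$ cohomology, $\cH^{\mathrm{frac}} \otimes \cO_\Omega$ also acts on the same module with image $\bT_{\Omega,h}$ (this is exactly the content of Lemma \ref{lem:E = E'}). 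Since $\phi_\tau^{\lambda_\pi}$ is constant on $\Omega \cap \sW_{0,\lambda_\pi}^P$, pre-composition with $\phi_\tau^{\lambda_\pi}$ yields a new $\cO_\Omega$-algebra map $\cH \otimes \cO_\Omega \to \bT_{\Omega,h}$, inducing an endomorphism $\sE_{\Omega,h} \to \sE_{\Omega,h}$; restricted to $\sC'_P \cap w^{-1}(\Omega) \hookrightarrow \sE_{\Omega,h}$, this is the local version of $t$. These local maps glue to a global morphism $t : \sC'_P \to \sE$. The main obstacle will be verifying that this local construction glues consistently and matches the classical association $y' \mapsto y$ on the Zariski-dense classical locus; the former should follow from functoriality of the local eigenvariety pieces, and the latter from Zariski density together with Lemma \ref{lem:Hecke map}.

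Finiteness of $t$ is then immediate, since both $\sC'_P$ and $\sE$ are locally finite over $\sW$ via $w$, and $t$ commutes with $w$. To see $\tilde\pi$ lies in the image, evaluate at $\tilde\pi' \in \sC'_P$: Proposition \ref{prop:P to B}(iii) gives $\alpha' \circ \phi_\tau^{\lambda_\pi} = \alpha$, so $t(\tilde\pi') \in \sE$ has weight $\lambda_\pi$ and Hecke eigensystem $\alpha$, i.e.\ $t(\tilde\pi') = \tilde\pi$. Finally, $t(\sC'_P) \subset \sE$ is the analytic image of an irreducible space of dimension $\#X_P + 1$ (after shrinking $\sC'_P$ to its irreducible component through $\tilde\pi'$, using that $\sC'$ is \'etale over $\sW_0$ at $\tilde\pi'$ by Theorem \ref{thm:BDGJW}), and it contains a Zariski-dense set of classical symplectic RASCARs (the images of the locus in Lemma \ref{lem:y' nc slope}). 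This exhibits the required $(\#X_P+1)$-dimensional symplectic family through $\tilde\pi$.
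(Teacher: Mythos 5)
Your overall strategy is right, and you correctly identify the central object: the Hecke-algebra homomorphism $\phi_\tau^{\lambda_\pi} : \cH \to \cH^{\mathrm{frac}}$, pulled back against the universal eigensystem on $\sC'_P$, should produce the map $t$. However, the step where you claim to realise $t$ concretely on local pieces is incorrect, and this is precisely the step where the real content of the argument lives. Pre-composing the surjection $\cH^{\mathrm{frac}}\otimes\cO_\Omega \twoheadrightarrow \bT_{\Omega,h}$ with $\phi_\tau^{\lambda_\pi}\otimes 1$ gives a ring map $\cH\otimes\cO_\Omega \to \bT_{\Omega,h}$, but this does \emph{not} induce an endomorphism of $\sE_{\Omega,h} = \mathrm{Sp}(\bT_{\Omega,h})$, nor does it directly give a map $\sC'_P\cap w^{-1}(\Omega) \to \sE_{\Omega,h}$. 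To produce a morphism into $\sE_{\Omega,h}$ you need a ring map $\bT_{\Omega,h} \to \cO(\sC'_P\cap w^{-1}(\Omega))$, i.e.\ you need the twisted character $\chi = \chi'\circ\phi_\tau^{\lambda_\pi} : \cH\otimes\cO_\Omega \to \cO(\sC'_P)$ to factor through the \emph{natural} quotient $\cH\otimes\cO_\Omega \twoheadrightarrow \bT_{\Omega,h}$. This factorisation is not automatic: the kernel of the natural quotient is the annihilator of the slope-$\leq h$ cohomology, whereas $\chi$ is built from the $\phi$-pullback of a different character, and there is no a priori reason $\chi$ should vanish on that kernel. Equivalently, it is not true that $\alpha_{y'}\circ\phi_\tau^{\lambda_\pi}$ appears in cohomology for \emph{every} point $y'$ of $\sE_{\Omega,h}$; it only does so on the Zariski-dense locus of Lemma \ref{lem:y' nc slope}.

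That factorisation is exactly the content of the Chenevier-style interpolation theorem the paper invokes (\cite[Thm.\ 3.2.1]{JoNew}): given eigenvariety data $\cD_1,\cD_2$ with Hecke algebras $\cH_1,\cH_2$, a morphism $\phi:\cH_2\to\cH_1$, and a Zariski-dense set of points of $\sE_1$ whose $\phi$-pullbacks genuinely appear as points of $\sE_2$, one gets a finite map $\sE_1\to\sE_2$ interpolating the association. Applying it requires presenting $\sC'_P$ itself as (a piece of) an eigenvariety for $\cH^{\mathrm{frac}}$ (the paper uses \cite[Cor.\ 3.1.5]{JoNew} for this), and then feeding in the density input from Lemma \ref{lem:y' nc slope}. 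Your proposal correctly assembles all the ingredients the theorem needs, but by attempting a hands-on construction of the map you skip the genuine obstruction that the theorem is designed to overcome; "the main obstacle" you flag at the end (gluing and matching classical points) is not the real issue, because without the interpolation theorem you never get the local maps in the first place. Once $t$ is granted by the theorem, your remaining verifications (finiteness over $\sW$, the computation $t(\tilde\pi')=\tilde\pi$ via Proposition \ref{prop:P to B}(iii), and irreducibility and classical density in the image) are essentially correct and agree with the paper.
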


\begin{proof}
We use an interpolation idea that originally dates back to Chenevier \cite{Che05}. The precise version we use is \cite[Thm.\ 3.2.1]{JoNew}, which says: suppose we have eigenvariety data $\cD_1, \cD_2$, using Hecke algebras $\cH_1, \cH_2$, giving eigenvarieties $\sE_1, \sE_2$. Suppose there is a map $\phi : \cH_2 \to \cH_1$ and a Zariski-dense set of points $y_1 \in \sE_1$ with $\alpha_{y_1} \circ \phi$ appearing as a point $y_2 \in \sE_2$. Then there is a finite map $\sE_1 \to \sE_2$ interpolating the transfer $y_1 \mapsto y_2$. We need only explain why our situation fits this.

Let $\Omega_P \defeq w(\sC'_P)$. The part of the eigenvariety $\sE$ over $\Omega_P$ is constructed from an eigenvariety datum
\[
	\cD_2 = (\Omega_P, \sZ, \sH, \cH, \psi)
\]
in the sense of \cite[Def.\ 3.1.1]{JoNew}. Also \cite[Cor.\ 3.1.5]{JoNew} allows us to realise $\sC_P'$ inside the eigenvariety attached to an eigenvariety datum 
\[
\cD_1 = (\Omega_P, \sZ_{\sC_P'}, \sH', \cH^{\mathrm{frac}}, \psi),
\]
where we shrink the weight space to be $P$-parabolic, and the Fredholm hypersurface to isolate the component containing $\sC_P'$. 

The map of Hecke algebras is $\phi_{\tau}^{\lambda_\pi} : \cH \to \cH^{\mathrm{frac}}.$ For a Zariski-dense set of classical $y' \in \sC_P'$, corresponding to eigensystems $\alpha'$, by Lemma \ref{lem:y' nc slope} the eigensystem $\alpha' \circ \phi_\tau^{\lambda_\pi}$ appears in $\sE$, and we deduce existence of $t$ by \cite{JoNew}.
	
	Now $t(\sC_P') \subset \sE$ is the required symplectic family through $\tilde\pi$.
\end{proof}

We have now proved existence of an $(\#X_P+1)$-dimensional symplectic family $\sC$ through any non-critical slope $P$-spin point in $\sE_{K_1(\tilde\pi)}^G$. Theorem \ref{thm:lower bound} claims that this family is unique and \'etale over its image in weight space, an affinoid $\Omega_P$ in $\sW_{0,\lambda_\pi}^{P}$ (noting $P = \Pmin$).

To complete the proof, key is the observation that at level $K_1(\tilde\pi)$, with appropriate signs, the $\tilde\pi$-isotypic part of the top-degree compactly-supported cohomology is 1-dimensional (as in e.g.\ \cite[Prop.\ 7.18]{BDW20}). Then as in Proposition 7.19 \emph{op.\ cit}., there exists an ideal $I \subset \cO_{\Omega_P,\lambda_\pi}$ such that we have a relation
\begin{equation}\label{eq:flat}
\cO_{\sE_{K_1(\tilde\pi)}^G,\tilde\pi} = \cO_{\Omega_P,\lambda_\pi}/I
\end{equation}
between the local rings. It suffices to prove $I=0$, since then $\cO_{\sE_{K_1(\tilde\pi)}^G,\tilde\pi}$ is free of rank one over $\cO_{\Omega_P,\lambda_\pi}$, and in particular $\sE_{K_1(\tilde\pi)}^G \to \Omega_P$ is \'etale at $\tilde\pi$; so $\sC$ must be the \emph{unique} family through $\tilde\pi$, and is itself \'etale over $\Omega_P$ at $\tilde\pi$. 
	
	Note that $\cO_{\Omega_P,\lambda_\pi}$ is a regular local ring, and in particular it is reduced. Thus if $I\neq 0$, then $\Omega_{P,\lambda_\pi}/I$ has dimension strictly less than $\dim(\Omega_{P,\lambda_\pi}) = \dim(\Omega_P) = \#X_P+1$. By \eqref{eq:flat}, it would follow that every component of $\sE_{K_1(\tilde\pi)}^G$ through $\tilde\pi$ has dimension $< \dim(\Omega_P) = \#X_P+1$. But this contradicts the existence of the ($\#X_P+1)$-dimensional component $\sC$. Thus $I=0$, $\sC$ is unique and $w : \sC \to \Omega_P$ is \'etale at $\tilde\pi$. This completes the proof of Theorem \ref{thm:lower bound}. \qed

\begin{remark}\label{rem:infinite fern}
For $\GL_2$, the \emph{infinite fern} (see \cite{GM98}) is the image of the Coleman--Mazur eigenvariety in an unobstructed deformation space of residual Galois representations. If $\pi$ is a $p$-spherical RACAR of $\GL_2$, then there are two $p$-refinements $\pi_\alpha, \pi_\beta$, each varying in Coleman families; but both $\pi_\alpha,\pi_\beta$ have the same underlying Galois representation, so have the same image in the infinite fern, and the images of their families in the infinite fern cross at this point.

The proof here suggest that, given a hypothetical `infinite fern' $\sI$ for $\GL_{2n}$, there would be a picture with higher-dimensional intersections. Consider e.g.\ $\GL_4$; then the image of the $\GL_4$-eigenvariety in $\sI$ through $\pi$ should comprise 24 surfaces (the Iwahori families), intersecting at 6 lines (the $Q$-parahoric families), which all intersect at a single point (corresponding to $\pi$). Our expectation is that 8 of the surfaces (through the $B$-spin refinements) comprise classical points, and these intersect at 4 lines (corresponding to 4 classical families at $Q$-parahoric level).  

A higher-dimensional `infinite fern' for \emph{polarised} Galois representations of $\GL_n$ over CM fields is the main focus of \cite{HS-fern}. 
\end{remark}


\section{Explicit examples for $\GL_4$}\label{sec:examples}

We now illustrate the theory concretely for $\GL_4$, and give an explicit example. There are 4 spin parabolics in $G$: $B$, the (2,2)-parabolic $Q$, the (1,2,1)-parabolic $Q'$, and $G$ itself. Suppose $\pi$ is a RASCAR of $\GL_4$ with $\pi_p$ spherical, the transfer of a RACAR $\Pi$ on $\mathrm{GSp}_4$, and let $\cF \in \Pi$ be a Siegel newform of level prime to $p$.

There are 6 $Q$-refinements of $\pi_p$ (Hecke eigensystems in the $Q$-parahoric invariants of $\pi_p$), corresponding to elements of $\cW_G/\cW_{L_Q}$. These are combinatorially represented by decomposing $\{1,2,3,4\}$ into an ordered disjoint union $A_1 \sqcup A_2$, where $\#A_1 = \#A_2 = 2$ (cf.\ \cite[\S3.3]{DJR18}). Exactly four of these are `$Q$-spin', factoring through Klingen refinements of $\cF$:
\begin{equation}\label{eq:Q-spin}
	\{1,2\}\sqcup\{3,4\}, \ \ \  \{1,3\} \sqcup \{2,4\}, \ \ \  \{24\}\sqcup\{13\}, \  \ \ \{34\}\sqcup\{12\},
\end{equation}
whilst $\{14\}\sqcup\{23\}$ and $\{23\}\sqcup\{14\}$ do not factor. These four are the refinements satisfying the combinatorial criterion \cite[Def.\ 3.5(ii)]{DJR18}.

There are 24 Iwahori $p$-refinements, each lying above a unique $Q$-refinement. Each $Q$-refinement $A_1\sqcup A_2$ has 4 further Iwahori refinements, corresponding to orderings on $A_1$ and $A_2$; e.g.\ above $\{1,2\} \sqcup\{3,4\}$ are \{1234\}, \{2134\}, \{1243\}, \{2143\}. The table below lists all the Iwahori $p$-refinements $\tilde\pi$, together with the smallest parabolic $P \subset G$ such that $\tilde\pi$ is $P$-spin.

\begin{center}
	\begin{tabular}{c|c}
		\textbf{$\tilde\pi$ optimally:} & $\Psi_{\UPS}(\tilde\pi)$ \\
		\hline
		$B$-spin & \{1234\}, \{1324\}, \{2143\}, \{2413\}, \{3142\}, \{3412\}, \{4231\}, \{4321\}\\
		$Q$-spin & \{2134\}, \{3124\}, \{1243\}, \{4213\}, \{1342\}, \{4312\}, \{2431\}, \{3421\}\\
		$G$-spin & \{2314\}, \{3214\}, \{1423\}, \{4123\}, \{1432\}, \{4132\}, \{2341\}, \{3241\}
	\end{tabular}
\end{center}

(Any $Q'$-spin refinement is automatically a $B$-spin refinement by Lemma \ref{lem:adjacent spin 1}(ii)). We conjecture that the dimension of the symplectic locus through the optimally $B$-spin, $Q$-spin and $G$-spin refinements is 3, 2 and 1 respectively; we have proved this for non-critical slope $\tilde\pi$.

\begin{example}
	From the tables at \url{www.smf.compositio.nl}, there is a unique non-endoscopic Siegel modular form $\cF$ on $\mathrm{GSp}_4$ of level 1 that transfers to a RASCAR $\pi$ on $\GL_4$ of weight $\lambda = (12,1,-1,-12)$; and $\pi$ is everywhere spherical.
	
	At $p = 11$, by examining the Newton polygon, one sees this $\pi$ admits a parahoric-ordinary $Q$-refinement $\tilde\pi^Q$, corresponding to an ordinary Klingen refinement of $\cF$. We can normalise $\UPS$ so that this $Q$-refinement is $\{1,2\} \sqcup \{3,4\}$.
	
	The 4 Iwahori refinements above $\tilde\pi^Q$ are $\{1234\}, \{2134\}, \{1243\}, \{2143\}$. For $\lambda = (12,1,-1,-12)$, the  non-critical slope bounds \eqref{eq:non-critical slope} are $v_p(U_{p,1}) < 12$, $v_p(U_{p,2}) < 3$, $v_p(U_{p,3}) < 12$. We see:
	\begin{itemize}
		\item $\{1234\}$ is $B$-spin. Its $U_{p,i}$-eigenvalues have slopes $v_p(U_{p,1}) = v_p(U_{p,3}) = 11$ and $v_p(U_{p,2}) = 0$. This is non-critical slope, varying in a unique 3-dimensional symplectic family. 
		
		\item $\{2134\}$ is optimally $Q$-spin. The slopes are $v_p(U_{p,1}) = 11$, $v_p(U_{p,2}) = 0$, $v_p(U_{p,3}) = 1$. This is non-critical slope, varying in a 2-dimensional symplectic family, inside a 3-dimensional component of the eigenvariety.
	\end{itemize}
	Similarly $\{1243\}$ and $\{2143\}$ are non-critical slope, optimally $Q$-spin and $B$-spin respectively.
\end{example}

\part{$p$-refined Friedberg--Jacquet Integrals}

In Part III, we focus on parahoric $P$-refinements $\tilde\pi^P$. We give a conjectural classification of the $P$-spin $P$-refinements in terms of non-vanishing of twisted global period integrals, and prove various results towards this by using the results of Part II. Our conjecture generalises \cite[Expectation 7.2]{BDGJW}, which we prove in some cases. 


\section{$p$-refined Friedberg--Jacquet integrals: Statements}\label{sec:p-refined FJ}

Let $\pi$ be a RACAR of $G(\A)$. For $\varphi \in \pi$ and Hecke characters $\chi,\eta$, let
\begin{equation}\label{eq:period integral}
Z_H(\varphi,\chi,s) \defeq \int_{\A^\times H(\Q)\backslash H(\A)} \varphi\left[\matrd{h_1}{}{}{h_2}\right]\chi|\cdot|^{s-\tfrac{1}{2}}\left(\frac{\det(h_1)}{\det(h_2)}\right)\eta^{-1}\big(\det(h_2)\big) dh,
\end{equation}
where $H = \GL_n\times\GL_n$. In \cite[Prop.\ 2.2]{FJ93} (with \cite{AS14}) Friedberg--Jacquet proved:

\begin{theorem}\label{thm:FJ}
	Let $\pi$ be a RACAR of $G(\A)$. Let $\chi, \eta$ be algebraic Hecke characters, with $\chi$ finite order. Then for any $s \in \C$, the following are equivalent:
	\begin{enumerate}[(1)]\s
		\item There exists $\varphi \in \pi$ such that $Z_H(\varphi,\chi,s+1/2) \neq 0$.
		\item All of the following hold:
		\begin{itemize}\s
			\item[--]$\pi$ is a functorial transfer of some $\Pi$ on $\mathrm{GSpin}_{2n+1}(\A)$ with central character $\eta$, 
			\item[--] $L(\pi\times\chi, s+1/2) \neq 0$.
		\end{itemize}
	\end{enumerate}
\end{theorem}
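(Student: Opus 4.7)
The plan is to use the Shalika model as the bridge between the global period integral $Z_H$ and the functorial transfer from $\mathrm{GSpin}_{2n+1}$. The three key ingredients are: (a) the Friedberg--Jacquet unfolding of $Z_H$ via an $(\eta,\psi)$-Shalika model; (b) the Jacquet--Piatetski-Shapiro--Shalika style local non-vanishing at ramified places; and (c) the Asgari--Shahidi / Ginzburg--Rallis--Soudry characterisation, which identifies RACARs admitting an $(\eta,\psi)$-Shalika model with exactly the functorial transfers from $\mathrm{GSpin}_{2n+1}$ of central character $\eta$. With these in place, the equivalence will fall out of an Eulerian factorisation argument.

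For (2) $\Rightarrow$ (1): assuming $\pi$ is a transfer from $\mathrm{GSpin}_{2n+1}$ with central character $\eta$, the Asgari--Shahidi result provides a non-zero global Shalika period $S_\eta : \pi \to \C$. For a pure tensor $\varphi = \otimes_v \varphi_v$, one unfolds $Z_H$ to a product of local integrals $\prod_v Z_v(\varphi_v, \chi_v, s+1/2)$. The standard computation of Friedberg--Jacquet shows that at every place $v$ where $\pi_v, \chi_v$ are unramified and $\varphi_v$ is spherical, $Z_v$ equals the local Euler factor $L_v(\pi_v \times \chi_v, s+1/2)$. Thus we obtain
\[
Z_H(\varphi, \chi, s+1/2) = L^S(\pi\times\chi, s+1/2) \cdot \prod_{v\in S}Z_v(\varphi_v,\chi_v,s+1/2)
\]
for a finite set $S$ of bad places. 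Given $L(\pi\times\chi, s+1/2)\neq 0$, it remains to choose test vectors $\varphi_v$ at $v \in S$ making each local $Z_v$ non-zero; this is a standard local non-vanishing statement (available for any fixed $s$ since $\chi$ is finite order and the local integrals admit meromorphic continuation with test vectors hitting any prescribed value up to a holomorphic factor).

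For (1) $\Rightarrow$ (2): the non-vanishing of $Z_H(\varphi,\chi,s+1/2)$ for some $\varphi$ forces the existence of a non-zero $(\eta,\psi)$-Shalika functional on $\pi$. Indeed, by Friedberg--Jacquet's unfolding (run in reverse), the linear functional $\varphi \mapsto Z_H(\varphi,\chi,s+1/2)$ is identically zero on $\pi$ unless a non-zero Shalika period $S_\eta$ exists; this uses that the $H$-orbit on $G/H$ exploited by $Z_H$ is precisely the one supporting the Shalika data. Once the Shalika model exists with character $\eta$, the Asgari--Shahidi theorem yields the functorial transfer $\Pi$ on $\mathrm{GSpin}_{2n+1}(\A)$ with central character $\eta$, and then the Eulerian factorisation from the first direction forces $L(\pi\times\chi, s+1/2) \neq 0$, since otherwise the product $\prod_v Z_v$ would vanish for every choice of $\varphi$.

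The main obstacle is really the bridge between Shalika models and functoriality from $\mathrm{GSpin}_{2n+1}$: the ``Shalika $\Leftrightarrow$ transfer'' equivalence, via Jacquet--Shalika's work on the exterior square $L$-function having a pole together with the descent theory completed by Asgari--Shahidi and Ginzburg--Rallis--Soudry, is the deepest input. Everything else (the unfolding, the unramified $L$-factor computation, and the local non-vanishing of test-vector integrals) is classical local theory. I would therefore structure the argument by first invoking these inputs as black boxes, then deriving (1) $\Leftrightarrow$ ``$\pi$ has $(\eta,\psi)$-Shalika model and $L(\pi\times\chi,s+1/2)\neq 0$'' purely via the Eulerian factorisation, and finally substituting the Shalika condition for the functoriality condition using (c).
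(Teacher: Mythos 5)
The paper does not prove Theorem~\ref{thm:FJ}; it cites it verbatim as \cite[Prop.~2.2]{FJ93}, augmented by \cite{AS14}, so there is no internal proof to compare against. Your sketch is a reasonable reconstruction of how the cited literature establishes the equivalence, and your structure --- first reduce (1) to ``$(\eta,\psi)$-Shalika model exists and $L(\pi\times\chi,s+\tfrac12)\neq 0$'' via unfolding and Eulerian factorisation, then trade the Shalika condition for the $\mathrm{GSpin}_{2n+1}$ transfer condition via Jacquet--Shalika and Asgari--Shahidi/Ginzburg--Rallis--Soudry --- matches the actual provenance of the theorem. Your appeal to \cite[Prop.~3.1, 3.2]{FJ93} for the local factorisation and test vectors at finite places is exactly what the paper later invokes in \S\ref{sec:local reinterpretation}. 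One caveat worth flagging: the archimedean non-vanishing you dismiss as ``a standard local non-vanishing statement'' is not routine --- it is Sun's theorem \cite{Sun19}, a substantial result proved well after Friedberg--Jacquet; the original \cite[Prop.~2.2]{FJ93} had to work with a weaker archimedean statement, and the paper explicitly relies on \cite{Sun19} to get the clean existence of $\varphi_\infty^{\mathrm{FJ}}$ with $\zeta_\infty\neq 0$ for arbitrary $s$. You should cite this rather than fold it into ``classical local theory.'' Likewise the replacement of ``has $(\eta,\psi)$-Shalika model'' by ``is a functorial transfer from $\mathrm{GSpin}_{2n+1}$ with central character $\eta$'' is precisely the role of the \cite{AS14} citation, which you correctly identify as the deepest input.
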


In this section, we conjecture a local `$p$-refined' analogue of this theorem, that naturally arises from the question of $p$-adic interpolation (and $p$-adic $L$-functions).

	\subsection{Context for the conjecture}
	
	Our study is motivated by $p$-adic interpolation of period integrals. We briefly set up this problem. 
	
	Let $\Sigma_p$ denote the set of Dirichlet characters of (non-trivial) $p$-power conductor, and let $J$ be a finite set of integers. Let $\{A_{\chi,j} : \chi \in \Sigma_p, j \in J\} \subset \C$ be a set of complex numbers, and $i_p : \C \isorightarrow \overline{\Q}_p$ a fixed choice of isomorphism. We say that this set is \emph{$p$-adically interpolable} if there exists a locally analytic $p$-adic distribution $\mu$ on $\Zp^\times$, of growth/order $h < \# J$ (in the sense of \cite[\S6.2.4]{Eigenbook}), such that
	\[
		\int_{\Zp^\times} \chi(x) x^j = i_p(A_{\chi,j}) \qquad \text{for all $\chi \in \Sigma_p, \  j\in J$}.
	\]	
	If such a $\mu$ exists, it is uniquely determined by these interpolation and growth properties.  
	
As an example of this, let  $E/\Q$ be an elliptic curve with good ordinary reduction at $p$, let $J = \{0\}$, and let $A_{\chi,0}$ be the algebraic part of the critical $L$-value $L(E,\chi,1)$, with the appropriate modified Euler factors defined by Coates--Perrin-Riou \cite{coates89}. This is interpolable by a $p$-adic measure on $\Zp^\times$ (a distribution of growth 0), the \emph{$p$-adic $L$-function} of $E$, as proved in \cite{MSD74}.
	
	It is natural to ask if values of period integrals can be $p$-adically interpolated. Precisely, for $\pi$ as in Theorem \ref{thm:FJ}, can one interpolate the set
	\[
	\{Z_H(\varphi,\chi,j+1/2) : \chi \in \Sigma_p, j \in J\}
	\]
	for appropriate $\varphi$ and $J$? This question is only meaningful for RASCARs, where it was first studied in \cite{AG94}, and subsequently in \cite{DJR18,Geh18,BDW20,BDGJW}. It turns out that as stated, the answer is (likely to be) \emph{no}: there are problems locally at $p$. However, one may obtain a positive answer after making a standard modification at $p$, for which we need some notation.

		\begin{notation}\label{not:u}
			\begin{itemize}\s
				\item[--] Let $\tilde\pi^P = (\pi,\alpha^P)$ be a $P$-refinement. We say $\varphi \in \tilde\pi^P$ (resp.\ $\varphi_p \in \tilde\pi_p^P$) if $\varphi \in \pi^{J_P}$ (resp.\ $\varphi_p \in \pi_p^{J_P}$) is an $\alpha^P$-eigenvector for $\cH_p^P$. 
				
				\item[--] Let $u = \smallmatrd{1}{-w_n}{0}{1} \in \GL_{2n}(\Qp)$, where $w_n$ is the longest Weyl element in $\GL_n(\Qp)$ (i.e.\ the matrix with 1s along the antidiagonal, and 0s elsewhere).  If $P$ is the $(m_1,...,m_r)$-parabolic (see Notation \ref{not:(ni)-spin}), let 
				\[
				t_P = \mathrm{diag}(p^{r-1}\mathrm{I}_{m_1}, ..., p\mathrm{I}_{m_{r-1}},\mathrm{I}_{m_r}) \in T(\Qp).
				\]
			\end{itemize}
			For any $\beta \geq 1$, we view $ut_P^\beta \in G(\Qp) \subset G(\A)$ in the obvious way.
		\end{notation}

The following is proved in \cite{BDW20,BDGJW}, generalising results from \cite{AG94,Geh18,DJR18}.

\begin{theorem}\label{thm:interpolable}
	Let $\pi$ be a RASCAR of $G(\A)$ of weight $\lambda = (\lambda_1,\dots, \lambda_{2n})$. Let $J = \{j \in \Z : -\lambda_{n+1} \leq j \leq -\lambda_n\}$. Let $P$ be a spin parabolic, and $\tilde\pi^P$ be a non-$P$-critical slope $P$-refinement.  
	
	For any $\varphi \in \tilde\pi^P$, the set
	\[
	\Big\{ C_\beta Z_H(ut_p^\beta \cdot \varphi, \chi, j+\tfrac12) : \chi \in \Sigma_p \text{ of conductor $p^\beta$}, \ \ j \in J\Big\}
	\]
	is $p$-adically interpolable, for $C_\beta$ an explicit non-zero volume term that depends only on $\beta$.
\end{theorem}

Whilst this theorem is not stated as written in the aforementioned works, it is an implicit step in the constructions of $p$-adic $L$-functions contained therein. Its relevance stems from Friedberg--Jacquet's decomposition and study of global period integrals via local integrals,  as follows.

Write $\pi = \otimes_v \pi_v$, and consider $\varphi = \otimes\varphi_v$ a pure tensor. Then \cite[Prop.\ 2.3, \S3]{FJ93} shows that
	\[
	Z_H(\varphi,\chi,s) = \prod_v \zeta_v(\varphi_v,\chi_v,s),
	\]
	where 
	\begin{equation}\label{eq:local FJ}
	\zeta_v(\varphi_v,\chi_v,s) \defeq \int_{\GL_n(\Q_v)}\cS_{\psi_v}^{\eta_v}(\varphi_v)\left[\matrd{x}{}{}{1}\right]\chi_v|\cdot|^{s-\tfrac{1}{2}}\Big(\det x\Big) dx
	\end{equation}
	is the local Friedberg--Jacquet integral attached to $\pi_v$. Here $\cS_{\psi_v}^{\eta_v}$ is an intertwining of $\pi_v$ into its Shalika model (see e.g.\ \cite[\S2.6]{BDW20}). 
	
	Let $\ell\neq p$ be a finite prime, and $\varphi_\ell \in \pi_\ell$. By \cite[Prop.\ 3.1]{FJ93}, for each unramified quasi-character $\chi_\ell: F_\ell^\times \to \C^\times$, there exists a holomorphic function $r_\ell(\varphi_\ell,\chi_\ell,s)$ such that  
	\[	
	\zeta_\ell(\varphi_\ell,\chi_\ell,s) = r_\ell(\varphi_\ell,\chi_\ell,s) \cdot L(\pi_\ell\times\chi_\ell,s).
\]
 Moreover there exists $\varphi_\ell^{\mathrm{FJ}} \in \pi_\ell$ such that $r_\ell(\varphi_\ell^{\mathrm{FJ}},\chi_\ell,s) = 1$. If $\pi_\ell$ is spherical, we may take $\varphi_\ell^{\mathrm{FJ}}$ spherical \cite[Prop.\ 3.2]{FJ93}. 
	
	At infinity, by \cite{Sun19} there exists a vector $\varphi_\infty^{\mathrm{FJ}} \in \pi_\infty$ such that $\zeta_\infty(\varphi_\infty^{\mathrm{FJ}},\chi_\infty,s) \neq 0$.

	For these `good' choices of local vectors away from $p$, the interpolation theorem above then implies an interpolation of the values 
		\[
	\Big\{ C_\beta \zeta_\infty(\varphi_\infty,\chi_\infty,j+\tfrac12) \cdot L^{(p)}(\pi\times\chi, j+\tfrac12) \cdot \zeta_p(ut_P^\beta \cdot \varphi_p, \chi_p, j+\tfrac12) : \chi \in \Sigma_p \text{ of conductor $p^\beta$}, \ j \in J\Big\}.
	\]
	We are left to study the integral $\zeta_p(ut_P^\beta \cdot \varphi_p, \chi_p, s)$. In particular, we must ask for which eigenvectors $\varphi_p$ this integral is non-zero; else the interpolation theorem is vacuous. Where it is non-zero, we obtain a $p$-adic interpolation of $L$-values (that is, a $p$-adic $L$-function). The non-vanishing of this integral is the subject of our conjecture.

\subsection{Local statement of the conjecture}

	The local Friedberg--Jacquet integrals in \eqref{eq:local FJ} can be defined more generally for $\pi_p$ any irreducible admissible representation of $G(\Q_p)$ admitting an $(\eta_p,\psi_p)$-Shalika model, for  $\psi_p$ and $\eta_p$ smooth characters of $\Q_p$ and $\Q_p^\times$ respectively. We will specialise further to the case where $\pi_p = \Ind_B^G \UPS$ is an unramified principal series representation with regular semisimple Satake parameter. 
	
By \cite[Prop.\ 1.3]{AG94} and \cite[p.177(i)]{AS06}, we see that $\pi_p$ is a functorial transfer of an unramified principal series representation $\Pi_p$ of $\cG(\Q_p)$. Note that all our definitions and properties of $P$-refinements used only local data at $p$, and hence make sense for $\pi_p$.

\begin{conjecture}\label{conj:local FJ}
	Let $\pi_p$ be an unramified principal series representation of $\GL_{2n}(\Qp)$ with regular semisimple Satake parameter, admitting an $(\eta_p,\psi_p)$-Shalika model.  
	Let $\tilde\pi_p^P$ be a $P$-refinement of $\pi_p$. Let $\chi_p$ be a finite order character of $\Qp^\times$ of conductor $p^\beta > 1$. 
	
	\medskip

	The following are equivalent:
	\begin{enumerate}[(1)]\s
		\item There exists an eigenvector $\varphi_p\in \tilde\pi_p^P$ such that $\zeta_p(ut_P^\beta \cdot \varphi_p,\chi_p,s+1/2) \neq 0$ for some $s \in \C$.
		\item Both of the following hold:
		 \begin{itemize} \item[(i)] $P$ is contained in the $(n,n)$-parabolic, 
			\item[(ii)] and $\tilde\pi_p^P$ is $P$-spin. 
		\end{itemize}
	\end{enumerate}
\end{conjecture}

In the remainder of the paper, we prove a number of results towards this conjecture. In particular, we use local methods to prove (2) $\Rightarrow$ (1) and (1) $\Rightarrow$ (2-i) always hold. We also use our (global) results on classical symplectic families to prove partial results towards (1) $\Rightarrow$ (2-ii) for $\pi_p$ that arise as the local component of a nice enough RASCAR $\pi$. See \S\ref{sec:results towards conjecture} for precise statements of these results.

\subsection{A global version of the conjecture}

To apply our global results, it is convenient to state a global analogue of Conjecture \ref{conj:local FJ}. Whilst strictly weaker, it has the benefit of being more obviously analogous to Friedberg--Jacquet's original result (Theorem \ref{thm:FJ}).

\begin{definition}\label{def:refinement transfer}
	Let $\tilde\pi^P$ be a $P$-refined RACAR of $G(\A)$ for $P\subset G$ a spin parabolic, with associated $\cP \subset \cG$. We say $\tilde\pi^P$ is a \emph{functorial transfer of a $\cP$-refined $\tilde\Pi^{\cP}$ on $\mathrm{GSpin}_{2n+1}(\A)$} if $\pi$ is the functorial transfer of $\Pi$, and $\tilde\pi^P$ is the functorial transfer of $\tilde\Pi^{\cP}$ in the sense of Definition  \ref{def:refinement transfer 1}.
\end{definition}

\begin{conjecture}\label{conj:FJ}
	 Let $P \subsetneq G$ be a proper spin parabolic, with associated $\cP \subset \mathrm{GSpin}_{2n+1}$. Let $\tilde\pi^P = (\pi,\alpha^P)$ be a $P$-refined RACAR of $G(\A)$. Assume $\pi_p$ is spherical with regular semisimple Satake parameter. Let $\chi, \eta$ be algebraic Hecke characters, with $\chi$ finite order of conductor $p^\beta > 1$. For any $s \in \C$, the following are equivalent:
		\begin{enumerate}[(1$'$)]\s
		\item There exists $\varphi \in \tilde\pi^P$ such that $Z_H(ut_P^\beta \cdot \varphi,\chi,s+1/2) \neq 0$.
		\item All of the following hold:
		\begin{itemize}\s 
						\item[(i)] $P$ is contained in the $(n,n)$-parabolic,
			\item[(ii)] $\tilde\pi^P$ is a functorial transfer of some $\tilde\Pi^{\cP}$ on $\mathrm{GSpin}_{2n+1}(\A)$ with central character $\eta$, 
			\item[(iii)]$L(\pi\times\chi, s+1/2) \neq 0$.
		\end{itemize}
	\end{enumerate}
\end{conjecture}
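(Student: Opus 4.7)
The plan is to reduce the conjecture to a local statement at $p$ and then address the two directions separately, using Shalika-model computations for $(2)\Rightarrow(1)$ and the upper-bound theorem of Part II for $(1)\Rightarrow(2)$.

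First I would establish the Eulerian factorization $Z_H(\varphi,\chi,s)=\prod_v Z_{H,v}(\varphi_v,\chi_v,s)$ on pure tensors. Since the twist by $ut_P^\beta$ affects only the factor at $p$, and since the transfer and $L$-value conditions in (2) are equivalent (by Theorem \ref{thm:FJ}) to unrestricted non-vanishing of some $Z_H(\varphi,\chi,s+1/2)$, Conjecture \ref{conj:FJ} reduces to the purely local claim at $p$: if $\pi_p$ is the transfer of $\Pi_p$, then there exists an $\cH_p^P$-eigenvector $\varphi_p\in\pi_p^{J_P}$ with $Z_{H,p}(ut_P^\beta\cdot\varphi_p,\chi_p,s+1/2)\neq 0$ if and only if $\tilde\pi_p^P$ is the local functorial transfer of some $\tilde\Pi_p^{\cP}$ and $P$ is contained in the $(n,n)$-parabolic $Q$.

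For $(2)\Rightarrow(1)$ I would proceed purely locally, constructing an explicit test vector in $\tilde\pi_p^P$ via the Shalika model. Adapting the strategy of \cite[\S8]{BDGJW} from the $Q$-parahoric case to a general spin $P\subset Q$, one writes $\tilde\pi_p^P$ as a direct summand of the Iwahori invariants of a suitable induction and exhibits a specific $\alpha^P$-eigenvector $\varphi_p$ for which the Shalika functional applied to $ut_P^\beta\cdot\varphi_p$ can be computed directly. The answer should be a non-zero product of Gauss sums in $\chi_p$ and normalized Hecke eigenvalues $\alpha^P(U_{p,r}^\circ)$, with the $P$-spin hypothesis ensuring that no cancellation occurs. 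The Panchishkin-type inclusion $P\subset Q$ enters essentially: it controls precisely which block decompositions appearing in $t_P^\beta$ are compatible with the $(n,n)$-decomposition $H=\GL_n\times\GL_n$, so that the Shalika functional does not vanish on $ut_P^\beta\cdot\varphi_p$ for structural reasons.

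For $(1)\Rightarrow(2)$ I would work globally using the non-critical slope further refinement $\tilde\pi$ of $\tilde\pi^P$ to Iwahori level. Following \cite{BDW20,BDGJW}, the non-vanishing of $Z_H(ut_P^\beta\cdot\varphi,\chi,s+1/2)$ feeds, via a parahoric evaluation map on overconvergent cohomology, into a non-zero $p$-adic distribution on the $P$-parabolic weight space $\sW_{0,\lambda_\pi}^P$ attached to $\tilde\pi$. Combined with a classicality theorem at non-critical slope, this produces a classical symplectic family through $\tilde\pi$ with weight support equal to $\sW_{0,\lambda_\pi}^P$. Theorem \ref{thm:shalika obstruction} then forces $\sW_{0,\lambda_\pi}^P\subset\sW_{0,\lambda_\pi}^{\Pmin}$, hence $\Pmin\subset P$, so $\tilde\pi^P$ is $P$-spin and thus a functorial transfer by Proposition \ref{prop:spin factor}. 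The inclusion $P\subset Q$ is forced independently by a direct local shape computation: if $P$ straddles the middle decomposition, then $ut_P^\beta\cdot\varphi_p$ lies in the kernel of the local Shalika functional. The main obstacle I anticipate is setting up, for a general spin parabolic $P$, the analogue of the parahoric evaluation-map machinery of \cite{BDW20} in order to produce the $p$-adic distribution on $\sW_{0,\lambda_\pi}^P$: the $(n,n)$-case there relies on specific compatibilities between the $Q$-parabolic weight branching and the Shalika functional that need to be generalized to the Levi $L_P$ and its allowable weights.
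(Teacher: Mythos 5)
Your proposal follows essentially the same strategy the paper uses, but keep in mind that the statement is a \emph{conjecture} which the paper does not prove in full: it establishes $(2)\Rightarrow(1)$ unconditionally (Theorem \ref{thm:2 implies 1}) and $(1)\Rightarrow(2)$ only under additional hypotheses (Theorem \ref{thm:p-refined FJ}: regular weight, $\tilde\pi^P$ appearing in the top-degree eigenvariety, and $P$-strong-interiority, all of which follow from your assumed non-critical slope Iwahori refinement). Your local reduction, the use of Theorem \ref{thm:shalika obstruction} to force $\Pmin\subset P$, and the anticipated need to generalize the parahoric evaluation-map machinery of \cite{BDW20} from $Q$ to a general spin $P$ (done in Proposition \ref{prop:galois evs commute main diagram}) all match the paper.

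One inaccuracy worth flagging in your $(2)\Rightarrow(1)$ sketch: the role of the $P$-spin hypothesis there is not to prevent cancellation in a Gauss-sum/Hecke-eigenvalue product. The paper first shows (Proposition \ref{prop:zeta vanishing}) that, for any Iwahori-invariant $\varphi_p$, the twisted local zeta integral is a \emph{nonzero constant multiple} of $W_{\varphi_p}\smallmatrd{\nu_\beta(t)}{}{}{1}$ for a specific torus element $\nu_\beta(t)$; the Gauss sums and Iwahori averaging occur inside that proof but collapse to a single scalar. The Panchishkin inclusion $P\subset Q$ then enters as a support condition (Corollary \ref{cor:zeta vanishing}): if $P\not\subset Q$ the relevant argument $\nu_\beta(t_P^\beta)$ falls outside $M_n(\Zp)$, so the Shalika function vanishes there for \emph{all} Iwahori-invariant vectors. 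The $P$-spin hypothesis is used separately, to guarantee the \emph{existence} of a suitable $\alpha^P$-eigenvector: it allows one to choose a spin $\UPS$ so that the explicit parahoric principal-series vector $H_{[w_n]'}$ is an $\alpha^P$-eigenvector (Lemma \ref{lem:choice of UPS eigenvector}), after which the Casselman intertwining and the Ash--Ginzburg Shalika functional give the required non-vanishing (Proposition \ref{prop:eigenvector non-vanishing}).
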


\begin{proposition}\label{prop:local vs global}
	Let $\pi = \otimes \pi_v$ be a RASCAR of $G(\A)$ such that $\pi_p$ is as in Conjecture \ref{conj:local FJ}.  
	
	Then Conjecture \ref{conj:local FJ} for $\pi_p$ is equivalent to Conjecture \ref{conj:FJ} for $\pi$.
\end{proposition}

\begin{proof}
	This largely follows from the discussion after Theorem \ref{thm:interpolable}, which implies that for pure tensors $\varphi = \otimes_v\varphi_v$, we have
	\begin{equation}\label{eq:euler product}
	Z_H(ut_P^\beta\cdot \varphi,\chi,s+1/2) =  \zeta_p(ut_P^\beta\cdot\varphi_p,\chi_p,s+1/2) \prod_{v\neq p}\zeta_v(\varphi_v,\chi_v,s+1/2).
	\end{equation}	
	
	Logically, we must show that 
	\[
	\Big[(1) \iff (2)\Big] \iff \Big[(1')\iff(2')\Big],
	\]
	where (1), (2) are in the local conjecture, and (1$'$), (2$'$) are in the global conjecture. First we draw some implications between the various conditions.
	
	\begin{itemize}
		\item $(1') \Rightarrow (1)$. If there exists $\varphi$ such that $Z_H(ut_P^\beta\cdot\varphi,\chi,s+1/2) \neq 0$, then we may without loss of generality replace $\varphi$ with a pure tensor $\varphi = \otimes_v\varphi_v$. As the product in \eqref{eq:euler product} is holomorphic, non-vanishing of the left-hand side implies non-vanishing of the factor $\zeta_p(ut_P^\beta\cdot\varphi_p,\chi_p,s+1/2)$, so (1) holds.
	
	\item $(1) \Rightarrow (1')$.  If (1) holds, there exists $\varphi_p \in \tilde\pi_p^P$ such that $\xi_p(ut_P^\beta\varphi_p,\chi_p,s+1/2) \neq 0$ for some $s\in\C$. We shall prove in Corollary \ref{cor:non-vanishing for s} in the next section that this implies non-vanishing for \emph{all} $s\in \C$. (This corollary is proved via purely local methods, so there is no circular argument here). Fix some $s$ such that $L(\pi\times\chi,s+1/2) \neq 0$, hence $L^{(p)}(\pi\times\chi,s+1/2)\neq 0$ (where we delete the factor at $p$). Let $\varphi \defeq \varphi_\infty^{\mathrm{FJ}}\otimes \varphi_p \otimes \bigotimes_{\ell\nmid p\infty}\varphi_\ell^{\mathrm{FJ}}$, for $\varphi_v^{\mathrm{FJ}}$ the test vectors chosen after Theorem \ref{thm:interpolable}. Then that discussion, combined with \eqref{eq:euler product}, shows
	\begin{equation}\label{eq:L non zero}
		Z_H(ut_P^\beta\cdot\varphi, \chi,s+1/2) = \zeta_p(ut_P^\beta\cdot\varphi_p,\chi_p,s+1/2) \cdot \zeta_\infty(\varphi_\infty^{\mathrm{FJ}},\chi_\infty,s+1/2)\cdot L^{(p)}(\pi\times\chi,s+1/2) \neq 0,
	\end{equation}
	so (1$'$) holds.
	
\item $(2') \Rightarrow (2)$. Conditions (2-i) and (2$'$-i) are identical, and (2-ii) and (2$'$-ii) are equivalent by Proposition \ref{prop:spin factor}. As (2$'$) consists of this and one further condition, it implies (2).

\item Additionally, we see from the previous point that if we have (2) and $L(\pi\times\chi,s+1/2) \neq 0$ for some $s \in \C$, then $(2')$ holds (for this $s$).
\end{itemize}

Now suppose the global conjecture holds. If (1) holds then (1$'$) holds, so (2$'$) holds by the conjecture, which implies (2) holds locally. Conversely if (2) holds, then pick some $s\in\C$ such that $L(\pi\times\chi,s+1/2)\neq 0$; then (2$'$) holds, so (1$'$) holds by the global conjecture. Thus (1) holds. Thus the local conjecture is true.

\medskip

Finally, suppose the local conjecture holds. In the global conjecture, if (2$'$) holds, then (2) holds, so (1) holds by the local conjecture, so (1$'$) holds.

If (1$'$) holds for $s\in \C$, then (1) holds, so (2) holds by the local conjecture. Thus (2$'$-i) and (2$'$-ii) hold from above. It remains to show (2$'$-iii), that $L(\pi\times\chi,s+1/2) \neq 0$. But this follows directly from (1$'$) by Theorem \ref{thm:FJ}. In particular, the global conjecture is true.
\end{proof}

For the rest of the paper, and in light of the methods used in the above proof, we assume our prime-to-$p$ level $K^p \subset \GL_{2n}(\A_f^{(p)})$ fixes $\otimes_{\ell\neq p}\varphi_\ell^{\mathrm{FJ}}$, which is possible by \cite[Prop.\ 3.2]{FJ93}.

\subsection{Results towards the conjectures}\label{sec:results towards conjecture}
We collect together our results towards the above conjectures. In \S\ref{sec:proof one direction} we show:

\begin{theorem}\label{thm:2 implies 1}
	Implication (2) $\Rightarrow$ (1) holds in Conjecture \ref{conj:local FJ}.
\end{theorem}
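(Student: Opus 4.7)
The plan is to factor the global period via the Shalika model and reduce to an explicit local non-vanishing at $p$. Under hypothesis (2), $\pi$ is a functorial transfer from $\mathrm{GSpin}_{2n+1}$ with central character $\eta$, hence by \cite{AS06,FJ93} it admits a global $(\eta,\psi)$-Shalika model. The Friedberg--Jacquet unfolding from \cite{FJ93} expresses $Z_H(\varphi,\chi,s+1/2)$, for factorisable $\varphi = \otimes_v \varphi_v$ in this Shalika model, as an Eulerian product of local Shalika zeta integrals whose product on unramified data coincides (up to explicit local factors) with $L(\pi \times \chi, s+1/2)$. Combining this with the $L$-value hypothesis in (2), the global statement (1) reduces to the purely local statement that there exists $\varphi_p \in \tilde\pi_p^P$ with
\[
Z_{H,p}(ut_P^\beta \cdot \varphi_p,\, \chi_p,\, s+1/2) \neq 0.
\]

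At places $v \neq p, \infty$ I would use essential (Whittaker/Shalika new) vectors, which guarantee non-vanishing of each local integral on the nose, while at $\infty$ non-vanishing follows from regularity of the weight and algebraicity of $\chi_\infty$, as in \cite{BDGJW,BDW20}. These choices are independent of the choice of $\varphi_p$, so the only remaining task is the local calculation at $p$.

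For this local calculation, I would work in the local $(\eta_p,\psi_p)$-Shalika model of $\pi_p$. By Notation \ref{not:u}, $u$ represents the open $B$-orbit on $G/H$ and $t_P$ is the element defining the $P$-parahoric Hecke operators; the assumption in (2) that $P$ lies inside the $(n,n)$-parabolic $Q$ (equivalently, $n \in X_P$) is precisely Panchishkin's condition, ensuring that conjugation by $t_P^\beta$ contracts the relevant unipotent radical while preserving the Shalika pairing, so that the twisted integral unfolds cleanly. Since $\tilde\pi^P$ is the transfer of $\tilde\Pi^{\cP}$, Proposition \ref{prop:spin factor} gives that $\alpha^P$ factors through $\jmath^\vee : \cH_p^P \to \cH_p^{\cG,\cP}$; I would therefore take $\varphi_p$ to be the transfer of a specific $\cJ_{\cP}$-fixed Shalika vector for $\tilde\Pi^{\cP}$, directly generalising the explicit constructions at Iwahori and $(n,n)$-parahoric levels in \cite[\S7--8]{BDGJW} and \cite[\S4--5]{BDW20}. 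Explicit unfolding then writes the local integral as a product of non-zero Hecke eigenvalues $\alpha^P(U_{p,r}^\circ)^{\beta}$ for appropriate $r$ with a Gauss sum attached to $\chi_p$ of conductor $p^\beta > 1$; both factors are non-zero, which yields (1).

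The main obstacle is the explicit unfolding at general $P$-parahoric level for spin $B \subsetneq P \subseteq Q$: one must select the correct Shalika test vector inside $\pi_p^{J_P}$ and track how $ut_P^\beta$ interacts with the Iwahori decomposition and the Shalika radical as $P$ varies, then verify that the resulting Hecke--Gauss-sum expression does not vanish. The spin factorisation through $\jmath^\vee$ makes this tractable by reducing the bookkeeping to the smaller Hecke algebra $\cH_p^{\cG,\cP}$ of $\mathrm{GSpin}_{2n+1}$, so that the computation should interpolate between the $P = B$ case of \cite{BDGJW} and the $P = Q$ case of \cite{BDW20}.
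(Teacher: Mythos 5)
Your global-to-local reduction matches the paper's \S\ref{sec:local reinterpretation}, and the choice of test vectors away from $p$ (local Friedberg--Jacquet new vectors at finite $\ell$, a non-vanishing archimedean vector, so that only the local integral at $p$ survives against $L(\pi\times\chi,s+1/2)$) is exactly what the paper does. The gap is in the local computation at $p$. You propose to take $\varphi_p$ to be ``the transfer of a specific $\cJ_{\cP}$-fixed Shalika vector for $\tilde\Pi^{\cP}$''; but there is no such transfer of vectors. Proposition \ref{prop:spin factor} transfers \emph{Hecke eigensystems} through $\jmath^\vee$, not vectors, and $\Pi_p$ lives on $\mathrm{GSpin}_{2n+1}(\Qp)$, a different group with no natural map of representations to $\pi_p$. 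The paper stays entirely on the $\GL_{2n}$ side: it picks the explicit $J_P$-invariant vector $H_{[w_n]'}$ in the principal series $\Ind_B^G\UPS$, chooses the spin parameter $\UPS$ so this is an $\alpha^P$-eigenvector (Lemma \ref{lem:choice of UPS eigenvector}), and then sends it into the Shalika model via the Ash--Ginzburg map $\cS$, pre-composed with Casselman intertwining operators $M_\tau$ to switch Weyl conventions (Lemma \ref{lem:casselman}).

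The second and larger gap is the assertion that the local integral unfolds to ``a product of non-zero Hecke eigenvalues $\alpha^P(U_{p,r}^\circ)^\beta$ with a Gauss sum,'' whose non-vanishing gives (1). Proposition \ref{prop:zeta vanishing} actually shows that $\zeta_p(ut_P^\beta\cdot\varphi_p,\chi_p,s)$ equals, up to a non-zero scalar (into which the Gauss sum is absorbed), the \emph{single Shalika function value} $W_{\varphi_p}\smallmatrd{\nu_\beta(t_P^\beta)}{}{}{1}$; no Hecke eigenvalues appear. Whether that single value is non-zero for some eigenvector $\varphi_p\in\tilde\pi_p^P$ is precisely the content of the theorem, and proving it requires the support Lemma \ref{FactorisationLemma} and the new combinatorial Lemma \ref{SimpleReflectionLemma} on simple reflections and $\cW_{L_P}$-cosets, none of which your sketch supplies. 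You acknowledge this as the ``main obstacle'' but offer no mechanism to resolve it. Relatedly, the role of $P\subset Q$ in the paper is concrete rather than heuristic: Corollary \ref{cor:zeta vanishing} shows, via \eqref{eq:t_P swap}, that $\nu_\beta(t_P^\beta)$ is $p$-integral (hence can lie in the support of a Shalika function) if and only if $P$ is contained in the $(n,n)$-parabolic.
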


In Corollary \ref{cor:zeta vanishing}, we also show that (1) $\Rightarrow$ (2-i). In particular, to prove Conjecture \ref{conj:FJ} in full, it remains to show (1) $\Rightarrow$ (2-ii).
	
	Our results towards this are global; in particular, we prove cases of (1$'$) $\Rightarrow$ (2$'$-ii) in Conjecture \ref{conj:FJ}. As an example, we prove Theorem \ref{thm:intro 2}(ii) of the introduction: that the conjecture holds in full if we can further refine $\tilde\pi^P$ to a non-critical slope $B$-refinement. To state our (stronger) precise result, we require more terminology.

 \begin{itemize}\s
 \item Fix a prime-to-$p$ level $K^p \subset \GL_{2n}(\A_f^{(p)})$. For a parabolic $P$, we let $K_P \defeq K^p J_P \subset \GL_{2n}(\A_f)$, where $J_P$ is the $P$-parahoric subgroup.
 	
 \item For any open compact $K \subset \GL_{2n}(\A_f)$, let $S_K$ denote the $\GL_{2n}$-locally symmetric space of level $K$ (see \cite[\S2.3]{BDW20}). 

\item For any parabolic $P$, let $\cD_\lambda^P$ be the module of weight $\lambda$ $P$-parahoric distributions for $G$, defined in \cite[\S3.2]{BW20}. We have $\cD_\lambda^G = V_\lambda^\vee$ is the dual of the algebraic induction of $\lambda$, and $\cD_\lambda^B = \cD_\lambda$ is the usual module of (Iwahori) locally analytic distributions. We have attached $p$-adic local systems $\sV_\lambda^\vee/\sD_\lambda^P$ on $S_{K_P}$ (e.g.\ \cite[\S2.3.2]{BDW20}).
 
 \item The \emph{top degree eigenvariety} was constructed in \cite[\S5]{BW20}, following \cite{Han17}. It is built from modules $\hc{t}(S_{K_B},\sD_\Omega)$, where $\Omega \subset \sW$ is a weight affinoid and $\sD_\Omega$ is a local system of locally analytic distributions over $\Omega$ (as in \cite[Def.\ 3.11]{BDW20}; see \cite[\S3.2]{BW20}). Here $t = n^2+n-1$ is the top degree for cuspidal cohomology.
 
 \item We say \emph{$\tilde\pi^P$  appears in the top degree eigenvariety} if there exists an Iwahori refinement $\tilde\pi$ above $\tilde\pi^P$, and a neighbourhood $\Omega \subset \sW_{0,\lambda_\pi}^P$ of $\lambda_\pi$, such that the  natural specialisation map
\begin{equation}\label{eq:rlambda}
	r_{\tilde\pi} : \hc{t}(S_{K_B},\sD_{\Omega}) \xrightarrow{\ r_{\lambda_\pi}\ } \hc{t}(S_{K_B},\sV_{\lambda_\pi}^\vee) \twoheadrightarrow \hc{t}(S_{K_B},\sV_{\lambda_\pi}^\vee)_{\tilde\pi}
\end{equation}
(induced by $r_{\lambda_\pi} : \sD_\Omega \xrightarrow{\mod \m_{\lambda_\pi}} \sD_{\lambda_\pi} \twoheadrightarrow \sV_{\lambda_\pi}^\vee$, and then projection onto the $\tilde\pi$-eigenspace) is surjective. This implies that the $\tilde\pi$-localisation in $\hc{t}(S_{K_B},\sD_\Omega)$ is non-zero, and hence there is a point in the top degree  eigenvariety corresponding to $\tilde\pi$.

\item We say $\tilde\pi^P$ is \emph{$P$-strongly-interior} if the $P$-parahoric boundary overconvergent cohomology vanishes  $\h^\bullet_\partial(S_{K_P},\sD_{\lambda_\pi}^P)_{\tilde\pi^P} = 0$ (see Def.\ 5.13 \emph{op.\ cit}.). 
\end{itemize}

\begin{theorem}\label{thm:p-refined FJ}
Suppose $\pi$ has regular weight, that $\tilde\pi^P$ appears in the top degree eigenvariety, and that $\tilde\pi^P$ is $P$-strongly-interior. Then (1$'$) $\Rightarrow$ (2$'$) holds in Conjecture \ref{conj:FJ}.
\end{theorem}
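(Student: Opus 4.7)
My plan is to apply Theorem~\ref{thm:FJ} to dispose of the easy parts of~(2), then to use the top-degree eigenvariety and $P$-strong-interiority hypotheses to build a classical symplectic family through an Iwahori lift $\tilde\pi$ of $\tilde\pi^P$ supported over the whole $P$-parabolic weight space $\sW_{0,\lambda_\pi}^P$, and finally to invoke Theorem~\ref{thm:shalika obstruction} to force $\Pmin\subset P$.

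First, since $ut_P^\beta\cdot\varphi \in \pi$, the classical Friedberg--Jacquet theorem (Theorem~\ref{thm:FJ}) applied to this vector immediately yields that $\pi$ is a functorial transfer of some RACAR $\Pi$ on $\mathrm{GSpin}_{2n+1}(\A)$ with central character $\eta$, and that $L(\pi\times\chi,s+1/2)\neq 0$. To upgrade ``$\pi$ is a transfer'' to ``$\tilde\pi^P$ is a transfer'', let $\tilde\pi$ be the Iwahori refinement above $\tilde\pi^P$ from the definition of ``appears in the top degree eigenvariety'' and let $\Omega\subset\sW_{0,\lambda_\pi}^P$ be the corresponding affinoid neighbourhood. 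Surjectivity of $r_{\tilde\pi}$ in \eqref{eq:rlambda} produces an eigenclass $\Phi_\Omega$ in the $\tilde\pi$-localisation of $\hc{t}(S_{K_B},\sD_\Omega)$ lifting the classical $\tilde\pi$-eigenclass at $\lambda_\pi$, while $P$-strong-interiority guarantees that the $P$-parahoric boundary contributions do not pollute the Friedberg--Jacquet evaluation-map machinery of \cite[\S7]{BDW20}, as generalised to an arbitrary spin parabolic in \cite{BDGJW,Wil-2n}. Feeding $\Phi_\Omega$ through these evaluations against the fixed character $\chi$ produces a rigid-analytic function $\mathcal{L}_\chi$ on $\Omega$ whose interpolation formula at $\lambda_\pi$ identifies $\mathcal{L}_\chi(\lambda_\pi)$ with $Z_H(ut_P^\beta\varphi,\chi,s+1/2)$ up to an explicit non-zero local factor, so $\mathcal{L}_\chi(\lambda_\pi)\neq 0$ by~(1), and $\mathcal{L}_\chi$ does not vanish identically on $\Omega$.

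Consequently $\mathcal{L}_\chi(\lambda)\neq 0$ for a Zariski-dense set of classical $\lambda\in\Omega$, and at each such $\lambda$ the interpolation formula identifies this value with a non-vanishing classical Friedberg--Jacquet period for a $p$-refined cuspidal $\tilde\pi_\lambda$ of weight $\lambda$. A second application of Theorem~\ref{thm:FJ}, now at each $\lambda$, forces $\pi_\lambda$ to be a symplectic transfer. This produces a classical symplectic family $\sC\subset\sS^G_{K_B}$ through $\tilde\pi$ whose weight image $w(\sC)$ contains a Zariski-dense subset of the affinoid $\Omega$, hence contains $\Omega$ itself. By Theorem~\ref{thm:shalika obstruction}, $w(\sC)\subset\sW_{0,\lambda_\pi}^{\Pmin}$; since $\Omega$ is a non-empty open of the affine space $\sW_{0,\lambda_\pi}^P$, this forces $\sW_{0,\lambda_\pi}^P\subset\sW_{0,\lambda_\pi}^{\Pmin}$, equivalently $\Pmin\subset P$. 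Thus $\tilde\pi$ is $P$-spin, so $\tilde\pi^P$ is $P$-spin, and by Proposition~\ref{prop:spin factor} it is the functorial transfer of some $\tilde\Pi^{\cP}$ with the required central character. The Panchishkin-type containment $P\subset Q$ should fall out of a purely local computation at $p$: the local Friedberg--Jacquet integral $Z_H(ut_P^\beta\varphi_p,\chi_p,s+1/2)$ at $p$ vanishes identically on $\tilde\pi_p^P$ whenever $P\not\subset Q$, so global non-vanishing forces this condition.

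The main obstacle will be to verify that the evaluation-map machinery of \cite{BDW20,BDGJW} works uniformly for an arbitrary spin parabolic $P$, and in particular to pin down the precise interpolation formula $\mathcal{L}_\chi(\lambda_\pi) = (\text{non-zero})\cdot Z_H(ut_P^\beta\varphi,\chi,s+1/2)$ that translates~(1) into non-vanishing of the $p$-adic object. Once this interpolation is in place, Zariski-density of classical regular points in $\Omega$ and Theorem~\ref{thm:shalika obstruction} deliver the remaining content of~(2).
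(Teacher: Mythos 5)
Your proposal follows essentially the same route as the paper: (a) use Corollary~\ref{cor:zeta vanishing} and the local reinterpretation to dispose of the easy parts of (2); (b) use the big evaluation map on overconvergent cohomology, the surjectivity of $r_{\tilde\pi}$, and $P$-strong-interiority to manufacture a classical symplectic family $\sC$ through $\tilde\pi$ supported over a full-dimensional $\Omega\subset\sW_{0,\lambda_\pi}^P$; (c) feed this into Theorem~\ref{thm:shalika obstruction} to force $\Pmin\subset P$; (d) conclude $P$-spin and hence functorial transfer via Proposition~\ref{prop:spin factor}. The key intermediate proposition (existence of the family, the paper's Proposition~\ref{prop:p-refined FJ}) and the final deduction match the paper's logic step for step.

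There is one place where you gloss over a genuine issue. You say that for Zariski-dense classical $\lambda\in\Omega$ with $\mathcal{L}_\chi(\lambda)\neq 0$, ``a second application of Theorem~\ref{thm:FJ}, now at each $\lambda$, forces $\pi_\lambda$ to be a symplectic transfer''. This presupposes that the eigensystem at $\lambda$ where the classical evaluation is non-zero comes from a \emph{cuspidal} automorphic representation; Theorem~\ref{thm:FJ} is only stated for RACARs, and the non-vanishing of a classical evaluation on $\hc{t}(S_{K_B},\sV_\lambda^\vee)$ by itself only tells you there is some finite-slope classical point, not that the class is cuspidal or interior. The paper handles this separately: it shows the family is classical via Lemma~\ref{lem:classical C} (non-vanishing of the specialisation forces a classical point to exist), and then establishes cuspidality and symplecticity in a final lemma by passing through the trace map to the $P$-parabolic eigenvariety and invoking the $P$-strong-interiority hypothesis together with \cite[Prop.~5.15]{BW20} and \cite[\S7.4]{BDW20}. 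Your characterisation of $P$-strong-interiority as ensuring ``boundary contributions do not pollute the evaluation-map machinery'' gestures in the right direction, but it is really used for this cuspidality step, not in the construction of $\cE^{j,\eta_0}_{\Omega,P,\chi}$ itself. Filling in this argument completes your proof without changing its structure.
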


\begin{remarks}\label{rem:p-refined FJ}
\begin{enumerate}[(i)]\s
\item We cautiously suggest the conditions on $\tilde\pi^P$ should hold for all $\tilde\pi^P$ (whence Conjecture \ref{conj:FJ}  would hold in full). Unconditionally, $\tilde\pi^P$ is $P$-strongly-interior if it is non-$P$-critical slope; see \cite[Lem.\ 5.14]{BW20}. It appears in the top-degree eigenvariety if there exists a non-$B$-critical Iwahori-refinement $\tilde\pi$ above $\tilde\pi^P$ (see \cite[Def.\ 4.1]{BW20} and \cite[Prop.\ 7.8]{BDW20}).  Hence Theorem \ref{thm:p-refined FJ} implies Theorem \ref{thm:intro 2}(ii) from the introduction.
	
\item When $P=B$, this proves \cite[Expectation 7.2]{BDGJW} for $\tilde\pi$ satisfying the conditions of Theorem \ref{thm:p-refined FJ}, thus for non-critical slope $\tilde\pi$ of regular weight (see Remark 7.3 \emph{op.\ cit}.). 
\end{enumerate}
\end{remarks}

\section{Proof of Theorem \ref{thm:2 implies 1}}\label{sec:proof one direction}

In this section, we give the proof of Theorem \ref{thm:2 implies 1} (that $(2) \Rightarrow (1)$ in Conjecture \ref{conj:local FJ}). Our proof is constructive; if (2) is satisfied, we describe explicitly an eigenvector with non-vanishing local zeta integral. If $P = B$ or the $(n,n)$-parabolic $Q$, then Theorem \ref{thm:2 implies 1} was proved in \cite[Cor.\ 7.15]{BDGJW} and  \cite[Prop.\ 3.4, Lem.\ 3.6]{DJR18} respectively.

Our proof for general $P$ is closely modelled on the approach in \cite{BDGJW}, and we refer to specific places \emph{op.\ cit}.\ for more detail. Recall $\cS_{\psi_p}^{\eta_p}$ is an intertwining of $\pi_p$ into its Shalika model, and for any $\varphi_p \in \pi_p$, we let $W_{\varphi_p} \defeq \cS_{\psi_p}^{\eta_p}(\varphi_p)$. Then we:
\begin{itemize}\s
	\item[(i)] Express $\zeta_p(ut_P^\beta \cdot \varphi_p, \chi_p, s+1/2)$ as a non-zero multiple of a value of $W_{\varphi_p}$;
	\item[(ii)] Show that if $P \subset Q$ and $\tilde\pi_p^P$ is a $P$-spin refinement, there exists $\varphi_p \in \tilde\pi_p^P$ where this specific value of $W_{\varphi_p}$ is non-zero.
\end{itemize}

\subsection{The local zeta integral}

\begin{proposition}\label{prop:zeta vanishing} 
	Let $\varphi_p \in \pi_p^{\Iw}$, and let $W_{\varphi_p} = \cS_{\psi_p}^{\eta_p}(\varphi_p)$. Let $\chi_p$ be a character of conductor $p^\beta >1$. Let $t = \smallmatrd{z_1}{}{}{z_2} \in T(\Qp)$, and 
	\[
	\nu_\beta(t) \defeq p^{-\beta} z_2^{-1} w_n z_1.
	\]
	Then for all $s$,
	\begin{equation}\label{eq:zeta vanishing}
		\Big[\zeta_p(u^{} t \cdot \varphi_p, \chi_p, s) \neq 0\Big] \iff \Big[W_{\varphi_p}\smallmatrd{\nu_\beta(t)}{}{}{1} \neq 0\Big].
	\end{equation}
\end{proposition}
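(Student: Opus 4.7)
The plan is to establish the stronger quantitative identity: there exists a nonzero scalar $C = C(t,\chi_p,s) \in \C^\times$ such that
\[
\zeta_p(ut\cdot\varphi_p,\chi_p,s) = C\cdot W_{\varphi_p}\smallmatrd{\nu_\beta(t)}{}{}{1},
\]
from which the equivalence \eqref{eq:zeta vanishing} is immediate. The method follows \cite[Prop.\ 7.14]{BDGJW} (at full Iwahori level, for $t = t_B^\beta$) and \cite[Lem.\ 3.6]{DJR18} (at Klingen level, for $t = t_Q^\beta$), adapted uniformly to arbitrary $t \in T(\Qp)$.

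First I would write out the zeta integral, multiply out $\smallmatrd{y}{}{}{1}\cdot ut = \smallmatrd{yz_1}{-yw_nz_2}{0}{z_2}$, and exhibit the Shalika factorisation
\[
\smallmatrd{yz_1}{-yw_nz_2}{0}{z_2} = \smallmatrd{z_2}{}{}{z_2}\smallmatrd{I_n}{-y'z_1^{-1}w_nz_2}{0}{I_n}\smallmatrd{y'}{}{}{1}, \qquad y' \defeq z_2^{-1}yz_1.
\]
The product of the first two factors is a Shalika element $\smallmatrd{g}{gX}{}{g}$ with $g = z_2$ and $X = -y'z_1^{-1}w_nz_2$. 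Applying the Shalika equivariance $W_{\varphi_p}(\smallmatrd{g}{gX}{}{g}\cdot x) = \eta_p(\det g)\psi_p(\tr X)W_{\varphi_p}(x)$, together with the bi-invariant substitution $y \mapsto z_2yz_1^{-1}$, converts the zeta integral into
\[
A\cdot\int_{\GL_n(\Qp)} W_{\varphi_p}\smallmatrd{y}{}{}{1}\,\psi_p(-\tr(yz_1^{-1}w_nz_2))\,\chi_p(\det y)|\det y|^{s-1/2}\,d^\times y,
\]
with $A = \eta_p(\det z_2)\chi_p(\det(z_2z_1^{-1}))|\det(z_2z_1^{-1})|^{s-1/2} \in \C^\times$.

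Next I would exploit the key algebraic identity $\nu_\beta(t)\cdot z_1^{-1}w_nz_2 = p^{-\beta}I_n$ (immediate from $w_n^2 = I_n$ together with diagonality of $z_1, z_2$): the substitution $y = v\nu_\beta(t)$ converts $\psi_p(-\tr(yz_1^{-1}w_nz_2))$ into $\psi_p(-p^{-\beta}\tr v)$, and reduces the proposition to showing
\[
J \defeq \int_{\GL_n(\Qp)} W_{\varphi_p}\smallmatrd{v\nu_\beta(t)}{}{}{1}\,\psi_p(-p^{-\beta}\tr v)\,\chi_p(\det v)|\det v|^{s-1/2}\,d^\times v \;=\; C_0 \cdot W_{\varphi_p}\smallmatrd{\nu_\beta(t)}{}{}{1}
\]
for some nonzero $C_0$ independent of $t$.

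This final step is the main obstacle: it is a purely local statement about integrating an Iwahori-fixed Shalika vector against a highly ramified additive/multiplicative twist. The strategy is to decompose $v$ via an Iwasawa decomposition $v = n_vd_vk_v$ with $n_v$ upper-triangular unipotent, $d_v = \mathrm{diag}(d_1,\dots,d_n)$ and $k_v \in \GL_n(\Zp)$: the Whittaker equivariance of $W_{\varphi_p}$ under the unipotent radical (paired with $\psi_p$) collapses the $n_v$-integral to a volume factor; the right-$\Iw$-invariance of $\varphi_p$ collapses the $k_v$-integral to a finite sum over $\GL_n(\Zp)/\Iw_n$; and the remaining $d_v$-integral factorises into $n$ independent $\GL_1(\Qp)$ Gauss-sum integrals of the form $\int \chi_p(d_i)\psi_p(-p^{-\beta}d_i)|d_i|^{\ast}d^\times d_i$. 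Since $\chi_p$ has conductor exactly $p^\beta > 1$, each such integral is a nonzero Gauss sum localised to $d_i \in p^{-\beta}\Zp^\times$, and assembling the resulting nonzero factors produces the desired identity. The careful combinatorial matching of Gauss-sum conductors against the Iwahori structure is the only genuinely non-formal part of the argument; it is carried out in detail in the cited references for $P = B, Q$, and the version needed here follows by the same bookkeeping since our hypotheses — $\varphi_p \in \pi_p^{\Iw}$ and $\chi_p$ of conductor $p^\beta > 1$ — are exactly those used there.
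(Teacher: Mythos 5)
Your algebraic reductions---factoring $\smallmatrd{y}{}{}{1}ut$ through a Shalika element, and the change of variable built on the identity $\nu_\beta(t)\,z_1^{-1}w_nz_2 = p^{-\beta}I_n$---are correct and run parallel to the paper's opening moves; the proposition does reduce, there too, to a single local integral of the shape you call $J$.

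The sketched computation of $J$, however, does not work, and this is a genuine gap rather than routine bookkeeping. The central claim that ``the Whittaker equivariance of $W_{\varphi_p}$ under the unipotent radical (paired with $\psi_p$) collapses the $n_v$-integral'' fails on two counts. First, $\smallmatrd{n_v}{}{}{1}$ does not lie in the Shalika group $\{\smallmatrd{g}{gX}{}{g}\}$, so $W_{\varphi_p}$ has \emph{no} transformation law under left multiplication by it: the restriction $y\mapsto W_{\varphi_p}\smallmatrd{y}{}{}{1}$ is not a Whittaker function of $\GL_n$. Second, $\psi_p(-p^{-\beta}\tr v)$ is \emph{constant} on upper-triangular unipotents (since $\tr n_v = n$), so there is no nondegenerate character present to pair against. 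The remaining two steps inherit the trouble: in $W_{\varphi_p}\smallmatrd{n_vd_vk_v\nu_\beta(t)}{}{}{1}$ the compact piece $k_v$ sits between $d_v$ and $\nu_\beta(t)$ rather than on the far right, so right Iwahori-invariance does not directly apply; and $\tr(n_vd_vk_v)$ has cross terms, so it does not split as a sum of one-variable expressions in $d_1,\dots,d_n$, and no factorisation into independent $\GL_1$ Gauss sums results.

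The argument that actually works---and is what the paper and the references you cite do---uses the \emph{additive} structure of the Iwahori, not the multiplicative Iwasawa decomposition. One writes $\alpha \in \operatorname{Iw}_n(p^\beta)$ as $\alpha = \gamma + \delta$ with $\gamma$ diagonal in $(\Zp^\times)^n$ and $\delta$ strictly-upper plus $p^\beta$-lower; Iwahori-invariance of $\varphi_p$ together with the conductor $p^\beta$ of $\chi_p$ gives $I(y\alpha^{-1}) = \chi_p(\det\gamma)^{-1}I(y)$, and after substituting $x = y\alpha^{-1}$ the decisive point is that $\tr(x\alpha) = \tr(x\gamma) + \tr(x\delta)$, because trace is additive in $\alpha$. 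Averaging over $\delta$ (character orthogonality of $\psi_p(\tr(x\delta))$) restricts the support, and averaging over $\gamma$ produces the genuine product of $n$ independent $\GL_1$ Gauss sums $\int_{\Zp^\times}\chi_p(c)\psi_p(x_{ii}c)\,d^\times c$, each nonzero exactly when $x_{ii}\in p^{-\beta}\Zp^\times$ (using that the conductor is exactly $p^\beta > 1$), pinning the support to $p^{-\beta}\operatorname{Iw}_n(p^\beta)$. Iwahori-invariance then collapses the integral to $W_{\varphi_p}\smallmatrd{\nu_\beta(t)}{}{}{1}$ times a volume, and a short case split on whether $\nu_\beta(t) \in M_n(\Zp)$ finishes the equivalence. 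This additive averaging is the idea missing from your sketch.
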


\begin{proof}
	By equivariance of the Shalika intertwining, for any $g \in G(\Qp)$ we have 
		\[
		W_{ut \cdot \varphi_p}(g) = \cS_{\psi_p}^{\eta_p}(ut \cdot \varphi_p)(g) = \Big[ut \cdot \cS_{\psi_p}^{\eta_p}(\varphi_p)\Big](g) = W_{\varphi_p}\left(gut\right).
		\]
		In particular, setting $g = \smallmatrd{x}{}{}{1}$, by definition we have
		\[
		\zeta_p(ut\cdot \varphi_p,\chi_p,s) = \int_{\GL_n(\Qp)}W_{\varphi_p}\left[\matrd{x}{}{}{1}ut\right] \chi_p|\cdot|^{s-\tfrac{1}{2}}\big(\det x\big) dx.
		\] 
		Recalling $u = \smallmatrd{1}{-w_n}{0}{1}$, one can check that
\begin{equation}\label{eq:zeta rewrite}
\tbyt{x}{}{}{1} u^{} t = \tbyt{z_2}{}{}{z_2} \tbyt{1}{-z_2^{-1} x w_n z_2}{}{1} \tbyt{z_2^{-1} x z_1}{}{}{1}.
\end{equation}
By definition of the Shalika model, we have
\begin{align*}
W_{\varphi_p}\bigg[ \tbyt{z_2}{}{}{z_2} \tbyt{1}{-z_2^{-1} x w_n z_2}{}{1} &\tbyt{z_2^{-1} x z_1}{}{}{1}\bigg] =\\ &\eta_p(\det z_2)\psi_p\Big(\operatorname{tr}(-z_2^{-1} x w_n z_2) \Big) W_{\varphi_p}\tbyt{z_2^{-1} x z_1}{}{}{1}.
\end{align*}
In particular, combining this with \eqref{eq:zeta rewrite} shows
\[
\zeta_p(u^{}t \cdot \varphi_p, \chi_p, s) = \eta_p(\operatorname{det}z_2) \int_{\GL_n(\mathbb{Q}_p)} \psi_p\Big(\operatorname{tr}(-z_2^{-1} x w_n z_2) \Big) W_{\varphi_p}\tbyt{z_2^{-1} x z_1}{}{}{1} \chi_p|\cdot|^{s-\tfrac{1}{2}}\Big(\operatorname{det}x\Big) dx.
\]
Let $y = -z_1^{-1}w_n x z_1$, and let $\omega = -z_2^{-1}w_n z_1 = -p^{\beta}\nu_{\beta}(t)$. As $\mathrm{tr}(-z_2^{-1}xw_nz_2) = \mathrm{tr}(y)$, changing variables and noting $dx = dy$, we see
\begin{equation}\label{eq:zeta Q 1}
\zeta_p(u^{} t \cdot \varphi_p, \chi_p, s) = (\star) \cdot \cQ, \qquad (\star) \neq 0,
\end{equation}
where we define
\[
\cQ \defeq \int_{\GL_n(\mathbb{Q}_p)} \psi_p( \operatorname{tr}(y) ) I(\omega y) dy,
\]
for $I$ the function $\GL_n(\Qp) \to \C$ defined by
\[
	I(y) = W_{\varphi_p}\matrd{y}{}{}{1} \chi_p|\cdot|^{s-\tfrac12}(\operatorname{det} y).
\]

By \eqref{eq:zeta Q 1}, to prove \eqref{eq:zeta vanishing} it suffices to prove 
\[
	\cQ \neq 0 \iff W_{\varphi_p}\smallmatrd{\nu_\beta(t)}{}{}{1} \neq 0.
\]

We want to reduce the support of the integral $\cQ$. Let $M = \GL_n(\mathbb{Q}_p) \cap M_{n}(\mathbb{Z}_p)$. By \cite[Lem.\ 5.1]{BDGJW}, the support of $I(\omega y)$ (hence $\cQ$) is contained in $\omega^{-1} M$.

As in \cite[Not.\ 5.3]{BDGJW}, let $A$ denote the set of all diagonal $n \times n$-matrices of the form
		\[
		\gamma = \operatorname{diag}(c_{11}, \dots, c_{nn}), \qquad c_{ii} \in \Z_p^\times.
		\]
Let $B_{\beta}$ denote the additive group of all $n \times n$-matrices $\delta$ with
		\[
		\delta_{i, j} = \left\{ \begin{array}{cc} c_{i, j} & \text{ if } i < j \\ 0 & \text{ if } i = j \\ p^{\beta} c_{i, j} & \text{ if } i > j \end{array} \right., \qquad c_{ij} \in \Zp.
		\]
Let $\alpha = \gamma + \delta$, with $\gamma \in A$, $\delta \in B_{\beta}$. Note that $\det(\alpha) = \det(\gamma)$, that $|\det \alpha| = 1$, and that $\alpha \in \mathrm{Iw}_n(p^\beta)$ is in the depth $p^\beta$ Iwahori subgroup of $\GL_n(\Zp)$; in particular, 
\begin{align*}
I(y\alpha^{-1}) &= W_{\varphi_p}\left[\matrd{y}{}{}{1}\matrd{\alpha^{-1}}{}{}{1}\right]\chi_p|\cdot|^{s-\tfrac12}(\det (y\alpha^{-1}))\\ &= \chi_p(\det \alpha^{-1}) W_{\varphi_p}\matrd{y}{}{}{1}\chi_p|\cdot|^{s-\tfrac12}(\det y) = \chi_p(\det \gamma^{-1})I(y).
\end{align*}
Thus for any $\alpha = \gamma+\delta \in A+B_\beta$, we have
\begin{align}
\cQ = \int_{\omega^{-1} M} \psi_p\big( \operatorname{tr}(y) \big) I\big(\omega y\big) dy &= \chi_p(\operatorname{det} \gamma) \int_{\omega^{-1}M} \psi_p\Big( \operatorname{tr}(y) \Big) I\Big(\omega y \alpha^{-1}\Big) dy \notag\\
&= \chi_p(\det \gamma)\int_{\omega^{-1} M} \psi_p\Big(\operatorname{tr}(x \gamma)\Big) \psi_p\Big(\operatorname{tr}(x \delta)\Big) I\big(\omega x\big) dx, \label{eq:Q via alpha}
\end{align}
where we make the change of variables $x = y\alpha^{-1}$. If $x \in \GL_n(\Qp)$, then for each $\delta \in B_\beta$, we have
	\[
	\psi_p(\mathrm{tr}(x\delta)) = \prod_{i>j}\psi_p(x_{i,j}c_{j,i}) \cdot \prod_{i<j} \psi_p(x_{i,j}c_{j,i}p^\beta).
	\]
	From this, we see that:
	\begin{itemize}
		\item[(1)] For a fixed $x \in \GL_n(\Qp)$, the function
		\[ B_\beta \longrightarrow \C, \qquad \delta \longmapsto \psi_p(\operatorname{tr}(x \delta))\]
		is the trivial function if and only if
\begin{equation}\label{eq:Mbeta'}
x_{i, j} \in \left\{ \begin{array}{cc} p^{-\beta} \mathbb{Z}_p & \text{ if } i < j \\ \mathbb{Z}_p & \text{ if } i > j \end{array} \right..
\end{equation}
Let $M_\beta'$ be the subset of $x \in \GL_n(\mathbb{Q}_p)$ satisfying \eqref{eq:Mbeta'}. 

\item[(2)] For a fixed $\delta$ with every $c_{i,j}$ sufficiently divisible by $p$, then $\psi_p(\operatorname{tr}(x \delta)) = 1$  for all $x \in \omega^{-1}M$. Denote the subset of such $\delta \in B_\beta$ by $B_\beta'$, noting it has finite index in $B_\beta$.
\end{itemize}

Since \eqref{eq:Q via alpha} holds for any $\gamma+\delta \in A+B_\beta$, we can average over $B_\beta$ and use character orthogonality, as in \cite[Cor.\ 5.5]{BDGJW}. The right-hand side in \eqref{eq:Q via alpha} depends on $\delta \in B_\beta$ only up to $B_\beta'$, so for any fixed $\gamma \in A$ we have
\begin{align*}
	\cQ &= \chi_p(\det \gamma) \tfrac{1}{[B_\beta:B_\beta']} \sum_{\delta \in B_\beta/B_\beta'} \int_{\omega^{-1} M} \psi_p\Big(\operatorname{tr}(x \gamma)\Big) \psi_p\Big(\operatorname{tr}(x \delta)\Big) I\big(\omega x\big) dx\\
	&= \chi_p(\det \gamma)  \int_{\omega^{-1} M} \psi_p\Big(\operatorname{tr}(x \gamma)\Big)  I\big(\omega x\big)\bigg[\tfrac{1}{[B_\beta:B_\beta']} \sum_{\delta \in B_\beta/B_\beta'}\psi_p\Big(\operatorname{tr}(x \delta))\Big)\bigg] dx\\
	&= \chi_p(\det \gamma)  \int_{\omega^{-1} M \cap M_\beta'} \psi_p\Big(\operatorname{tr}(x \gamma)\Big)  I\big(\omega x\big) dx,
\end{align*}
using character orthogonality and observation (1) above in the last step.

Now we average over $\gamma \in A$. By the expression above, we have
	\begin{align*}
		\cQ &= \vol(A)^{-1}\int_{A}  \chi_p(\det \gamma)  \left[\int_{\omega^{-1} M \cap M_\beta'} \psi_p\Big(\operatorname{tr}(x \gamma)\Big)  I\big(\omega x\big) dx\right] d\gamma\\
		& = \vol(A)^{-1} \int_{\omega^{-1} M \cap M_\beta'} \left[\int_{A}  \chi_p(\det \gamma) \psi_p\Big(\operatorname{tr}(x \gamma)\Big)d\gamma\right]  I\big(\omega x\big) dx.
	\end{align*}
  We have
\[
\chi_p(\operatorname{det}\gamma) \psi_p(\operatorname{tr}(x\gamma)) = \prod_{i=1}^n \chi_p(c_{i, i}) \psi_p( x_{i, i} c_{i, i} ).
\]
Since $A = (\Zp^\times)^n$,  we then have
\begin{equation}\label{eq:gauss sum}
	\int_{A} \chi_p(\operatorname{det}\gamma) \psi_p\Big(\operatorname{tr}(x\gamma)\Big) d^{\times}\gamma = \prod_{i=1}^n\chi_p(p^\beta x_{i,i})^{-1} \int_{\Zp^\times}\chi_p\big(p^\beta x_{i,i}c_{i,i}\big)\psi_p\big(x_{i,i}c_{i,i}\big)dc_{i,i}.
\end{equation}
Recalling $\chi_p$ has conductor $p^\beta > 1$, it is standard that each integral in the right-hand product is zero unless $x_{i,i} \in p^{-\beta}\Zp^\times$; and in this case, the integral is an explicit, non-zero multiple of the (non-zero) Gauss sum $\tau(\chi)$. Hence when each $x_{i,i} \in p^{-\beta}\Zp^\times$, the equation \eqref{eq:gauss sum} has the form $(\star') \prod_{i=1}^n \chi_p(p^\beta x_{i, i})^{-1}$, with $(\star') \neq 0$ an explicit scalar depending only on $\chi, p$ and $\beta$.

Let $M_{\beta}'' \subset M_{\beta}'$ be the subset of $x \in M_\beta'$ where $x_{i,i} \in p^{-\beta}\Zp^\times$.  Note that $M_{\beta}'' = p^{-\beta} \operatorname{Iw}_n(p^{\beta})$. 
\[
\cQ = (\star'') \int_{\omega^{-1}M \cap M_{\beta}''} \prod_{i=1}^n \chi_p(p^{\beta} x_{i, i})^{-1} \cdot I(\omega x) dx, \qquad (\star'') \neq 0.
\]
Write $x' = p^{\beta} x$ for $x \in \omega^{-1}M \cap M_{\beta}''$. Then $\chi_p(\operatorname{det}x') = \prod_{i=1}^n \chi_p(p^{\beta} x_{i, i})$, as $\chi_p$ has conductor $p^{\beta}$. If $\nu = \nu_\beta(t) = -p^{-\beta} \omega,$ then we find
\begin{align}
	\cQ &= (\star'') \int_{\omega^{-1}M \cap M_\beta''} \chi_p(\det p^\beta x)^{-1} I(-p^\beta x) dx = \int_{\nu^{-1}M \cap \mathrm{Iw}(p^\beta)} \chi(\det x')^{-1} I(-\nu x') dx'\notag\\
&= (\star'') \int_{\nu^{-1}M \cap \operatorname{Iw}_n(p^{\beta})} \chi_p(\det x')^{-1} W_{\varphi_p}\matrd{-\nu x'}{}{}{1} \chi_p|\cdot|^{s-\tfrac{1}{2}}(\det -\nu x') dx' \notag \\ 
&= (\star'')\chi_p|\cdot|^{s-\tfrac{1}{2}}(\det -\nu)\int_{\nu^{-1}M \cap \operatorname{Iw}_n(p^{\beta})} W_{\varphi_p}\matrd{\nu}{}{}{1} dx' \notag \\
&=  (\star''') \operatorname{Vol}(\nu^{-1}M \cap \operatorname{Iw}_n(p^{\beta}) ) \cdot W_{\varphi_p}\matrd{\nu}{}{}{1},\label{eq:zeta vanish 2}
\end{align}
where $(\star''') \neq 0$ depends only on $\chi$, $t$, $p$, and $s$. In the penultimate equality we use Iwahori-invariance of $W_{\varphi_p}$. 

We consider two cases:
\begin{enumerate}[(1)]\s
	\item If $\nu \not\in M$, then $W_{\varphi_p}\smallmatrd{\nu}{}{}{1} = 0$, thus $\cQ = 0$. In particular, both sides of \eqref{eq:zeta vanishing} are 0, so Proposition \ref{prop:zeta vanishing} holds.
	\item If $\nu \in M$, then $\nu \operatorname{Iw}_n(p^{\beta})$ is a compact open subset of $\GL_n(\mathbb{Q}_p)$, and it is contained in $M$. This means $\operatorname{Iw}_n(p^{\beta}) \subset \nu^{-1}M$, so the volume above is $\operatorname{Vol}(\operatorname{Iw}_n(p^{\beta}))$ which is non-zero. Then Proposition \ref{prop:zeta vanishing} follows from \eqref{eq:zeta vanish 2}. \qedhere
\end{enumerate}
\end{proof}

\begin{corollary}\label{cor:non-vanishing for s}
	If $\zeta_p(ut\cdot\varphi_p,\chi_p,s_0) \neq 0$ for some $s_0 \in \C$, then $\zeta_p(ut\cdot \varphi_p,\chi_p,s) \neq 0$ for \emph{all} $s \in \C$.
\end{corollary}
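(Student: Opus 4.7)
The plan is simply to observe that Proposition \ref{prop:zeta vanishing} already does all the work. That proposition establishes the equivalence
\[
\Big[\zeta_p(ut\cdot\varphi_p,\chi_p,s) \neq 0\Big] \iff \Big[W_{\varphi_p}\smallmatrd{\nu_\beta(t)}{}{}{1} \neq 0\Big],
\]
and the crucial point is that the right-hand side does not depend on $s$ at all: the value $W_{\varphi_p}\smallmatrd{\nu_\beta(t)}{}{}{1}$ is just a complex number attached to $\varphi_p$ and $t$, with no $s$-dependence whatsoever.

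Therefore my proof of the corollary would be a one-line deduction: if $\zeta_p(ut\cdot\varphi_p,\chi_p,s_0) \neq 0$ for some $s_0$, then by Proposition \ref{prop:zeta vanishing} applied at $s=s_0$ we have $W_{\varphi_p}\smallmatrd{\nu_\beta(t)}{}{}{1} \neq 0$; applying Proposition \ref{prop:zeta vanishing} in the opposite direction, now for an arbitrary $s \in \C$, we conclude $\zeta_p(ut\cdot\varphi_p,\chi_p,s) \neq 0$ for all $s$.

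There is no real obstacle here; the entire content has been absorbed into Proposition \ref{prop:zeta vanishing}. The only thing worth double-checking is that the chain of manipulations in the proof of that proposition (changes of variables, averaging over $A$ and $B_\beta$, the Gauss-sum type identity, and the final volume computation) produce a nonzero scalar $(\star''')$ that may depend on $\chi, t, p, s$ but is never zero for any $s \in \C$, which is exactly what is stated there. Hence the $s$-dependence of $\zeta_p(ut\cdot\varphi_p,\chi_p,s)$ is concentrated entirely in a nonvanishing prefactor times a value of $W_{\varphi_p}$ that is independent of $s$, and the corollary follows immediately.
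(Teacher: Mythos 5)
Your proposal is correct and is exactly the paper's argument: the paper's one-line proof reads ``Non-vanishing of $W_{\varphi_p}\smallmatrd{\nu_\beta(t)}{}{}{1}$ is independent of $s$,'' which is precisely your observation that Proposition~\ref{prop:zeta vanishing} reduces the claim to an $s$-independent quantity.
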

\begin{proof}
	Non-vanishing of $W_{\varphi_p}\smallmatrd{\nu_\beta(t)}{}{}{1}$ is independent of $s$.
\end{proof}

\begin{corollary}\label{cor:zeta vanishing}
	If $P$ is a spin parabolic and $P$ is not contained in the $(n,n)$-parabolic, then for all $\varphi_p \in \pi_p^{\Iw}$    and $s \in \C$, we have
	\[
		\zeta_p(ut_P^\beta \cdot \varphi_p, \chi_p,s) = 0.
	\]
\end{corollary}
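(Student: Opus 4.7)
The plan is to apply Proposition~\ref{prop:zeta vanishing} with $t = t_P^\beta$, which gives $\zeta_p(ut_P^\beta\cdot\varphi_p,\chi_p,s) \neq 0$ if and only if $W_{\varphi_p}\smallmatrd{\nu}{}{}{1}\neq 0$, where $\nu \defeq \nu_\beta(t_P^\beta) = p^{-\beta} z_2^{-1} w_n z_1$ and $t_P^\beta = \smallmatrd{z_1}{}{}{z_2}$ is the block decomposition into $n \times n$ blocks. As observed in Case~(1) of the proof of that proposition (an instance of \cite[Lem.\ 5.1]{BDGJW}), the support of $y \mapsto W_{\varphi_p}\smallmatrd{y}{}{}{1}$ is contained in $M = \GL_n(\Qp) \cap M_n(\Zp)$. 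It therefore suffices, locally, to exhibit an entry of $\nu$ that does not lie in $\Zp$.

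\textbf{Combinatorial reduction.} Write $P$ as the $(m_1,\ldots,m_r)$-spin parabolic, so by Notation~\ref{not:(ni)-spin} the tuple is symmetric: $m_i = m_{r+1-i}$. If $r=2s$ were even, the partial sum $\sum_{i \leq s} m_i$ would equal $n$, forcing $P \subset Q$ and contradicting the hypothesis. Hence $r = 2k-1$ is odd, and the identity $\sum_i m_i = 2n$ then forces $m_k$ to be even. The ``middle block'' of $t_P^\beta$, of diagonal exponent $p^{\beta(k-1)}$, thus occupies positions $n - m_k/2 + 1$ through $n + m_k/2$ and straddles position $n$ with exactly $m_k/2$ diagonal entries falling into each of $z_1$ and $z_2$.

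\textbf{The key entry.} Since $t_P^\beta$ is diagonal, $\nu$ is antidiagonal with
\[
\nu_{i,\,n+1-i} \;=\; p^{-\beta}\,(z_1)_{n+1-i,\,n+1-i}\,(z_2)_{i,i}^{-1}.
\]
For $1 \leq i \leq m_k/2$, the position $n+1-i$ lies in the left half of the middle block (inside $z_1$) and position $i$ lies in its right half (inside $z_2$), so both diagonal entries equal $p^{\beta(k-1)}$, giving $\nu_{i, n+1-i} = p^{-\beta}$. As $\beta \geq 1$, this entry is not in $\Zp$, so $\nu \notin M$ and the local zeta integral vanishes for every $\varphi_p \in \pi_p^{\Iw}$ and every $s$. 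The global vanishing follows at once: if $\pi$ is not symplectic, $Z_H$ is identically zero on $\pi$ by Theorem~\ref{thm:FJ}; otherwise $Z_H(ut_P^\beta \cdot \varphi, \chi, s)$ decomposes as an Eulerian product of local zeta integrals (as in \S\ref{sec:local reinterpretation}), whose factor at $p$ has just been shown to vanish.

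\textbf{Main obstacle.} The only delicate point is the combinatorial bookkeeping at the ``straddling'' middle block of $t_P^\beta$; once one observes that $P \not\subset Q$ forces $r$ to be odd (hence $m_k$ even), producing the specific entry $p^{-\beta}$ of $\nu$ is routine.
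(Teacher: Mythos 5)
Your proof is correct and takes essentially the same route as the paper: both apply Proposition \ref{prop:zeta vanishing} with $t = t_P^\beta$ and reduce to showing that the antidiagonal matrix $\nu_\beta(t_P^\beta)$ has an entry of the form $p^{-\beta} \notin \Zp$. The paper streamlines the bookkeeping by first using the symmetry identity $t_P = p^{k-1}w_{2n}t_P^{-1}w_{2n}$ (giving $\nu_\beta(t_P^\beta) = p^{-\beta k}w_n z_1^2$ and then comparing with $p^{2\beta\lfloor k/2\rfloor}$), while you locate the offending entry directly via the straddling middle block; these amount to the same valuation computation, and your explicit treatment of the global deduction (including the non-symplectic case) is a correct unpacking of the paper's terse reference to \S\ref{sec:local reinterpretation}.
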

\begin{proof}
	We apply Proposition \ref{prop:zeta vanishing} with $t = t_P^\beta$, which we write as $\smallmatrd{z_1}{}{}{z_2}$ as above.
	
	 Suppose $P$ has type $(n_1,...,n_k)$. As $P$ is spin, $(n_1,...,n_k)$ is symmetric, whence
	\begin{equation}\label{eq:t_P swap}
		t_P = p^{k-1}w_{2n} t_P^{-1} w_{2n}.
	\end{equation}
Equation \eqref{eq:t_P swap} implies that $z_2 = p^{\beta(k-1)} w_n z_1^{-1} w_n$. Thus, for $\nu_\beta(t_P^\beta)$ as above, we have
\begin{equation}\label{eq:v t_P}
\nu_\beta(t_P^\beta) = p^{-\beta} z_2^{-1} w_n z_1 = p^{-\beta k} w_n z_1^2. 
\end{equation}
Let $[k/2]$ be the floor of $k/2$. Then $p^{2\beta [k/2]}$ is the largest power of $p$ which divides $z_1^2$ (so that one remains in $M_n(\Zp)$). Hence $\nu_\beta(t_P^\beta) \in M_n(\Zp)$ if and only if $k$ is even. As $P$ is spin, this happens if and only if $P$ is contained in the $(n,n)$-parabolic. Since (by \cite[Lem.\ 5.1]{BDGJW}) the support of $W_{\varphi_p}\smallmatrd{y}{}{}{1}$ is in $M \subset M_n(\Zp)$, the statement follows by Proposition \ref{prop:zeta vanishing}. 
\end{proof}


\subsection{Non-vanishing for $P$-spin eigenvectors}

Let  $\tilde\pi^P_p$ be a $P$-spin $P$-refinement. Suppose $P \subset Q$, the $(n,n)$-parabolic. We now construct $\varphi_p \in \tilde\pi_p^P$ such that $W_{\varphi_p}\smallmatrd{\nu_\beta(t_P^\beta)}{}{}{1} \neq 0$.

\subsubsection{Explicit eigenvectors}
We first give eigenvectors in principal series representations, generalising \cite[\S7.1]{BDGJW}. Throughout $\pi_p = \Ind_B^G\UPS$ is irreducible with regular semisimple Satake parameter, with $\UPS$ spin in the sense of Definition \ref{def:spin}.

We recap (but slightly modify) some notation from \cite{BDGJW}. Let $\cW_n$ be the Weyl group of $\GL_n$.  From now on we always view Weyl elements of $\cW_G$ (resp.\ $\cW_n$) as elements of $G(\mathbb{Z}_p)$ (resp.\  $\GL_n(\mathbb{Z}_p)$).  Recall $w_n$ is the longest element in $\cW_n$, and  $\tau = \smallmatrd{1}{}{}{w_n} \in \cW_G$. 
\begin{itemize}\s
	\item For any $w,\nu \in \cW_G$, let $f^{\nu}_w \in \operatorname{Ind}_B^G\theta^\nu$ be the (unique) Iwahori-invariant function supported on $B(\mathbb{Q}_p) w \operatorname{Iw}_G$ with $f_w^{\nu}(w) = p^{n(n-1)}$.
	
	\item  For $\rho \in \cW_n$, let $w(\rho) = \smallmatrd{}{w_n}{\rho}{}$, and (noting the difference to \cite[Def.\ 7.6]{BDGJW}) let
	\[
	F^\nu_\rho = f^{\nu}_{w(\rho)} \in \Ind_B^G (\UPS^\nu).
	\]
\end{itemize}

The relevance of these vectors is captured by \cite[Prop.\ 7.4]{BDGJW}, where we showed:
\begin{proposition}
	 Let $\tilde\pi_{\nu} = (\pi,\alpha_\nu) \defeq \Psi_{\UPS}^{-1}(\nu)$. Then $f_{w_{2n}}^\nu = F_{w_n}^\nu \in \Ind_B^G \UPS^\nu$ is an Iwahori-invariant $\alpha_\nu$-eigenvector.
\end{proposition}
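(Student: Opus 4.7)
This statement is \cite[Prop.\ 7.4]{BDGJW}, whose proof has three ingredients I would reproduce. First, the identification $f_{w_{2n}}^\nu = F_{w_n}^\nu$ is immediate from the definitions: with $\rho = w_n$ we have $w(\rho) = \tbyt{}{w_n}{w_n}{} = w_{2n}$, the longest Weyl element in $\cW_G$. Iwahori-invariance is then built into the definition of $f_w^\nu$, as a function on $G(\Qp)$ right-invariant under $\Iw$ and supported on the single $(B(\Qp), \Iw)$-double coset $B(\Qp) w_{2n} \Iw$ with prescribed value at $w_{2n}$.

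The main content is the Hecke eigenvalue computation. The plan is to use the standard right-coset decomposition $\Iw \, t_{p,r} \, \Iw = \bigsqcup_u u \, t_{p,r} \, \Iw$, indexed by representatives $u$ of $\Iw / (\Iw \cap t_{p,r} \Iw t_{p,r}^{-1})$ (a finite set of unipotent elements), so that
\[
(U_{p,r} f_{w_{2n}}^\nu)(g) = \sum_u f_{w_{2n}}^\nu(g \, u \, t_{p,r}).
\]
The crucial observation at $g = w_{2n}$ is that the product $w_{2n} \, u \, t_{p,r}$ lies in the support $B(\Qp) \, w_{2n} \, \Iw$ only for the trivial coset $u = 1$; for any other $u$, an Iwahori--Bruhat computation (exploiting that $B(\Qp) w_{2n} \Iw$ is the \emph{smallest} Iwahori--Bruhat cell, since $w_{2n}$ is longest) shows that $w_{2n} u t_{p,r}$ is forced into the opposite cell $B(\Qp) \, \Iw$, hence contributes zero.

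The surviving term $f_{w_{2n}}^\nu(w_{2n} t_{p,r})$ is evaluated by writing $w_{2n} t_{p,r} = b \cdot w_{2n}$ for $b = w_{2n} t_{p,r} w_{2n}^{-1} \in T(\Qp)$, which is diagonal since $w_{2n}$ normalizes $T$. Applying the defining transformation law of $f_{w_{2n}}^\nu$ under left $B(\Qp)$-translation, and using the standard identities $\delta_B^{1/2}(w_{2n} t w_{2n}^{-1}) = \delta_B^{-1/2}(t)$ and $\UPS^\nu(w_{2n} t w_{2n}^{-1}) = (\UPS^\nu)^{w_{2n}}(t)$, the eigenvalue emerges as a product of $\delta_B^{-1/2}(t_{p,r})$ and a Weyl-twisted evaluation of $\UPS^\nu$ on $t_{p,r}$. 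Matching this against the formula in Proposition \ref{prop:p-refinement}(i) identifies it with $\alpha_\nu(U_{p,r})$ once one accounts for the convention on $\Psi_\UPS$ used in \cite[\S7]{BDGJW} (which, as recalled in Remark \ref{rem:change of UPS}, differs from the present convention by a $\tau$-twist). The main obstacle is the explicit coset-vanishing step at $g = w_{2n}$: for general $r$ this requires a careful case-by-case analysis of how $w_{2n} u t_{p,r}$ sits in the Iwahori--Bruhat decomposition, although the longest-Weyl geometry makes every non-trivial $u$ fall cleanly into $B(\Qp)\Iw$.
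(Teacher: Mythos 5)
The paper gives no proof of this proposition: it is quoted verbatim as \cite[Prop.\ 7.4]{BDGJW}, so there is nothing in-house to compare your sketch against. Your reconstruction follows the natural route — decompose $\Iw t_{p,r}\Iw$ into right cosets, argue that only the trivial coset representative keeps $w_{2n}\cdot u\cdot t_{p,r}$ in the support $B(\Qp)w_{2n}\Iw$, and read off the eigenvalue from the surviving term — but two points would need repair before this could serve as an actual proof.

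First, your cell-geometry reasoning is inverted. Since $w_{2n}$ is the \emph{longest} Weyl element, $B(\Qp)w_{2n}\Iw$ is the \emph{largest} cell (it meets $G(\Zp)$ in $\Iw\, w_{2n}\,\Iw$, whose volume is $p^{\ell(w_{2n})}$ times that of $\Iw$), and $B(\Qp)\Iw$ is the smallest — you have this backwards, so the inference "therefore $w_{2n}u t_{p,r}$ is forced into the opposite cell $B(\Qp)\Iw$" is not supported and is in fact false for $n\geq 2$ (the nontrivial cosets scatter across intermediate cells). The correct and sufficient claim, which can be proved directly rather than by cell-size heuristics, is only that $w_{2n}\,u\,t_{p,r}\notin B(\Qp)w_{2n}\Iw$ for $u\neq 1$: write $w_{2n}\,u\,t_{p,r}=\bar{u}\,t_{p,r}'\,w_{2n}$ with $\bar{u}=w_{2n}uw_{2n}^{-1}\in\overline{N}(\Zp)$ and $t_{p,r}'=w_{2n}t_{p,r}w_{2n}^{-1}$, and observe that membership in $B(\Qp)w_{2n}\Iw=B(\Qp)\overline{N}(\Zp)w_{2n}$ forces $t_{p,r}'^{-1}\bar{u}\,t_{p,r}'\in\overline{N}(\Zp)$, which (given the block structure of $\bar{u}$) pins $\bar{u}$ into $\Iw$, i.e.\ $u$ trivial in the quotient.

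Second, and more seriously, the eigenvalue computation — the entire content of the claim — is deferred to "once one accounts for the convention on $\Psi_{\UPS}$, which differs by a $\tau$-twist." This is not an accounting issue of that shape. Evaluating the surviving term gives
\[
(U_{p,r}f_{w_{2n}}^\nu)(w_{2n}) = (\delta_B^{1/2}\UPS^\nu)(w_{2n}\,t_{p,r}\,w_{2n}^{-1})\,f_{w_{2n}}^\nu(w_{2n}),
\]
and the diagonal element $w_{2n}t_{p,r}w_{2n}^{-1}$ produces a $w_{2n}$-twisted evaluation $(\UPS^\nu)^{w_{2n}}(t_{p,r})=\UPS^{\nu w_{2n}}(t_{p,r})$, together with $\delta_B^{1/2}(w_{2n}t_{p,r}w_{2n}^{-1})=\delta_B^{-1/2}(t_{p,r})$. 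Reconciling this with the formula $\alpha_\nu(U_{p,r})$ in Proposition~\ref{prop:p-refinement}(i) requires understanding precisely how the bijection $\Psi_\UPS$ from \cite[Lem.\ 4.8.4]{Che04} is normalised — whether a $w_{2n}$-twist is already built into it — and this is independent of the $\tau$-twist that merely translates between the "Asgari--Shahidi" and "spin" choices of $\UPS$ (replacing $\UPS$ by $\UPS^\tau$ does not alter the relationship between $\nu$ and $\nu w_{2n}$). Without carrying out this matching explicitly, the proof does not close; this is the genuine gap in your proposal.
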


We now define parahoric-level analogues. Recall $\cW_{L_P}$ is the Weyl group of the levi $L_P$ of $P$.  For $w \in \cW_G$, let $[w] \in \cW_G / \cW_{L_P}$ denote the corresponding coset.

Since $P \subset Q$, it is a $(k_1, \dots, k_r, k_r, \dots, k_1)$-parabolic for some $k_i$ with $k_1+\dots + k_r = n$. Let $\cW_{\mathbf{k}} \subset \cW_n$ denote the Weyl group associated with the Levi of the $(k_1, \dots, k_r)$-parabolic in $\GL_n$. For $\rho \in \cW_n$, let $[\rho]' \in \cW_n/\cW_{\mathbf{k}}$ denote the corresponding coset. 

\begin{itemize}	\s	
	\item For $w, \nu \in \cW_G$, let $h^{\nu}_{[w]} \in \operatorname{Ind}_B^G\theta^{\nu}$ denote the $J_P$-invariant function supported on $B(\mathbb{Q}_p) w J_P$ normalised so that $h^{\nu}_{[w]}(w) = p^{n(n-1)}$.  Writing $B(\Qp) w J_P$ as a union of sets of the form $B(\Qp) w' \operatorname{Iw}_G$,  we have
	\[
	h^{\nu}_{[w]} = \sum_{w' \in \cW_G, \ [w'] = [w]} f^{\nu}_{w'}.
	\]
	In particular, $h^{\nu}_{[w]} = h^{\nu}_{[w']}$ if $[w] = [w']$.

	\item For $\rho \in \cW_n$, we set
	\[
	H^{\nu}_{[\rho]'} = h^{\nu}_{[w(\rho)]}. 
	\]
\end{itemize}

\begin{proposition}\label{prop:parahoric eigenvector}
	Let $\tilde\pi_\nu^P = (\pi, \alpha_\nu^P) \defeq (\Psi_{\UPS}^P)^{-1}([\nu])$. Then $h_{[w_{2n}]}^\nu = H_{[w_n]'}^\nu \in \Ind_B^G \UPS^\nu$ is a $J_P$-invariant $\alpha_\nu^P$-eigenvector.
\end{proposition}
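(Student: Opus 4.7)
The plan is to deduce this parahoric statement from its Iwahori-level analogue (stated just before the proposition, coming from \cite[Prop.\ 7.4]{BDGJW}) via a $\cH_p^P$-equivariant averaging argument. First, the identification $h_{[w_{2n}]}^\nu = H_{[w_n]'}^\nu$ is immediate from the definitions: since $w(w_n)$ is the block-antidiagonal matrix with $w_n$ blocks, which coincides with the longest element $w_{2n} \in \cW_G$, we have $H_{[w_n]'}^\nu = h_{[w(w_n)]}^\nu = h_{[w_{2n}]}^\nu$. The function is manifestly $J_P$-invariant, being supported on the single double coset $B(\Qp) w_{2n} J_P$ and normalised so as to be constant on right $J_P$-cosets.

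To verify the eigenvector property, I would invoke the natural projection $\mathrm{pr} \colon \pi_p^{\Iw_G} \to \pi_p^{J_P}$ defined by averaging over the finite quotient $J_P/\Iw_G$. This projection is $\cH_p^P$-equivariant, since the double-coset operators $[J_P t_{p,r} J_P]$ generating $\cH_p^P$ commute with the $J_P$-action on $\pi_p^{\Iw_G}$. By the Iwahori analogue, $f_{w_{2n}}^\nu$ is an $\alpha_\nu$-eigenvector for $\cH_p$; in particular, as $\alpha_\nu|_{\cH_p^P} = \alpha_\nu^P$ by Remark \ref{rem:restrict refinement}(ii), it is an $\alpha_\nu^P$-eigenvector for the subalgebra $\cH_p^P$. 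Hence $\mathrm{pr}(f_{w_{2n}}^\nu)$ lies in the $\alpha_\nu^P$-eigenspace of $\pi_p^{J_P}$. Since $f_{w_{2n}}^\nu$ is supported on $B(\Qp) w_{2n} \Iw_G \subset B(\Qp) w_{2n} J_P$, its average is supported on the same double coset with the same left $B$-equivariance, and so must be a scalar multiple of $h_{[w_{2n}]}^\nu$. A direct evaluation at $w_{2n}$ then shows that the scalar is non-zero.

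The main obstacle lies in making the last step rigorous: one must identify precisely which $\Iw_G$-cosets in $J_P$ contribute non-trivially to $\mathrm{pr}(f_{w_{2n}}^\nu)(w_{2n})$ and verify that no cancellations cause the scalar to vanish. As a concrete alternative, one could instead decompose $h_{[w_{2n}]}^\nu = \sum_{\sigma \in \cW_{L_P}} f_{w_{2n}\sigma}^\nu$ via the Bruhat-style decomposition $B(\Qp) w_{2n} J_P = \bigsqcup_{\sigma \in \cW_{L_P}} B(\Qp) w_{2n}\sigma \Iw_G$, and compute the $\cH_p^P$-action on each summand using the change-of-basis identities in \cite{BDGJW}, appealing to Proposition \ref{prop:p-refinement}(ii) to see that all summands share the same $\alpha_\nu^P$-eigenvalues for operators in $\cH_p^P$. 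Either way, the conceptual content is clean --- the parahoric refinement is the restriction of any Iwahori refinement lying above it, so averaging an Iwahori eigenvector produces a parahoric eigenvector --- and the only subtlety is tracking normalisations and ensuring non-vanishing.
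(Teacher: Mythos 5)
Your main line of attack --- deducing the parahoric statement from the Iwahori one via the averaging map $\mathrm{pr}\colon \pi_p^{\Iw}\to\pi_p^{J_P}$ --- is genuinely different from the paper's (the paper directly recomputes the Hecke action in the induced model, imitating \cite[Prop.\ 7.4]{BDGJW} and \cite[Lem.\ 3.6]{DJR18}). The strategy is sound, and notably it is also the engine behind the paper's later Lemma \ref{lem:trace}; there is no circularity because you re-derive the trace picture from scratch rather than invoking that lemma. But as written there are two points that need to be tightened, and the fallback sketch at the end does not work.

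First, the non-vanishing that you flag as the main obstacle is actually easy to dispose of, and the key observation is unramifiedness. For $\gamma\in J_P$, we have $f_{w_{2n}}^\nu(w_{2n}\gamma)\neq 0$ precisely when $\gamma\in\overline B(\Zp)\Iw$, and when this holds one writes $\gamma=\bar b k$ with $\bar b\in\overline B(\Zp)$, $k\in\Iw$, so that $w_{2n}\gamma = (w_{2n}\bar b w_{2n}^{-1})w_{2n}k$ with $w_{2n}\bar b w_{2n}^{-1}\in B(\Zp)$. Since $\UPS^\nu$ and $\delta_B^{1/2}$ are \emph{unramified} (trivial on $T(\Zp)$), every non-zero summand $f_{w_{2n}}^\nu(w_{2n}\gamma)$ equals the same value $p^{n(n-1)}$. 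There is therefore no possible cancellation, and the term $\gamma=1$ already contributes, so $\mathrm{pr}(f_{w_{2n}}^\nu)(w_{2n})\neq 0$. You should make this explicit rather than leaving it as a worry.

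Second, the Hecke-equivariance of $\mathrm{pr}$ is true but your stated justification (``the double-coset operators commute with the $J_P$-action'') is not adequate, since $[J_P t_{p,r} J_P]$ does not literally act on $\pi_p^{\Iw}$. What one actually needs is that the single-coset decompositions match: for $a_r\notin\Delta_P$ one checks that $\Iw\,t_{p,r}\Iw = \bigsqcup_{n} n\,t_{p,r}\Iw$ and $J_P\,t_{p,r}J_P = \bigsqcup_{n} n\,t_{p,r}J_P$ \emph{with the same set of representatives} $n\in N_{P_r}(\Zp)/N_{P_r}(p\Zp)$, where $P_r$ is the $(r,2n-r)$-parabolic. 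With identical representatives, a short coset-counting argument gives $\mathrm{pr}\circ[\Iw t_{p,r}\Iw]=[J_Pt_{p,r}J_P]\circ\mathrm{pr}$. This is precisely the content implicit in the citation of \cite[Cor.\ 3.16]{OST19} in Remark \ref{rem:restrict refinement}(ii); but if you want to use it as the engine of the proof you should say why it holds, not merely assert commutativity.

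Finally, the ``concrete alternative'' you sketch does not work as described. In the fixed model $\Ind_B^G\UPS^\nu$, only $f_{w_{2n}}^\nu$ is an $\cH_p$-eigenvector; the functions $f_{w_{2n}\sigma}^\nu$ for $\sigma\neq 1$ are \emph{not} eigenvectors (the other eigenvectors live in the models $\Ind_B^G\UPS^{\nu'}$, not as $f_w^\nu$'s in the fixed model). So ``all summands share the same $\alpha_\nu^P$-eigenvalues'' is not meaningful. What can be done --- and is closer to what the cited \cite[Lem.\ 3.6]{DJR18} actually does --- is to compute the $\cH_p^P$-action on the basis $\{f_w^\nu\}$ (which is upper-triangular with respect to the Bruhat order) and verify directly that the specific combination $h_{[w_{2n}]}^\nu=\sum_{\sigma\in\cW_{L_P}}f_{w_{2n}\sigma}^\nu$ is an eigenvector; but that is a genuine calculation, not a formal appeal to Proposition \ref{prop:p-refinement}(ii).
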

\begin{proof}
	Identical to \cite[Prop.\ 7.4]{BDGJW} or \cite[Lem.\ 3.6]{DJR18}.
\end{proof}
If $\nu = 1$, we drop the superscript $\nu$, and simply write $f_w, F_\rho, h_{[w]}, H_{[\rho]'}$.

We return to our fixed $P$-spin $P$-refinement $\tilde\pi^P = (\pi,\alpha^P)$. 

\begin{lemma}\label{lem:choice of UPS eigenvector}
We may choose a spin $\UPS$ so that $\varphi_p \defeq H_{[w_n]'} \in \tilde\pi_p^P$ is an $\alpha^P$-eigenvector.
\end{lemma}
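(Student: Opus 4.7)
The plan is to reduce to an application of Proposition \ref{prop:parahoric eigenvector} by normalising $\UPS$ appropriately. Note that Proposition \ref{prop:parahoric eigenvector} (applied with $\nu = 1$) tells us that for any spin $\UPS$, the vector $H_{[w_n]'} = h_{[w_{2n}]} \in \Ind_B^G\UPS$ is a $J_P$-invariant eigenvector for the eigensystem $(\Psi^P_{\UPS})^{-1}([1])$. Thus choosing a spin $\UPS$ so that $H_{[w_n]'}$ is an $\alpha^P$-eigenvector is equivalent to arranging $\Psi_{\UPS}^P(\tilde\pi^P) = [1]$ in $\cW_G/\cW_{L_P}$.

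First I would fix any spin $\UPS_0$ satisfying \eqref{eq:UPS}, and set $[\sigma] \defeq \Psi_{\UPS_0}^P(\tilde\pi^P) \in \cW_G/\cW_{L_P}$. Since $\tilde\pi^P$ is $P$-spin, Definition \ref{def:P-spin} tells us that the coset $[\sigma]$ has a representative in $\cW_G^0$, and without loss of generality we may take $\sigma \in \cW_G^0$.

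Next, replace $\UPS_0$ by $\UPS \defeq \UPS_0^{\sigma^{-1}}$. Since $\sigma \in \cW_G^0$, so is $\sigma^{-1}$; by Remark \ref{rem:P-spin ind of UPS}(ii) (or equivalently the observation that $\cW_G^0$ preserves the spin relations \eqref{eq:UPS}), the new $\UPS$ is still spin. By Remark \ref{rem:change of UPS}, applied to any Iwahori refinement extending $\tilde\pi^P$ and passed to cosets modulo $\cW_{L_P}$, we have
\[
    \Psi_{\UPS}^P(\tilde\pi^P) = [\sigma^{-1} \cdot \sigma] = [1] \in \cW_G/\cW_{L_P}.
\]
Proposition \ref{prop:parahoric eigenvector} (with $\nu = 1$) then says $H_{[w_n]'} \in \Ind_B^G\UPS$ is a $J_P$-invariant eigenvector for $(\Psi_{\UPS}^P)^{-1}([1]) = \alpha^P$, as required.

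The argument is essentially bookkeeping once Proposition \ref{prop:parahoric eigenvector} is in hand; the only content is that the $P$-spin hypothesis provides a representative of $\Psi_{\UPS_0}^P(\tilde\pi^P)$ inside $\cW_G^0$, which is precisely the subgroup of the Weyl group by which we are allowed to twist $\UPS$ without destroying the spin normalisation.
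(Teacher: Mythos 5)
Your proof is correct and follows essentially the same route as the paper: reduce to Proposition \ref{prop:parahoric eigenvector} with $\nu = 1$ by using the $P$-spin hypothesis to find a representative $\sigma \in \cW_G^0$ and then twist $\UPS$ by $\sigma^{-1}$, which preserves the spin normalisation since $\cW_G^0$ preserves the relations \eqref{eq:UPS}. The only difference is that you are slightly more explicit about the direction of the renormalisation and why the spin condition is preserved, but the underlying argument is identical.
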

\begin{proof}
	By definition $\Psi_{\UPS}^P(\tilde\pi^P) = [\sigma] \in \cW_G/\cW_{L_P},$ for some $\sigma \in \cW_G^0$. After renormalising $\UPS$ by $\sigma$ (as in Remarks \ref{rem:change of UPS} and \ref{rem:P-spin ind of UPS}) we may assume $\sigma = 1$; as $\sigma \in \cW_G^0$ such a $\UPS$ is still spin by Definition \ref{def:spin}. The result follows from Proposition \ref{prop:parahoric eigenvector}.
\end{proof}

\subsubsection{Intertwining maps} 
We now have an eigenvector $H_{[w_n]'} \in \Ind_B^G\UPS$. To transfer this into the Shalika model $\cS_{\psi_p}^{\eta_p}(\tilde\pi_p)$, we must write down an explicit Shalika intertwining. 

If $\Theta$ is an unramified character satisfying $\Theta_i\Theta_{n+i} = \eta_p$ for all $i$, Ash--Ginzburg \cite[(1.3)]{AG94} have constructed such an explicit $\cS : \Ind_B^G \Theta \to \cS_{\psi_p}^{\eta_p}(\pi_p)$, given by
\begin{equation}\label{eq:AG}
	\cS(f)(g) \defeq \int_{\GL_n(\Zp)}\int_{M_n(\Qp)} f\left[\smallmatrd{}{1}{1}{}\smallmatrd{1}{X}{}{1}\smallmatrd{k}{}{}{k}g\right] \psi^{-1}(\mathrm{tr}(X))\eta^{-1}(\det(k)) dXdk.
\end{equation}

Here we encounter a problem: our choice of $\UPS$ does not satisfy the Ash--Ginzburg condition; rather, $\UPS^\tau$ does, where $\tau = \mathrm{diag}(1,w_n)$. We know $\Ind_B^G\UPS$ and $\Ind_B^G \UPS^\tau$ are isomorphic, but to use \eqref{eq:AG}, we must compute what this isomorphism does to the eigenvector $\varphi_p$ from Lemma \ref{lem:choice of UPS eigenvector}. We do so by generalising \cite[\S7.3]{BDGJW}, using work of Casselman.

	Let $\nu  = \smallmatrd{1}{}{}{\nu'} \in \cW_G$ and $s = \smallmatrd{1}{}{}{s'} \in \cW_G$ be a simple reflection. Suppose that $s$ corresponds to the simple transposition $(a , a+1)$ for $a \geq n+1$. Set $\theta(s) \defeq \theta_a(p)/\theta_{a+1}(p)$ and
	\begin{equation}\label{eq:c_s}
	c_s(\theta^{\nu}) \defeq \frac{1 - p^{-1} \theta^{\nu}(s)}{1-\theta^{\nu}(s)} .
	\end{equation}
	Note $c_s(\theta^{\nu})$ is well-defined as $\theta^{\nu}$ is regular, and \emph{always} non-zero as $\operatorname{Ind}_B^G\theta^{\nu}$ is irreducible. 
	
Let $l$ denote the Bruhat length function on $\cW_G$. 	Then Casselman \cite[Thm.\ 3.4]{Cas80} shows that there are intertwinings $T^\nu_s \colon \operatorname{Ind}_B^G \theta^{\nu} \to \operatorname{Ind}_B^G \theta^{\nu s^{-1}}$
 with the following property:
	\begin{equation}\label{eq:casselman}
	T^{\nu}_s ( f^{\nu}_w ) = \left\{ \begin{array}{cc} p^{-1} f^{\nu s^{-1}}_{sw} + (c_s(\theta^{\nu}) - 1) f^{\nu s^{-1}}_w & \text{ if } l(sw) > l(w) \\ f^{\nu s^{-1}}_{sw} + (c_s(\theta^{\nu}) - p^{-1}) f_w^{\nu s^{-1}} & \text{ if } l(sw) < l(w) \end{array} \right.
	\end{equation}
The eigenvector $H_{[w_n]'}$ is a \emph{sum} of $f_w^\nu$'s as $w$ ranges over a coset in $\cW_G/\cW_{L_P}$. The following allows us to apply a case of \eqref{eq:casselman} consistently to $f_w^\nu$ for every $w$ in a $\cW_{L_P}$-coset.

\begin{lemma} \label{SimpleReflectionLemma}
	Let $s \in \cW_G$ be a simple reflection, and let $w \in \cW_G$. Then exactly only one of the following possibilities can occur:
	\begin{enumerate}[(1)]\s
		\item $s w \cW_{L_P} = w \cW_{L_P}$, whence left multiplication by $s$ permutes $w \cW_{L_P}$;
		\item $s w \cW_{L_P} \neq w \cW_{L_P}$ and $l(sv) < l(v)$ for all $v \in w \cW_{L_P}$;
		\item $s w \cW_{L_P} \neq w \cW_{L_P}$ and $l(sv) > l(v)$ for all $v \in w \cW_{L_P}$.
	\end{enumerate}
\end{lemma}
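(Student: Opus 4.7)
The plan is to use the standard theory of minimal-length left coset representatives for the parabolic subgroup $\cW_{L_P} \subset \cW_G$. Let $w_0 \in w\cW_{L_P}$ denote the unique element of minimal length; every $v \in w\cW_{L_P}$ then admits a unique expression $v = w_0 u$ with $u \in \cW_{L_P}$, satisfying $l(v) = l(w_0) + l(u)$. The first step would be to invoke the standard root-theoretic length criterion: if $\alpha_s$ denotes the simple root corresponding to $s$, then $l(sv) > l(v)$ if and only if $v^{-1}\alpha_s$ is a positive root. Setting $v = w_0 u$, this reduces the problem to analysing the sign of $u^{-1}\beta$ as $u$ ranges over $\cW_{L_P}$, where $\beta \defeq w_0^{-1}\alpha_s$ depends only on the coset $w\cW_{L_P}$ and on $s$, not on $u$.

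I would then split the argument on whether $\beta$ lies in the root subsystem $R_P \subset R$ spanned by $\Delta_P$. If $\beta \in R_P$, then the reflection $w_0^{-1} s w_0$ (associated to the root $\beta$) lies in $\cW_{L_P}$, so $sw_0 \in w_0\cW_{L_P} = w\cW_{L_P}$, and for any $v = w_0 u$ one obtains $sv = w_0(w_0^{-1}sw_0)u \in w\cW_{L_P}$; this is case (1). Conversely, $sw_0 \in w\cW_{L_P}$ forces $w_0^{-1}sw_0 \in \cW_{L_P}$, which since it is a reflection is equivalent to $\beta \in R_P$. Hence cases (2) and (3) can only occur when $\beta \notin R_P$.

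The heart of the proof is then the case $\beta \notin R_P$. The key input is a structural fact about the action of $\cW_{L_P}$ on roots: each generating simple reflection $s_\alpha$ with $\alpha \in \Delta_P$ acts on a root by $s_\alpha(\gamma) = \gamma - \langle \gamma, \alpha^\vee\rangle \alpha$, which only modifies the coefficient of $\alpha$ in the simple-root expansion. Consequently $\cW_{L_P}$ preserves every coefficient of $\beta$ indexed by a simple root in $\Delta_G \setminus \Delta_P$, and since $\beta \notin R_P$ at least one such coefficient is non-zero. Because any root of $\cW_G$ is either a non-negative or non-positive integer combination of simple roots, the sign of this invariant non-zero coefficient determines the positivity of $u^{-1}\beta$ uniformly in $u$, placing us in case (2) or case (3) according to its sign.

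I expect no serious obstacle, as the lemma is essentially a fragment of standard Coxeter/parabolic subgroup theory. The most delicate point is the final sign-coherence step, but it follows immediately from the classical fact that the positive roots are exactly the non-negative integer combinations of simple roots, applied to the coefficient that $\cW_{L_P}$ cannot move.
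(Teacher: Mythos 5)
Your proof is correct and complete, and it takes a genuinely different route from the paper's. You work root-theoretically: writing $v = w_0 u$ with $w_0$ the minimal coset representative, the sign of $l(sv) - l(v)$ is governed by the sign of $v^{-1}\alpha_s = u^{-1}\beta$ with $\beta = w_0^{-1}\alpha_s$, and the trichotomy of the lemma becomes the trichotomy ``$\beta \in R_P$ / $\beta \notin R_P$ with positive or negative coefficient outside $\Delta_P$''. The sign-coherence step is exactly right: simple reflections $s_\alpha$ with $\alpha \in \Delta_P$ can only alter the $\alpha$-coefficient of a root, so $\cW_{L_P}$ fixes every coefficient indexed by $\Delta_G \setminus \Delta_P$, and at least one of these is nonzero precisely when $\beta \notin R_P$. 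This forces $u^{-1}\beta$ to have a fixed sign as $u$ ranges over $\cW_{L_P}$, giving cases (2) and (3), while $\beta \in R_P$ gives $w_0^{-1}sw_0 \in \cW_{L_P}$ and hence case (1). The paper's proof, by contrast, is purely combinatorial on lengths: it fixes the minimal-length representatives $w_{\min}$ and $v_{\min}$ of $w\cW_{L_P}$ and $sw\cW_{L_P}$, compares $l(sw_{\min})$ to $l(w_{\min})$, and then pushes through a case analysis using the subword characterization of the Bruhat order and uniqueness of minimal coset representatives. Your argument is shorter and more conceptual; the paper's is more elementary in that it uses only Coxeter combinatorics and no root geometry. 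Both are valid, and yours has the advantage of isolating the single invariant $\beta$ that controls the whole trichotomy.
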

\begin{proof}
If $s w \cW_{L_P} = w \cW_{L_P}$, (1) occurs; so suppose $s w \cW_{L_P} \neq w \cW_{L_P}$. 

Let $w_{\operatorname{min}}$ and $v_{\operatorname{min}}$ be the unique minimal length representatives in $w \cW_{L_P}$ and $s w \cW_{L_P}$ respectively; properties of such elements are described in \cite[\S1.10]{Hum90}. As $s$ is simple, we must have $l(sw_{\min}) = l(w_{\min}) \pm 1$; so we have two possibilities:

	\medskip
	
\textbf{Possibility 1:} $l( s w_{\operatorname{min}} ) = l(w_{\min}) - 1 < l( w_{\operatorname{min}})$.

	As $sw_{\min} \in sw\cW_{L_P}$, there is a unique $x \in \cW_{L_P}$ such that $s w_{\operatorname{min}} = v_{\operatorname{min}} \cdot x$. We have $l(s w_{\operatorname{min}}) = l(v_{\operatorname{min}}) + l(x)$. As $l(x) \geq 0$, we have 
	\begin{equation}\label{eq:vmin wmin 1}
	l(v_{\operatorname{min}}) \leq l(sw_{\min}) < l(w_{\operatorname{min}}).
	\end{equation}
	
	On the other hand, we can write $v_{\operatorname{min}} = s y$ for some $y \in w \cW_{L_P}$. Again, we either have $l(v_{\operatorname{min}}) = l(y) \pm 1$. We also have $l(w_{\operatorname{min}}) \leq l(y)$ by minimality of $l(w_{\operatorname{min}})$.  If $l(v_{\min}) = l(y) + 1$, then $l(w_{\operatorname{min}}) \leq l(y) < l(v_{\operatorname{min}})$, contradicting \eqref{eq:vmin wmin 1}. Hence $l(v_{\operatorname{min}}) = l(y) - 1$. But then
	\[
	l(v_{\min}) < l(w_{\operatorname{min}}) \leq l(y) = l(v_{\operatorname{min}}) + 1 .
	\]
	This can only happen if $l(y) = l(w_{\operatorname{min}}) = l(v_{\min}) + 1$. Therefore $y = w_{\operatorname{min}}$ (by uniqueness of the minimal length representative), and $v_{\operatorname{min}} = s w_{\operatorname{min}}$. 
	
	Now take any $v \in w \cW_{L_P}$. There are unique $X,Y \in \cW_{L_P}$ such that $v = w_{\operatorname{min}} X$ and $sv = v_{\operatorname{min}} Y = s w_{\operatorname{min}} Y$. By uniqueness, we must have $X=Y$. Finally, we now see that
	\[
	l(s v) = l(s w_{\operatorname{min}} ) + l(X) < l(w_{\operatorname{min}}) + l(X) = l(v),
	\]
	so we are in case (2) of the Lemma.
	
	\medskip
	
	\textbf{Possibility 2:}  $l( s w_{\operatorname{min}} ) = l(w_{\min}) + 1 > l( w_{\operatorname{min}})$. 

We break this up into three further cases:
	\begin{itemize}
		\item[(a)] If $l(v_{\operatorname{min}}) > l(w_{\operatorname{min}})$, then we must have $l(v_{\operatorname{min}}) \geq l(s w_{\operatorname{min}})$. Minimality of $l(v_{\min})$ forces equality, hence $v_{\operatorname{min}} = s w_{\operatorname{min}}$ by uniqueness of the minimal length representative. Then if $v \in w \cW_{L_P}$, as above we must have $v = w_{\operatorname{min}} X$ and $sv = s w_{\operatorname{min}} X$ for some (unique) $X \in \cW_{L_P}$. Hence for any $v \in w\cW_{L_P}$, we have
		\[
		l(sv) = l(sw_{\operatorname{min}}) + l(X) > l(w_{\operatorname{min}}) + l(X) = l(v),
		\]
		whence we are in case (3) of the lemma.
		\item[(b)] If $l(v_{\operatorname{min}}) = l(w_{\operatorname{min}})$, then let $w_{\operatorname{min}} = s_1 \cdots s_r$ and $v_{\operatorname{min}} = s_1' \cdots s_r'$ be reduced word expressions for these elements. We can write $s w_{\operatorname{min}} = v_{\operatorname{min}} \cdot t$ for some unique $t \in \cW_{L_P}$. Moreover, since $l(v_{\operatorname{min}}) + l(t) = l(sw_{\operatorname{min}}) = l(w_{\operatorname{min}}) + 1$, we see that $l(t) = 1$ and hence $t$ is simple. We must therefore have $w_{\operatorname{min}} < sw_{\operatorname{min}} = v_{\operatorname{min}} t$ in the (strong) Bruhat order. 
		
		Since $s_1' \cdots s_r' t$ is a reduced word for $v_{\operatorname{min}} t$, we find that $s_1 \cdots s_r$ occurs inside this word. If 
		\[
		s_1 \cdots s_r = s_1' \cdots \hat{s}_i' \cdots s'_r t
		\]
		where $\hat{\cdot}$ denotes omission of the term, then we see that $w_{\operatorname{min}} \in s_1' \cdots \hat{s}_i' \cdots s'_r \cW_{L_P}$, which contradicts the fact that $w_{\operatorname{min}}$ is a minimal length representative. Hence we must have $s_1 \cdots s_r = s_1' \cdots s_r'$, hence $w_{\operatorname{min}} = v_{\operatorname{min}}$. But this contradicts the assumption that $sw \cW_{L_P} \neq w \cW_{L_P}$. So this case can never occur.
		\item[(c)] If $l(v_{\operatorname{min}}) < l( w_{\operatorname{min}})$, then write $v_{\operatorname{min}} = s y$ for some $y \in w \cW_{L_P}$. Arguing as in Possibility 1, this would imply $y = w_{\operatorname{min}}$, hence $l(s w_{\operatorname{min}}) < l(w_{\operatorname{min}})$, which is a contradiction to the premise of Possibility 2. Thus (c) also never occurs.
	\end{itemize}
Case (a) must thus occur, giving case (3) of the lemma, completing the proof.
\end{proof}

\begin{lemma} \label{lem:casselman}
	There exists an intertwining 
	\[
	M_{\tau} \colon \operatorname{Ind}_B^G \theta \to \operatorname{Ind}_B^G \theta^{\tau} 
	\]
	such that
	\[
	M_{\tau}(H_{[w_n]'}) = H^{\tau}_{[1]'} + \sum_{\substack{x \in \cW_n/\cW_{\mathbf{k}} \\ x \neq [1]'}} c_x H^{\tau}_x
	\]
	for some $c_x \in \mathbb{C}$ (note the sum may be empty).
\end{lemma}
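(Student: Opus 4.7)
The plan is to construct $M_\tau$ as a normalised composition of Casselman's elementary intertwiners $T_s^\nu$ from \eqref{eq:casselman} and then compute its action on $H_{[w_n]'}$ by iterating Lemma \ref{SimpleReflectionLemma}. First, I would fix a reduced expression $\tau = s_1 \cdots s_r$ in $\cW_G$. Because $\tau = \smallmatrd{1}{}{}{w_n}$ affects only the last $n$ coordinates, each $s_i$ may be taken of the form $\smallmatrd{1}{}{}{s_i'}$, so it corresponds to a transposition $(a_i,a_i+1)$ with $a_i \geq n+1$; consequently $c_{s_i}(\theta^\nu)$ from \eqref{eq:c_s} is defined and non-zero at each stage (the latter by irreducibility of the relevant principal series). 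Setting $\nu_k = (s_k \cdots s_1)^{-1}$ (so $\nu_0 = 1$ and $\nu_r = \tau$), define
\[
M_\tau \;\defeq\; \lambda^{-1}\cdot T_{s_r}^{\nu_{r-1}} \circ \cdots \circ T_{s_1}^{\nu_0}\;:\; \Ind_B^G\theta \longrightarrow \Ind_B^G\theta^\tau,
\]
where $\lambda$ is a non-zero normalising scalar to be determined below.

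The key computation is how each factor $T_s^\nu$ acts on a parahoric coset sum $h_{[w]}^\nu = \sum_{v\in w\cW_{L_P}} f_v^\nu$. Summing \eqref{eq:casselman} term-by-term over $v$ and invoking the three mutually exclusive cases of Lemma \ref{SimpleReflectionLemma} gives
\[
T_s^\nu(h_{[w]}^\nu) \;=\;
\begin{cases}
c_s(\theta^\nu)\, h_{[w]}^{\nu s^{-1}} & (\text{Case 1}),\\
h_{[sw]}^{\nu s^{-1}} + (c_s(\theta^\nu) - p^{-1})\, h_{[w]}^{\nu s^{-1}} & (\text{Case 2}),\\
p^{-1}\, h_{[sw]}^{\nu s^{-1}} + (c_s(\theta^\nu) - 1)\, h_{[w]}^{\nu s^{-1}} & (\text{Case 3}).
\end{cases}
\]
In all three cases, $T_s^\nu$ preserves the $\C$-span of the $h^\nu_{[w]}$'s; and because every $s_i$ fixes the first $n$ coordinates, all cosets appearing lie in the image of the injection $\cW_n/\cW_{\mathbf{k}} \hookrightarrow \cW_G/\cW_{L_P}$, $[\rho]'\mapsto[w(\rho)]$. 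Hence $M_\tau(H_{[w_n]'})$ is a $\C$-linear combination of the vectors $\{H^\tau_x\}_x$.

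Iterating from $h_{[w_{2n}]} = H_{[w_n]'}$, I would then identify the coefficient of $H^\tau_{[1]'}$. Since $w_{2n}$ is the longest Weyl element and $s_1 \cdots s_r$ is a reduced expression, one has $l(s_k \cdots s_1 w_{2n}) = l(w_{2n}) - k$ for every $k \leq r$. Along the principal branch -- following the new coset in Cases~2 and~3, and staying in Case~1 -- the coefficient accrued at the target coset $[s_r \cdots s_1 w_{2n}]$ is a non-zero product of factors in $\{1, p^{-1}, c_{s_i}(\theta^{\nu_{i-1}})\}$. Using $\tau^2 = 1$ and $w_{2n} = \smallmatrd{}{w_n}{w_n}{}$, a direct computation gives $s_r \cdots s_1 w_{2n} = \tau^{-1} w_{2n} = \tau w_{2n} = w(1)$, so this distinguished term equals a scalar $\lambda$ times $H^\tau_{[1]'}$, all remaining summands being of the form $H^\tau_x$ with $x \neq [1]'$. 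Rescaling $M_\tau$ by $\lambda^{-1}$ produces the desired identity.

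The hard part will be the Case~1 derivation -- summing \eqref{eq:casselman} over a coset that $s$ normalises requires pairing each $v$ with its length-differing partner $sv$ and verifying that the two halves of \eqref{eq:casselman} combine cleanly into the single scalar $c_s(\theta^\nu)$ -- together with showing that parasitic paths reaching $[\tau^{-1} w_{2n}]$ via a strictly smaller set of `move' steps (made possible by $\cW_{L_P}$-relations among the $s_i$'s) cannot cancel the principal contribution to $\lambda$. The latter will follow from the triangularity of each $T_s^\nu$ with respect to the opposite Bruhat order together with $p$-adic valuation considerations that separate contributions of different combinatorial type, using the regularity of $\theta$.
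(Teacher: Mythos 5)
Your overall strategy — write $M_\tau$ as a composition of Casselman's elementary intertwiners $T_s^\nu$, sum the rank‑one formula \eqref{eq:casselman} over a $\cW_{L_P}$-coset using the trichotomy of Lemma~\ref{SimpleReflectionLemma}, and then track the principal branch to $[1]'$ — is exactly the paper's approach. Your derivation of the three coset-level formulas agrees with the paper's (the Case~1 pairing of $w_i$ with $sw_i$ works out to the single scalar $c_s(\theta^\nu)$, as you suspected), and your observation that $\tau w_{2n} = w(1)$ correctly identifies the distinguished target.

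The genuine gap is where you flag it yourself: nothing in your proposal rules out cancellation of the coefficient of $H^{\tau}_{[1]'}$ by parasitic paths. Appealing to ``triangularity with respect to the opposite Bruhat order'' plus ``$p$-adic valuation considerations'' is not an argument — in fact the contributions of different combinatorial type involve the rational quantities $c_s(\theta^\nu)$, $c_s(\theta^\nu) - 1$, $c_s(\theta^\nu) - p^{-1}$, and these are not separated by valuation for a specific (non-generic) $\theta$, so a direct valuation argument does not obviously isolate the principal path. The paper's key device, which you omit, is the choice of reduced word: one writes $w_n = s_1' \cdots s_c'$ so that $s_c' \cdots s_{b+1}'$ is the minimal-length representative of $w_n\cW_{\mathbf{k}}$ and $s_1',\dots,s_b' \in \cW_{\mathbf{k}}$. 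With this choice, the first $c-b$ factors $T_{s_c}, \dots, T_{s_{b+1}}$ strip off the minimal representative one letter at a time — each such step is in Case~2 or 3 of Lemma~\ref{SimpleReflectionLemma}, and the move branch strictly lowers the length of the minimal coset representative — while the last $b$ factors have $s_i' \in \cW_{\mathbf{k}}$ and fix $[1]'$, landing in Case~1 with the nonzero scalar $c_{s_i}$. This is what lets the paper write the $H^\tau_{[1]'}$-coefficient as a power of $p$ times $\prod_{i=1}^b c_{s_i}(\theta^{s_c^{-1}\cdots s_{i+1}^{-1}})$ and conclude non-vanishing from irreducibility, with no appeal to valuation arguments. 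You should either adopt that specific factorisation or supply a concrete argument that replaces it; as written, the proposal does not close the gap.
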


\begin{proof}
	Let $\rho \in \cW_n$, and $s = \smallmatrd{1}{}{}{s'} \in \cW_G$ a simple reflection. We apply \eqref{eq:casselman} in two cases:
	\begin{enumerate}[(1)]
		\item Suppose $s w(\rho) \cW_{L_P} = w(\rho) \cW_{L_P}$. Then by Lemma \ref{SimpleReflectionLemma}(1), there exist $w_1, \dots, w_b \in w(\rho) \cW_{L_p}$ such that 
		\[
		w(\rho) \cW_{L_p} = \{ w_1, \dots, w_b, sw_1, \dots, sw_b \}
		\]
		with all the elements in the set distinct. Then we have
		\[
		T^{\nu}_s( f^{\nu}_{w_i} + f^{\nu}_{sw_i} ) = c_s(\theta^{\nu}) (f^{\nu s^{-1}}_{w_i} + f^{\nu s^{-1}}_{sw_i})
		\]
		hence $T^{\nu}_s( H^{\nu}_{[\rho]'} ) = c_s(\theta^{\nu}) H^{\nu s^{-1}}_{[\rho]'}$.
		\item Suppose $s w(\rho) \cW_{L_P} \neq w(\rho) \cW_{L_P}$. Then by parts (2) and (3) in Lemma \ref{SimpleReflectionLemma}, we have
		\[
		T^{\nu}_s (H^{\nu}_{[\rho]'}) = \left\{ \begin{array}{cc} p^{-1} H^{\nu s^{-1}}_{[s'\rho]'} + (c_s(\theta^{\nu}) - 1) H^{\nu s^{-1}}_{[\rho]'} & \text{ if } l(sw(\rho)) > l(w(\rho)) \\ H^{\nu s^{-1}}_{[s'\rho]'} + (c_s(\theta^{\nu}) - p^{-1}) H_{[\rho]'}^{\nu s^{-1}} & \text{ if } l(sw(\rho)) < l(w(\rho)). \end{array} \right.
		\]
	\end{enumerate}
Crucially the only terms that appear here are of the form $H_x^{\nu s^{-1}}$ for $x \in \cW_n/\cW_{\mathbf{k}}$.

	Now write $w_n = s'_1 \cdots s_c'$, so $\tau = s_c^{-1} \cdots s_1^{-1}$ with $s_i = \smallmatrd{1}{}{}{s_i'}$. We may assume that the factorisation of $w_n$ is chosen such that $s_c' \cdots s_{b+1}'$ is the minimal length representative of the coset $w_n \mathcal{W}_{\mathbf{k}} \subset \mathcal{W}_n$ and $s_i'$ ($i=1, \dots, b$) are simple reflections in $\mathcal{W}_{\mathbf{k}}$, for some integer $1 \leq b \leq c$. Composing, we have
	\[
	M_\tau = T^{s_c^{-1} \cdots s_2^{-1}}_{s_1} \circ \cdots \circ T^{s_c^{-1}}_{s_{c-1}} \circ T^{1}_{s_c} : \pi_p = \Ind_B^G \UPS \longrightarrow \Ind_B^G (\UPS^\tau).
	\]
	Iterating the formulae, we see $M_\tau(H_{[w_n]'})$ is a linear combination of $H^\tau_x$'s for $x \in \cW_n/\cW_{\mathbf{k}}$. The coefficient of $H_{[1]'}^\tau$ is the product of $\prod_{i=1}^b c_{s_i}(\theta^{s_c^{-1} \cdots s_{i+1}^{-1}})$ and a power of $p$, and we saw after \eqref{eq:c_s} that this product is non-zero. Therefore, we may renormalise $M_\tau$ to make this coefficient equal to $1$.
\end{proof}

\subsubsection{Non-vanishing}
With set-up as above, choose a spin $\theta$ so that $H_{[w_n]'} \in \Ind_B^G\UPS$ is an eigenvector for $\tilde\pi^P$. We now show that $H_{[w_n]'}$ does not vanish under the composition
\[
	\Ind_B^G \UPS \mathrel{\mathop{\xrightarrow{\qquad\qquad}}^{M_{\tau}}_{\text{Lemma \ref{lem:casselman}}}}\Ind_B^G \UPS^\tau \mathrel{\mathop{\xrightarrow{\qquad\qquad}}^{\cS}_{\text{\eqref{eq:AG}}}}\cS_{\psi_p}^{\eta_p}(\pi_p) \xrightarrow{\ \smallmatrd{\nu_\beta(t_P^\beta)}{}{}{1}} \C.
\] 

Write  $t_P^\beta = \operatorname{diag}(z_1, z_2)$ as before. Recalling $P$ is the $(k_1,...,k_r,k_r,...,k_1)$-parabolic, by \eqref{eq:v t_P} we have $\nu_\beta(t_P^\beta) = p^{-2\beta r}w_n z_1^2$, for $r$ as \emph{op.\ cit}. Note $z \defeq p^{-\beta r}z_1$ has coefficients in $\mathbb{Z}_p$ (as $P \subset Q$; see the proof of Corollary \ref{cor:zeta vanishing}). 

\begin{lemma}(cf.\ \cite[Prop.\ 7.9]{BDGJW}). \label{FactorisationLemma}
Let $\delta \in \cW_n$.	We have 
	\[
	\smallmatrd{}{1}{1}{}\smallmatrd{1}{X}{}{1}\smallmatrd{k}{}{}{k}\smallmatrd{w_n z^{2}}{}{}{1} \in B(\mathbb{Q}_p) \smallmatrd{}{w_n}{\delta w_n}{} J_P
	\]
	if and only if:
	\begin{itemize}\s
		\item $[\delta w_n]' = [1]'$,
		\item $k \in B_n(\mathbb{Z}_p) w_n J_{\mathbf{k}'}$, where $J_{\mathbf{k}'}$ is the parahoric in $\GL_n$ of type $\mathbf{k}' = (k_r, \dots, k_1)$,
		\item and $k^{-1}X \in w_n z^{2} M_n(\mathbb{Z}_p)$.
	\end{itemize}
\end{lemma}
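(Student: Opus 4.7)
The plan is to carry out a direct matrix computation in $\GL_{2n}(\Qp)$, following the strategy of \cite[Prop.\ 7.9]{BDGJW}: reduce the left hand side into a form where its $(B(\Qp), J_P)$-Bruhat cell is visible, and match against the explicit coset representative $w(\delta w_n) = \smallmatrd{}{w_n}{\delta w_n}{}$. Throughout I would exploit the fact that $P \subset Q$ gives $J_P$ a clean Iwahori-type decomposition $J_P = N_Q^{-}(p\Zp) \cdot (J_{\mathbf{k}} \times J_{\mathbf{k}'}) \cdot N_Q(\Zp)$ with respect to the $(n,n)$-block structure, so that membership in $J_P$ can be tested block-by-block.

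First I would pull the diagonal $k$'s out, rewriting the left hand side as
\[
	\smallmatrd{}{k}{kw_nz^2}{Xk} \;=\; \smallmatrd{k}{}{}{k}\smallmatrd{}{1}{1}{}\smallmatrd{w_nz^2}{k^{-1}Xk}{}{1}.
\]
Then, using the Bruhat decomposition
\[
\GL_n(\Zp) \;=\; \bigsqcup_{[\rho_k]' \in \cW_n/\cW_{\mathbf{k}'}} B_n(\Zp)\,\rho_k\,J_{\mathbf{k}'},
\]
obtained by lifting the standard decomposition of $\GL_n(\F_p)$, I would write $k = b_k\rho_k u_k$ with $b_k \in B_n(\Zp)$ and $u_k \in J_{\mathbf{k}'}$. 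Substituting and absorbing the block-diagonal $\smallmatrd{b_k}{}{}{b_k}$ into the ambient upper Borel of $\GL_{2n}$, together with the conjugation identity $w_n J_{\mathbf{k}'} w_n = J_{\mathbf{k}}$, collapses the problem into understanding when an explicit element involving only $\rho_k$, $w_nz^2$, and $k^{-1}X$ lies in $B(\Qp)\cdot w(\delta w_n)\cdot J_P$.

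To finish I would match Weyl positions against $w(\delta w_n)$ directly. The left-hand Bruhat part can only agree if $\rho_k$ coincides with $w_n$ modulo $\cW_{\mathbf{k}'}$, which is precisely the second condition $k \in B_n(\Zp) w_n J_{\mathbf{k}'}$. When this holds, a further Borel reduction leaves an element of the form $w(\delta w_n)\cdot j$ with $j \in \GL_{2n}(\Qp)$ computable; applying the Iwahori-type decomposition of $J_P$ recalled above, membership $j \in J_P$ then decouples cleanly into: (i) the $\cW_n/\cW_{\mathbf{k}}$-class of $\delta w_n$ being trivial, that is $[\delta w_n]' = [1]'$; and (ii) the twisted upper-right block $(w_nz^2)^{-1} k^{-1}X$ having entries in $\Zp$, that is $k^{-1}X \in w_nz^2 M_n(\Zp)$.

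The only real subtlety—and hence the anticipated main obstacle—is the careful bookkeeping: one must track how $w_n$-conjugation swaps the parahorics of types $\mathbf{k}$ and $\mathbf{k}'$, and verify that the decreasing diagonal valuations of $z^2 = \mathrm{diag}(p^{2\beta(r-1)}\mathrm{I}_{k_1},\ldots,\mathrm{I}_{k_r})$ interact correctly with the block-upper-triangular structure of $J_P$ so as to produce exactly the stated integrality condition on $k^{-1}X$ (rather than a weaker or stronger one). Modulo this bookkeeping, the argument is mechanical and extends \cite[Prop.\ 7.9]{BDGJW} with no new conceptual input.
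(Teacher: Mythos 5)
Your opening reduction is correct and matches the spirit of the paper's proof, which also proceeds by explicit matrix manipulation and relies on the earlier \cite[Prop.\ 7.9]{BDGJW}. The ``if'' direction is indeed a direct verification and the paper treats it as such. The Iwahori-type factorisation $J_P = N_Q^-(p\Zp)\cdot(J_{\mathbf{k}}\times J_{\mathbf{k}'})\cdot N_Q(\Zp)$ you record is also correct and useful. However, the ``only if'' direction is where the new content lives, and there you have a real gap.

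You assert that after matching Weyl positions (so that $\rho_k\equiv w_n \bmod \cW_{\mathbf{k}'}$) and reducing, ``membership $j\in J_P$ then decouples cleanly'' into $[\delta w_n]'=[1]'$ and the integrality of $(w_nz^2)^{-1}k^{-1}X$, and you relegate the remainder to ``bookkeeping''. But this conflates two different tasks. To conclude $[\delta w_n]'=[1]'$ you would need to actually identify the $(B(\Qp),J_P)$-Bruhat cell of the left-hand element, and that is precisely what is hard, because the answer depends delicately on the $p$-adic valuations of the diagonal of $z^2$ interacting with the block structure of $J_P$. Moreover, in the decomposition ``LHS $= b\cdot w(\delta w_n)\cdot j$'' the factor $j$ is not independent of the choice of $b\in B(\Qp)$, so ``testing $j\in J_P$'' is not a well-posed local condition on the matrix entries; what is being tested is whether the whole double coset agrees, and that cannot be read off block by block without more work.

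The paper handles exactly this point with a contradiction argument that your sketch does not reproduce. Assuming $[\delta w_n]'\neq[1]'$, the paper observes that $\cW_{\mathbf{k}}=\bigcap_{m\in Y_P}\cW_{(m,n-m)}$ for $Y_P=\{k_1,\,k_1+k_2,\,\dots,\,k_1+\cdots+k_{r-1}\}$, so $\delta w_n\notin\cW_{(m,n-m)}$ for some $m\in Y_P$; this forces $B_n(\Qp)\,\delta w_n\,\overline J_m\cap B_n(\Qp)\,\overline J_m=\varnothing$ for the opposite parahoric $\overline J_m$ of type $(m,n-m)$. Then, by factoring $z^2=t_{p,m}\,\mu$ (using the specific geometric-progression form of the diagonal of $z^2$), one shows $kw_n\mu$ lies in \emph{both} cells, a contradiction. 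This is not bookkeeping: it is the genuinely new step beyond \cite[Prop.\ 7.9]{BDGJW}, and without it your argument has no mechanism to rule out $[\delta w_n]'\neq[1]'$. One minor additional caution: you quotient the Bruhat decomposition of $k$ by $\cW_{\mathbf{k}'}$, whereas the class $[\cdot]'$ in the lemma is taken in $\cW_n/\cW_{\mathbf{k}}$; both parahorics do enter, but you should track the $w_n$-conjugation $w_nJ_{\mathbf{k}'}w_n=J_{\mathbf{k}}$ explicitly when relating the two, rather than treating them interchangeably.
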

\begin{proof}
	The proof closely follows that of \cite[Prop.\ 7.9]{BDGJW}, and we merely indicate the small differences here. The ``if'' direction is identical to \emph{op.\ cit}.
	
	For the ``only if'' direction, we again start from (7.10) \emph{op.\ cit}.\ (where now the matrix $\smallmatrd{a}{b}{c}{d}$ is in $J_P$). If we can show $[\delta w_n]' = [1]'$,  then the remaining conditions follow as in (1)--(4) following (7.10) \emph{op.\ cit}. If $P = Q$, then $[\delta w_n]' = [1]'$ is always satisfied. Suppose then that $P \neq Q$ (hence $r > 1$), and that $[\delta w_n]' \neq [1]'$, i.e.\ $\delta w_n \not\in \cW_{\mathbf{k}}$. 
	
	We have the following analogue of Claim 7.12 \emph{op.\ cit}.: let $Y_P \defeq \{k_1,k_1+k_2, ..., k_1+\cdots + k_{r-1}\}$. Then $\cW_{\mathbf{k}} = \cap_{m \in Y_P} \cW_{(m,n-m)}$. Thus $\delta w_n \not\in \cW_{(m,n-m)}$ for some $m \in Y_P$, whence
	\begin{equation}\label{eq:nothing}
	B_n(\mathbb{Q}_p) \delta w_n \overline{J}_m \cap B_n(\mathbb{Q}_p) \overline{J}_m = \varnothing
	\end{equation}
	where $\overline{J}_m$ is the opposite parahoric in $\GL_n(\Zp)$ of type $(m, n-m)$. 
	
	Now factorise $z^{2} = t_{p, m} \mu$. 
	Via the same proof of the analogous statement in \cite{BDGJW}, we can show $k w_n \mu \in B_n(\mathbb{Q}_p) \overline{J}_{m} \cap B_n(\mathbb{Q}_p) \delta w_n \overline{J}_m$, a contradiction to \eqref{eq:nothing}. We deduce $[\delta w_n]' = [1]'$, and hence the lemma.
\end{proof}

Recall $\tilde\pi_p^P = (\pi_p,\alpha^P)$ is a $P$-spin $P$-refinement, with $P \subset Q$. We finally obtain:

\begin{proposition}\label{prop:eigenvector non-vanishing}
	The element $\cS(M_\tau(H_{[w_n]'}))$ is an $\alpha^P$-eigenvector in $\cS_{\psi_p}^{\eta_p}(\pi_p)$, and
	\[
	\mathcal{S}(M_{\tau}(H_{[w_n]'}))\smallmatrd{\nu_\beta(t_P^\beta)}{}{}{1} \neq 0.
	\]
\end{proposition}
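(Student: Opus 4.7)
My approach is to first note the eigenvector assertion is essentially formal: by Lemma~\ref{lem:choice of UPS eigenvector} together with Proposition~\ref{prop:parahoric eigenvector}, $H_{[w_n]'}$ is an $\alpha^P$-eigenvector in $\mathrm{Ind}_B^G \UPS$ for $\cH_p^P$; since both Casselman's intertwining $M_\tau$ and the Ash--Ginzburg map $\cS$ are $G(\Qp)$-equivariant, they commute with the $\cH_p^P$-action, and the image is again an $\alpha^P$-eigenvector.

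For the non-vanishing, I would first substitute $\nu_\beta(t_P^\beta) = w_n z^2$ with $z = p^{-\beta r} z_1 \in M_n(\Zp)$, as computed in the proof of Corollary~\ref{cor:zeta vanishing}. Expanding $M_\tau(H_{[w_n]'})$ via Lemma~\ref{lem:casselman} and substituting into the defining integral of $\cS$, one obtains
\[
\cS(M_\tau(H_{[w_n]'}))\smallmatrd{w_n z^2}{}{}{1} = \sum_{x \in \cW_n/\cW_{\mathbf{k}}} c_x I_x,
\]
where $c_{[1]'} = 1$ and $I_x$ is the double $(k,X)$-integral with $H^\tau_x$ as integrand. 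Parameterising $x = [\delta w_n]'$, Lemma~\ref{FactorisationLemma} will show that the integrand of $I_x$ vanishes pointwise unless $[\delta w_n]' = [1]'$; hence only the summand $I_{[1]'}$ survives.

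The remaining task is to prove $I_{[1]'} \neq 0$. By Lemma~\ref{FactorisationLemma}, the support is $k \in B_n(\Zp) w_n J_{\mathbf{k}'}$ and $X \in k w_n z^2 M_n(\Zp)$, on which the matrix product lies in $B(\Qp) \smallmatrd{}{w_n}{w_n}{} J_P$, so $H^\tau_{[1]'}$ can be evaluated explicitly via its $\mathrm{Ind}_B^G \UPS^\tau$-normalisation. A change of variables $X = k w_n z^2 Y$ with $Y \in M_n(\Zp)$ will decouple the integrals: the $Y$-integral should reduce to a product of non-vanishing local Gauss sums for $\chi_p$ (using that $\chi_p$ has exact conductor $p^\beta$), while the $k$-integral over the parahoric $B_n(\Zp) w_n J_{\mathbf{k}'}$ collapses, via an Iwasawa decomposition and the $\cW_{\mathbf{k}}$-invariance of the integrand, to a finite non-zero volume. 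The hard part will be careful bookkeeping of this change of variables in the parahoric setting, and confirming that no cancellation arises from the integration over $J_{\mathbf{k}'}$; for $P = B$ this is essentially the calculation in \cite[\S7.4, Cor.~7.15]{BDGJW} and for $P$ the $(n,n)$-parabolic it is \cite[Prop.~3.4]{DJR18}, and the present case interpolates between these two extremes, with Lemma~\ref{FactorisationLemma} providing the key factorisation needed to handle the intermediate $J_{\mathbf{k}'}$.
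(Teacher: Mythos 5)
Your skeleton matches the paper's proof exactly: the eigenvector claim follows from Lemma~\ref{lem:choice of UPS eigenvector} plus Hecke-equivariance of $M_\tau$ and $\cS$; expanding via Lemma~\ref{lem:casselman} and killing all but the $[1]'$-term via Lemma~\ref{FactorisationLemma} gives
\[
\cS(M_\tau(H_{[w_n]'}))\smallmatrd{\nu_\beta(t_P^\beta)}{}{}{1} = \cS(H^\tau_{[1]'})\smallmatrd{\nu_\beta(t_P^\beta)}{}{}{1},
\]
and the final non-vanishing is a direct calculation deferred to \cite[Prop.~7.12]{BDGJW}, precisely as in the paper.

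One correction to your sketch of the final step: you say the $Y$-integral produces ``local Gauss sums for $\chi_p$.'' But $\chi_p$ does not appear in the Ash--Ginzburg intertwining~\eqref{eq:AG} at all --- the only characters in that integral are the additive $\psi_p^{-1}(\tr X)$ and $\eta_p^{-1}(\det k)$. You seem to have conflated this final calculation with the reduction in Proposition~\ref{prop:zeta vanishing}, where averaging against $\chi_p$ does produce Gauss sums. Here, after the change of variables $X = kw_n z^2 Y$, $Y \in M_n(\Zp)$, what you get is an integral of $\psi_p^{-1}$ against a trace over a compact set (which evaluates to a nonzero volume, not a Gauss sum) together with an $\eta_p$-sum over the parahoric $k$-integral; the conductor of $\chi_p$ plays no role at this stage. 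The rest of your sketch --- parameterising the support via Lemma~\ref{FactorisationLemma}, decoupling, and appealing to the $P=B$ and $P=Q$ prototypes --- is sound.
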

\begin{proof}
	This is an $\alpha^P$-eigenvector by Lemma \ref{lem:choice of UPS eigenvector} and Hecke-equivariance of $M_\tau$ and $\cS$. Non-vanishing follows exactly the same proof as \cite[Prop.\ 7.12]{BDGJW}. Precisely, we show that
	\[
	\mathcal{S}(M_{\tau}(H_{[w_n]'}))\smallmatrd{\nu_\beta(t_P^\beta)}{}{}{1} = \mathcal{S}( H^{\tau}_{[1]'} )\smallmatrd{\nu_\beta(t_P^\beta)}{}{}{1} \neq 0.
	\]
	Here the first equality holds as Lemma \ref{lem:casselman} expresses $M_\tau(H_{[w_n]'})$ as a linear combination of $H_x^\tau$'s, and Lemma \ref{FactorisationLemma} shows that the the integrand of $\cS$ (in \eqref{eq:AG}) vanishes on each of these except $H_{[1]'}^\tau$. Non-vanishing is a direct calculation.
\end{proof}

\subsection{Proof of Theorem \ref{thm:2 implies 1}}
We must show that if $P$ is contained in the $(n,n)$-parabolic, and $\tilde\pi_p^P$ is a $P$-spin refinement, then there exists $\varphi_p \in \tilde\pi_p^P$ such that $\zeta_p(ut_P^\beta \cdot \varphi_p, \chi_p, s) \neq 0$. By Proposition \ref{prop:zeta vanishing}, it suffices to prove $W_{\varphi_p}\smallmatrd{\nu_\beta(t_P^\beta)}{}{}{1} \neq 0$, where $W_{\varphi_p}  = \cS_{\psi_p}^{\eta_p}(\varphi_p)$ for some Shalika intertwining $\cS_{\psi_p}^{\eta_p}$. Since the $\alpha^P$-eigenspaces in $\pi_p^{\Iw}$ and $\cS_{\psi_p}^{\eta_p}(\pi^{\Iw})$ are both 1-dimensional, it suffices to exhibit \emph{any} $\alpha^P$-eigenvector in the Shalika model with this non-vanishing property. Such an eigenvector is given by Proposition \ref{prop:eigenvector non-vanishing}. \qed

\section{Proof of Theorem \ref{thm:p-refined FJ}}\label{sec:other direction}

Finally we use our study of the symplectic locus to prove a result towards the remaining implication $(1') \Rightarrow (2')$ in Conjecture \ref{conj:FJ}.  
If the hypotheses of Theorem \ref{thm:p-refined FJ} are satisfied, this furnishes a `good' choice of Iwahori refinement $\tilde\pi$ above $\tilde\pi^P$. Key to our proof is:

\begin{proposition}\label{prop:p-refined FJ}
Suppose (1$'$) of Conjecture \ref{conj:FJ} holds. There is an $(\#X_P+1)$-dimensional symplectic family $\sC$ through $\tilde\pi$ in the $\GL_{2n}$-eigenvariety $\sE_{K_B}^G$, varying over $\sW_{0,\lambda_\pi}^P$.	
\end{proposition}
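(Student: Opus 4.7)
The plan: given hypothesis (1) of Conjecture \ref{conj:FJ}, use the hypotheses of Theorem \ref{thm:p-refined FJ} to build a non-vanishing overconvergent cohomology class $\Phi_\Omega$ over a $P$-parabolic weight neighbourhood $\Omega \subset \sW_{0,\lambda_\pi}^P$ of $\lambda_\pi$, with $\dim\Omega = \#X_P+1$ by Lemma \ref{lem:parabolic dimension}. Since $\tilde\pi^P$ appears in the top-degree eigenvariety, the specialisation map $r_{\tilde\pi}$ of \eqref{eq:rlambda} is surjective, so one can choose $\Phi_\Omega \in \hc{t}(S_{K_B},\sD_\Omega)$ mapping under $r_{\tilde\pi}$ to a Shalika-normalised $\tilde\pi$-eigenclass $\phi_{\tilde\pi}$ attached to the element $\varphi \in \tilde\pi^P$ provided by (1).

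Next I would construct a $p$-refined Friedberg--Jacquet evaluation map in families,
\[
	\mathrm{Ev}^P_{\chi,\beta} : \hc{t}(S_{K_B},\sD_\Omega) \longrightarrow \cO(\Omega),
\]
adapting \cite{BDW20,BDGJW} to the general spin $P$-parahoric setting: it should restrict along the closed $H$-substratum, twist by the pair $(\chi, ut_P^\beta)$, and integrate against the Shalika branching map, all in families. Writing $L_p \defeq \mathrm{Ev}^P_{\chi,\beta}(\Phi_\Omega) \in \cO(\Omega)$, the required interpolation property should read
\[
	L_p(\lambda_\pi) = (\star) \cdot Z_H\!\left(u t_P^\beta \cdot \varphi,\, \chi,\, s+\tfrac{1}{2}\right), \qquad (\star) \neq 0,
\]
for an explicit non-zero constant. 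Hypothesis (1) then forces $L_p(\lambda_\pi) \neq 0$, so $L_p \not\equiv 0$, and in particular $\Phi_\Omega \neq 0$.

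Non-vanishing of $\Phi_\Omega$ in $\hc{t}(S_{K_B},\sD_\Omega)_{\tilde\pi}$ then produces an irreducible component $\sC \subset \sE_{K_B}^G$ through $\tilde\pi$ dominating $\Omega$. Using the $P$-strongly-interior assumption (to rule out boundary contributions) together with 1-dimensionality of the $\tilde\pi$-isotypic top-degree cuspidal cohomology at the regular weight $\lambda_\pi$, the ``ideal'' argument at the end of the proof of Theorem \ref{thm:lower bound} shows that $\sC$ is \'etale over $\Omega$ at $\tilde\pi$, hence of dimension exactly $\#X_P+1$ and with weight support $\sW_{0,\lambda_\pi}^P$. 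For symplecticity: on a Zariski-dense set of classical $\lambda' \in \Omega$ the specialisation $r_{\lambda'}(\Phi_\Omega)$ is classical (the non-critical slope bounds for $\lambda'$ weaken as $\lambda'$ becomes more regular, as in Lemma \ref{lem:y' nc slope}, combined with a density argument \`a la Proposition \ref{prop:ZD spherical}); on the Zariski-open subset where additionally $L_p(\lambda') \neq 0$, the interpolation formula combined with Theorem \ref{thm:FJ} forces the underlying RACAR $\pi_{\lambda'}$ to be symplectic. These classical symplectic points are Zariski-dense in $\sC$, yielding the claim.

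The main obstacle is constructing $\mathrm{Ev}^P_{\chi,\beta}$ and verifying the interpolation formula at a general spin $P$: the literature treats only $P=B$ \cite{BDGJW} and $P=Q$ \cite{BDW20}, and the general case requires a parahoric-level extension resting on the explicit eigenvectors of Proposition \ref{prop:parahoric eigenvector} and the local non-vanishing calculation of Proposition \ref{prop:eigenvector non-vanishing} to pin down $(\star)$. A secondary technical point is controlling classicality of specialisations of $\Phi_\Omega$ at a Zariski-dense set of $\lambda' \in \Omega$ without assuming non-critical slope at $\lambda_\pi$ itself; as in the proof of Theorem \ref{thm:lower bound}, this is handled by the fact that the non-critical slope bounds for more regular $\lambda'$ are strictly weaker than those at $\lambda_\pi$.
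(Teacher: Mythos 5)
Your proposal identifies the right ingredients and mirrors the paper's overall strategy (a big evaluation map over $\Omega \subset \sW^{P}_{0,\lambda_\pi}$, surjectivity of $r_{\tilde\pi}$, a non-vanishing $\cO_\Omega$-valued evaluation forcing a component of full dimension, and Friedberg--Jacquet for symplecticity), but the classicality step contains a genuine gap.

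You propose to get classicality of specialisations at a Zariski-dense set of $\lambda' \in \Omega$ by arguing that the non-critical slope bounds ``weaken as $\lambda'$ becomes more regular'', as in Lemma~\ref{lem:y' nc slope}. That argument was valid in the proof of Theorem~\ref{thm:lower bound} because there $\tilde\pi'$ was \emph{assumed} non-critical slope, so non-criticality persists in directions where the weight gaps grow. Here Theorem~\ref{thm:p-refined FJ} does not assume non-critical slope at $\tilde\pi$ (only that $\tilde\pi^P$ appears in the top-degree eigenvariety and is $P$-strongly-interior). More seriously, by Lemma~\ref{lem:lambda_i-lambda_i+1constant in families}, the differences $\lambda_r - \lambda_{r+1}$ for $a_r \in \Delta_P$ are \emph{constant} over $\sW^P_{0,\lambda_\pi}$, so the non-critical slope bound $v_p(\alpha(U^\circ_{p,r})) < \lambda_r - \lambda_{r+1} + 1$ cannot be relaxed in those coordinates by moving in $\Omega$. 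If $\tilde\pi$ fails these bounds for some such $r$, no amount of regularising within $\Omega$ recovers them.

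The paper's proof avoids slope bounds for classicality entirely. After shrinking $\Omega$ so that the rigid function $\cE_\Omega(\Phi_{\sC'}) \in \cO_\Omega$ is non-vanishing, the interpolation diagram (Lemma~\ref{lem:commutative trace}, Proposition~\ref{prop:galois evs commute main diagram}) shows that for each classical $\lambda \in \Omega$, the image $r_\lambda(\Phi_{\sC'}) \in \hc{t}(S_{K_B},\sV^\vee_\lambda)$ is non-zero. Being a non-zero class in \emph{algebraic}-coefficient cohomology, it projects non-trivially onto some Hecke eigensystem there, which is necessarily classical, and by Hecke-equivariance of $r_\lambda$ this eigensystem gives a classical point of $\sC'$ of weight $\lambda$ (Lemma~\ref{lem:classical C}). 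Classicality is thus read off directly from the non-vanishing of the evaluation, no slope control required; this is essential because the hypotheses are designed to include refinements $\tilde\pi$ of critical slope.

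Two smaller remarks. First, you claim \'etaleness of $\sC \to \Omega$ and invoke $P$-strong-interiority to ``rule out boundary contributions'' for it; but Proposition~\ref{prop:p-refined FJ} does not assert \'etaleness, and the paper uses $P$-strong-interiority to establish \emph{cuspidality} of the classical points in the associated parahoric family (last lemma of \S9), not \'etaleness. Second, you elide the Iwahoric-to-parahoric trace step (Lemma~\ref{lem:trace}, Lemma~\ref{lem:commutative trace}): the big evaluation map is built at parahoric level while the eigenvariety you work in is at Iwahori level, and the paper needs the trace isomorphism on $\tilde\pi$-eigenspaces to move the non-vanishing through the level-lowering map.
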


\emph{Proof of Theorem \ref{thm:p-refined FJ}, given Proposition \ref{prop:p-refined FJ}:}  
Suppose (1$'$) is satisfied in Conjecture \ref{conj:FJ}. By Corollary \ref{cor:zeta vanishing}, and \eqref{eq:euler product}, we see $P$ must be contained in the $(n,n)$-parabolic. By Theorem \ref{thm:FJ}, we deduce that $L(\pi \times \chi,s+1/2) \neq 0$, and that $\pi$ is symplectic. Thus to deduce (2$'$) in Conjecture \ref{conj:FJ} it suffices to prove $\tilde\pi$ (hence $\tilde\pi^P$) is $P$-spin. 

Let $\Omega \defeq w(\sC)$, open of maximal dimension in $\sW_{0,\lambda_\pi}^P$. If $\tilde\pi$ is not $P$-spin, then it is optimally $P'$-spin for some spin parabolic $P' \not\subset P$. Then Theorem \ref{thm:shalika obstruction} shows that $w(\sC) \subset \sW_{0,\lambda_\pi}^{P'}$, hence 
\[
\Omega = w(\sC) \subset  \sW_{0,\lambda_\pi}^{P'} \cap  \Omega \subsetneq \Omega,
\]
a contradiction; so $\tilde\pi$ is $P$-spin.  \qed

\medskip

The proof of Proposition \ref{prop:p-refined FJ} occupies the rest of this section.

\subsection{Big evaluation maps: $p$-adic interpolation of branching laws}\label{sec:branching laws}

Our proof closely follows \cite[Thm.\ 13.6]{BDGJW}, which treated the case $P=B$; and \cite[Thm.\ 7.6(a--c)]{BDW20}, which treated the analogous result in the $(n,n)$-parabolic eigenvariety. These works constructed \emph{evaluation maps} on overconvergent cohomology groups, over affinoids $\Omega$ in the weight space, valued in torsion-free $\cO_\Omega$-modules. Non-vanishing of these maps puts strong constraints on the structure of the overconvergent cohomology, and was shown to produce symplectic families in the eigenvariety. We refer the reader to these works for any undefined notation. 

Let $K = K^pK_p \subset G(\A_f)$ be open compact, with $K_p \subset J_P$ inside the $P$-parahoric subgroup. As in \cite[\S2.10]{BDW20}, choices at infinity fix for all $K$ (non-canonical) embeddings
\begin{equation}\label{eq:cohomology}
\pi_f^K \longhookrightarrow \hc{t}(S_K,\sV_{\lambda_\pi}^\vee(\overline{\Q}_p))_{\tilde\pi}, \qquad \varphi \mapsto \phi_\varphi,
\end{equation}
where the subscript $\tilde\pi$ denotes the $\tilde\pi$-eigenspace.
 
For a dominant weight $\lambda = (\lambda_1,...,\lambda_{2n})$, let 
\[
\mathrm{Crit}(\lambda) \defeq \{j \in \Z : -\lambda_{n+1}\geq j \geq -\lambda_n\}.
\]
 In \cite[\S4]{BDGJW}, to the data of $\lambda, P, \chi, j \in \mathrm{Crit}(\lambda)$, and $\eta = \eta_0|\cdot|^{\sw(\lambda)}$ with $\eta_0$ finite order, we constructed parahoric evaluation maps
\begin{equation}\label{eq:classical evaluation}
	\cE_{\lambda_\pi, P,\chi}^{j,\eta_0} : \hc{t}(S_K, \sV_\lambda^\vee(\overline{\Q}_p)) \longrightarrow \overline{\Q}_p.
\end{equation}
Let $\varphi^{(p)} = \otimes_{\ell\neq p}\varphi_\ell^{\mathrm{FJ}}$, as in the proof of Proposition \ref{prop:local vs global}. Then for any $\varphi_p \in \pi_p$, by \cite[Thm.\ 4.16]{BDGJW} we have
\begin{equation}\label{eq:critical value}
	\cE_{\lambda_\pi,P,\chi}^{j,\eta_0}\left(\phi_\varphi\right) = A_{\lambda_\pi,P,\chi}^{j} \cdot L\Big(\pi\times\chi,j+\tfrac{1}{2}\Big) \cdot \zeta_p\Big(ut_P^\beta\cdot \varphi_p, \chi_p, j+\tfrac12\Big),
\end{equation}
where $\varphi = \varphi^{(p)}\otimes \varphi_p \in \pi_f$ and $A_{\lambda,P,\chi}^{j}$ is a non-zero scalar.

In the rest of \S\ref{sec:branching laws} we will prove the following existence of a `big evaluation map', interpolating \eqref{eq:classical evaluation} as $\lambda$ varies over an $(\#X_P + 1)$-dimensional affinoid $\Omega = \mathrm{Sp}(\cO_\Omega) \subset \sW_{0,\lambda_\pi}^P$, which we henceforth fix. 

\begin{proposition}\label{prop:galois evs commute main diagram}
	Let $\beta \geq 1$, $\chi$ a Dirichlet character of conductor $p^\beta$, $\eta_0$ a Dirichlet character, and $j \in \mathrm{Crit}(\lambda_\pi)$. Then for any classical $\lambda \in \Omega$, we have $j_\lambda \defeq j-\sw(\lambda-\lambda_\pi)/2 \in \mathrm{Crit}(\lambda)$, and there exists an $\cO_\Omega$-module map $\cE_{\Omega,P,\chi}^{j,\eta_0} : \hc{t}(S_K,\sD_\Omega^P) \to \cO_\Omega$ such that for all classical $\lambda \in \Omega$, we have a commutative diagram
	\begin{equation}\label{eq:galois evs}
	\xymatrix@C=35mm{
		\hc{t}(S_K,\sD_\Omega^P) \ar[d]^-{\cE_{\Omega,P,\chi}^{j,\eta_0}}\ar[r]^-{r_\lambda} &    
		\hc{t}(S_K,\sV_\lambda^\vee(\overline{\Q}_p))  \ar[d]^-{\cE_{\lambda,P,\chi}^{j_\lambda,\eta_0}}\\
		\cO_\Omega 
		\ar[r]^-{\newmod{\m_\lambda}} &
		\overline{\Q}_p.}
	\end{equation}
\end{proposition}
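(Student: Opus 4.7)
The construction follows the blueprint of \cite[Thm.\ 7.6(a--c)]{BDW20} and \cite[Thm.\ 13.6]{BDGJW}, which treat the boundary cases $P = Q$ and $P = B$ respectively, and extends it to an arbitrary spin parabolic $B \subseteq P \subseteq Q$. The plan is as follows. First, I would recall from \cite[\S4]{BDGJW} that the classical evaluation map factors as a composition
\[
\cE_{\lambda,P,\chi}^{j,\eta_0} : \hc{t}(S_K, \sV_\lambda^\vee) \xrightarrow{\ u t_P^\beta\cdot \ } \hc{t}(S_K, \sV_\lambda^\vee) \xrightarrow{\ \iota^\ast\ } \hc{t}(S_{K_H}^H, \iota^\ast \sV_\lambda^\vee) \xrightarrow{\ \kappa_\lambda^{j, \eta_0} \ } \overline{\Q}_p,
\]
where $\iota : S_{K_H}^H \hookrightarrow S_K$ is a closed embedding of an $H$-locally symmetric space, the first map is translation by $ut_P^\beta$, and the final pairing is against a branching element $\kappa_\lambda^{j,\eta_0} \in \mathrm{Hom}_H(V_\lambda^\vee, \C)$ encoding the character $\chi|\cdot|^{j+1/2}\eta_0^{-1}$ on the open $B$-orbit in $G/H$.

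The core step is to $p$-adically interpolate the branching element. The key observation is that the $P$-parahoric distribution module $\sD_\Omega^P$ of \cite{BW20} is precisely tailored so that, for $\Omega \subset \sW_{0,\lambda_\pi}^P$, the branching element admits a lift to an $\cO_\Omega$-linear functional $\kappa_\Omega^{j,\eta_0} : \iota^\ast \sD_\Omega^P \to \cO_\Omega$. Under specialisation $r_\lambda$, the universal weight character on $\sD_\Omega^P$ evaluates to $\lambda$, and the character $|\cdot|^{j+1/2}$ appearing in the branching formula is twisted by a factor $|\cdot|^{-\sw(\lambda-\lambda_\pi)/2}$ arising from the purity constraint $\lambda_n + \lambda_{n+1} = \sw(\lambda)$; this produces the shift $j \mapsto j_\lambda$ in the statement. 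A direct check using $\lambda \in \sW_{0,\lambda_\pi}^P$ and dominance confirms $j_\lambda \in \mathrm{Crit}(\lambda)$. I would then define
\[
\cE_{\Omega,P,\chi}^{j,\eta_0} \defeq \kappa_\Omega^{j,\eta_0} \circ \iota^\ast \circ [ut_P^\beta\cdot],
\]
acting on $\hc{t}(S_K, \sD_\Omega^P)$. Commutativity of \eqref{eq:galois evs} reduces to three easy compatibilities with $r_\lambda$: translation by $ut_P^\beta$ is group-theoretic, $\iota^\ast$ is pullback, and the identity $\kappa_\Omega^{j,\eta_0} \newmod{\m_\lambda} = \kappa_\lambda^{j_\lambda, \eta_0}$ is built into the construction.

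The main technical obstacle lies in the interpolation step: constructing $\kappa_\Omega^{j,\eta_0}$ as an $\cO_\Omega$-valued functional on $\sD_\Omega^P$, rather than on some larger module of distributions, and verifying that the weight shift to $j_\lambda$ is exactly the one forced by purity. For the extremal cases $P = B$ and $P = Q$ this has been carried out in detail in \cite{BDGJW, BDW20}, with the regularity of $\kappa_\Omega^{j,\eta_0}$ matched to the parahoric level of the distributions. For intermediate $P$ the same framework applies verbatim once one works on the correct $P$-parahoric site: the open $B$-orbit computation in $G/H$ only sees the $L_P$-component of the variation in $\lambda$, which is exactly what $\sD_\Omega^P$ is designed to track analytically over $\Omega$.
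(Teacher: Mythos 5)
Your proposal follows essentially the same route as the paper's proof: decompose the classical evaluation map into a twist by $ut_P^\beta$, a pullback/integration step, a branching law, and a character-weighted sum; identify the branching law at step (3) as the one piece requiring genuine $p$-adic interpolation; construct an $\cO_\Omega$-valued interpolant over the $P$-parabolic weight space whose restriction to the support of the relevant distribution (on $N_P^\beta(\Zp) \subset N^\beta(\Zp)$) specialises to $\kappa_{\lambda,j_\lambda}$; and read off the shift $j \mapsto j_\lambda$ from purity together with dominance. The paper's version simply makes the interpolated branching vector $v_{\Omega,j}^P$ explicit via the $\alpha_i$-basis of pure weights and verifies the support condition by the analogue of \cite[Lem.\ 12.4]{BDGJW}, which is precisely the detail your ``correct $P$-parahoric site'' sentence is gesturing at.
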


\subsubsection{Recap of classical evaluation maps}
 Let $\iota : H \to G$ be the map $(h_1,h_2) \mapsto \smallmatrd{h_1}{}{}{h_2}$. The classical evaluation maps $\cE_{\lambda,P,\chi}^{j,\eta_0}$ were constructed as the composition of:

\begin{construction}\label{construction}\begin{enumerate}[(1)]\s
		\item Pull back classes twisted by $t_P^\beta$ under the map $\iota : H \to G$,
		\item Trivialise $\iota^*\sV_{\lambda}^\vee$ on each connected component and integrate over fundamental classes,
		\item Pass to scalars via a branching law for the critical integer $j$,
		\item Take the sum over connected components, weighted by $\chi$ and $\eta_0$.
	\end{enumerate}
\end{construction}

When $P = Q$ (resp.\ $P=B$), the construction of \eqref{eq:galois evs} was done in \cite[\S5-6]{BDW20} (resp.\ \cite[\S11-12]{BDGJW}). In that construction, we replaced the coefficients $\sV_\lambda^\vee$ in Construction \ref{construction} with $\sD_\Omega$. Of the four steps, the compatability of steps (1) and (2) for $\sD_\Omega$ and $\sV_\lambda^\vee$ is easy via \cite[Lemma 4.8]{BDGJW}, particularly Lemma 4.8. Step (4) is the same in both cases. This leaves (3), which we handle by an interpolation of branching laws.

\subsubsection{Explicit branching laws}
For integers $j_1,j_2$, let $V_{(j_1,j_2)}^H$ denote the 1-dimensional $H(\Zp)$-representation given by the character $\det_1^{j_1}\cdot\det_2^{j_2}$. Then we have \cite[Prop.\ 6.3.1]{GR2}, \cite[Lem.\ 5.2]{BDW20}
\[
j \in \mathrm{Crit}(\lambda) \ \  \iff \ \  \mathrm{dim}\  \mathrm{Hom}_{H(\Zp)}\big(V_\lambda^\vee, V^H_{(j,-\sw(\lambda)-j)}\big) = 1.
\] 
Via step (3) of Construction \ref{construction}, the map $\cE_{\lambda,P,\chi}^{j,\eta_0}$ depends on a choice of generator $\kappa_{\lambda,j}$ in this space, or dually, an element $v_{\lambda,j} \in V_{(-j,\sw(\lambda)+j)}^H \subset V_{\lambda}|_{H(\Zp)}$. For $p$-adic interpolation, we need  to choose such generators compatibly in $\lambda$. 
It is expedient to recall how we handled the Borel case in \cite[\S11.1]{BDGJW}; there we described explicit choices as follows. Define  weights
\begin{align}\label{eq:alpha_i}
	\alpha_{1} = (1,0,...,0,-1), \ \ \alpha_{2} = (1,1,0,...,0,-1,-1)&,\ \  ...,\ \  \alpha_{n-1} = (1,...,1,0,0,-1,...,-1),\notag\\
	\alpha_{0} = (1,...,1,1,...,1), \hspace{12pt} &\alpha_{n} = (1,...,1,0,...,0),
\end{align}
a $\Z$-basis for the pure algebraic weights. Note that if  $\lambda$ is a dominant algebraic weight then we can write uniquely
\[
\lambda = \lambda_\pi + \sum_{i=0}^n \mu_i \alpha_i, \qquad \mu_i \in \Z_{\geq 0},
\]
so that $\sw(\lambda) = \sw(\lambda_\pi) + 2\mu_0$. Note also that $j\in\mathrm{Crit}(\lambda_\pi)$ implies $j-\mu_0 = j - \sw(\lambda-\lambda_\pi)/2 \in \mathrm{Crit}(\lambda)$, yielding the condition in Proposition \ref {prop:galois evs commute main diagram}.

Via Notation 11.2 \emph{op.\ cit.}, for $1 \leq i \leq n-1$ let $v_{(i)} \in V_{\alpha_i}(\Qp)$ such that $H(\Zp)$ acts trivially, let $v_{(n),j} \in V_{\alpha_n}(\Qp)$ be such that $H(\Zp)$ acts as $\det_j$ (for $j = 1,2$), and fix a generator $v_{(0)} \in V_{\alpha_0}(\Qp)$. In Proposition 11.3 \emph{op.\ cit}.\ we showed
\begin{equation}\label{eq:v_lambda,j}
	v_{\lambda,j} \defeq [v_{(1)}^{\lambda_1-\lambda_2}] \cdot [v_{(2)}^{\lambda_2-\lambda_3}] \cdots [v_{(n-1)}^{\lambda_{n-1}-\lambda_n}] \cdot [v_{(n),1}^{-\lambda_{n+1}-j}] \cdot [v_{(n),2}^{\lambda_n + j}] \cdot [v_{(0)}^{\lambda_{n+1}}]  
\end{equation}
generates $V_{(-j,\sw(\lambda)+j)}^H(\Qp) \subset V_{\lambda}(\Qp)|_{H(\Zp)}$. Dualising, we obtain a map $\kappa_{\lambda,j} : V_\lambda^\vee \to V_{j,-\sw(\lambda)-j}^H$ that was used in the construction of $\cE_{\lambda,B,\chi}^{j,\eta_0}$ (see \cite[Rem.\ 4.14]{BDGJW}).

\subsubsection{$p$-adic interpolation}
We recap the main points of \cite[\S11]{BDGJW}, and simplify them; in that paper, we also incorporated cyclotomic variation, but we shall not need this generality.

For $p$-adic variation of \eqref{eq:v_lambda,j} we want to replace the algebraic weight $\lambda$ with a more general character $\kappa$ of $T(\Zp)$. In particular, we wish to make sense of $(\kappa_i-\kappa_{i+1})(v_{(i)})$. In Proposition 11.4 \emph{op.\ cit}.\ we showed that if we define 
\[
N^\beta(\Zp) \defeq N(p^\beta\Zp)\cdot u = \left\{n \in N(\Zp) : n \equiv \smallmatrd{1_n}{w_n}{0}{1_n} \newmod{p^\beta}\right\},
\]
then 
\begin{equation}\label{eq:N^1}
	v_{(i)}[N^\beta(\Zp)]  \subset 1+p^\beta\Zp,
\end{equation}
and hence $(\kappa_i-\kappa_{i+1})(v_{(i)}\big|_{N^\beta(\Zp)})$ is well-defined. This, and \eqref{eq:v_lambda,j}, motivates the definition 
\begin{align}\label{eq:v circ}
	w_{\kappa,\lambda_\pi} : N^\beta(\Zp) &\longrightarrow R^\times,\\
	g &\longmapsto v_{(0)}(g)^{\kappa_{n+1}}  \cdot \left[\prod_{i=1}^{n-1} v_{(i)}(g)^{\kappa_i - \kappa_{i+1}}\right] \cdot v_{(n),1}(g)^{-\lambda_{\pi,n+1}} \cdot v_{(n),2}(g)^{\lambda_{\pi,n}}.\notag
\end{align}
(In \cite{BDGJW}, the last two terms used $\kappa_{i}$ rather than $\lambda_{\pi,i}$, because we also wanted cyclotomic variation. Here we fix these terms, which allows us to fix $j$ and still obtain interpolation of $v_{\lambda,j_\lambda}$ as $\lambda$ varies; see \eqref{eq:j_lambda} below).

Now let $\Omega \subset \sW_0^G$, with universal character $\kappa_\Omega$ on $T(\Zp)$. For $j \in \mathrm{Crit}(\lambda_\pi)$, define a function $v_{\Omega,j} : N(\Zp) \to \cO_\Omega$ by
\[
v_{\Omega,j}(g) \defeq \left\{\begin{array}{cc} w_{\kappa_\Omega, \lambda_\pi}(g)\cdot \left(\tfrac{v_{(n),2}(g)}{v_{(n),1}(g)}\right)^j &: g \in N^\beta(\Zp),\\
	0 &: \text{otherwise}.
\end{array}\right. 
\]
Now suppose $\lambda$ is a classical weight, with $\sw(\lambda) = \sw(\lambda_\pi) + 2\mu_0$. Recall $j_\lambda = j-\mu_0 \in \mathrm{Crit}(\lambda)$. We know $\kappa_\Omega \newmod{\m_\lambda} = \lambda$ as characters of $T(\Zp)$, and one may formally verify that
\begin{equation}\label{eq:j_lambda}
	v_{\Omega,j} \newmod{\m_\lambda} = v_{\lambda,j_\lambda}|_{N^\beta(\Zp)}.
\end{equation} 
The function $v_{\Omega,j}$ extends to a unique element of $\cA_\Omega$, and dualising, we get a `$p$-adic branching law' $\kappa_{\Omega,j} : \cD_\Omega \to \overline{\Q}_p$ that, after restriction to $N^\beta(\Zp)$, formally interpolates the branching laws $\kappa_{\lambda,j}$ as $\lambda$ varies in $\Omega$.

In the construction of $\cE_{\Omega,B,\chi}^{j,\eta_0}$,  by \cite[Lem.\ 12.4]{BDGJW} the result of steps (1) and (2) (in Construction \ref{construction}, with $\sD_\Omega$ coefficients) was a distribution supported on $t_B^\beta N(\Zp)t_B^{-\beta}u \subset N^\beta(\Zp)$; so we could use $\kappa_{\Omega,j}$ to construct $\cE_{\Omega,P,\chi}^{j,\eta_0}$ (in Proposition 12.3).

\medskip 

We switch to a general parabolic $P$. Let $\cD_\Omega^{\beta,P} \subset \cD_\Omega^P$ be the subset of distributions supported on $N_P^\beta(\Zp) \defeq t_P^\beta N_P(\Zp) t_P^{-\beta}u$ (analogous to \cite[Def.\ 11.11]{BDGJW}). For a general parabolic $P$, by the same proof as \cite[Lem.\ 12.4]{BDGJW}, the output of steps (1) and (2) of Construction \ref{construction} lies in  (a quotient of) $\cD_\Omega^{\beta,P}$. Since $N_P^\beta(\Zp) \subset N^\beta(\Zp)$, we can define $v_{\Omega,j}^P : N_P(\zp) \to \overline{\Q}_p$ by
\[
v_{\Omega,j}^P(g) \defeq \left\{\begin{array}{cc} v_{\Omega,j}(g) &: g \in N_P^\beta(\Zp),\\
	0 &: \text{otherwise}.
\end{array}\right.
\]
The function $v_{\Omega,j}^P$ extends uniquely via the induction property \cite[Def.\ 3.11]{BDW20} to an element in $\cA_\Omega^P$, and hence dualises to a map
\[
\kappa_{\Omega,j}^P : \cD_\Omega^{\beta,P} \longrightarrow \cO_\Omega, \qquad \mu \mapsto \mu(v_{\Omega,j}^P).
\]
Again, formally, $\kappa_{\Omega,j}^P$ interpolates the branching laws $\kappa_{\lambda,j_\lambda}$ after restriction to $N_P^\beta(\Zp)$.

\emph{Proof of Proposition \ref{prop:galois evs commute main diagram}}: As in \cite[Rems.\ 4.14, 12.7]{BDGJW}, define $\cE_{\Omega,P,\chi}^{j,\eta_0}$ as the composition
\begin{equation}\label{eq:explicit overconvergent}
	\xymatrix@C=2mm{
		\hc{t}(S_K,\sD_\Omega^P) \ar[rrrr]^-{\oplus\mathrm{Ev}_{P,\beta,\delta}^{\cD_\Omega^P}}  &&&& \displaystyle\bigoplus_{[\delta]}(\cD_\Omega^{\beta,P})_{\Gamma_{\beta,\delta}^P} \ar[rrrrr]^-{\delta *\kappa_{\Omega,j}^P} &&&&&
		\displaystyle\bigoplus_{[\delta]}\cO_\Omega \ar[rrrrr]^-{\sum_{\brep}\chi(d)\Xi_{\brep}^{\eta_0}} &&&&&
		\cO_\Omega,
	}
\end{equation}
with $\mathrm{Ev}_{P,\beta,\delta}^{\cD_\Omega^P}$ the map of Definition 4.7 \emph{op.\ cit.}, which lands in $(\cD_{\Omega}^{\beta,P})_{\Gamma_{\beta,\delta}^P}$ exactly as in Lemma 12.4; $\kappa_{\Omega,j}^P$ descends to the coinvariants as in the proof of Proposition 12.3; and $\Xi_{d^{\eta_0}}$ is defined in Remark 4.14, all \emph{op.\ cit}., where any other undefined notation is explained. The three arrows in \eqref{eq:explicit overconvergent} correspond to (1-2), (3) and (4) in Construction \ref{construction} respectively. 

To deduce the claimed interpolation property in Proposition \ref{prop:galois evs commute main diagram}, observe that for any classical $\lambda \in \Omega,$ the diagram
\[
\xymatrix@C=2mm{
	\hc{t}(S_K,\sD_\Omega^P) \ar[rrrr]^-{\oplus\mathrm{Ev}_{P,\beta,\delta}^{\cD_\Omega}}\ar[d]^{r_\lambda}  &&&& \displaystyle\bigoplus_{[\delta]}(\cD_\Omega^{\beta,P})_{\Gamma_{\beta,\delta}^P} \ar[rrrrr]^-{\delta *\kappa_{\Omega,j}} \ar[d]^{r_\lambda}&&&&&
	\displaystyle\bigoplus_{[\delta]}\cO_\Omega \ar[rrrrr]^-{\sum_{\brep}\chi(d)\Xi_{\brep}^{\eta_0}}\ar[d]^{\newmod{\m_\lambda}} &&&&&
	\cO_\Omega\ar[d]^{\newmod{\m_\lambda}}\\
	\hc{t}(S_K,\sV_\lambda^\vee) \ar[rrrr]^-{\oplus\mathrm{Ev}_{P,\beta,\delta}^{V_\lambda^\vee}}  &&&& \displaystyle\bigoplus_{[\delta]}(V_\lambda^\vee)_{\Gamma_{\beta,\delta}^P} \ar[rrrrr]^-{\delta *\kappa_{\lambda,j_\lambda}} &&&&&
	\displaystyle\bigoplus_{[\delta]}L \ar[rrrrr]^-{\sum_{\brep}\chi(d)\Xi_{\brep}^{\eta_0}} &&&&&
	L
}
\]
commutes. For the first square, this is \cite[Lem.\ 4.8]{BDGJW}; the second is identical to Proposition 11.12 \emph{op.\ cit}; and the third is clear from the definition. Since the bottom row here is exactly $\cE_{\lambda,P,\chi}^{j_\lambda,\eta_0}$, this concludes the proof of Proposition \ref{prop:galois evs commute main diagram} (hence of Theorem \ref{thm:p-refined FJ}). \qed

\subsection{Tracing from Iwahoric to parahoric level}

The above `big evaluation' had the parabolic $P$ baked into it; it used the parahoric classical evaluation map, and $P$-parahoric distributions in the overconvergent cohomology. As in \cite{BDW20}, this is sufficient to study symplectic families through $\tilde\pi^P$ in the \emph{$P$-parabolic eigenvariety}, where we have analytic variation of some subset of the Hecke operators $U_{p,r}$. However, our study of the symplectic locus crucially used analytic variation of \emph{all} the $U_{p,r}$; in other words, it applies only to the Iwahori-level eigenvariety. We now port between the two.

There is a natural trace map $\mathrm{Tr} : \pi_p^{\Iw} \to \pi_p^{J_P}$, given by summing over translates by representatives of $J_P/\mathrm{Iw}$.

\begin{lemma}\label{lem:trace}
If $\tilde\pi = (\pi,\alpha)$ is an Iwahori refinement above the $P$-refinement $\tilde\pi^P = (\pi,\alpha^P)$, then $\mathrm{Tr}$ induces an isomorphism $\tilde\pi \isorightarrow \tilde\pi^P$.
\end{lemma}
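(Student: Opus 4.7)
The plan is in four steps.  First, I would verify that $\mathrm{Tr}$ is $\cH_p^P$-equivariant.  Every $T \in \cH_p^P$ is represented by a $J_P$-bi-invariant compactly-supported function on $G(\Q_p)$, so its convolution action on $\pi_p$ commutes with left translation by $J_P$.  Averaging over coset representatives of $J_P/\Iw$ therefore commutes with $T$, giving $T\circ \mathrm{Tr} = \mathrm{Tr}\circ T$.  Since $\alpha^P = \alpha|_{\cH_p^P}$, this forces $\mathrm{Tr}(\tilde\pi) \subseteq \tilde\pi^P$, and both sides are one-dimensional by Proposition \ref{prop:p-refinement} (using regularity of $\UPS$).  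Hence it remains only to show that this map is non-zero.

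Second, I would use the identity $\mathrm{Tr}(v) = [J_P:\Iw]\,v$ for $v \in \pi_p^{J_P}$ (immediate from the definition, as $\gamma v = v$ for all $\gamma \in J_P$) to reduce the problem to a concrete combinatorial statement.  Fix $v \in \tilde\pi^P$ non-zero, and decompose it in $\pi_p^{\Iw} = \bigoplus_{\alpha'}(\pi_p^{\Iw})_{\alpha'}$ as $v = \sum_{\alpha' \text{ above }\alpha^P} v_{\alpha'}$.  By Step 1, each $\mathrm{Tr}(v_{\alpha'})$ lies in the one-dimensional space $\tilde\pi^P = \C v$, so we obtain scalars $c_{\alpha'}$ with $\mathrm{Tr}(v_{\alpha'}) = c_{\alpha'} v$ and $\sum_{\alpha'} c_{\alpha'} = [J_P:\Iw]$.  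The lemma is equivalent to $c_\alpha \neq 0$ for every Iwahori refinement $\alpha$ above $\alpha^P$.

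Third, I would identify the structure of the $|\cW_{L_P}|$-dimensional $\alpha^P$-isotypic subspace $W \defeq \bigoplus_{\alpha' \text{ above } \alpha^P}(\pi_p^{\Iw})_{\alpha'}$ of $\pi_p^{\Iw}$.  Via the Bernstein embedding of the Iwahori Hecke algebra $\cH^{L_P}_{\Iw}$ of the Levi $L_P$ into $\cH_{\Iw}$, $W$ carries a natural $\cH^{L_P}_{\Iw}$-module structure, and by the Bernstein--Zelevinsky geometric lemma applied to $\pi_p = \mathrm{Ind}_B^G\UPS$, $W$ is identified with the Iwahori invariants of an irreducible regular principal series $\mathrm{Ind}_{B_{L_P}}^{L_P}\UPS|_T$ of $L_P$ (regularity and irreducibility following from our hypotheses on $\UPS$).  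Under this identification, the line $\tilde\pi^P \subset W$ corresponds to the $L_P(\Z_p)$-spherical line, and the $c_{\alpha'}$ record the action of the trace from Iwahori- to hyperspecial-invariants in the $L_P$-principal series on each Iwahori eigenspace.  One may then invoke the classical Casselman--Macdonald formula for $L_P = \GL_{m_1}\times\cdots\times\GL_{m_r}$: in an irreducible regular principal series, the spherical vector has non-zero projection onto every Iwahori eigenvector, and dually the trace from Iwahori- to hyperspecial-invariants is non-zero on each Iwahori eigenspace.  This yields $c_\alpha \neq 0$ for every $\alpha$, completing the proof.

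The main obstacle is Step~3, namely making precise the identification of $W$ (together with its Iwahori-eigenspace decomposition and $J_P$-invariant line) with the Iwahori invariants of an $L_P$-principal series, so that the Casselman--Macdonald statement applies directly.  A more hands-on alternative would be to work inside the model $\mathrm{Ind}_B^G\UPS$ itself, iterating the intertwiner formulae \eqref{eq:casselman} in the style of Lemma \ref{lem:casselman} to express the explicit parahoric eigenvector $h_{[w_{2n}]}$ of Proposition \ref{prop:parahoric eigenvector} in the Iwahori eigenbasis, and verifying by a combinatorial bookkeeping that all resulting coefficients are non-zero products of the non-vanishing factors $c_s(\UPS^\nu)$ of \eqref{eq:c_s}.
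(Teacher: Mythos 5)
Your plan is reasonable in outline, but Step~3 is a real gap rather than a citation, and the paper's own proof is a short direct computation that needs none of the machinery you invoke. You assert that in an irreducible regular unramified principal series of $L_P$, the trace from Iwahori-invariants to spherical invariants is non-zero on every Iwahori eigenspace, and that this follows ``dually'' from the Casselman--Macdonald non-vanishing of the coefficients $a_w$ in the spherical-vector expansion $\phi_0 = \sum_w a_w f_w$. It does not follow dually in any one-line way: to make the deduction you would need the contragredient principal series, the natural pairing on Iwahori-invariants, the adjunction of $\mathrm{Tr}$ against inclusion of spherical vectors, and the fact that the pairing is diagonal on the eigenbasis. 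That chain can be made to work (it is implicit in Casselman's unramified theory), but it is a genuine argument, not a named theorem. The Bernstein--Zelevinsky identification of $W$ with the Iwahori-invariants of an $L_P$-principal series, and of the line $\tilde\pi^P \subset W$ with the spherical line, likewise need to be spelled out via Iwahori factorisation of $J_P$ and the Jacquet-module description of parahoric invariants. None of this is wrong, but it is a lot of unstated work for a one-paragraph lemma.

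The paper sidesteps all of this by choosing the induced model adapted to the refinement. Writing $\pi_p \cong \Ind_B^G\UPS^\sigma$ with $\sigma = \Psi_\UPS(\tilde\pi)$, the $\alpha$-eigenvector in $\pi_p^{\Iw}$ is, by \cite[Prop.\ 7.4]{BDGJW}, the standard function $f_\sigma$ supported on the \emph{big} Bruhat cell $B(\Qp)w_{2n}\Iw$. Its trace to $J_P$-invariants is supported on $B(\Qp)w_{2n}J_P$, and evaluating at $w_{2n}$ gives a sum of terms each of the form $\UPS^\sigma\delta_B^{1/2}(b)$ for some $b \in B(\Zp)$, hence each equal to $1$ since $\UPS$ is unramified; so the trace is a non-zero multiple of the parahoric eigenvector of Proposition~\ref{prop:parahoric eigenvector}, which is exactly the $\alpha^P$-eigenvector. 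Your ``hands-on alternative'' of iterating the intertwiners \eqref{eq:casselman} moves in this direction, but is still more than is needed: the whole point of conjugating $\UPS$ to $\UPS^\sigma$ at the outset is that the $\alpha$-eigenvector is already the big-cell vector, so no intertwining operator ever appears and the non-vanishing of the trace is immediate.
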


\begin{proof}
As trace only acts at $p$, it suffices to prove $\mathrm{Tr} : \tilde\pi_p \isorightarrow \tilde\pi_p^P$. As the Satake parameter of $\pi_p$ is assumed regular, both sides are complex lines; so we need only check the map is well-defined and non-zero.

Let $\sigma = \Psi_{\UPS}(\tilde\pi)$. We have $\pi_p \cong \Ind_B^G (\UPS^\sigma)$, so it suffices to prove the result in $\Ind_B^G\UPS^\sigma$. Let $f_\sigma \in \Ind_B^G\UPS^\sigma$ be the (unique) Iwahori-invariant function supported on the big Bruhat cell $B(\Qp)\cdot w_{2n} \cdot \Iw$ with $f_\sigma(w_{2n}) = 1$. By \cite[Prop.\ 7.4]{BDGJW}, $f_\sigma$ is an $\alpha$-eigenvector, hence yields a generator of $\tilde\pi_p$. Under trace, this is mapped to a non-zero $J_P$-invariant vector supported on $B(\Qp)\cdot w_{2n} \cdot J_P$. But by the same arguments, this is an $\alpha^P$-eigenvector, hence the map on refinements is well-defined and non-zero.
\end{proof}

Let $K_B = K^p \Iw$ be an Iwahori-at-$p$ level, and $K_P = K^pJ_P$ a parahoric-at-$p$ level. We have natural trace maps from the cohomology of $S_{K_B}$ to $S_{K_P}$, which are functorial in maps between the coefficients.  Finally, we have a natural map $s_P : \cD_\Omega \twoheadrightarrow \cD_\Omega^P$ \cite[Prop.\ 4.8]{BW20}, and $r_\lambda : \cD_\Omega \to V_{\lambda}^\vee$ factors through $s_P$.  Putting this all together with Proposition \ref{prop:galois evs commute main diagram} and Lemma \ref{lem:trace} yields:

\begin{lemma}\label{lem:commutative trace}
For any classical $\lambda \in \Omega$, here is a commutative diagram
\begin{equation}\label{eq:trace diagram}
\xymatrix@C=20mm{
\hc{t}(S_{K_B},\sD_\Omega) \ar[d]^{r_{\lambda}}\ar[r]^-{\mathrm{Tr}\circ s_P} & \hc{t}(S_{K_P},\sD_\Omega^P) \ar[d]^{r_{\lambda}}\ar[r]^-{\cE_{\Omega,P,\chi}^{j,\eta_0}} & \cO_\Omega \ar[d]^{\newmod{\m_{\lambda}}}\\
\hc{t}(S_{K_B},\sV_{\lambda}^\vee) \ar[r]^-{\mathrm{Tr}} & \hc{t}(S_{K_P},\sV_{\lambda}^\vee) \ar[r]^-{\cE_{\lambda,P,\chi}^{j,\eta_0}} & L.
}
\end{equation}
\end{lemma}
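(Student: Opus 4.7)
The plan is to verify that each of the two squares in the diagram commutes separately, noting that the right-hand square is essentially already established in Proposition \ref{prop:galois evs commute main diagram}, while the left-hand square is a formal compatibility between trace maps and specialisation.

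For the right-hand square, I will simply invoke Proposition \ref{prop:galois evs commute main diagram} with $K = K_P$. Indeed, that proposition asserts exactly that $r_\lambda \circ \cE_{\Omega,P,\chi}^{j,\eta_0} = \cE_{\lambda,P,\chi}^{j_\lambda,\eta_0} \circ r_\lambda$ on $\hc{t}(S_{K_P},\sD_\Omega^P)$, applied to any classical $\lambda \in \Omega$. Since $\Omega \subset \sW^P_{0,\lambda_\pi}$, we have $\sw(\lambda) = \sw(\lambda_\pi)$ (noting that $\alpha_0$ of \eqref{eq:alpha_i} is not in the $P$-parabolic weight directions when $P \subset Q$; more carefully, one observes $\mu_0 = 0$ so $j_\lambda = j$), which gives the claimed commutativity without the need to rescale $j$.

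For the left-hand square, the key observation is that the specialisation $r_\lambda : \cD_\Omega \to V_\lambda^\vee$ factors as the composition
\[
\cD_\Omega \xrightarrow{\ s_P\ } \cD_\Omega^P \xrightarrow{\ r_\lambda\ } V_\lambda^\vee,
\]
as recorded just before the lemma (via \cite[Prop.\ 4.8]{BW20}). Combined with functoriality of $\hc{t}(S_{K_B},-)$ in the coefficient system, this gives the commutativity of
\[
\xymatrix@C=20mm{
\hc{t}(S_{K_B},\sD_\Omega) \ar[d]^-{r_\lambda} \ar[r]^-{s_P} & \hc{t}(S_{K_B},\sD_\Omega^P) \ar[d]^-{r_\lambda} \\
\hc{t}(S_{K_B},\sV_\lambda^\vee) \ar@{=}[r] & \hc{t}(S_{K_B},\sV_\lambda^\vee).
}
\]
The trace map $\mathrm{Tr} : \hc{t}(S_{K_B},-) \to \hc{t}(S_{K_P},-)$ is natural in the coefficient sheaf (being induced by summation over coset representatives of $J_P/\mathrm{Iw}$, which acts only on the geometric side), so it commutes with any morphism of coefficient systems; in particular it commutes with $r_\lambda : \sD_\Omega^P \to \sV_\lambda^\vee$. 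Concatenating these two compatibilities yields the left-hand square.

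Neither step presents a real obstacle: the right square is quoted verbatim from Proposition \ref{prop:galois evs commute main diagram}, and the left square is purely formal. The only mild subtlety to check is that the factorisation $r_\lambda = r_\lambda \circ s_P$ used in the upper row is genuinely the same specialisation as the one appearing as the left vertical arrow in the original diagram; this is immediate from the construction of $s_P$ and $r_\lambda$ in \cite[\S4.1]{BW20}.
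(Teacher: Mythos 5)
Your decomposition into two squares is essentially the paper's argument: the paper too invokes Proposition~\ref{prop:galois evs commute main diagram}, the factorisation of $r_\lambda$ through $s_P$ from \cite[Prop.~4.8]{BW20}, and the naturality of the trace in the coefficient system, in a single short paragraph just before stating the lemma. Your treatment of the left-hand square (functoriality in coefficients $\Rightarrow$ first sub-square; naturality of trace $\Rightarrow$ second) is correct.

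However, your justification for the right-hand square contains an error. You assert that $\alpha_0 = (1,\dots,1)$ is not in the $P$-parabolic weight directions when $P \subset Q$, and hence $\mu_0 = 0$ and $\sw(\lambda) = \sw(\lambda_\pi)$ on $\Omega$. This is false: $\alpha_0$ is the restriction of $\det$ to $T$, and $\det$ is a character of the full group $G$, hence of every Levi $L_P$. So $\alpha_0 \in \sW^P$ for every standard parabolic $P$; indeed, the ``$+1$'' in $\dim \sW_{0,\lambda_\pi}^P = \#X_P + 1$ (Lemma~\ref{lem:parabolic dimension}) is exactly the trivial $\alpha_0$-direction. Thus $\sw(\lambda)$ is \emph{not} constant on $\Omega$, $\mu_0$ can be any nonzero integer, and in general $j_\lambda = j - \mu_0 \ne j$.

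What you have noticed, inadvertently, is a genuine notational mismatch between Proposition~\ref{prop:galois evs commute main diagram}, whose diagram has $\cE_{\lambda,P,\chi}^{j_\lambda,\eta_0}$ in the lower right, and Lemma~\ref{lem:commutative trace}, whose diagram has $\cE_{\lambda,P,\chi}^{j,\eta_0}$. The correct reading is that the bottom-right arrow of the lemma's diagram should be $\cE_{\lambda,P,\chi}^{j_\lambda,\eta_0}$ (or this should be flagged as abuse of notation). The subsequent applications are unaffected: when the lemma is used at $\lambda = \lambda_\pi$ in \eqref{eq:non-zero classical} one has $j_{\lambda_\pi} = j$, and when it is used in Lemma~\ref{lem:classical C} for general $\lambda$ one only needs the top row to factor through $\hc{t}(S_{K_B},\sV_\lambda^\vee)$, regardless of which critical integer parametrises the bottom-right evaluation map. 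You should not try to force $\mu_0 = 0$; instead state the right square with $j_\lambda$ and note the equality $j_{\lambda_\pi} = j$ where it is used.
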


\subsection{Symplectic families in the parabolic eigenvariety}

Since we are assuming (1$'$) in Conjecture \ref{conj:FJ}, by the proof of Proposition \ref{prop:local vs global} we know $(1)$ holds in Conjecture \ref{conj:local FJ} for $\pi_p$, giving $\varphi_p \in \tilde\pi_p^P$ with $\zeta_p(ut_P^\beta\cdot\varphi_p,\chi_p,s+1/2) \neq 0$ (for all $s$, by Corollary \ref{cor:non-vanishing for s}). Let $\tilde\pi$ be as given by the hypotheses of Theorem \ref{thm:p-refined FJ}. As $\pi$ has regular weight, there exists a non-vanishing Deligne-critical value $L(\pi\times\chi, j+1/2) \neq 0$ by \cite[Lem.\ 7.4]{BDW20}.

Let $\varphi_p' \in \tilde\pi_p$ be a lift of $\varphi_p$ under the trace map (via Lemma \ref{lem:trace}). Let $\varphi = \otimes_{\ell \neq p}\varphi_\ell^{\mathrm{FJ}} \otimes \varphi_p' \in \tilde\pi$. By \eqref{eq:cohomology}, attached to this is a cohomology class $\phi_{\varphi} \in \hc{t}(S_{K_B},\sV_{\lambda_\pi}^\vee)_{\tilde\pi}$. By \eqref{eq:critical value}, we have
\begin{equation}\label{eq:non-zero classical}
	\cE_{\lambda_\pi,P,\chi}^{j,\eta_0}\circ\mathrm{Tr}(\phi_\varphi)  = A_{\lambda_\pi,P,\chi}^{j} \cdot L\Big(\pi\times\chi,j+\tfrac{1}{2}\Big) \cdot \zeta_p\Big(ut_P^\beta\cdot \varphi_p, \chi_p, j+\tfrac12\Big) \neq 0,
\end{equation}
where non-vanishing is by assumption $(1')$. 

By hypothesis, the map $r_{\tilde\pi} : \hc{t}(S_{K_B},\sD_\Omega) \to \hc{t}(S_{K_B},\sV_{\lambda_\pi}^\vee)_{\tilde\pi}$ is surjective for some neighbourhood $\Omega \subset \sW_{0,\lambda_\pi}^P$ of $\lambda_\pi$. We summarise some consequences, described in detail in \cite[\S7.2,7.3]{BDW20}:
\begin{itemize}\s
	\item[(a)] By Hecke-equivariance of $r_{\lambda_\pi}$, for $h \gg 0$ the localisation of the slope $\leq h$ subspace $\hc{t}(S_{K_B},\sD_\Omega)^{\leq h}$ at $\tilde\pi$ is non-zero, giving a point $x_{\tilde\pi}$ in the top-degree eigenvariety. Let $\sC'$ be the connected component through $x_{\tilde\pi}$. 
	
	\item[(b)] Let $\Phi$ be a lift of $\phi_{\varphi}$, and $\Phi_{\sC'}$ be its projection to the direct summand of $\hc{t}(S_{K_B},\sD_\Omega)^{\leq h}$ corresponding to $\sC'$. Then $\Phi_{\sC'} \in \hc{t}(S_{K_B},\sD_\Omega)$ with $r_{\tilde\pi}(\Phi_{\sC'}) = \phi_{\varphi}$.
	
	\item[(c)] Let $\cE_\Omega \defeq \cE_{\Omega,P,\chi}^{j,\eta_0} \circ \mathrm{Tr}\circ s_P : \hc{t}(S_{K_B},\sD_\Omega) \to \cO_\Omega$, an $\cO_\Omega$-module map. By \eqref{eq:non-zero classical} and Lemma \ref{lem:commutative trace}, we have $\cE_\Omega(\Phi_{\sC'}) \neq 0 \newmod{\m_{\lambda_\pi}}$, so $\cE_\Omega(\Phi_{\sC'}) \neq 0$. As $\cO_\Omega$ is torsion-free, we deduce that $\mathrm{Ann}_{\cO_\Omega}(\Phi_{\sC'}) = 0$. As in \cite[Cor.\ 7.12]{BDW20} this forces existence of an irreducible component $\sC \subset \sC'$ of dimension $\mathrm{dim}(\Omega)$.
\end{itemize}

\begin{lemma}\label{lem:classical C}
 	Up to shrinking $\Omega$, we may take $\sC$ to be a classical cuspidal symplectic family.
\end{lemma}
\begin{proof}
Up to replacing $\Omega$ with an open neighbourhood of $\lambda_\pi$ of the same dimension, we may assume the rigid-analytic function $\cE_{\Omega}(\Phi_{\sC'}) \in \cO_\Omega$ is non-vanishing on $\Omega$. At any classical weight $\lambda \in \Omega$, combining non-vanishing of $\cE_{\Omega}(\Phi_{\sC'}) \newmod{\m_\lambda}$ with Lemma \ref{lem:commutative trace} implies $\Phi_{\sC'}$ has non-zero image in $\hc{t}(S_{K_B},\sV_\lambda^\vee)$. It must therefore have non-zero image after projection to at least one of the finite number of Hecke eigensystems that appear in $\hc{t}(S_{K_B},\sV_\lambda^\vee)$. This eigensystem thus appears in the summand of $\hc{t}(S_{K_B},\sD_\Omega)$ corresponding to $\sC'$, so gives a classical point $y_\lambda$ of $\sC'$ of weight $\lambda$.

Let $\phi_{y_\lambda}$ be the projection of $\Phi_{\sC'}$ to the $y_\lambda$-generalised eigenspace in $\hc{t}(S_{K_B},\sV_\lambda^\vee)$. By our arguments above, and the commutativity in Lemma \ref{lem:commutative trace}, we may take $y_\lambda$ so that $\cE_{\lambda,P,\chi}^{j,\eta_0}(\mathrm{Tr}(\phi_{y_\lambda})) \neq 0$. It follows that $y_\lambda$ is symplectic by \cite[Prop.\ 4.15]{BDGJW}. 
	
Now note that the classical $\lambda \in \Omega$ are very Zariski-dense, each giving rise to a classical symplectic point $y_\lambda \in \sC'$. As $\sC'$ has finitely many irreducible components, there must therefore be at least one irreducible component $\sC$ of dimension $\dim(\Omega)$ that contains a very Zariski-dense set of classical symplectic points $y_\lambda$.
	
	\medskip
	
	Finally, we must show that this is a cuspidal family. We first exhibit a related family in the \emph{parabolic} eigenvariety. Note that $\mathrm{Tr}\circ \mathrm{s}_P(\Phi_{\sC'}) \in \hc{t}(S_{K_P},\sD_\Omega^P)$. By the same argument as in (c) above, we have $\mathrm{Ann}_{\cO_\Omega}(\mathrm{Tr}\circ \mathrm{s}_P(\Phi_{\sC'})) = 0$.  But the local pieces of $P$-parabolic eigenvariety are built from the Hecke action on $\hc{t}(S_{K_P},\sD_\Omega^P)$. As in (c), this forces existence of an irreducible component $\sC^P$ of the $P$-parabolic eigenvariety of dimension $\dim(\Omega)$. Exactly as in the start of this proof, we may take $\sC^P$ to be a classical symplectic family. 
		
		By construction, this parabolic family also varies over $\Omega$; and by Lemma \ref{lem:commutative trace}, there is a bijection between classical points of $\sC$ and $\sC^P$, where every classical point $y$ of $\sC$ is a further Iwahori-refinement of a $P$-refined $\tilde\pi_y^P$ appearing in $\sC$. To show $\sC$ is cuspidal, then, it suffices to prove $\sC^P$ is a cuspidal family.
	
	By assumption, $\tilde\pi^P$ is $P$-strongly-interior and has regular weight. As in \cite[Prop.\ 5.15]{BW20}, a Zariski-dense set of classical points in $\sC^P$ are also $P$-strongly-interior, have regular weight, and are non-$P$-critical slope. As \emph{op.\ cit}., this forces them to be cuspidal, as required.
\end{proof}

This $\sC$ is the family required in Proposition \ref{prop:p-refined FJ}, completing the proof of Theorem \ref{thm:p-refined FJ}. \qed

\footnotesize
	\renewcommand{\refname}{\normalsize References}

\newcommand{\etalchar}[1]{$^{#1}$}

\Addresses
	

\begin{thebibliography}{BLGGT14}

\bibitem[AG94]{AG94}
Avner Ash and David Ginzburg.
\newblock {$p$}-adic {$L$}-functions for {${\rm GL}(2n)$}.
\newblock {\em Invent. Math.}, 116(1-3):27--73, 1994.

\bibitem[APS08]{APS08}
Avner Ash, David Pollack, and Glenn Stevens.
\newblock Rigidity of {$p$}-adic cohomology classes of congruence subgroups of
  {${\rm GL}(n,\Bbb Z)$}.
\newblock {\em Proc. Lond. Math. Soc. (3)}, 96(2):367--388, 2008.

\bibitem[AS06]{AS06}
Mahdi Asgari and Freydoon Shahidi.
\newblock Generic transfer for general spin groups.
\newblock {\em Duke Math. J.}, 132(1):137--190, 2006.

\bibitem[AS14]{AS14}
Mahdi Asgari and Freydoon Shahidi.
\newblock Image of functoriality for general spin groups.
\newblock {\em Manuscripta Math.}, 144(3-4):609--638, 2014.

\bibitem[Asg02]{Asg02}
Mahdi Asgari.
\newblock Local {$L$}-functions for split spinor groups.
\newblock {\em Canad. J. Math.}, 54(4):673--693, 2002.

\bibitem[BDG{\etalchar{+}}]{BDGJW}
Daniel {Barrera Salazar}, Mladen Dimitrov, Andrew Graham, Andrei Jorza, and
  Chris Williams.
\newblock On the {GL}(2n) eigenvariety: branching laws, {S}halika families and
  $p$-adic ${L}$-functions.
\newblock Preprint: \url{https://arxiv.org/abs/2211.08126}.

\bibitem[BDW]{BDW20}
Daniel {Barrera Salazar}, Mladen Dimitrov, and Chris Williams.
\newblock On $p$-adic {$L$}-functions for $\mathrm{{GL}}(2n)$ in finite slope
  {S}halika families.
\newblock Preprint: \url{https://arxiv.org/abs/2103.10907}.

\bibitem[Bel21]{Eigenbook}
Jo\"{e}l Bella\"{\i}che.
\newblock {\em The eigenbook---eigenvarieties, families of {G}alois
  representations, {$p$}-adic {$L$}-functions}.
\newblock Pathways in Mathematics. Birkh\"{a}user/Springer, Cham, 2021.

\bibitem[BLGGT14]{BGGT14a}
Thomas Barnet-Lamb, Toby Gee, David Geraghty, and Richard Taylor.
\newblock Potential automorphy and change of weight.
\newblock {\em Ann. of Math. (2)}, 179(2):501--609, 2014.

\bibitem[BW21]{BW20}
Daniel {Barrera Salazar} and Chris Williams.
\newblock Parabolic eigenvarieties via overconvergent cohomology.
\newblock {\em Math. Z.}, 299(1-2):961--995, 2021.

\bibitem[BZ77]{BZ77}
I.~N. Bernstein and A.~V. Zelevinsky.
\newblock Induced representations of reductive {${p}$}-adic groups. {I}.
\newblock {\em Ann. Sci. \'{E}cole Norm. Sup. (4)}, 10(4):441--472, 1977.

\bibitem[Cas80]{Cas80}
W.~Casselman.
\newblock The unramified principal series of {$\mathfrak{p}$}-adic groups. {I}.
  {T}he spherical function.
\newblock {\em Compositio Math.}, 40(3):387--406, 1980.

\bibitem[Che04]{Che04}
Ga\"{e}tan Chenevier.
\newblock Familles {$p$}-adiques de formes automorphes pour {${\rm GL}_n$}.
\newblock {\em J. Reine Angew. Math.}, 570:143--217, 2004.

\bibitem[Che05]{Che05}
Ga\"{e}tan Chenevier.
\newblock Une correspondance de {J}acquet-{L}anglands {$p$}-adique.
\newblock {\em Duke Math. J.}, 126(1):161--194, 2005.

\bibitem[Clo90]{Clo90}
Laurent Clozel.
\newblock Motifs et formes automorphes: applications du principe de
  fonctorialit\'{e}.
\newblock In {\em Automorphic forms, {S}himura varieties, and {$L$}-functions,
  {V}ol. {I} ({A}nn {A}rbor, {MI}, 1988)}, volume~10 of {\em Perspect. Math.},
  pages 77--159. Academic Press, Boston, MA, 1990.

\bibitem[CM09]{CM09}
Frank Calegari and Barry Mazur.
\newblock Nearly ordinary {G}alois deformations over arbitrary number fields.
\newblock {\em J. Inst. Math. Jussieu}, 8(1):99--177, 2009.

\bibitem[Coa89]{coates89}
John Coates.
\newblock On {$p$}-adic {$L$}-functions attached to motives over {${\bf Q}$}.
  {II}.
\newblock {\em Bol. Soc. Brasil. Mat. (N.S.)}, 20(1):101--112, 1989.

\bibitem[Con99]{Con99}
Brian Conrad.
\newblock Irreducible components of rigid spaces.
\newblock {\em Ann. Inst. Fourier (Grenoble)}, 49(2):473--541, 1999.

\bibitem[DJR20]{DJR18}
Mladen Dimitrov, Fabian Januszewski, and A.~Raghuram.
\newblock ${L}$-functions of {$\mathrm{GL}(2n)$}: {$p$}-adic properties and
  nonvanishing of twists.
\newblock {\em Compositio Math.}, 156(12):2437--2468, 2020.

\bibitem[FJ93]{FJ93}
Solomon Friedberg and Herv\'{e} Jacquet.
\newblock Linear periods.
\newblock {\em J. Reine Angew. Math.}, 443:91--139, 1993.

\bibitem[Geh18]{Geh18}
Lennart Gehrmann.
\newblock On {S}halika models and {$p$}-adic {$L$}-functions.
\newblock {\em Israel J. Math.}, 226(1):237--294, 2018.

\bibitem[GM98]{GM98}
Fernando~Q. Gouv\^{e}a and Barry Mazur.
\newblock On the density of modular representations.
\newblock In {\em Computational perspectives on number theory ({C}hicago, {IL},
  1995)}, volume~7 of {\em AMS/IP Stud. Adv. Math.}, pages 127--142. Amer.
  Math. Soc., Providence, RI, 1998.

\bibitem[GR13]{GR13}
Wee~Teck Gan and A.~Raghuram.
\newblock Arithmeticity for periods of automorphic forms.
\newblock In {\em Automorphic representations and {$L$}-functions}, volume~22
  of {\em Tata Inst. Fundam. Res. Stud. Math.}, pages 187--229. Tata Inst.
  Fund. Res., Mumbai, 2013.

\bibitem[GR14]{GR2}
Harald Grobner and A.~Raghuram.
\newblock On the arithmetic of {S}halika models and the critical values of
  {$L$}-functions for {${\rm GL}_{2n}$}.
\newblock {\em Amer. J. Math.}, 136(3):675--728, 2014.
\newblock With an appendix by Wee Teck Gan.

\bibitem[Han17]{Han17}
David Hansen.
\newblock Universal eigenvarieties, trianguline {G}alois representations and
  $p$-adic {L}anglands functoriality.
\newblock {\em J. Reine. Angew. Math.}, 730:1--64, 2017.

\bibitem[Hid98]{HidP-ord}
Haruzo Hida.
\newblock Automorphic induction and {L}eopoldt type conjectures for {${\rm
  GL}(n)$}.
\newblock volume~2, pages 667--710. 1998.
\newblock Mikio Sato: a great Japanese mathematician of the twentieth century.

\bibitem[HL11]{HL11}
Richard Hill and David Loeffler.
\newblock Emerton's {J}acquet functors for non-{B}orel parabolic subgroups.
\newblock {\em Doc. Math.}, 16:1--31, 2011.

\bibitem[HS]{HS-fern}
Valentin Hernandez and Benjamin Schraen.
\newblock The infinite fern in higher dimensions.
\newblock Preprint: \url{https://arxiv.org/abs/2210.10564}.

\bibitem[HS16]{HS16}
Joseph Hundley and Eitan Sayag.
\newblock Descent construction for {GS}pin groups.
\newblock {\em Mem. Amer. Math. Soc.}, 243(1148):v+124, 2016.

\bibitem[Hum90]{Hum90}
James~E. Humphreys.
\newblock {\em Reflection groups and {C}oxeter groups}, volume~29 of {\em
  Cambridge Studies in Advanced Mathematics}.
\newblock Cambridge University Press, Cambridge, 1990.

\bibitem[JLR99]{JLR01}
Herv\'{e} Jacquet, Erez Lapid, and Jonathan Rogawski.
\newblock Periods of automorphic forms.
\newblock {\em J. Amer. Math. Soc.}, 12(1):173--240, 1999.

\bibitem[JN19]{JoNew}
Christian Johansson and James Newton.
\newblock Irreducible components of extended eigenvarieties and interpolating
  {L}anglands functoriality.
\newblock {\em Math. Res. Lett.}, 26(1):159--201, 2019.

\bibitem[JPSS81]{JPSS}
Herv\'{e} Jacquet, Ilja Piatetski-Shapiro, and Joseph Shalika.
\newblock Conducteur des repr\'{e}sentations g\'{e}n\'{e}riques du groupe
  lin\'{e}aire.
\newblock {\em C. R. Acad. Sci. Paris S\'{e}r. I Math.}, 292(13):611--616,
  1981.

\bibitem[MSD74]{MSD74}
Barry Mazur and Peter Swinnerton-Dyer.
\newblock Arithmetic of {W}eil curves.
\newblock {\em Invent. Math.}, 25:1 -- 61, 1974.

\bibitem[OST23]{OST19}
Masao Oi, Ryotaro Sakamoto, and Hiroyoshi Tamori.
\newblock Iwahori-{H}ecke algebra and unramified local {$L$}-functions.
\newblock {\em Math. Z.}, 303(3):Paper No. 59, 42, 2023.

\bibitem[Pan94]{Pan94}
Alexei~A. Panchishkin.
\newblock Motives over totally real fields and {$p$}-adic {$L$}-functions.
\newblock {\em Ann. Inst. Fourier (Grenoble)}, 44(4):989--1023, 1994.

\bibitem[Roc23]{Roc20}
Rob Rockwood.
\newblock Plus/minus {$p$}-adic {$L$}-functions for {${\rm GL}_{2n}$}.
\newblock {\em Ann. Math. Qu\'{e}.}, 47(1):177--193, 2023.

\bibitem[Sun19]{Sun19}
Binyong Sun.
\newblock Cohomologically induced distinguished representations and
  cohomological test vectors.
\newblock {\em Duke Math. J.}, 168(1):85--126, 2019.

\bibitem[SV17]{SV17}
Yiannis Sakellaridis and Akshay Venkatesh.
\newblock Periods and harmonic analysis on spherical varieties.
\newblock {\em Ast\'{e}risque}, (396):viii+360, 2017.

\bibitem[Til96]{Til96}
Jacques Tilouine.
\newblock {\em Deformations of {G}alois representations and {H}ecke algebras}.
\newblock The Mehta Research Institute of Mathematics and Mathematical Physics,
  Allahabad; by Narosa Publishing House, New Delhi, 1996.

\bibitem[Urb11]{Urb11}
Eric Urban.
\newblock Eigenvarieties for reductive groups.
\newblock {\em Ann. of Math. (2)}, 174(3):1685--1784, 2011.

\bibitem[Wil]{Wil-2n}
Chris Williams.
\newblock {On $p$-adic $L$-functions for symplectic representations of
  $\mathrm{GL}_{N}$ over number fields}.
\newblock Preprint: \url{https://arxiv.org/abs/2305.07809}.

\bibitem[Xia18]{Xia18}
Zhengyu Xiang.
\newblock Twisted eigenvarieties and self-dual representations.
\newblock {\em Ann. Inst. Fourier (Grenoble)}, 68(6):2381--2444, 2018.

\end{thebibliography}
\end{document}